\definecolor{indigo}{rgb}{0.4,0,0.9}
\DeclareFontFamily{U}{rsfs}{\skewchar\font127 }
\DeclareFontShape{U}{rsfs}{m}{n}{%
	<-6.5> rsfs5
	<6.5-8> rsfs7
	<8-> rsfs10
}{}
\newcommand{\interior}[1]{%
  {\kern0pt#1}^{\mathrm{o}}%
}
\newcommand*\bigcdot{\mathpalette\bigcdot@{.5}}
\newcommand*\bigcdot@[2]{\mathbin{\vcenter{\hbox{\scalebox{#2}{$\m@th#1\bullet$}}}}}
\def\N {{\mathbb{N}}}
\def\R {{\mathbb{R}}}
\def\Z {{\mathbb{Z}}}
\def\DD {{\mathcal{D}}}
\def\dist {{\mathrm{dist}}}
\def\M  {\mathrm {M}}
\def\d {\mathrm{d}}
\def\e {\mathrm{e}}
\def\eps {\varepsilon}
\def\ZZ{\mathcal{Z}}
\newcommand{\Mod}[1]{\ (\mathrm{mod}\ #1)}
\def\1 {{\mbox{\boldmath 1}}}
\DeclareMathOperator{\supp}{supp}
\def\ind{\cic{1}}
\newcommand{\cic}{\bm}
\def\XXint#1#2#3{{\setbox0=\hbox{$#1{#2#3}{\int}$}
		\vcenter{\hbox{$#2#3$}}\kern-.5\wd0}}
\def \no#1#2#3 {{\bf #1} (#3), #2.}
\def \eds#1#2#3 {#1, #2, #3.}
\newcounter{counter}
\numberwithin{equation}{section}
\numberwithin{counter2}{section}
\newtheorem{proposition}[subsection]{Proposition}
\newtheorem{theorem}[counter]{Theorem}
\newcounter{withinsub}
\numberwithin{withinsub}{subsection}
\newtheorem{corollary}{Corollary}
\newtheorem{lemma}[withinsub]{Lemma}
\theoremstyle{definition}
\newtheorem{definition}[subsection]{Definition}
\newtheorem*{remark*}{Remark}
\newtheorem*{warn*}{A word of warning}
\newtheorem{remark}[withinsub]{Remark} 
 \theoremstyle{plain}
\numberwithin{corollary}{counter}
\DeclareRobustCommand\widecheck[1]{{\mathpalette\@widecheck{#1}}}
\def\@widecheck#1#2{%
    \setbox\z@\hbox{\m@th$#1#2$}%
    \setbox\tw@\hbox{\m@th$#1%
       \widehat{%
          \vrule\@width\z@\@height\ht\z@
          \vrule\@height\z@\@width\wd\z@}$}%
    \dp\tw@-\ht\z@
    \@tempdima\ht\z@ \advance\@tempdima2\ht\tw@ \divide\@tempdima\thr@@
    \setbox\tw@\hbox{%
       \raise\@tempdima\hbox{\scalebox{1}[-1]{\lower\@tempdima\box
\tw@}}}%
    {\ooalign{\box\tw@ \cr \box\z@}}}
\numberwithin{figure}{section}
\author[O.\ Bakas]{Odysseas Bakas}
\address[O.\ Bakas]{Department of Mathematics, University of Patras \\ \newline \indent 26504 Patras, Greece}
\email{\href{mailto:obakas@upatras.gr}{\textnormal{obakas@upatras.gr}}}
\author[V. Ciccone]{Valentina Ciccone}
\address[V.\ Ciccone]{Hausdorff Center for Mathematics, Universit\"at Bonn \\ \newline \indent Endenicher Allee 60, 53115 Bonn, Germany }
\email{\href{mailto:ciccone@math.uni-bonn.de}{\textnormal{ciccone@math.uni-bonn.de}}}
\author[F.\ Di Plinio]{Francesco Di Plinio} 
\address[F.\ Di Plinio]{Dipartimento di Matematica e Applicazioni, Universit\`a di Napoli \\ \newline \indent Via Cintia, Monte S.\ Angelo 80126 Napoli, Italy}
\email{\href{mailto:francesco.diplinio@unina.it}{\textnormal{francesco.diplinio@unina.it}}}
\author[M. Fraccaroli]{Marco Fraccaroli} 
\address[M.\ Fraccaroli]{BCAM - Basque Center for Applied Mathematics \\ \newline \indent Alameda de Mazarredo 14, E48009 Bilbao, Basque Country, Spain.}
\email{\href{mailto:mfraccaroli@bcamath.org}{\textnormal{mfraccaroli@bcamath.org }}}
\author[I. Parissis]{Ioannis Parissis}
\address[I.\ Parissis]{Departamento de Matem\'aticas, Universidad del Pa\'is Vasco, Aptdo. 644, 48080 Bilbao, Spain and Ikerbasque, Basque Foundation for Science, Bilbao, Spain}
\email{\href{mailto:ioannis.parissis@ehu.eus}{\textnormal{ioannis.parissis@ehu.eus}}}
\author[M. Vitturi]{Marco Vitturi}\address[M. Vitturi]{School of Mathematical Sciences, University College Cork \\ \newline \indent Western Gateway Building, Western Road, Cork, Ireland}
\email{\href{mailto:marco.vitturi@ucc.ie}{\textnormal{marco.vitturi@ucc.ie}}}  \date{\today}
\thanks{{O.\ Bakas is partially supported by grant PID2021-122156NB-I00 funded by MICIU/AEI/10.13039/501100011033 and FEDER, UE and by the funding programme ``MEDICUS'' of the University of Patras. }}
\thanks{V.\ Ciccone is supported by the Deutsche Forschungsgemeinschaft (DFG, German Research Foundation) under Germany's Excellence Strategy -- EXC2047/1   390685813 as well as SFB 1060} 
\thanks{F.\ Di Plinio is partially supported by the FRA 2022 Program of University of Napoli Federico II, project ReSinAPAS -
Regularity and Singularity in Analysis, PDEs, and Applied Sciences. }
\thanks{M.\ Fraccaroli is supported by the Basque Government through the BERC 2022-2025 program and by the Ministry of Science and Innovation: BCAM Severo Ochoa accreditation CEX2021-001142-S / MICIN / AEI / 10.13039/501100011033. }
\thanks{I. Parissis is partially supported by grant PID2021-122156NB-I00 funded by MICIU/AEI/10.13039/501100011033 and FEDER, UE, grant IT1615-22 of the Basque Government and IKERBASQUE}
\subjclass[2010]{Primary: 42B20. Secondary: 42B25}
\keywords{Fourier multipliers, $\Lambda(p)$-sets, lacunary sets,  sparse domination, Gabor decomposition, Zygmund's inequality}
\begin{document}

\title[Singular multipliers on multiscale Zygmund sets]{Singular multipliers on multiscale Zygmund sets}

\begin{abstract}  
Given an Orlicz space  
 $ L^2 \subseteq X \subseteq L^1$ on $[0,1]$, with submultiplicative Young function ${\mathrm{Y}_X}$, we fully characterize the closed null sets $\Xi$ of the real line with the property that H\"ormander-Mihlin or  Marcinkiewicz multiplier operators $\mathrm{T}_m$ with singularities on $\Xi$  obey weak-type endpoint modular bounds on $X$ of the type
	\[
	\left|\left\{x\in \mathbb R : |\mathrm{T}_m f(x)| >\lambda\right\}\right| \leq C \int_{\mathbb R} \mathrm{Y}_X \left(\frac{|f|}{\lambda}\right), \qquad \forall \lambda>0.
	\]
These sets $\Xi$ are exactly those enjoying a scale invariant version of   Zygmund's $(L\sqrt{\log L},{L^2})$ improving inequality with $X$ in place of the former space, which is termed \emph{multiscale Zygmund property}. Our methods actually yield sparse and quantitative weighted estimates for the Fourier multipliers $\mathrm{T}_m$ and for the corresponding square functions.

In particular, our framework covers the case of singular sets $\Xi$ of finite lacunary order and thus leads to modular   and  quantitative weighted versions of the classical endpoint theorems of Tao and Wright for Marcinkiewicz multipliers. Moreover, we obtain a pointwise sparse bound for the Marcinkiewicz square function answering a recent conjecture of Lerner. 
On the other hand, examples of  non-lacunary  sets enjoying the multiscale Zygmund property for each $X=L^p$, $1<p\leq 2$ are also covered.

The main new ingredient in the proofs is a multi-frequency, multi-scale projection lemma based on Gabor expansion, and possessing   independent interest. 
\end{abstract}	
\maketitle

\section{Introduction and main results} \label{SIntro} Let $\Xi\subset \mathbb R$ be a closed null set and {$\Omega_\Xi $} stand for the 
countable collection of connected components of $O_\Xi\coloneqq \R \setminus \Xi$.  Consider hereafter the class of Fourier multipliers
\begin{equation}\label{e:multip}
 \mathrm{T}_m f(x) \coloneqq \int_{\mathbb R} m(\xi) \widehat{f}(\xi) \e^{-2\pi i x \xi}\, \d \xi, \qquad x\in \mathbb R,
\end{equation}
with \emph{singular set} $\Xi$, namely whose symbol $m$  satisfies the $M$-th order, for some fixed, suitably large  integer $M$, H\"ormander-Mihlin type condition
\begin{equation}\label{e:hmintro}
	\sup_{0\leq j \leq M}  \sup_{\xi \in O_\Xi} \mathrm{dist}(\xi, \Xi)^{j} \left| m^{(j)} (\xi)\right| \eqqcolon \|m\|_{\mathrm{HM}(\Xi)} <\infty.
\end{equation}
The class of multipliers satisfying \eqref{e:hmintro} will be referred to as  \emph{H\"ormander-Mihlin multipliers with respect to $\Xi$}.  
  
This paper focuses on  the endpoint and localized behavior of multipliers  as in \eqref{e:hmintro} and of their related square functions, whose singular set  $\Xi$ exhibits a suitable multiscale version of the Zygmund property \eqref{e:zygclass} below. Hereby, we refer to the  classical inequality of Zygmund, illustrating how  the  approximate independence of the lacunary characters $  \e^{2\pi i 2^k \cdot}$ leads to exponential square integrability,   written in the adjoint form as
\begin{equation} \label{e:zygclass}
	 \left|\left\langle\sum_{k\in \mathbb N  } a_k \exp(2\pi i2^k \cdot), f\right\rangle \right|  \leq C
\left( \sum_{k\in \mathbb N} |a_k|^2 \right)^{ 	 \frac12 } 
\left\|f\right\|_{L\sqrt{\log L}(0,1)}.
\end{equation} 
A more specific description of the problem at hand is  given through the next three definitions.

\begin{definition}[Orlicz spaces, $B_p$ property, modular estimates] \label{d:locorl}
Throughout the article, $X$ stands for  the Orlicz  space of measurable functions on $[0,1]$ endowed with the Luxemburg norm
\begin{equation}\label{e:lux}
	\| f \|_{X} \coloneqq \inf\left\{t>0:\, \int _{[0,1]}\mathrm{Y}_X\left(\frac{|f(x)|}{t}\right) \, \d x \leq 1\right\}
\end{equation}
induced by a  Young function $\mathrm{Y}_X$. This means that   ${\mathrm{Y}_X}:[0,\infty)\to [0,\infty)$ is continuous, convex and strictly increasing, $\mathrm{Y}_X(0)=0$ and either  $\lim_{t\to\infty} Y(t)/t=\infty$ or ${\mathrm{Y}_X}(t)=t$ for all $t\in [0,\infty)$. The latter case  allows us to consider $X=L^1$.  In particular, $X$ is a Banach function space on $[0,1]$ and  the inclusion $  X\subseteq L^1(0,1)$ holds.  A typical example of Young function that the reader should keep in mind is of the form $\mathrm{Y}_{p,s}(t)\coloneqq t^p \log^s(e+t)$ with $p\in[1,\infty)$ and $s\in[0,\infty)$. 

The following structural assumption appears in a few of our corollaries. Say that $X$ \emph{has the $B_p$ property} for some $1<p<\infty$ if 
\begin{itemize}
\item[i.] $\mathrm{Y}_X$ is submultiplicative, namely $\mathrm{Y}_X(st) \leq  C \mathrm{Y}_X(s) \mathrm{Y}_X(t)$ uniformly over $s,t>0$;
\item[ii.] there holds
$B_{p}(X)\coloneqq \displaystyle \left(\int_0^1 s^{p-1}\mathrm{Y}_X(s^{-1}) \d s \right)^{  \frac1p }  <\infty.$ 
\end{itemize}
\end{definition}

Condition ii.\ is easily motivated as being necessary and sufficient for the $L^p$-boundedness of the Orlicz maximal operator $\mathrm{M}_X$ defined in \eqref{e:Orl} below. This is due to C.\ P\'erez \cite{Perez95}, see also \cite{WilsonBook}. 

Modular estimates, formally defined hereafter, are the Orlicz space substitute of weak-type bounds. Let $\mathrm{T}$ be a quasi-sublinear operator sending  the class $L^\infty_0(\R)$ of bounded, compactly supported functions into measurable functions on $\R$. If $w$ is a weight on $\R$,  say that $T$ \emph{has the $X$-modular estimate with weight $w$} if
there exists $C>0$ with the property that 
\begin{equation} \label{e:modest}
w\left( \big\{ x\in  \mathbb R: \,|\mathrm{T} f(x)|>\lambda\big\}\right) \leq C   \int_{\R} \mathrm{Y}_X \left(\frac{|f(x)|}{\lambda} \right) \, w(x) \d x, 
\end{equation}
for all $f\in L^\infty_0(\R)$ and $\lambda>0$, and denote by $\left[\mathrm{ T}\right]_{X,w}$ the least such constant $C$; we omit $w$ from the subscript when $w=1$. As anticipated,   modular estimates are relevant e.g.\ because they imply \cite{BCPV} local weak type inequalities such as  
\[
\left\| \mathrm{ T}: X\mapsto L^{1,\infty}(0,1)\right\| \lesssim_{\mathrm{Y}_X}  \left[\mathrm{ T}\right]_{X},
\]
under the assumption that $\mathrm{Y}_X$ is submultiplicative.

\begin{definition}[$\ZZ(X)$  property]\label{d:LambdaX} 
Let $X$ be a local Orlicz space on $(0,1)$ as in Definition~\ref{d:locorl}. Say that $\mathbb K \subset\mathbb Z$ has the $\ZZ(X)$ property if there exists $C\geq 1$ such that, cf. \eqref{e:zygclass},
\begin{equation}\left|\left\langle\sum_{k\in \mathbb K  } a_k \exp(2\pi ik \cdot), f\right\rangle \right|  \leq C
\left( \sum_{k\in \mathbb K} |a_k|^2 \right)^{  \frac12 } \|f\|_{X}
 \end{equation}
uniformly over all finitely supported complex sequences $\{a_k:\, k \in \mathbb Z\}$ and $f\in X$. In that case we denote by $\ZZ(X,\mathbb K)$ the least such $C>0$ and refer to it as  the \emph{$\ZZ(X)$ constant of  $\mathbb K$}. Otherwise, simply set $\ZZ(X,\mathbb K) =\infty.$   It is rather obvious that
\begin{equation} \label{e:rathobv}
	\ZZ(X,\mathbb K) =\sup_{\mathbb K'\subset \mathbb K} \ZZ(X,\mathbb K'),
\end{equation} and we record this fact for future use.
\end{definition}

\begin{remark}
Some observations concerning Definition~\ref{d:LambdaX} 
are in order. To begin with,   $\ZZ(L^2,\mathbb Z)\leq 1$ trivially, which   typically leads to considering spaces $X$ with $L^2\subseteq X \subseteq L^1$. Secondly,  note that the classical Zygmund inequality \eqref{e:zygclass} is equivalent to
\begin{equation}\label{eq:classic}
\ZZ\big(	{L\sqrt{\log L}}, \{2^{k}:\,k\in \mathbb N\}\big)  < \infty,
\end{equation}
which is why we refer to the defining inequality for $\ZZ(X,\mathbb K)$ as  \emph{Zygmund property}. 
Note that \eqref{eq:classic} may be equivalently restated as
\[
\|f\|_{\exp(L^2)(0,1)} {\sim\sup_{p\geq 2}\left(p^{-\frac12}\left\|f\right\|_{p} \right)}\lesssim \|f\|_{L^2(0,1)}
\]
for all trigonometric polynomials $f$ with frequencies in $\{2^k:\, k\in\N\}$. 
Analogously, the $\mathcal Z(X)$ property  may be equivalently  rewritten in the  
adjoint form
\begin{equation}
\label{e:adjzig}
\|f\|_{X'} \leq \ZZ(X,\mathbb K) \|f\|_{L^2(0,1)}, \qquad   f(x)=\sum_{k\in \mathbb K  } a_k \e^{2\pi i k x},
\end{equation}
where $X'$ is the dual Banach space of $X$. As trigonometric polynomials \emph{a priori} belong to any   $X$ as above, the possible lack of reflexivity of $X$ is inconsequential.  In particular, the $\ZZ(L^p)$ property for $1<p<2$ is equivalent to the classical  $\Lambda(q)$ property of   $\mathbb K\subset\Z$, $q=p'$. The latter is a central property  in the study of thin sets in analysis, a theme with extensive literature; see e.g.\  \cites{bourgain,rudin} and also the discussion in \S\ref{sec:LPchar}.
\end{remark}

\begin{definition}[{Multiscale} $\ZZ(X)$ property]\label{sec:LambdaX} Let $X$ be a local Orlicz space on $(0,1)$ as in Definition~\ref{d:locorl}. Say that a singular set  $\Xi \subset \R$ has the \emph{{multiscale} $\ZZ(X)$ property}, or \emph{multiscale Zygmund property} when $X$ is generic or clear from context, if  
\[
\ZZ^\star(X,\Xi)\coloneqq 
\sup_{n\in   \mathbb Z} \ZZ\left(X,\left\lfloor 2^n \Xi \right\rfloor\right)<\infty
\] 
where $\left\lfloor \lambda \Xi \right\rfloor\coloneqq \{\left\lfloor \lambda \xi \right\rfloor:\,\xi \in \Xi\}$.
In words, the integer parts  of each dyadic rescaling of the set $\Xi$ have the $\ZZ(X)$ property uniformly in the rescaling.    Property \eqref{e:rathobv} is  inherited, so that
\begin{equation} \label{e:rathobv2}
	\ZZ^\star(X,\Xi) =\sup_{\Xi'\subset \Xi} \ZZ^\star(X,\Xi'). 
\end{equation}

 As we shall see momentarily in  \S\ref{sec:examples}, examples of infinite sets with nontrivial multiscale Zygmund properties are those enjoying limited additive structure, such as lacunary sets of finite order. At the other end of the spectrum, the set of integers and more generally	 sets containing arbitrarily long arithmetic progressions do not satisfy any non-trivial multiscale Zygmund property.
\end{definition}

The above definitions are tied together by one of the main results of this article, which also serves as a guiding principle throughout the introduction. In words, the content of the following theorem is a quantitative version of the equivalence that $\mathrm T_m$ has the $X$-modular estimate uniformly in $m\in \mathrm{HM}(\Xi)$ if and only if $\Xi$ has the multiscale $\ZZ(X)$ property.

\begin{theorem} \label{t:mainchar} Suppose $X$ has the $B_p$ property for some  $1<p<\infty$  and $B_p(X)\lesssim 1$.  Then, with reference to \eqref{e:modest}, 
\begin{equation}
	\label{e:mainchar} \frac{1}{C \mathrm{Y}_X\left(\frac{1} {\ZZ^\star\left(X,\Xi \right)}\right)}\leq   \sup\left\{ \left[\mathrm{T}_m\right]_{X}: \, \left\|m\right\|_{\mathrm{HM}(\Xi)}=1\right\} \leq C \mathrm{Y}_X\left( \ZZ^\star\left(X,\Xi \right)\right).
\end{equation}
\end{theorem}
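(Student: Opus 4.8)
\emph{Strategy.} The two estimates in \eqref{e:mainchar} are proved separately, and the right-hand (sufficiency) inequality is the substantial one. For it the plan is to: (i) decompose a normalized symbol $m\in\mathrm{HM}(\Xi)$ along the Whitney decomposition of $O_\Xi=\R\setminus\Xi$; (ii) reduce the $X$-modular bound for $\mathrm{T}_m$ to a sparse bound for a square function $S_\Xi$ adapted to $\Xi$, one combining a Rubio de Francia square function over the components $\Omega_\Xi$ with a Littlewood--Paley square function inside each component; (iii) prove that sparse bound with constant $\lesssim\ZZ^\star(X,\Xi)$ by means of a multi-frequency, multi-scale projection lemma coming from a Gabor expansion, and then convert it into the modular estimate $[S_\Xi]_X\lesssim\mathrm{Y}_X\!\left(\ZZ^\star(X,\Xi)\right)$ by factoring the constant out of the sparse form and invoking submultiplicativity of $\mathrm{Y}_X$ together with the sparse-to-modular passage of \cite{BCPV}. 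The left-hand (necessity) inequality is then obtained by a randomization plus testing argument: one shows that $\sup_{\|m\|_{\mathrm{HM}(\Xi)}=1}[\mathrm{T}_m]_X$ dominates $[S_\Xi]_X$ up to constants — via Khintchine's inequality and a Paley--Zygmund argument applied to the Marcinkiewicz-type symbols $m=\sum_{\omega}\epsilon_\omega\ind_\omega$ with random signs, all of which belong to $\mathrm{HM}(\Xi)$ with norm $\le1$ — and then bounds $[S_\Xi]_X$ from below by testing against the natural extremizer of a dyadic rescaling of a finite sub-configuration of $\Xi$ nearly saturating the Zygmund constant, exploiting the dilation invariance built into Definition~\ref{sec:LambdaX}.

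\emph{Sufficiency: reduction to $S_\Xi$.} Fix $m$ with $\|m\|_{\mathrm{HM}(\Xi)}=1$. Splitting $m$ along a smooth partition of unity subordinate to the dyadic Whitney decomposition $\{I\}$ of $O_\Xi$, so that $|I|\sim\dist(I,\Xi)$, each piece $m\varphi_I$ is a H\"ormander symbol adapted to $I$ with constant $\lesssim 1$, while on each $\omega\in\Omega_\Xi$ the non-oscillatory part of $m$ reconstructs a multiple of $\ind_\omega$. Pairing $\mathrm{T}_mf$ against $g$, inserting fattened smooth projections, and applying Cauchy--Schwarz in the Whitney and component indices yields $|\langle\mathrm{T}_mf,g\rangle|\lesssim\int_{\R}S_\Xi f\cdot\widetilde S_\Xi g$ up to a standard Calder\'on--Zygmund error, where $S_\Xi f\coloneqq\big(\sum_{\omega}|S_\omega f|^2+\sum_I|\Delta_I f|^2\big)^{1/2}$ combines the sharp frequency cutoffs $S_\omega$ onto components with smooth Littlewood--Paley pieces $\Delta_I$, and $\widetilde S_\Xi$ is an $L^2$-bounded fattened variant. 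Thus everything reduces to a sparse, hence modular, bound for $S_\Xi$; the hypothesis that $X$ have the $B_p$ property with $B_p(X)\lesssim 1$ — equivalently, by P\'erez's theorem \cite{Perez95}, that the Orlicz maximal operator $\mathrm{M}_X$ be bounded on $L^p$ with a uniform constant — is what lets the vector-valued Calder\'on--Zygmund and sparse-form machinery behind these reductions run with constants independent of $\Xi$.

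\emph{Sufficiency: the core estimate.} Organize the ingredients of $S_\Xi$ by generation: the Whitney intervals of length $\sim 2^{-n}$, and the component boundaries resolved at scale $2^{-n}$, all sit at distance $\sim 2^{-n}$ from $\Xi$, so dilating by $2^n$ places them essentially at the integers $\lfloor 2^n\Xi\rfloor$. For each fixed $n$ the corresponding part $S_\Xi^{(n)}$ of $S_\Xi$ is then, up to dilation, a unit-scale multi-frequency projection attached to $\lfloor 2^n\Xi\rfloor$, and the main lemma bounds any such operator on a unit cube by the Zygmund constant $\ZZ\!\left(X,\lfloor 2^n\Xi\rfloor\right)$: expanding the input in a Gabor system $\{\e^{2\pi i j\bigcdot}\phi(\bigcdot-\ell)\}_{j,\ell}$ at unit scale renders the projection essentially diagonal in the spatial index $\ell$, and on each fixed cell $\ell$ the required estimate is precisely an instance of the defining inequality of Definition~\ref{d:LambdaX} for the frequencies $\lfloor 2^n\Xi\rfloor$, hence $\le\ZZ^\star(X,\Xi)$ uniformly in $n$, and, by \eqref{e:rathobv2}, in the sub-configuration actually present. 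Summing over generations is the remaining technical core: the $S_\Xi^{(n)}$ live on essentially disjoint frequency annuli, so an almost-orthogonality/Littlewood--Paley argument — again using $B_p$ — assembles them into a \emph{single} sparse bound $|\langle S_\Xi f,g\rangle|\lesssim\ZZ^\star(X,\Xi)\,\Lambda_{\mathcal S}(f,g)$, with $\Lambda_{\mathcal S}$ a sparse form carrying an $X$-Orlicz average on $f$ and an $L^1$ average on $g$. Factoring $\ZZ^\star(X,\Xi)$ out of $\Lambda_{\mathcal S}$ and using $\mathrm{Y}_X(st)\le C\mathrm{Y}_X(s)\mathrm{Y}_X(t)$ with $s=\ZZ^\star(X,\Xi)$, together with \cite{BCPV}, gives $[S_\Xi]_X\lesssim\mathrm{Y}_X\!\left(\ZZ^\star(X,\Xi)\right)$, and with it the right-hand inequality of \eqref{e:mainchar} for $\mathrm{T}_m$.

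\emph{Necessity, and the expected obstacle.} For the lower bound on $[S_\Xi]_X$ — which by the randomization described above also bounds $\sup_m[\mathrm{T}_m]_X$ from below — fix $n\in\Z$ and, via \eqref{e:rathobv2}, a finite $\Xi'\subset\Xi$ with $\ZZ\!\left(X,\lfloor 2^n\Xi'\rfloor\right)\ge\tfrac12\ZZ^\star(X,\Xi)$; enclose $\Xi'$ in a bounded block $B$, let $\mathcal F$ be the components of $O_\Xi$ inside $B$, and test $[S_\Xi]_X$ against the dilate by $2^n$ of the inverse Fourier transform of $\ind_B$, a sinc-type bump which is the natural extremizer of the Rubio de Francia square function of $\mathcal F$. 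After normalizing its $X$-norm to $\sim 1$, the defining Zygmund inequality read off the frequency cells of $\mathcal F$ makes $S_\Xi f$ of size $\sim\ZZ\!\left(X,\lfloor 2^n\Xi'\rfloor\right)$ on a set of measure $\sim 1$, while submultiplicativity of $\mathrm{Y}_X$ bounds the right-hand side of \eqref{e:modest} (with $w\equiv 1$) by $\lesssim\mathrm{Y}_X\!\left(1/\ZZ^\star(X,\Xi)\right)$; rearranging yields $[S_\Xi]_X\gtrsim\mathrm{Y}_X\!\left(1/\ZZ^\star(X,\Xi)\right)^{-1}$. I expect the genuine obstacle to be step (iii) of the sufficiency argument: proving the multi-frequency, multi-scale Gabor projection lemma with sharp linear dependence on $\ZZ\!\left(X,\lfloor 2^n\Xi\rfloor\right)$, and then summing over generations while preserving the scale-uniformity recorded by the supremum in the multiscale Zygmund property — this forces a true orthogonality input rather than a triangle-inequality sum over scales, and its interaction with the Orlicz/modular (sparse) framework, through P\'erez's theorem and \cite{BCPV}, is where the argument is most delicate.
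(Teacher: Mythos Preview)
Your overall architecture is in the right spirit --- the Gabor projection lemma with linear dependence on $\ZZ^\star(X,\Xi)$ is indeed the technical heart, and the sparse-to-modular conversion via submultiplicativity of $\mathrm{Y}_X$ is exactly how the paper proceeds (this is Proposition~\ref{p:weighmod} in the appendix). But both halves of your argument depart from the paper in ways worth noting, and one of them has a genuine gap.

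\medskip
\textbf{Sufficiency.} The paper does \emph{not} reduce $\mathrm{T}_m$ to a square function $S_\Xi$. Instead, it proves a bilinear $(X,L^2)$-sparse form bound for $\mathrm{T}_m$ directly (Theorem~\ref{thm:HMsparse} with $X_1=X$, $X_2=L^2$, so that $\ZZ^\star(L^2,\Xi)\le 1$ trivially). The mechanism is a bilinear stopping estimate (Proposition~\ref{p:maintechdual}) where the projection lemma is applied separately to $f_1$ and to $f_2$, each with its own Orlicz space. Your route through $|\langle \mathrm{T}_mf,g\rangle|\lesssim\int S_\Xi f\cdot\widetilde S_\Xi g$ is not obviously equivalent: a pointwise sparse bound for $S_\Xi$ alone does not immediately yield a sparse \emph{form} for $\langle\mathrm{T}_mf,g\rangle$ with controlled averages on $g$, and it is the latter that Proposition~\ref{p:weighmod} eats to produce the modular inequality. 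The paper's direct bilinear approach sidesteps this issue entirely.

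\medskip
\textbf{Necessity.} The paper does not randomize. For Theorem~\ref{t:mainchar} it constructs an explicit multiplier
\[
m(\xi)=\sum_{k\in\lfloor\mu\Xi\rfloor}\eta_k\,\psi\Big(\frac{\xi-k}{\eps}\Big),\qquad \eta_k=\frac{\overline{\widehat f(k)}}{\big(\sum_{j}|\widehat f(j)|^2\big)^{1/2}},
\]
with $f$ a trigonometric polynomial nearly saturating $\ZZ^\star(X,\Xi)$; one then checks $\|m\|_{\mathrm{HM}(\Xi)}\lesssim 1$ and $|\mathrm{T}_m[Qf]|\gtrsim(\sum_k|\widehat f(k)|^2)^{1/2}$ on an interval of length $\sim 1$, where $Qf$ is a smooth lifting of $f$ from the torus to $\R$. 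Your Khintchine route to $[H_\Xi]_X\lesssim\sup_m[\mathrm{T}_m]_X$ is a legitimate alternative (the random symbols $\sum_\omega\epsilon_\omega\ind_\omega$ do lie in the unit ball of $\mathrm{HM}(\Xi)$, being locally constant on $O_\Xi$). However, your choice of test function --- a sinc, the inverse transform of $\ind_B$ --- does not connect to $\ZZ^\star(X,\Xi)$: the Zygmund constant measures concentration of Fourier coefficients on $\lfloor 2^n\Xi\rfloor$ relative to the $X$-norm, and the extremizer for that is a specific trigonometric polynomial, not a sinc. The paper's $Qf$ construction (equations~\eqref{e:thecount}--\eqref{e:Qf}) is precisely the device that transplants this extremizer to $\R$ while keeping its $X$-modular integral under control; without it, your lower bound does not close.
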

  The structural $B_p$ property assumption is rather tame, as it is automatically true, with $p=3$ say, whenever $\mathrm{Y}_X $ is submultiplicative and  $L^2$ continuously embeds into $X$.  The characterization of endpoint bounds for Fourier multiplier operators is a central problem within Fourier analysis and a question to this regard appears explicitly e.g.\ in \cite[pp. 521]{TW}, albeit  
tailored to the case of $L^p \log^s(L)$ scales.  Theorem \ref{t:mainchar}  provides a complete answer to such a question, under the mere assumption of submultiplicativity.

\subsection{Main results} Theorem \ref{t:mainchar} actually descends from an essentially stronger inequality quantifying the sparse form behavior of $ \mathrm{HM}(\Xi)$-multipliers. More broadly, our work    yields \emph{pointwise} and \emph{bilinear form sparse bounds} for multipliers and square functions whose singular set $\Xi$ enjoys the  multiscale $\ZZ(X)$ properties. Through the rest of the introduction, the widespread local norm notation $\langle f\rangle_{X,Q}$ points to \eqref{e:Orl},  and   sparse collections  are defined in \S\ref{ss:sop}.

\begin{definition}[Sparse norms]\label{d:locspa} Let $X_1, X_2$ be a pair of local Orlicz spaces on $(0,1)$ as in Definition~\ref{d:locorl}. Let $\mathrm{T}$ be a linear operator sending $ f \in L^\infty_0(\mathbb R)$ to $\mathrm{T}f \in \mathrm{L^1_{\mathrm{loc}}(\R^d)}$. Say that $\mathrm{T}$ has the $(X_1,X_2)$-\emph{sparse bound} if there exists $C>0$ such that the inequality
\[
\left| \left \langle  \mathrm{T} f_1,f_2\right \rangle \right| \leq C \sup_{\mathcal S  } \sum_{Q\in \mathcal S} |Q| \langle f_1 \rangle_{X_1,Q} \langle f_2 \rangle_{X_2,Q}
\]
holds for all pairs $f_1,f_2\in L^\infty_0(\R)$, where the supremum is being taken over all $\eta$-sparse collections $\mathcal S$ of intervals of the real line for some   fixed parameter $\eta\in (0,1)$. The least such constant $C$ is indicated with $\|\mathrm{T}\|_{X_1,X_2}$ and  termed the $(X_1,X_2)$-\emph{sparse norm} of $\mathrm{T}$.   The dependence on $\eta$ in the definitions above is suppressed,  as the   minimum value of $\eta$ remains fixed throughout this paper.
\end{definition} 

Originally motivated by  the precise, and sometimes  {sharp} quantification of  the weighted Lebesgue behavior of Calder\'on-Zygmund singular integrals, the modern usage of sparse operators dates back to the earlier works of Lerner \cites{LN2011,LN2013}. Sparse form bounds akin to those  {in} Definition \ref{d:locspa} have since  appeared in the pursuit of weighted and local estimates within and beyond Calder\'on-Zygmund theory, in a subsequent flurry of activity, see e.g.\ \cites{BCL,BFP,CCDPO17,CDPPV22,CDPO,LSph}. The sparse bounds of this paper will be used  to deduce the weighted and  optimal endpoint behavior of several operators covered by our general formalism. In the unweighted case, and for the special case that the singular set $\Xi$ is a lacunary set of finite order, these endpoint results first appeared  \cite{TW} for first order lacunary sets, and in \cite{BCPV} for lacunary sets of general order.

The first couple of results deals with two types of  square functions closely related to the multipliers of \eqref{e:multip}-\eqref{e:hmintro}. The first is the family of   square functions
\begin{equation} \label{e:Hxiprim}
\mathrm{H}_{\Xi,m} f\coloneqq\left(\sum_{\omega \in \Omega_\Xi} | \mathrm{T}_{m\mathbf{1}_\omega }f|^2\right)^{	 \frac12 }
\end{equation}
with  $m\in \mathrm{HM}(\Xi)$, cf.\ \eqref{e:hmintro}. When $m=\mathbf{1}_\R $, in analogy with the well studied case $\Xi=\{2^k : k \in \mathbb Z\}$, \eqref{e:Hxiprim} is  termed the \emph{$\Xi$-Marcinkiewicz square function} associated to the rough frequency projections 
$\mathrm{H}_\omega  \coloneqq  \mathrm{T}_{\mathbf{1}_\omega}  $, $\omega \in \Omega_\Xi$, and we reserve the notation  $\mathrm{H}_{\Xi}$ for this special case. 

\begin{theorem}\label{thm:roughsf} For each $f\in L^\infty_0(\R)$ there exists a sparse collection $\mathcal S$ such that 	
\begin{equation} \label{e:Hxi}
\mathrm{H}_{\Xi,m} f  \lesssim \ZZ^\star (X,\Xi) \|m\|_{\mathrm{HM}(\Xi)}\sum_{Q\in \mathcal S} \langle f\rangle_{X,Q} \cic{1}_Q
\end{equation} pointwise almost everywhere on $\R$.
The implied constant is absolute. \emph{A fortiori,}
\[
\left\| \mathrm{H}_{\Xi,m}\right\|_{X,L^1} \lesssim \ZZ^\star (X,\Xi) \|m\|_{\mathrm{HM}(\Xi)} .
\]
\end{theorem}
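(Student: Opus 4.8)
The plan is to establish the pointwise sparse bound \eqref{e:Hxi} by a standard stopping-time/recursion scheme for sparse domination, with the genuinely new input being a \emph{single-scale projection estimate} for the square function, in which the multiscale Zygmund property $\ZZ^\star(X,\Xi)$ enters. First I would reduce matters, via a standard Whitney/good-lambda-free argument (following the template in e.g.\ \cites{LN2013,CCDPO17,BCPV}), to proving the following local estimate: for every interval $Q$ and $f$ supported on $3Q$ (say), one controls the portion of $\mathrm{H}_{\Xi,m}f$ living at ``scale $|Q|$'' by a single average $C\,\ZZ^\star(X,\Xi)\|m\|_{\mathrm{HM}(\Xi)}\langle f\rangle_{X,Q}$, up to a term that is again localized to a subfamily of Whitney subcubes of $Q$ whose union has measure at most $(1-\eta)|Q|$. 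Iterating this local-to-global step produces the sparse family $\mathcal S$ and yields \eqref{e:Hxi}; the \emph{a fortiori} mapping bound $\|\mathrm H_{\Xi,m}\|_{X,L^1}\lesssim \ZZ^\star(X,\Xi)\|m\|_{\mathrm{HM}(\Xi)}$ is then immediate by pairing \eqref{e:Hxi} against $\cic{1}_{E}$ with $E$ of finite measure and using the elementary bound $\sum_{Q\in\mathcal S}|Q\cap E|\langle f\rangle_{X,Q}\lesssim \|f\|_{X}\, |E|^{0}$--type estimate, or more precisely by testing against $L^\infty$ functions and summing the sparse form geometrically.

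The core local estimate is proved by decomposing the frequency side. Because the components $\omega\in\Omega_\Xi$ are the connected components of $\R\setminus\Xi$ and each $m\mathbf 1_\omega$ is a H\"ormander--Mihlin symbol adapted to the interval $\omega$, one splits the square function into dyadic frequency annuli: for each scale $n\in\mathbb Z$ consider the components $\omega$ with $|\omega|\sim 2^{-n}$ relative to the spatial scale $|Q|$, i.e.\ after rescaling so that $|Q|=1$, the endpoints of such $\omega$ cluster near the points of $\lfloor 2^n\Xi\rfloor$. Standard H\"ormander--Mihlin kernel estimates let one replace each rough multiplier $\mathrm T_{m\mathbf 1_\omega}$ acting on $f$ by a sum of smooth frequency projections onto intervals of length $\sim 2^{-n}$ with centres in $2^{-n}\mathbb Z$ (a Gabor/Littlewood--Paley expansion), at the cost of a rapidly decaying tail in the spatial variable which is harmless for the sparse recursion. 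The key point is then: for a fixed dyadic scale $2^{-n}$, the $\ell^2$-superposition over those frequency intervals whose (rescaled) positions lie in $\lfloor 2^n\Xi\rfloor$ is controlled \emph{on the unit interval} by $\ZZ(X,\lfloor 2^n\Xi\rfloor)\le \ZZ^\star(X,\Xi)$ times the $X$-average of the input. This is exactly the adjoint/local form \eqref{e:adjzig} of the Zygmund property, transferred from exponentials on $(0,1)$ to smooth frequency projections of band-limited functions via a de-la-Vall\'ee-Poussin type argument; summing the resulting bounds over $n$ loses only an absolute constant because the spatial tails from distinct scales are summable.

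The main obstacle I anticipate is precisely this transfer step: turning the \emph{discrete} Zygmund inequality for the characters $\e^{2\pi i k\cdot}$, $k\in\lfloor 2^n\Xi\rfloor$, into a statement about \emph{continuous} smooth frequency projections $\mathrm T_{\phi(2^n(\cdot-c))}f$ with $c$ ranging over the (integer parts of the) rescaled singular set, uniformly across all spatial scales and all choices of $m\in\mathrm{HM}(\Xi)$ with $\|m\|_{\mathrm{HM}(\Xi)}=1$. This is where a multi-frequency, multi-scale projection lemma built on Gabor expansion --- the ingredient advertised in the abstract --- is needed: it replaces the rough projections $\mathrm T_{m\mathbf 1_\omega}$ by genuine time-frequency atoms whose frequency supports are honest lattice translates, so that testing against the finitely many characters at scale $2^n$ is legitimate, while keeping the spatial localization tight enough for the sparse iteration to close. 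Everything else --- the H\"ormander--Mihlin kernel bounds, the Whitney stopping time, the summation over scales, and the passage from the pointwise bound to the $(X,L^1)$ bound --- is routine and I would only sketch it, citing \cites{CCDPO17,BCPV} for the stopping-time mechanics.
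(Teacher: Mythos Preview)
Your high-level plan is correct and matches the paper: reduce $\mathrm H_{\Xi,m}$ to a model square function $\|T_{\mathbb P^\omega}f\|_{\ell^2(\omega\in\Omega_\Xi)}$, feed in a Gabor-based projection lemma that turns the discrete $\ZZ^\star(X,\Xi)$ property into local $X$-average control, and run a sparse iteration. You have correctly identified the transfer step as the novel ingredient.

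The genuine gap is in the iteration itself, which you declare routine. For a linear operator one telescopes $T_{\mathbb Q(I)}=T_{\mathbb Q_{\mathcal G}(I)}+\sum_L T_{\mathbb Q(3L)}$ and recurses on the second sum. Here the object is an $\ell^2(\omega)$-norm, and there is no such identity; the iteration must instead control, for $x$ in a stopping interval $L$,
\[
\Bigl|\, \bigl\|T_{\mathbb P^\omega(3I)}f\bigr\|_{\ell^2} - \bigl\|T_{\mathbb P^\omega(3L)}f\bigr\|_{\ell^2}\,\Bigr|(x)
\lesssim \bigl\|T_{\mathbb Q^\omega_{\mathsf{far},L}}f\bigr\|_{\ell^2}(x)+\bigl\|T_{\mathbb Q^\omega_{\mathsf{ov},L}}f\bigr\|_{\ell^2}(x),
\]
where $\mathbb Q_{\mathsf{ov},L}$ consists of the tiles with $I_P\supsetneq L$. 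The far piece is a tail, but the overlapping piece is \emph{not} small pointwise on $L$: each $T_{\mathbb P^\omega_{\mathsf{ov},L}}f$ is a high-frequency oscillation of roughly constant amplitude on $L$. The paper resolves this with an additional idea you omit: writing $T_{\mathbb Q^\omega}=\e^{2\pi i a_\omega\cdot}Z_{\mathbb Q^\omega}$ so that $\|T_{\mathbb Q^\omega}\|_{\ell^2}=\|Z_{\mathbb Q^\omega}\|_{\ell^2}$, the demodulated operators $Z$ satisfy a Lipschitz/oscillation bound (Lemma~\ref{l:3sszyg}),
\[
\sup_{x,y\in 9L}\bigl\|Z_{\mathbb Q^\omega_{\mathsf{ov},L}}f(x)-Z_{\mathbb Q^\omega_{\mathsf{ov},L}}f(y)\bigr\|_{\ell^2}\lesssim \ZZ^\star(X,\Xi)\inf_L \mathrm M_X(f\chi_L^{\mathsf{dec}}).
\]
This lets one swap $x$ for a nearby ``good'' point $y_L\in L$ on which a weak-type estimate (Proposition~\ref{p:weaktype}) already controls $\|Z_{\mathbb Q^\omega_{\mathsf{ov},L}}f(y_L)\|_{\ell^2}$. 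Your scale-by-scale ``sum over $n$ with summable tails'' does not substitute for this: it explains why single-scale pieces are $X$-controlled, not why the large-scale part of the square function is nearly constant on each stopping cube. This step is not covered by the stopping-time mechanics in \cites{CCDPO17,BCPV} and must be supplied for the recursion to close.
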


Theorem~\ref{thm:roughsf} entails  an equivalence between the $\ZZ^\star(X,\Xi)$ constant and the best constant in global modular inequality for the square functions $\mathrm{H}_{m,\Xi}$. 
\begin{corollary}\label{c:sample}
Suppose $X$ has the $B_p$ property for some  $1<p<\infty$ and $B_p(X)\lesssim 1$.  Then, with reference to \eqref{e:modest}, 
\begin{equation}\label{e:maincharsf} 
\frac{1}{C \mathrm{Y}_X\left(\frac{1} {\ZZ^\star\left(X,\Xi \right)}\right)} \leq    \sup_{\Xi'\subset \Xi} \sup_{\|m\|_{\mathrm{HM}(\Xi')} \leq 1}\left[\mathrm{H}_{\Xi',m}\right]_{X} \leq C\mathrm{Y}_X\left(  \ZZ^\star\left(X,\Xi \right)\right). 
\end{equation}
Furthermore, there exists a positive increasing function $\mathcal Q$  such that 
\begin{equation}\label{e:wcsample} 
\sup_{\|m\|_{\mathrm{HM}(\Xi)}\leq 1} \left[\mathrm{H}_{\Xi,m}\right]_{X,w} \lesssim \mathcal Q  ([w]_{A_1})  \mathrm{Y}_X\left(  \ZZ^\star\left(X,\Xi \right)\right)
\end{equation}
uniformly over all weights $w$.
\end{corollary}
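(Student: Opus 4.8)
The plan is to derive Corollary~\ref{c:sample} from the pointwise sparse bound in Theorem~\ref{thm:roughsf} together with two standard principles: sparse bounds self-improve to modular estimates when $\mathrm{Y}_X$ is submultiplicative, and they also quantify $A_1$-weighted behavior.

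\emph{The upper bound in \eqref{e:maincharsf}.} Fix $\Xi'\subset\Xi$ and $m$ with $\|m\|_{\mathrm{HM}(\Xi')}\leq 1$. By Theorem~\ref{thm:roughsf} applied to $\Xi'$ and by \eqref{e:rathobv2}, for each $f\in L^\infty_0(\R)$ there is a sparse collection $\mathcal S$ with $\mathrm{H}_{\Xi',m}f\lesssim \ZZ^\star(X,\Xi)\sum_{Q\in\mathcal S}\langle f\rangle_{X,Q}\cic{1}_Q$ pointwise. Normalizing $f$ by $\lambda$ and using homogeneity, it suffices to show that a sparse operator $f\mapsto \sum_{Q\in\mathcal S}\langle f\rangle_{X,Q}\cic{1}_Q$ obeys the modular estimate $|\{x:\sum_Q \langle f\rangle_{X,Q}\cic{1}_Q(x)>1\}|\lesssim \int_{\R}\mathrm{Y}_X(C'|f|)$ with $C'\sim \ZZ^\star(X,\Xi)$; applying $\mathrm{Y}_X$ submultiplicativity to pull out the constant $C'$ then yields the factor $\mathrm{Y}_X(\ZZ^\star(X,\Xi))$. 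This modular bound for sparse forms under submultiplicativity is exactly the mechanism recorded after \eqref{e:modest} and in \cite{BCPV}: one uses the $B_p$ property (with $B_p(X)\lesssim 1$) to control the Orlicz maximal function $\mathrm{M}_X$ via P\'erez's theorem, dominates the sparse operator by $\mathrm{M}_X$ composed with a Carleson-type stopping argument, and converts the resulting $L^{1,\infty}(0,1)$ bound into the global modular statement by the submultiplicativity-driven rescaling; this is the step where I expect to simply cite the earlier machinery rather than reprove it.

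\emph{The lower bound in \eqref{e:maincharsf}.} This is the content of the left inequality in \eqref{e:mainchar} adapted to square functions: testing $\mathrm{H}_{\Xi',m}$ on suitable trigonometric polynomials with frequencies in a rescaled finite subset of $\Xi$ and choosing $m$ to isolate the relevant components, one produces, for any finite $\mathbb K\subset\lfloor 2^n\Xi\rfloor$ witnessing $\ZZ(X,\mathbb K)$ near $\ZZ^\star(X,\Xi)$, a function $f$ and a multiplier $m$ with $\|m\|_{\mathrm{HM}(\Xi')}\leq 1$ for which the modular inequality forces $[\mathrm{H}_{\Xi',m}]_X\gtrsim 1/\mathrm{Y}_X(1/\ZZ^\star(X,\Xi))$. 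Concretely, the adjoint form \eqref{e:adjzig} gives a polynomial $f$ with $\|f\|_{L^2}=1$ and $\|f\|_{X'}\sim\ZZ^\star$; pairing against this $f$, localizing to $[0,1]$, and scaling by $\lambda\sim\ZZ^\star$ in \eqref{e:modest} produces the claimed lower bound. I would present this briefly, as it parallels the proof of the analogous lower bound for $\mathrm{T}_m$ in Theorem~\ref{t:mainchar}.

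\emph{The $A_1$-weighted estimate \eqref{e:wcsample}.} Here I invoke the well-known fact (going back to Lerner and to the sparse-domination literature cited after Definition~\ref{d:locspa}, e.g.\ \cites{BFP,CCDPO17}) that an operator with an $(X,L^1)$-type pointwise sparse bound satisfies a quantitative $A_1$ weighted weak-type inequality: summing $\sum_{Q\in\mathcal S}|Q|\langle f\rangle_{X,Q}\langle \cic{1}_E\, w\rangle_{\infty,Q}$ against the $A_1$ constant of $w$ over a level set $E=\{\mathrm{H}_{\Xi,m}f>\lambda\}$, using sparseness and the pointwise control of $w$ by $[w]_{A_1}\inf_Q w$, one extracts a Carleson sum bounded by $[w]_{A_1}$ times a multiple of $\int \mathrm{M}_X(f/\lambda)\,w$, and then absorbs $\mathrm{M}_X$ by a second application of the $A_1$/$B_p$ argument. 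The output is \eqref{e:wcsample} with $\mathcal Q([w]_{A_1})$ a polynomial (in fact at most quadratic) in $[w]_{A_1}$ and the factor $\mathrm{Y}_X(\ZZ^\star(X,\Xi))$ coming, as before, from submultiplicativity. The main obstacle throughout is purely bookkeeping: matching the Orlicz local averages $\langle\cdot\rangle_{X,Q}$ in the sparse bound with the modular right-hand side of \eqref{e:modest} requires the interplay of the $B_p$ property, P\'erez's characterization of $\mathrm{M}_X$ on $L^p$, and submultiplicativity of $\mathrm{Y}_X$, all of which are available by hypothesis; no genuinely new estimate is needed beyond Theorem~\ref{thm:roughsf}.
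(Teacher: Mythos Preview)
Your approach is essentially the paper's. The upper bound and the weighted estimate \eqref{e:wcsample} are derived exactly as you say: Theorem~\ref{thm:roughsf} gives an $(X,L^1)$ sparse bound, and then a general ``sparse $\Rightarrow$ weighted modular'' principle (packaged in the paper as Proposition~\ref{p:weighmod} in Appendix~\ref{s:app}, applied with $q=1$ so that the reverse H\"older factor trivializes) yields \eqref{e:wcsample}; the right inequality in \eqref{e:maincharsf} is then just the case $w\equiv 1$ together with \eqref{e:rathobv2}. Your sketch of this step is correct in spirit, though the specific manipulation you write with $\langle \cic{1}_E w\rangle_{\infty,Q}$ is not the argument that works; the paper instead splits off the set $\{\mathrm{M}_X f>\lambda\}$, controls its weighted measure by Fefferman--Stein, and handles the complement via H\"older and Lemma~\ref{l:weighmod2}.

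One point to flag on the lower bound: your description via \eqref{e:adjzig}, taking $f$ with $\|f\|_{L^2}=1$ and $\|f\|_{X'}\sim\ZZ^\star$ and ``pairing against this $f$'', does not quite fit the modular framework \eqref{e:modest}, which needs a test function in $X$, not $X'$. The paper's argument in Section~\ref{proof_equiv_Zyg_prop} instead takes $f$ with $\|f\|_X=1$, builds a Schwartz extension $Qf$ via \eqref{e:thecount}, and crucially passes from $\mathrm{H}_{\Xi',m}$ to the smooth square function $\mathrm{G}_{\Xi'}$ via the easy inequality \eqref{e:4graph}; it then chooses a carefully constructed finite set $Z$ (built from $\lfloor\mu\Xi\rfloor$, which is why the supremum over $\Xi'\subset\Xi$ is needed) and smooth projections $\varphi_m\in\Phi_{\omega_m}$ so that $\mathrm{G}_Z[Qf]$ dominates $(\sum_k|\widehat f(k)|^2)^{1/2}$ on $[0,1]$. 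The modular inequality plus submultiplicativity then gives \eqref{e:reusal}. Your deferral to ``the analogous lower bound for $\mathrm{T}_m$'' is reasonable since the two are proved in the same section, but be aware that the square-function version routes through $\mathrm{G}_\Xi$ rather than testing $\mathrm{H}_{\Xi',m}$ directly.
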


Estimate \eqref{e:wcsample} is deduced immediately from the theorem through an application of Proposition \ref{p:weighmod}, which is stated and proved in Appendix \ref{s:app}. The right side bound in \eqref{e:maincharsf}, by virtue of \eqref{e:rathobv2}, is just a particular case of \eqref{e:wcsample}, while the leftmost almost inequality is proved in Section \ref{proof_equiv_Zyg_prop}. The leftward estimate of \eqref{e:maincharsf} also shows that the $(X,L^1)$ sparse form of Theorem~\ref{thm:roughsf} is best possible, in the sense that $X$ may not be replaced with an Orlicz space $X_1 \supsetneq X$ for which $\mathcal Z^\star(X_1,\Xi)=\infty.$ 

If $\omega$ is a bounded interval, a smooth analogue of $\mathrm{H}_\omega$ may be defined {by} using smooth frequency projections from the class
\[
\Phi_\omega  \coloneqq\left\{\varphi\in\mathcal S(\R):\;\mathrm{supp}\, \varphi \subset \omega,\; \sup_{0\leq \alpha\leq D}\ell_\omega ^\alpha\left\|\varphi^{(\alpha)} \right\|_{\infty}\leq 1 \right\}
\]
consisting of $D$-smooth functions supported on $\omega$ and  $L^\infty$-normalized. Throughout the paper, the smoothness parameter $D$ will in general be chosen to be as large as needed and will be omitted from the notation.  One then defines the intrinsic smooth frequency projections on $\omega$ as
\[
\mathrm{G}_\omega f(x) = \sup_{\varphi \in \Phi_\omega   } \left| \mathrm{T}_{\varphi } f (x) \right|, \qquad x\in \mathbb R,
\]
and introduces  the $\Xi$-\emph{Littlewood-Paley square function} by 
\begin{equation} \label{e:Gxidef}
\mathrm{G}_{\Xi} f\coloneqq
\left(\sum_{\omega \in   \mathbf{w}_{\mathcal D}(O_\Xi) } | \mathrm{G}_\omega f|^2\right)^{	 \frac12 } ,
\end{equation}
where $\mathbf{w}_{\mathcal D}(O)$ stands for the dyadic Whitney decomposition of the open set $O\subset \mathbb R$ as detailed in \S\ref{ss:disc}. For example, if $\Xi=\{0\}$ then $\mathbf{w}_{\mathcal D}(O)$ is the standard collection of Littlewood-Paley intervals and $\mathrm{G} _\Xi$ is the intrinsic version of the smooth Littlewood-Paley square function, considered e.g.\ by Wilson \cite{Wint}.

\begin{theorem} \label{thm:smoothsf}    For each $f\in L^\infty_0(\R)$ there exists a sparse collection $\mathcal S$ such that 
\begin{equation} \label{e:Gxi}
\mathrm{G}_{\Xi} f \lesssim 
\ZZ^\star (X,\Xi)\left(\sum_{Q\in \mathcal S} \langle f\rangle_{X,Q}^2 \cic{1}_Q \right)^{ 	 \frac12 } 
\end{equation}
pointwise almost everywhere on $\R$. The implied constant is absolute.
\end{theorem}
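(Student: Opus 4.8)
\textbf{Plan of proof of Theorem~\ref{thm:smoothsf}.} The plan is to reduce the $\ell^2$-valued smooth square function $\mathrm{G}_{\Xi}$ to a vector-valued singular integral to which a standard sparse domination machinery applies, with the Zygmund constant $\ZZ^\star(X,\Xi)$ entering only through a single \emph{multi-frequency, multi-scale projection estimate} — essentially the local inequality encoded in the multiscale $\ZZ(X)$ property — used at the ``localized'' testing step. Concretely, I would first rescale: since the Whitney intervals $\omega\in\mathbf{w}_{\mathcal D}(O_\Xi)$ are organized by dyadic length $\ell_\omega=2^{-n}$, and since on each such scale the relevant frequencies, after dilation by $2^n$, are at unit distance from the integer parts $\lfloor 2^n\Xi\rfloor$, it is exactly the sets $\lfloor 2^n\Xi\rfloor$ and their $\ZZ(X,\cdot)$ constants that will govern the scale-$n$ contribution, uniformly in $n$ by the very definition of $\ZZ^\star(X,\Xi)$.

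Next I would set up the sparse domination along the now-standard lines (as in \cites{LN2013,BFP,CDPO}): fix $f\in L^\infty_0(\R)$, run the stopping-time/Calder\'on--Zygmund-type recursion that produces the sparse collection $\mathcal S$, and on each stopping interval $Q\in\mathcal S$ reduce to controlling a \emph{localized} piece of $\mathrm{G}_\Xi$ acting on $f\mathbf 1_{3Q}$, after peeling off the tails via the rapid decay of the smooth symbols $\varphi\in\Phi_\omega$ (the smoothness parameter $D$ chosen large). The crux is then the pointwise/averaged bound
\[
\Ave{ \mathrm{G}_{\Xi} (g)^2 }_{Q}^{1/2} \lesssim \ZZ^\star(X,\Xi)\, \langle g\rangle_{X,Q}, \qquad \supp g\subset 3Q,
\]
which, after localizing in frequency scale to those $\omega$ with $\ell_\omega\lesssim \ell_Q$ (larger scales contribute the sparse term on ancestors) and dilating $Q$ to the unit scale, becomes precisely an inequality comparing the $\ell^2$-sum of smooth frequency projections onto the separated frequencies clustering near $\lfloor 2^n\Xi\rfloor$ against the $X$-average of $g$ on $(0,1)$ — i.e.\ a reformulation of \eqref{e:adjzig} for $\mathbb K=\lfloor 2^n\Xi\rfloor$, with $X'$-duality turning the $\ZZ(X,\mathbb K)\le \ZZ^\star(X,\Xi)$ bound into exactly the displayed estimate. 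The smooth cutoffs $\varphi$, as opposed to sharp indicators, are harmless here: one writes $\mathrm{T}_\varphi = \mathrm{T}_\varphi \mathrm{H}_{\tilde\omega}$ for a slightly enlarged rough projection $\tilde\omega\supset\supp\varphi$, uses Plancherel and the $L^\infty$-normalization $\sup_\alpha \ell_\omega^\alpha\|\varphi^{(\alpha)}\|_\infty\le 1$ to absorb $\mathrm{T}_\varphi$, and is left with the rough multi-frequency projection to which the Zygmund inequality applies.

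The main obstacle I anticipate is precisely this multi-frequency, multi-scale projection step: one must pass from the \emph{exact} exponential sums in \eqref{e:adjzig} to the smoothly-windowed, multiscale frequency projections appearing in $\mathrm{G}_\Xi$, and do so with a constant that does not deteriorate as the number of active Whitney scales grows. This is where the paper's announced Gabor-expansion lemma does the work: one expands each $g\mathbf 1_{3Q}$ in a Gabor (Wilson-type) basis adapted to the unit cell, so that frequency projections become near-diagonal in the Gabor coefficients, and the scale-$n$ square function is controlled by the $\ZZ(X,\lfloor 2^n\Xi\rfloor)$ constant acting on a single row of coefficients; almost-orthogonality between distinct Whitney scales (Cotlar-type, using the dyadic separation $\ell_\omega=2^{-n}$ and the smoothness $D$) then sums the scales with only an absolute loss. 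Once this localized estimate is in hand, the remaining bookkeeping — tail estimates, the geometric summation of the sparse term over ancestors of each $Q$, and verifying the sparseness constant $\eta$ — is routine and follows the template already used for Theorem~\ref{thm:roughsf}.
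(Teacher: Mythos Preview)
Your stated ``crux'' estimate
\[
\langle \mathrm{G}_{\Xi}(g)^2\rangle_Q^{1/2} \lesssim \ZZ^\star(X,\Xi)\,\langle g\rangle_{X,Q},\qquad \supp g\subset 3Q,
\]
is false as written. Take $\Xi=\{0\}$, $X=L^1$ (so $\ZZ^\star\lesssim 1$), $Q=[0,1)$, $g=N\cic{1}_{[0,1/N)}$. Then $\langle g\rangle_{1,Q}=1$ while $\langle \mathrm{G}_{\Xi}(g)^2\rangle_Q^{1/2}\sim\|g\|_2=\sqrt N$. The obstruction is at \emph{small} scales, not large ones, so your ``larger scales go to ancestors'' remark does not help. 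The correct localized estimate in the paper (Proposition~\ref{p:maintech}, specialized to \eqref{e:pack5}) is for the \emph{restricted} tile collection $\mathbb P^{O_\Xi}_{\mathcal G}(I)$, where one has \emph{already removed} all tiles $P$ with $I_P\subset 3L$ for $L$ in a stopping family $\mathcal L$ on which $\mathrm{M}_X(f\chi_I^{\mathsf{dec}})$ exceeds a fixed multiple of $\langle f\rangle_{X,I,+}$. The packing condition $\sum_{L}|L|\le 2^{-2}|I|$ is what lets the recursion close; without this excision the local $L^2$ bound simply does not hold.

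There is a second, related gap in your sketch of how to prove even the corrected local estimate. Applying the Zygmund inequality ``scale by scale'' and then invoking Cotlar almost-orthogonality across Whitney scales does not work directly: the different scales are not almost-orthogonal in any useful uniform way once one has passed from $L^2$ to $X$-averages. The paper's mechanism is different. One applies the projection Lemma~\ref{l:proj} to each $f\cic{1}_L$, producing functions $g_L$ with $\langle g_L\rangle_{2,L,-}\lesssim \ZZ^\star(X,\Xi)\langle f\rangle_{X,L}$ that \emph{agree with $f\cic{1}_L$ on all relevant wave packets of scale $\ge\ell_L$}; the multi-scale summation is then carried out in $L^2$ on $g=\sum_L g_L$ (via Lemma~\ref{l:L2sum} and the almost-orthogonality \eqref{e:aotiles}), not in $X$. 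This is the step where the Gabor lemma really enters, and it is more than a cosmetic smoothing of \eqref{e:adjzig}. Finally, the paper organizes the sparse argument not through the direct recursion you describe (that is the template for Theorem~\ref{thm:roughsf}, whose conclusion is an $\ell^1$-sparse bound), but via an abstract generalized Carleson embedding, Proposition~\ref{p:balayage}: one shows $a_I\coloneqq\sum_{P\in\mathbb P^{O_\Xi}_=(I)}|\langle f,\upphi_P\rangle|^2$ satisfies \eqref{e:packing} with constant $\lesssim\ZZ^\star(X,\Xi)^2$, and the $\ell^2$-sparse form \eqref{e:Gxi} then drops out automatically.
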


\begin{corollary}\label{c:sample2} Suppose $X$ has the $B_p$ property for some  $1<p<\infty$ and $B_p(X)\lesssim 1$. Then, with reference to \eqref{e:modest}, 
\begin{equation}\label{e:maincharssf}
\frac{1}{C \mathrm{Y}_X\left(\frac{1} {\ZZ^\star\left(X,\Xi \right)}\right)} \leq \sup_{\Xi'\subset \Xi}\left[\mathrm{G}_{\Xi'}\right]_{X} \leq C\mathrm{Y}_X\left(  \ZZ^\star\left(X,\Xi \right)\right). 
\end{equation}
\end{corollary}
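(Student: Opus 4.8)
The plan is to derive Corollary~\ref{c:sample2} from Theorem~\ref{thm:smoothsf} by the same two-step mechanism already used for Corollary~\ref{c:sample}: the upper bound is a soft consequence of the sparse bound \eqref{e:Gxi} together with a modular estimate for the relevant sparse operator, and the lower bound is obtained by testing $\mathrm{G}_{\Xi'}$ against suitable trigonometric polynomials built from a finite subset $\Xi' \subset \Xi$ and invoking the adjoint form \eqref{e:adjzig} of the Zygmund property. Throughout, $B_p(X) \lesssim 1$ and submultiplicativity of $\mathrm{Y}_X$ are the standing structural hypotheses that make the Orlicz bookkeeping go through.

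For the right-hand inequality in \eqref{e:maincharssf}, the starting point is \eqref{e:Gxi}: for every $f \in L^\infty_0(\R)$ there is an $\eta$-sparse collection $\mathcal S$ with
\[
\mathrm{G}_{\Xi} f \lesssim \ZZ^\star(X,\Xi)\left(\sum_{Q\in\mathcal S} \langle f\rangle_{X,Q}^2 \cic{1}_Q\right)^{\frac12}.
\]
Hence it suffices to prove that the square-function-type sparse operator $f \mapsto (\sum_{Q\in\mathcal S} \langle f\rangle_{X,Q}^2 \cic{1}_Q)^{1/2}$ obeys the $X$-modular estimate \eqref{e:modest} with constant $\lesssim \mathrm{Y}_X(1)\lesssim 1$. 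This is exactly the content of the sparse-to-modular passage that must be isolated as a lemma (the $\ell^2$-summed analogue of the $\ell^1$-summed statement used in Corollary~\ref{c:sample}); it follows by the standard argument, pointwise-dominating the square-sum by a Carleson-type integral, using $B_p(X)\lesssim1$ to control $\langle f\rangle_{X,Q}$ by a dyadic Orlicz maximal function $\M_X f$, invoking the $L^p$-boundedness of $\M_X$ (P\'erez's theorem, already quoted after Definition~\ref{d:locorl}), disjointifying via sparseness, and finally converting the resulting $L^{p}$-type bound into the modular form via submultiplicativity of $\mathrm{Y}_X$ and the elementary inequality $\mathrm{Y}_X(\lambda^{-1}|f|)\gtrsim (\lambda^{-1}|f|)^{p}$ on the set where $|f|>\lambda$. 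Composing with \eqref{e:Gxi} and using submultiplicativity $\mathrm{Y}_X(\ZZ^\star \cdot t)\lesssim \mathrm{Y}_X(\ZZ^\star)\mathrm{Y}_X(t)$ to pull out the factor $\mathrm{Y}_X(\ZZ^\star(X,\Xi))$ yields the claimed upper bound, uniformly over $\Xi' \subset \Xi$ after recalling \eqref{e:rathobv2}.

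For the left-hand inequality, fix a finite $\Xi' = \{\xi_1,\dots,\xi_N\} \subset \Xi$ and a scale $n \in \mathbb Z$; after the dyadic rescaling implicit in the definition of $\ZZ^\star$, one may assume the points $\lfloor 2^n \xi_j\rfloor$ are distinct integers lying in well-separated Whitney intervals of $O_{\Xi'}$ (discarding points and passing to a subset only decreases the left side by \eqref{e:rathobv2}). Testing $\mathrm{G}_{\Xi'}$ against $f(x) = \sum_j a_j \e^{2\pi i k_j x}$ with $k_j = \lfloor 2^n\xi_j\rfloor$, each frequency $k_j$ is seen by exactly one smooth projection $\mathrm{G}_{\omega_j}$ with a bump $\varphi \in \Phi_{\omega_j}$ normalized to value $1$ near $k_j$, so that $\mathrm{G}_{\Xi'}f(x) \gtrsim (\sum_j |a_j|^2)^{1/2}$ on $[0,1]$ up to errors controlled by the tails of the bumps (handled by choosing the Whitney intervals containing a single $k_j$, which is possible after one further dyadic rescaling, or by a standard $\varepsilon$-separation argument). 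Feeding this lower bound into the modular inequality \eqref{e:modest} for $\mathrm{G}_{\Xi'}$ at level $\lambda \sim (\sum_j|a_j|^2)^{1/2}$ and at level $\lambda = \|f\|_{X'}^{-1}\cdot$const produces, after unwinding the definition of the Luxemburg norm, the reverse Zygmund inequality $\|f\|_{X'} \lesssim [\mathrm{G}_{\Xi'}]_X \,\mathrm{Y}_X^{-1}(\ldots)\, (\sum_j|a_j|^2)^{1/2}$, i.e.\ $\ZZ(X, \lfloor 2^n\Xi'\rfloor) \lesssim C\,\mathrm{Y}_X(1/[\mathrm{G}_{\Xi'}]_X)^{-1}[\mathrm{G}_{\Xi'}]_X$; taking the supremum over $n$ and over finite $\Xi' \subset \Xi$ gives the leftmost estimate after rearranging. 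This is precisely the scheme already executed for the multiplier characterization in Section~\ref{proof_equiv_Zyg_prop}, and the only adaptation is replacing the multiplier $\mathrm{T}_m$ with a single smooth projection realizing the test frequency.

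\textbf{Main obstacle.} The genuinely delicate point is the $\ell^2$-summed sparse-to-modular lemma: unlike the pointwise $\ell^1$ sparse bound of Theorem~\ref{thm:roughsf}, here one must control the \emph{square} of the sparse averages, and the modular (rather than norm) formulation forces a careful use of submultiplicativity together with $B_p(X)\lesssim1$ — in particular one cannot simply quote a linear $X \to L^{1,\infty}$ bound. I expect this to require the same quantitative argument that presumably underlies the analogous step for $\mathrm{H}_{\Xi,m}$, adapted from $\ell^1$ to $\ell^2$; the constant must come out independent of $\Xi$, with the entire $\Xi$-dependence carried by the explicit factor $\mathrm{Y}_X(\ZZ^\star(X,\Xi))$. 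The lower-bound direction, by contrast, is essentially bookkeeping once the Whitney combinatorics are arranged so that each test frequency is isolated by a single smooth projection.
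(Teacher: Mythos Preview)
Your upper-bound plan misidentifies the difficulty. The ``$\ell^2$-summed sparse-to-modular lemma'' you flag as the main obstacle is not needed: pointwise one has
\[
\left(\sum_{Q\in\mathcal S}\langle f\rangle_{X,Q}^2\cic{1}_Q\right)^{\frac12}\leq \sum_{Q\in\mathcal S}\langle f\rangle_{X,Q}\cic{1}_Q,
\]
so Theorem~\ref{thm:smoothsf} immediately yields $\|\mathrm{G}_\Xi\|_{X,L^1}\lesssim \ZZ^\star(X,\Xi)$, and Proposition~\ref{p:weighmod} with $q=1$ applies verbatim. This is exactly how the paper proceeds (see the comment right after Corollary~\ref{c:sample2}: the conclusion of Theorem~\ref{thm:smoothsf} is \emph{stronger} than that of Theorem~\ref{thm:roughsf}). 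Your longer route could in principle be made to work, but the sketch you give for it is vague and the effort is unnecessary.

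For the lower bound your outline captures the spirit, but two concrete steps are glossed over that the paper handles carefully in Section~\ref{proof_equiv_Zyg_prop}. First, a trigonometric polynomial on $[0,1]$ is not an admissible input for $\mathrm{G}_{\Xi'}$ on $\R$; the paper uses the localization $Qf$ of \eqref{e:thecount} to manufacture a Schwartz function on $\R$ with Fourier support in tiny neighborhoods of the integers $k$, together with the modular transference estimate \eqref{e:thecure}. Second, one cannot simply take a finite $\Xi'\subset\Xi$ and hope that the Whitney intervals of $O_{\Xi'}$ each isolate a single test frequency $k_j$; the paper instead \emph{constructs} an auxiliary set $Z=\{\zeta_1,\dots,\zeta_M\}$ with $\zeta_m=\xi_m+\tfrac12$, $\xi_m\in\mu\Xi$, after splitting $\lfloor\mu\Xi\rfloor$ into residue classes mod $4$ to force separation. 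This guarantees each $a_m$ sits in its own Whitney interval $\omega_m\in\mathbf{w}_{\mathcal D}(O_Z)$, giving the clean lower bound \eqref{e:keyeq}. The set $Z$ is a translate of a dilate of a subset of $\Xi$, so modulation and dilation invariance recover $[\mathrm{G}_Z]_X\leq K$ as in \eqref{e:keyineq}. Your ``one further dyadic rescaling, or a standard $\varepsilon$-separation argument'' does not obviously achieve this.
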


The left side bound in \eqref{e:maincharssf} is also proved in Section \ref{proof_equiv_Zyg_prop} and provides, along with Theorem \ref{t:mainchar} and Corollary \ref{c:sample}, another characterization of the multiscale Zygmund property. The estimate in the conclusion of Theorem~\ref{thm:smoothsf} is stronger than the corresponding one of Theorem~\ref{thm:roughsf}, whence $\mathrm{G}_{{\Xi}}$ also satisfies the weighted modular inequalities of Corollary~\ref{c:sample}, and in particular the right side estimate in \eqref{e:maincharssf}. However, stronger quantitative weighted estimates may be deduced from the quadratic sparse domination of the conclusion of  Theorem~\ref{thm:smoothsf}; see Corollary \ref{cor:ssfw} {in} Section \ref{s:weights}.  As it was the case in Theorem~\ref{thm:roughsf}, the sparse domination of Theorem~\ref{thm:smoothsf} is best possible. Firstly, the space $X$ cannot be replaced by any Orlicz space $X_1\supsetneq X$ for which $\mathcal Z^\star(X_1,\Xi)=\infty$. Furthermore, the $\ell^2$-sum over the sparse collection in the right hand side cannot be replaced in general by any $\ell^q$-sum with $q>2$. To see this it is enough to note that sparse forms 
\[
f\mapsto \mathcal S_qf\coloneqq \left(\sum_{Q\in\mathcal S} \langle f\rangle_{X,Q} ^q \ind_Q\right)^{\frac1q}
\]
satisfy $\|\mathcal S_q f\|_p=O(p^{\frac 1q})$ for $p$ large. If for example $\Xi=\{0\}$, $\mathrm{G}_\Xi$ is the usual smooth Littlewood-Paley square function, and $\|\mathrm{G}_{\Xi}\|_{L^p}\gtrsim p^{1/2}$ as $p\to \infty$, hence this forces $q\leq 2$; see e.g.\ \cites{lernersf,LPR}.

For H\"ormander-Mihlin multipliers adapted to a singular set $\Xi$ as in \eqref{e:hmintro}, a sparse form domination holds instead.

\begin{theorem}	\label{thm:HMsparse} 
Let $\Xi $ be a singular set with the multiscale $\ZZ(X_j)$ property for $j=1,2$ and suppose $m$ is a multiplier satisfying \eqref{e:hmintro}. Then  
\[
\left\| \mathrm{T}_m  \right\|_{X_1,X_2} \lesssim \|m\|_{\mathrm{HM}(\Xi)} \prod_{j=1,2} \ZZ^\star(X_j, \Xi)   
\]
with absolute implicit constant.
\end{theorem}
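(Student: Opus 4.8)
The plan is to reduce the sparse domination of the Fourier multiplier $\mathrm T_m$ to the pointwise sparse bounds for the auxiliary square functions $\mathrm H_{\Xi,m}$ and $\mathrm G_\Xi$ furnished by Theorem~\ref{thm:roughsf} and Theorem~\ref{thm:smoothsf}, via a Littlewood--Paley--type decoupling of $\mathrm T_m$ along the Whitney decomposition of $O_\Xi$. First I would write, for a multiplier $m$ satisfying \eqref{e:hmintro}, a resolution of the identity adapted to $\mathbf w_{\mathcal D}(O_\Xi)$: a smooth partition of unity $\{\psi_\omega\}$, $\psi_\omega \in \Phi_{3\omega}$ say, subordinate to an enlargement of the Whitney intervals, so that $\sum_\omega \psi_\omega \equiv 1$ on a neighbourhood of $O_\Xi$ (outside a null set) with bounded overlap. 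Then $m = \sum_\omega m\psi_\omega$, and each piece $m\psi_\omega$ is supported on $3\omega$ and, by \eqref{e:hmintro} together with the Whitney property $\mathrm{dist}(\omega,\Xi)\sim \ell_\omega$, satisfies $\ell_\omega^\alpha \|(m\psi_\omega)^{(\alpha)}\|_\infty \lesssim \|m\|_{\mathrm{HM}(\Xi)}$ for $0\le \alpha\le D$; that is, $m\psi_\omega / \|m\|_{\mathrm{HM}(\Xi)}$ belongs (up to a harmless dilation of the class) to $\Phi_{3\omega}$.

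The core of the argument is then a vector-valued / bilinear-form manipulation: writing $\langle \mathrm T_m f_1,f_2\rangle = \sum_\omega \langle \mathrm T_{m\psi_\omega}f_1, f_2\rangle$, I would insert a second partition of unity $\{\widetilde\psi_\omega\}$, equal to $1$ on $\mathrm{supp}\,\psi_\omega$ and supported on a fixed dilate $C\omega$, so that $\mathrm T_{m\psi_\omega} = \mathrm T_{m\psi_\omega}\mathrm T_{\widetilde\psi_\omega}$, and then Cauchy--Schwarz in $\omega$:
\[
\left|\langle \mathrm T_m f_1,f_2\rangle\right| \le \sum_\omega \big\langle |\mathrm T_{m\psi_\omega} \mathrm T_{\widetilde\psi_\omega} f_1|,\,|f_2|\big\rangle \le \int_{\mathbb R}\Big(\sum_\omega |\mathrm T_{m\psi_\omega}\mathrm T_{\widetilde\psi_\omega}f_1|^2\Big)^{\frac12}\Big(\sum_\omega |\text{(loc.\ max.\ of }f_2)|^2\Big)^{\frac12}.
\]
The first factor is controlled pointwise by $\mathrm{G}_\Xi(\mathrm T_{\widetilde\psi_\omega}f_1)$-type quantities — more precisely, since $m\psi_\omega/\|m\|_{\mathrm{HM}(\Xi)}$ is an admissible smooth symbol on $C\omega$, one bounds $|\mathrm T_{m\psi_\omega}g_\omega|\lesssim \|m\|_{\mathrm{HM}(\Xi)}\,\mathrm G_{C\omega}g_\omega$, and then reorganizes the square sum so that Theorem~\ref{thm:smoothsf} applies to $f_1$, producing a sparse collection $\mathcal S_1$ with $(\sum_\omega|\cdots|^2)^{1/2}\lesssim \|m\|_{\mathrm{HM}(\Xi)}\,\mathcal Z^\star(X_1,\Xi)\,(\sum_{Q\in\mathcal S_1}\langle f_1\rangle_{X_1,Q}^2\mathbf 1_Q)^{1/2}$. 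Symmetrically, the second factor dominated by a smooth square function in $f_2$ yields, by Theorem~\ref{thm:smoothsf} again (or rather its dual use), a sparse collection $\mathcal S_2$ with the $\mathcal Z^\star(X_2,\Xi)$-bound in the $X_2$ norm. Merging the two sparse collections into a common one $\mathcal S$ (standard, using the stopping-time/Carleson structure) and applying Cauchy--Schwarz over $Q\in\mathcal S$ converts $\sum_Q|Q|\langle f_1\rangle_{X_1,Q}^2\langle f_2\rangle_{X_2,Q}^2$-type sums into the desired $\sum_Q|Q|\langle f_1\rangle_{X_1,Q}\langle f_2\rangle_{X_2,Q}$, giving the product bound $\prod_{j=1,2}\mathcal Z^\star(X_j,\Xi)$.

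\textbf{Main obstacle.} The delicate point is the passage from the $\omega$-indexed square sum of the ``composed'' operators $\mathrm T_{m\psi_\omega}\mathrm T_{\widetilde\psi_\omega}f_1$ to a single sparse form in $f_1$ with the correct single power of $\mathcal Z^\star(X_1,\Xi)$: one must be careful that the smooth frequency localizations $\widetilde\psi_\omega$ do not cost an extra factor (this is handled by absorbing them into the intrinsic square functions $\mathrm G_\omega$, since $\mathrm T_{\widetilde\psi_\omega}f_1$ is itself a smooth frequency projection of $f_1$ on $C\omega$, so the whole sum collapses to a single $\mathrm G_\Xi$-type object up to bounded overlap and finite iteration of the Whitney scales), and that the error from $m=\sum_\omega m\psi_\omega$ near $\Xi$ is genuinely zero almost everywhere because $\Xi$ is null. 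A secondary technical nuisance is that $\mathrm T_{m\psi_\omega}$ need not be exactly a projection from $\Phi_{C\omega}$ after composition, but this is only a matter of enlarging the dilation constant and the smoothness parameter $D$, both of which are free. Finally, merging $\mathcal S_1$ and $\mathcal S_2$ into one sparse family while keeping the product structure of the local averages is routine given the machinery of \S\ref{ss:sop}, but should be carried out with the bounded-overlap bookkeeping made explicit.
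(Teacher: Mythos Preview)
Your reduction to the pairing of two $\Xi$-adapted smooth square functions is correct up to the point
\[
|\langle \mathrm T_m f_1,f_2\rangle| \lesssim \|m\|_{\mathrm{HM}(\Xi)}\int_{\mathbb R} \mathrm G_\Xi f_1\cdot \mathrm G_\Xi f_2,
\]
obtained after inserting a smooth projection on the $f_2$ side and applying Cauchy--Schwarz over $\omega\in\mathbf w_{\mathcal D}(O_\Xi)$; note that as written, your ``$(\sum_\omega|\text{loc.\ max.\ of }f_2|^2)^{1/2}$'' does not make sense unless you first insert such a projection. The real gap is the next step: applying Theorem~\ref{thm:smoothsf} to each factor gives
\[
\int_{\mathbb R} \Big(\sum_{Q\in\mathcal S_1}\langle f_1\rangle_{X_1,Q}^2\mathbf 1_Q\Big)^{\frac12}\Big(\sum_{R\in\mathcal S_2}\langle f_2\rangle_{X_2,R}^2\mathbf 1_R\Big)^{\frac12},
\]
and this integral is \emph{not} controlled by a bilinear sparse form $\Lambda^{\mathcal S}_{X_1,X_2}(f_1,f_2)$, even after merging $\mathcal S_1,\mathcal S_2$ into a common sparse $\mathcal S$. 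A concrete obstruction: take $X_1=X_2=L^1$, a dyadic chain $Q_0\supset Q_1\supset\cdots\supset Q_N$ with $|Q_{k+1}|=\tfrac12|Q_k|$, $f_1=\mathbf 1_{Q_0}$, $f_2=|Q_N|^{-1}\mathbf 1_{Q_N}$. A direct computation gives the displayed integral $\gtrsim N^{3/2}$, whereas $\sup_{\mathcal S}\Lambda^{\mathcal S}_{1,1}(f_1,f_2)\sim \int f_1\,\mathrm M f_2\sim N$. Your sentence ``applying Cauchy--Schwarz over $Q\in\mathcal S$ converts $\sum_Q|Q|\langle f_1\rangle_{X_1,Q}^2\langle f_2\rangle_{X_2,Q}^2$ into $\sum_Q|Q|\langle f_1\rangle_{X_1,Q}\langle f_2\rangle_{X_2,Q}$'' is also the wrong direction of Cauchy--Schwarz.

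The paper therefore does \emph{not} deduce Theorem~\ref{thm:HMsparse} from Theorems~\ref{thm:roughsf} and~\ref{thm:smoothsf}. Instead, via the discretization \eqref{e:cvxhullms2}, it proves a bilinear model-sum estimate (Proposition~\ref{prop:sparsemodelform}) directly: one runs a single stopping-time construction on $3I_0$, choosing stopping intervals whenever \emph{either} $\langle f_1\rangle_{X_1,\cdot}$ or $\langle f_2\rangle_{X_2,\cdot}$ gets large, and then applies the bilinear stopping estimate of Proposition~\ref{p:maintechdual}, which uses the projection Lemma~\ref{l:proj} on both $f_1$ and $f_2$ simultaneously. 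The point is that the Zygmund gain on each side is captured within the same localized $L^2$ bilinear bound, rather than in two separate pointwise estimates that one then tries to recombine.
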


Taking $X_2=L^q(0,1)$ with $1\leq q\leq 2$ and applying Proposition \ref{p:weighmod} yields the next corollary. In the particular case  $q=2$, the corresponding $\ZZ^\star$-constant trivializes to 1. Note that the rightmost bound in \eqref{e:mainchar} from Theorem \ref{t:mainchar} is a particular case, while the leftward bound is proved in Section \ref{proof_equiv_Zyg_prop}.

\begin{corollary}  \label{cor:sample3} 
Let $1\leq q \leq 2 $ and suppose that $X$ has the $B_p$ property for some $1<p<q'$ and $B_p(X)\lesssim 1$. Then, there exists a positive increasing function $\mathcal Q$ such that  
\[
\begin{split}
\sup_{\|m\|_{\mathrm{HM}(\Xi)}\leq 1} \left[\mathrm{T}_m\right]_{X,w}   & \lesssim  \mathcal Q\left([w]_{A_1}, [w]_{\mathrm{RH}_{\frac{q}{q-p(q-1)}}}\right) \mathrm{Y}_X\left(\ZZ^\star (X,\Xi)\right) \mathrm{Y}_X\left(\ZZ^\star (L^q(0,1),\Xi)\right)  
\end{split}
\]
uniformly over all weights $w$.
\end{corollary}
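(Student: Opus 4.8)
The plan is to obtain the corollary by an argument in the spirit of the derivation of Corollary~\ref{c:sample}, but now using the genuinely bilinear sparse bound of Theorem~\ref{thm:HMsparse} together with a nontrivial second Orlicz space $X_2=L^q(0,1)$: first produce an $(X,L^q(0,1))$-sparse bound for $\mathrm{T}_m$, then convert it into a weighted modular estimate by means of the abstract Proposition~\ref{p:weighmod} of Appendix~\ref{s:app}. There is nothing to prove unless $\ZZ^\star(X,\Xi)$ and $\ZZ^\star(L^q(0,1),\Xi)$ are both finite, so we may assume that $\Xi$ has the multiscale $\ZZ(X_j)$ property for $X_1=X$ and $X_2=L^q(0,1)$. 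Applying Theorem~\ref{thm:HMsparse} to this pair then yields, for every $m$ with $\|m\|_{\mathrm{HM}(\Xi)}\le 1$, the bilinear sparse bound of Definition~\ref{d:locspa} with
\[
\|\mathrm{T}_m\|_{X,L^q(0,1)}\;\lesssim\; A\;\coloneqq\;\ZZ^\star(X,\Xi)\,\ZZ^\star(L^q(0,1),\Xi),
\]
with absolute implied constant; when $q=2$ the factor $\ZZ^\star(L^2(0,1),\Xi)\le 1$ is trivial and $A$ reduces to $\ZZ^\star(X,\Xi)$.

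The second move is to feed this into Proposition~\ref{p:weighmod}. Here the hypotheses play their part: $1<p<q'$ is precisely the condition under which the denominator $q-p(q-1)$ is positive, so that $s\coloneqq \frac{q}{q-p(q-1)}$ is a well-defined exponent in $[1,\infty]$, while $B_p(X)\lesssim 1$ (together with a uniform submultiplicativity constant, which is part of the $B_p$ assumption) is what renders the implied constants, and hence the function $\mathcal Q$, independent of $X$. Proposition~\ref{p:weighmod}, applied with $X_1=X$ and $X_2=L^q(0,1)$, turns the $(X,L^q(0,1))$-sparse bound above into a weighted modular estimate in the sense of \eqref{e:modest}, namely
\[
[\mathrm{T}_m]_{X,w}\;\lesssim\;\mathcal Q\!\left([w]_{A_1},[w]_{\mathrm{RH}_s}\right)\,\mathrm{Y}_X\!\left(\|\mathrm{T}_m\|_{X,L^q(0,1)}\right)
\]
for a positive increasing $\mathcal Q$ and all weights $w$. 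At the level of the proof of Proposition~\ref{p:weighmod}, the outer $\mathrm{Y}_X$ is the manifestation of the homogeneity $[\kappa\,\mathrm{T}]_{X,w}\le \mathrm{Y}_X(\kappa)\,[\mathrm{T}]_{X,w}$, $\kappa>0$, valid since $\mathrm{Y}_X$ is submultiplicative; the reverse H\"older class $\mathrm{RH}_s$ surfaces when the $L^q$-averages on the testing side of the sparse form are confronted with the measure $w\,\d x$; and the $B_p$ property of $X$ is used to absorb the $X$-averages on the $f$-side through the $L^p$-boundedness of the Orlicz maximal operator $\mathrm{M}_X$ (cf.\ \cite{Perez95}), whose operator norm on $L^p$ is comparable to $B_p(X)$.

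It remains to tidy the constants. By monotonicity and submultiplicativity of $\mathrm{Y}_X$, $\mathrm{Y}_X(\|\mathrm{T}_m\|_{X,L^q(0,1)})\lesssim \mathrm{Y}_X(A)$, and a further use of submultiplicativity gives
\[
\mathrm{Y}_X(A)\;\lesssim\;\mathrm{Y}_X\!\left(\ZZ^\star(X,\Xi)\right)\,\mathrm{Y}_X\!\left(\ZZ^\star(L^q(0,1),\Xi)\right);
\]
substituting back and taking the supremum over $\|m\|_{\mathrm{HM}(\Xi)}\le 1$ produces the claimed inequality. Granting Theorem~\ref{thm:HMsparse} and Proposition~\ref{p:weighmod}, the argument is essentially bookkeeping; the only point that requires care at this level is the uniformity in $X$ — that the conclusion of Proposition~\ref{p:weighmod} depends on $X$ solely through $B_p(X)$ (and the submultiplicativity constant), so that a single function $\mathcal Q$ serves all admissible $X$ — while all the genuine analysis producing the $A_1$ and $\mathrm{RH}_s$ dependence is internal to Proposition~\ref{p:weighmod} and, upstream, to Theorem~\ref{thm:HMsparse}.
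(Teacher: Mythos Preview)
Your proof is correct and follows precisely the route the paper sketches: apply Theorem~\ref{thm:HMsparse} with $X_1=X$, $X_2=L^q(0,1)$ to obtain an $(X,L^q)$-sparse bound with constant $A=\ZZ^\star(X,\Xi)\,\ZZ^\star(L^q(0,1),\Xi)$, then invoke Proposition~\ref{p:weighmod} on the normalized operator $A^{-1}\mathrm{T}_m$ and use submultiplicativity of $\mathrm{Y}_X$ to pull the constant $A$ outside as $\mathrm{Y}_X(A)\lesssim \mathrm{Y}_X(\ZZ^\star(X,\Xi))\,\mathrm{Y}_X(\ZZ^\star(L^q(0,1),\Xi))$. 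The paper compresses all of this into the single sentence preceding the corollary; your write-up simply unpacks the bookkeeping, including the homogeneity step $[\kappa T]_{X,w}\lesssim \mathrm{Y}_X(\kappa)[T]_{X,w}$, which the paper leaves implicit.
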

 
\subsection{Marcinkiewicz-type multipliers and maximal multiscale $\ZZ(X)$ property}\label{sec:marc}  The next  definition strengthens  that  of the multiscale $\ZZ(X)$ property.

\begin{definition}[{Maximal multiscale $\ZZ(X)$ property}] {Let $X$ be   a local Orlicz space on $(0,1)$} as in Definition~\ref{d:locorl}. Say that a pairwise disjoint collection of intervals $\Omega$ has the \emph{maximal multiscale $\ZZ(X)$ property} if 
\[
\ZZ^{\star\star}(X,\Omega)\coloneqq \sup \ZZ^\star\left(X,\left\{p_\omega :\, \omega \in \Omega\right\} \right)<\infty,
\]
the supremum being taken over all choices of points $\{p_\omega:\, \omega\in\Omega\}$ consisting of exactly one point $p_\omega\in\overline{\omega}$ per interval $\omega\in 
{\Omega}$. If $\Xi $ is a singular set, we abuse notation to say that $\Xi$ has the maximal multiscale $\ZZ(X)$ property if the collection of complementary intervals $\Omega_\Xi$ does, and write $\ZZ^{\star\star}(X,\Xi)$ in place of  $\ZZ^{\star\star}(X,\Omega_\Xi)$.
\end{definition}
{Typical examples of sets possessing non-trivial maximal multiscale Zygmund properties are again finite unions of lacunary sets of finite order, see \S\ref{sec:examples} for more details}.

Given a singular set $\Xi$, a bounded function $m:\R\to \mathbb C$ is said to be a  {\emph{Marcinkiewicz multiplier with singular set $\Xi$}} if $m$ has bounded variation uniformly on all $\omega\in\Omega_{{\Xi}}$, where $\Omega_{{\Xi}}$ indicates the complementary intervals of $\Xi$ as before. Denote by $ \|m\|_{\mathrm{Mar}(\Xi)}$ the corresponding multiplier norm.
These Marcinkiewicz multipliers satisfy a similar result to Theorem~\ref{thm:HMsparse}, but requiring of $\Xi$ the formally  stronger maximal multiscale  Zygmund property.  
\begin{theorem}	\label{thm:Marc}  Let $\Xi $ be a singular set with the {maximal multiscale} $\ZZ(X_j)$ property for $j=1,2$, and suppose $m$ is a  Marcinkiewicz multiplier with singular set $\Xi$. Then  
\[
\left\|  \mathrm{T}_m  \right\|_{X_1,X_2} \lesssim   \|m\|_{\mathrm{Mar}(\Xi)} \prod_{j=1,2} \ZZ^{\star\star}(X_j, \Xi)  
\]
with absolute implicit constant.
\end{theorem}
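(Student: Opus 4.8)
The plan is to deduce Theorem~\ref{thm:Marc} from Theorem~\ref{thm:HMsparse} by decomposing a Marcinkiewicz multiplier $m$ with singular set $\Xi$ into a sum of H\"ormander-Mihlin multipliers, each adapted to a \emph{refinement} of $\Xi$ obtained by adding finitely many extra points per complementary interval, and then controlling the resulting series. Fix $\omega\in\Omega_\Xi$. Since $m\cic{1}_\omega$ has bounded variation on $\omega$ with total variation at most $\|m\|_{\mathrm{Mar}(\Xi)}$, the standard device is to perform a dyadic Whitney-type decomposition of $\omega$ relative to one of its endpoints (or, if $\omega$ is unbounded, relative to the origin inside the rescaled picture) and write $m\cic{1}_\omega = \sum_{\nu} m_{\omega,\nu}$, where each $m_{\omega,\nu}$ is supported on a Whitney subinterval $\omega_\nu$ of $\omega$, has controlled variation, and—after a single integration by parts replacing the jump at the near endpoint by a smooth cutoff—can be written as a linear combination of an $\mathrm{HM}$-type piece on $\omega_\nu$ plus a bounded-variation remainder pushed to the next generation. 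Iterating, one obtains $m=\sum_{k\geq 0} m_k$ where $m_k\in\mathrm{HM}(\Xi_k)$ with $\|m_k\|_{\mathrm{HM}(\Xi_k)}\lesssim 2^{-\delta k}\|m\|_{\mathrm{Mar}(\Xi)}$ for some $\delta>0$, and $\Xi_k$ is a singular set obtained from $\Xi$ by inserting at most $C\cdot 2^{k}$ (or so) Whitney points inside each $\omega\in\Omega_\Xi$.

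The crucial point is then that each refined set $\Xi_k$ still has the multiscale $\ZZ(X_j)$ property, with a constant that does \emph{not} blow up too fast in $k$—and this is exactly what the \emph{maximal} multiscale $\ZZ(X)$ hypothesis buys us. Indeed, the frequencies of $\Xi_k$, after integer part and dyadic rescaling, are obtained by replacing each $\lfloor 2^n\xi\rfloor$, $\xi\in\Xi$, by a bounded-per-interval cluster of integers lying within $\overline{\omega}$ in the rescaled picture. Splitting the Whitney subintervals of each $\omega$ into $O(k)$ families according to their generation, each family contributes one point per interval $\omega\in\Omega_\Xi$, so its $\ZZ^\star(X_j,\cdot)$-constant is bounded by $\ZZ^{\star\star}(X_j,\Xi)$; by the triangle inequality in the adjoint form \eqref{e:adjzig} (square function estimates add in $\ell^2$, but the number of families is $O(k)$, so we lose at most a factor $k$), one gets $\ZZ^\star(X_j,\Xi_k)\lesssim k\,\ZZ^{\star\star}(X_j,\Xi)$. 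Feeding this into Theorem~\ref{thm:HMsparse} gives $\|\mathrm{T}_{m_k}\|_{X_1,X_2}\lesssim k^2\, 2^{-\delta k}\|m\|_{\mathrm{Mar}(\Xi)}\prod_{j=1,2}\ZZ^{\star\star}(X_j,\Xi)$, and summing the geometrically decaying series in $k$ yields the claimed bound, since sparse norms are subadditive.

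The main obstacle I anticipate is organizing the decomposition $m=\sum_k m_k$ so that the bookkeeping of the inserted points is \emph{uniform across all dyadic rescalings} $2^n\Xi$ simultaneously—one cannot choose the Whitney points of $\omega$ depending on $n$, so the refinement $\Xi_k$ must be defined once and for all at the level of $\R$ (e.g.\ via a fixed dyadic-Whitney rule on each $\omega$), and one must check that its integer parts at every scale still decompose into $O(k)$ transversals of $\Omega_\Xi$. A secondary technical point is the integration-by-parts step: producing genuinely $M$-th order $\mathrm{HM}(\Xi_k)$ symbols from a mere bounded-variation hypothesis requires mollifying within each Whitney interval at the correct scale and absorbing the commutator terms, which is where the smoothness parameter $M$ in \eqref{e:hmintro} and the room in the decay rate $\delta$ get used; this is classical (cf.\ the passage from Marcinkiewicz to Littlewood-Paley theory) but must be done with constants tracked explicitly to retain the quantitative dependence on $\ZZ^{\star\star}(X_j,\Xi)$.
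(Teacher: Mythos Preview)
Your proposal has a genuine gap: the claimed geometric decay $\|m_k\|_{\mathrm{HM}(\Xi_k)}\lesssim 2^{-\delta k}\|m\|_{\mathrm{Mar}(\Xi)}$ cannot be extracted from the bounded-variation hypothesis. Consider the simplest case $\Xi=\{2^k:k\in\Z\}$, $\omega=(1/2,1)$, and $m=\cic{1}_{(1/2,3/4)}$, which has $\|m\|_{\mathrm{Mar}(\Xi)}=O(1)$ but a jump at the interior point $3/4$. Any telescoping smooth approximation of this jump at Whitney scale $2^{-k}$ (relative to a refined set $\Xi_k$ containing points near $3/4$) produces a correction $m_k$ with $\|m_k\|_\infty\sim 1$, hence $\|m_k\|_{\mathrm{HM}(\Xi_k)}\sim 1$, for \emph{every} $k$. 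Mollification and ``integration by parts'' do not help: a BV function has a measure as derivative, and there is no mechanism by which successive refinement gains smallness. Without the decay, your series $\sum_k k^2\cdot O(1)$ diverges and the argument collapses. (There is also an inconsistency in your counting: ``$C\cdot 2^k$ points per $\omega$'' versus ``$O(k)$ families according to generation''; a standard Whitney decomposition inserts $O(k)$ points up to generation $k$, not $2^k$.)

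The paper's route is different and avoids any smoothing of $m$. It first identifies $\mathrm{Mar}(\Xi)$ with the atomic class $R_{1,1}^\Xi$ (\S\ref{sec:Rpq}), and by Lemma~\ref{lem:Jatom} reduces to atoms of the form $m\cic{1}_\omega=J^{-1}\sum_{j=1}^{J_\omega}\eps_{j,\omega}\cic{1}_{\alpha(j,\omega)}$ with $|\eps_{j,\omega}|\leq 1$ and $J_\omega\leq J$. Writing $\mathrm{T}_m=J^{-1}\sum_{j=1}^J\sum_{\omega}\eps_{j,\omega}\mathrm{H}_{\alpha(j,\omega)}$ exhibits $\mathrm{T}_m$ as a \emph{convex} average over $j$ of operators, each of which is a combination of rough projections onto one subinterval $\alpha(j,\omega)$ per $\omega\in\Omega_\Xi$. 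For fixed $j$, the set $\Xi_{m,j}$ of endpoints of these intervals contributes at most two points per $\overline{\omega}$, so by the very definition of the maximal multiscale constant, $\ZZ^\star(X,\Xi_{m,j})\lesssim \ZZ^{\star\star}(X,\Xi)$ with \emph{no loss in $j$ or $J$} (Lemma~\ref{lem:marctomodel}). Each such operator is then in the convex hull of model sums over $\mathbb P^{O_{\Xi_{m,j}}}$, and Proposition~\ref{prop:sparsemodelform}---the common engine behind Theorem~\ref{thm:HMsparse}---delivers the sparse bound; the $J^{-1}$ average gives the result with an absolute constant. The moral is that bounded variation should be cashed in as an $\ell^1$ average of \emph{indicators}, which the model-sum/rough-projection machinery handles directly, rather than as a sum of smooth H\"ormander--Mihlin pieces, which it is not.
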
  

Weighted modular inequalities for these multipliers identical to those   of Corollary~\ref{cor:sample3} hold, up to replacing $\ZZ^{\star}$ by $\ZZ^{\star\star}$ and $\mathrm{HM}(\Xi)$ by $\mathrm{Mar}(\Xi)$; we omit the formal statements.   However, in analogy with Theorem \ref{t:mainchar}, it is worthwhile to record the following characterization of the maximal multiscale Zygmund constant  $\ZZ^{\star\star}\left(X,\Xi \right)$. 

\begin{corollary} \label{c:maincharmar} Suppose $X$ has the $B_p$ property for some  $1<p<\infty$ and $B_p(X)\lesssim 1$. Then, with reference to \eqref{e:modest}, 
\begin{equation}
	\label{e:maincharE}\frac{1}{C \mathrm{Y}_X\left(\frac{1} {\ZZ^{\star\star}\left(X,\Xi \right)}\right)} \leq    \sup\left\{ \left[\mathrm{T}_m\right]_{X}: \left\|m\right\|_{\mathrm{Mar}(\Xi)}=1\right\} \leq C \mathrm{Y}_X\left(  \ZZ^{\star\star}\left(X,\Xi \right)\right).
\end{equation}
\end{corollary}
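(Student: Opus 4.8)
The plan is to deduce Corollary~\ref{c:maincharmar} from Theorem~\ref{thm:Marc} exactly as Theorem~\ref{t:mainchar} is deduced from Theorem~\ref{thm:HMsparse}. For the rightmost inequality in \eqref{e:maincharE}, first I would specialize Theorem~\ref{thm:Marc} to the case $X_1=X$ and $X_2=L^1(0,1)$. Since $\ZZ^\star(L^1,\Xi)\leq \ZZ^{\star\star}(L^1,\Xi)$ and the $\ZZ$-constant against $L^1$ is controlled by an absolute constant (the $\ZZ(L^1)$ property is trivial, as trigonometric polynomials lie in every $X\subseteq L^1$ and the sum-of-squares of coefficients controls the $L^2\subseteq L^1$ norm), one obtains $\|\mathrm{T}_m\|_{X,L^1}\lesssim \|m\|_{\mathrm{Mar}(\Xi)} \ZZ^{\star\star}(X,\Xi)$. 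This is an $(X,L^1)$-sparse bound. The passage from a sparse form bound to a modular weak-type estimate \eqref{e:modest} then goes through Proposition~\ref{p:weighmod} (stated in the appendix), which under the $B_p$ assumption with $B_p(X)\lesssim 1$ converts the $(X,L^1)$-sparse norm into the modular constant $[\mathrm{T}_m]_X$, picking up at most a factor $\mathrm{Y}_X(\ZZ^{\star\star}(X,\Xi))$; here one uses submultiplicativity of $\mathrm{Y}_X$ to absorb the constant $\ZZ^{\star\star}$ inside the Young function, i.e.\ $\mathrm{Y}_X(Ct)\lesssim \mathrm{Y}_X(C)\mathrm{Y}_X(t)$. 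Taking the supremum over $\|m\|_{\mathrm{Mar}(\Xi)}=1$ gives the upper bound.

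For the leftmost inequality, the strategy is the usual testing-by-exponentials argument, parallel to what the authors announce is carried out in Section~\ref{proof_equiv_Zyg_prop} for $\ZZ^\star$. Fix any $n\in\mathbb Z$ and any choice of one point $p_\omega\in\overline\omega$ per $\omega\in\Omega_\Xi$, so that $\mathbb K=\lfloor 2^n\{p_\omega\}\rfloor$ is an arbitrary admissible set in the definition of $\ZZ^{\star\star}$. Given a finitely supported sequence $(a_k)_{k\in\mathbb K}$ and $f\in X$, I would build a Marcinkiewicz multiplier $m$ with $\|m\|_{\mathrm{Mar}(\Xi)}\lesssim 1$ such that $\mathrm{T}_m$, tested against a suitable rescaled modulated function, reproduces (up to harmless errors and the dyadic rescaling by $2^n$) the pairing $\langle \sum_k a_k e^{2\pi i k\cdot}, f\rangle$ on $[0,1]$. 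Concretely, since on each complementary interval $\omega$ one only needs bounded variation, one can take $m$ to be, on each $\omega$, a bump of controlled variation concentrated near the chosen frequency $p_\omega$; the bounded-variation (rather than Hörmander--Mihlin) flexibility is exactly what lets the singularity sit at an endpoint of $\omega$, which is why $\ZZ^{\star\star}$ rather than $\ZZ^\star$ appears. Running the modular estimate \eqref{e:modest} for this $\mathrm{T}_m$ at an appropriately chosen level $\lambda$ and unwinding the definition of the Luxemburg norm then yields $\|\sum_k a_k e^{2\pi i k\cdot}\|_{X'}\lesssim [\mathrm{T}_m]_X \,\big(\sum_k|a_k|^2\big)^{1/2}$ modulo a factor absorbed by $1/\mathrm{Y}_X(1/\ZZ^{\star\star})$; taking the supremum over $n$ and over the point selections produces the claimed lower bound on $\ZZ^{\star\star}(X,\Xi)$.

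The main obstacle I expect is the construction in the lower-bound direction: one must manufacture, from an arbitrary admissible frequency set $\mathbb K$ arising as $\lfloor 2^n\{p_\omega\}\rfloor$, a single honest Marcinkiewicz multiplier whose operator, after the dyadic rescaling and after restricting/transferring to the torus $[0,1]$, acts on trigonometric polynomials with frequencies in $\mathbb K$ as (close to) the identity while keeping $\|m\|_{\mathrm{Mar}(\Xi)}=O(1)$ uniformly in $n$ and in the point selection. This requires care because the intervals $\omega$ may be arbitrarily small or large relative to the scale $2^{-n}$, so the reproducing bumps must be chosen with variation independent of those ratios, and one must check that a standard de Leeuw / transference step correctly passes from continuous Marcinkiewicz multipliers on $\mathbb R$ to the periodic Zygmund-type inequality. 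All of this is routine in spirit but technical; none of the remaining steps (the $B_p\Rightarrow$ modular conversion and the submultiplicativity manipulations) present genuine difficulty, being quotations of Proposition~\ref{p:weighmod} and of the elementary properties of $\mathrm{Y}_X$ recorded in Definition~\ref{d:locorl}.
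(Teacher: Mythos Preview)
Your upper-bound argument contains a genuine error. You specialize Theorem~\ref{thm:Marc} to $X_2=L^1(0,1)$ and claim that the $\ZZ(L^1)$ constant is controlled by an absolute constant. This is false: by the adjoint formulation~\eqref{e:adjzig}, finiteness of $\ZZ(L^1,\mathbb K)$ would mean $\|g\|_{L^\infty(0,1)}\lesssim\|g\|_{L^2(0,1)}$ for every trigonometric polynomial $g$ with frequencies in $\mathbb K$, which fails for any infinite $\mathbb K$. Your parenthetical justification has the duality reversed; it is the $X'=L^\infty$ norm, not the $X=L^1$ norm, that must be controlled by $L^2$. The correct choice is $X_2=L^2(0,1)$, for which $\ZZ^{\star\star}(L^2,\Xi)\leq 1$ trivially, as noted after Definition~\ref{d:LambdaX}. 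This is exactly how the paper derives the right inequality in Theorem~\ref{t:mainchar} from Corollary~\ref{cor:sample3} with $q=2$, and the analogous route works here with Theorem~\ref{thm:Marc} in place of Theorem~\ref{thm:HMsparse}. After this correction the passage through Proposition~\ref{p:weighmod} and the submultiplicativity step go through as you describe.

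Your lower-bound strategy is correct and matches the paper's approach in Section~\ref{proof_equiv_Zyg_prop}, suitably adapted. You have correctly identified the key structural point: a bounded-variation multiplier can place a unit-variation bump (e.g.\ an indicator) near any chosen point $p_\omega\in\overline\omega$, including an endpoint, while keeping $\|m\|_{\mathrm{Mar}(\Xi)}=O(1)$; this is what forces $\ZZ^{\star\star}$ rather than $\ZZ^{\star}$ on the left side. The paper does not spell this argument out for Corollary~\ref{c:maincharmar}, but the construction in \S\ref{proof_equiv_Zyg_prop} for Theorem~\ref{t:mainchar}, with the smooth bump $\psi$ replaced by an indicator near each $p_\omega$, carries over directly.
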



In accordance with the point of view of \cites{BCPV,CRDFS,TW}, Marcinkiewicz multipliers can be understood by embedding them into a wider class of  symbols satisfying suitable variation estimates, uniformly on each interval of $\Omega_\Xi$. These are the classes of $R_{p,q} ^\Xi$-multipliers which will be discussed in \S\ref{sec:Rpq} below. We note here that Marcinkiewicz multipliers with singular set $\Xi$ are $R_{1,1} ^\Xi$-multipliers and when $\Xi=\Lambda_1\coloneqq \{2^k:\, k\in\Z\}$ these are exactly the classical Marcinkiewicz multipliers; see \cite{CCDP2018}*{Proposition 2.9}. The class of multipliers $R_{2,2} ^{\Lambda_1}$ is the $R_2$ class of \cites{BCPV,TW}. In \S\ref{sec:Rpq} we will also present a suitable version of Theorem~\ref{thm:Marc} for $R_{p,1} ^\Xi$ multipliers  with $1 \leq p \leq 2${;} see Theorem \ref{thm:F}.

\subsection{Lacunary examples and weighted bounds}\label{sec:examples}  For $\xi\in\R$, the singular set $\Xi=\{\xi\}$ of standard H\"ormander-Mihlin multipliers is a singleton, enjoying the strongest possible maximal multiscale property 
$\ZZ^{\star \star}(L^1,\{\xi\}) \leq 2.
$ In this case, Theorem \ref{thm:roughsf} and Theorem \ref{thm:smoothsf}   recover respectively the pointwise sparse bound for H\"ormander-Mihlin multipliers, and the well known square sparse bound for the Littlewood-Paley square function \cite{Br2020}. Theorem \ref{thm:HMsparse} is instead a   sparse form domination   for  H\"ormander-Mihlin multipliers, well within the results of \cites{LN2013,LacEl}. 
If $\Xi$ is a finite set, it is easy to check that
\begin{equation}
	\label{e:finiteset}
	\ZZ^{\star\star}(L^p, \Xi) \lesssim \left[\# \Xi\right]^{\frac1p-\frac12}, \qquad 1\leq p \leq 2. 
\end{equation}
{Specializing} our results to this case leads to sparse and weighted versions of the multi-frequency estimates discussed in \cite{Bmf}. A more general family of singular sets with the Zygmund property is that of lacunary sets. The next definition has countless analogues in the literature, the closest being that of \cite{SjSj}; see also \cite{PR}.

\begin{definition}\label{def:lacun}
Let $\gamma\in(1,\infty)$. A sequence $\{\theta_k\}_{k\in\Z}$ is called $\gamma$-\emph{lacunary} if there exists $\theta\in\R$ such that $\theta_k\neq \theta$ for all $k\in\Z$ and $ \gamma(\theta_{k+1}-\theta)\leq (\theta_k-\theta) $ for all $k\in \Z$. A $\gamma$-\emph{lacunary set of order $0$} is a single point in $\R$. If $\tau\geq 1$ is a positive integer then a set $\Xi\subset\R$ will be called \emph{$\gamma$-lacunary of order $\tau$} if there exists a $\gamma$-lacunary sequence $\{\theta_k\}_{k\in\Z}$ such that, for every $	k\in\Z$, the set $\Xi\cap(\theta_{k+1},\theta_k]$ is lacunary of order $\tau-1$. In the sequel, we do not refer to $\gamma$ explicitly and simply say \emph{lacunary} of order $\tau$.
\end{definition}

Lacunary sets $\Xi$ of order $\tau\geq 0$ enjoy the maximal multiscale Zygmund properties
\begin{align}
\label{e:zygplac} 
\ZZ^{\star\star}(L^p, \Xi) &\lesssim (p-1)^{-\frac{\tau}{2}}, \qquad 1<p \leq2,  
\\
\label{e:zygloglac} 
\ZZ^{\star\star}\left(L\left[\log L\right]^{\frac\tau 2}, \Xi\right) &\lesssim 1 ,
\end{align}
with implied constants depending on $\gamma,\tau$ only. This is trivial for $\tau=0$,  a routine check relying upon the already mentioned inequality of Zygmund for $\tau=1$, see \cite{Zyg_pap} and \cite{Zygmund_book}*{Theorem 7.6, Chapter XII}, or upon the higher order analogue due to Bonami \cite{Bonami} for  $\tau>1$; see also \cite{BCPV}*{Remark 4.3}.  We note in passing that the corresponding Young function for the local $L[\log L]^\frac{\tau}{2}$ space can be taken to be $Y_{1,\frac{\tau}{2}}(t)\coloneqq t[\log(e+t)]^{\frac{\tau}{2}}$ which satisfies the formalism of Orlicz spaces in Definition~\ref{d:locorl}; in particular $Y_{1,\frac{\tau}{2}}$ satisfies the $B_p$-condition in Definition~\ref{d:locorl} for all $p>1$.

The article \cite{LerMRL2019}*{\S5.2} conjectures sparse norm  bounds respectively for the  rough Littlewood-Paley square function $\mathrm{H}_\Xi$ of Theorem \ref{thm:roughsf} and for the corresponding  Marcinkiewicz  multipliers $\mathrm{T}_m$ of Theorem \ref{thm:Marc} when $\Xi $ is a lacunary set of order $1$. Theorem \ref{thm:roughsf} yields in this case
\begin{equation}
	\label{e:thepartrough_1}
\|\mathrm{H}_\Xi\|_{L\sqrt{\log L}, L^1} \lesssim 1, \qquad 
\|\mathrm{H}_\Xi\|_{L^p, L^1} \lesssim (p-1)^{-\frac12}, \quad 1<p\leq 2
\end{equation}
and the second estimate matches the conjectured bound in  \cite{LerMRL2019}. On the other hand,  applying Theorem \ref{thm:Marc} tells us that 
\begin{equation}
	\label{e:thepartrough_2}
\|\mathrm{T}_m\|_{L\sqrt{\log L}, L\sqrt{\log L}} \lesssim 1, \qquad 
\|\mathrm{T}_m\|_{L^p, L^p} \lesssim (p-1)^{-1}, \quad 1<p\leq 2.
\end{equation}
The second estimate is sharp as $p\to 1^+$, cf. \cite{CCDP2018}*{Prop.\ 7.1}, both showing that \cite{LerMRL2019}*{eq.\ (5.1)} is too optimistic and deducing the correct substitute. In addition, neither space in the first estimate can be improved to a space $X\supsetneq L\sqrt{\log L}$, as otherwise $T_m$ would satisfy an $X$-modular estimate of the form appearing in Corollary \ref{cor:sample3}, and in particular map $X \to L^{1,\infty}$ locally, which is known to fail whenever $X\supsetneq L\sqrt{\log L}$; see \cite{TW}.

Lerner's conjectures \cite{LerMRL2019} aimed at furthering the study of quantitative weighted norm inequalities for the operators $\mathrm{H}_\Xi, \mathrm{T}_m$ in the first order lacunary case. Our sparse estimates lead to  a few improvements of known quantifications, and extend the scope to a much wider array of singular sets. For these sets the lacunarity assumption is replaced by requiring the multiscale or maximal multiscale Zygmund properties with $X=L^p$ with a specific blow-up rate as $p\to 1^+$, covering in particular the finite order lacunary case. See Section \ref{s:weights} for statements and proofs.

\subsection{Characterizing the LP-property and   non-lacunary examples}\label{sec:LPchar} Let $\Xi\subset \R$ be a closed null set and recall that $\Omega_\Xi=\{\omega\}_{\omega\in\Omega_\Xi}$ denotes the collection of complementary intervals of $\Xi$, namely $O_\Xi=\bigcup_{\omega\in\Omega_\Xi} \omega$. Let $1<p<\infty$. The set $\Xi$ has the \emph{Littlewood-Paley $p$-property}, in short $\mathrm{LP}(p)$, if there exist constants $C_p,c_p>1$, depending only on $p$, such that the following two-sided square function estimate 
\begin{equation}\label{eq:LP}
 c_p ^{-1} \|f\|_{p}\leq \left\|\mathrm{H}_{\Xi} f\right\|_p\leq C_p \|f\|_p
\end{equation}
holds, with $\mathrm{H}_\Xi$ defined as in \eqref{e:Hxiprim}. By duality,  it is clear that $\Xi$ has the $\mathrm{LP}(p)$ property if and only if it has the   $\mathrm{LP}(p')$ property.  
Moreover, say that $\Xi$ is an  \emph{$\mathrm{LP}$-set or that it has the $\mathrm{LP}$ property} if $\Xi$ has the $\mathrm{LP}(p)$-property for all $p\in(1,\infty)$. 

Similar definitions can be given in terms of H\"ormander-Mihlin multipliers or Marcinkiewicz multipliers with singular set $\Xi$. Say that a closed  null set $\Xi\subset\R$ has the property $\mathrm{HM}(p)$ if every H\"ormander-Mihlin multiplier with singular set $\Xi$ as in \eqref{e:multip} is bounded on $L^p(\R)$. Similarly, 
say $\Xi$ has the property $\mathrm{Mar}(p)$ if every Marcinkiewicz multiplier with singular set $\Xi$ as in \S\ref{sec:marc} is bounded on $L^p(\R)$.

These definitions are rather classical, see \cite{SjSj} for the case of the real line and \cite{HaKl} 
{of} the torus. We stress that the definitions above are insensitive to the value of the constants involved in $L^p(\R)$-boundedness assumptions. Furthermore, the main results in \cite{SjSj} show that the properties $\mathrm{LP}(p)$, $\mathrm{Mar}(p)$ and $\mathrm{HM}(p)$ are all equivalent for any fixed $p\in(1,\infty)$, whence  we focus on the $\mathrm{LP}(p)$ property in the next characterization.

\begin{theorem}\label{thm:LP} Let $\Xi\subset \mathbb R$ be a singular set and let $q\in [1,2)$. Then the following are equivalent.
\begin{itemize}
\item[1.]  $\ZZ^{\star\star}(L^{p},\Xi)<\infty$ for all $p\in(q,2)$.
\item[2.]  $\ZZ^\star(L^{p},\Xi)<\infty$ for all $p\in(q,2)$.
\item[3.] The set $\Xi$ has the \emph{$\mathrm{LP}(p)$ property} for all $p \in (q,q')$	.
\end{itemize} 
In particular, the set $\Xi$ has the $\mathrm{LP}$ property if and only if $ \ZZ^\star(L^p, \Xi)<+\infty$ for all $p\in(1,2)$.
\end{theorem}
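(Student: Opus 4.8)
The plan is to establish the cycle of implications $1 \Rightarrow 2 \Rightarrow 3 \Rightarrow 1$, with the bulk of the work residing in the last step. The implication $1 \Rightarrow 2$ is essentially immediate: picking the point $p_\omega$ in each interval $\omega\in\Omega_\Xi$ so that $\lfloor 2^n p_\omega\rfloor$ realizes the closest integer to $2^n\xi$ for the relevant $\xi\in\Xi$ (or simply noting that $\Xi$ itself, viewed through a choice of selected points, is subsumed by the supremum defining $\ZZ^{\star\star}$), one gets $\ZZ^\star(L^p,\Xi)\le \ZZ^{\star\star}(L^p,\Xi)$ up to harmless constants, uniformly in $p$. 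For $2 \Rightarrow 3$: given $\ZZ^\star(L^p,\Xi)<\infty$ for all $p\in(q,2)$, the right-hand inequality in \eqref{eq:LP} for $p\in(q,q')$ follows by interpolation/duality from Theorem~\ref{thm:roughsf} applied with $X=L^p$, $p\in(q,2)$ (the sparse bound $\|\mathrm H_\Xi\|_{L^p,L^1}\lesssim\ZZ^\star(L^p,\Xi)$ upgrades to $L^r\to L^r$ boundedness for $r\in(p,\infty)$ by the standard weighted theory of sparse forms, and in particular for all $r\in(q,q')$ after also invoking the dual range via $\mathrm{LP}(r)\Leftrightarrow\mathrm{LP}(r')$); the lower (reverse) inequality in \eqref{eq:LP} is then automatic by the usual duality argument, since $\sum_{\omega}\mathrm H_\omega^*\mathrm H_\omega=\mathrm{Id}$ and the adjoint square function bound on $L^{r'}$ gives the reverse bound on $L^r$.

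**The main step, $3\Rightarrow 1$.** The hard direction is to deduce the multiscale and maximal multiscale Zygmund bounds from the (qualitative) $\mathrm{LP}(p)$ property. Here I would use the equivalence, cited from \cite{SjSj}, between $\mathrm{LP}(p)$, $\mathrm{Mar}(p)$ and $\mathrm{HM}(p)$, so that I may assume every Marcinkiewicz (hence every $\mathrm{HM}(\Xi)$) multiplier is $L^p(\R)$-bounded for all $p\in(q,q')$ --- a priori with no control on the constant. The first task is to promote this qualitative boundedness to a quantitative one: by the closed graph theorem the map $m\mapsto \mathrm T_m$ is bounded from $(\mathrm{Mar}(\Xi),\|\cdot\|_{\mathrm{Mar}(\Xi)})$ into $\mathcal L(L^p)$, so $\sup\{\|\mathrm T_m\|_{L^p\to L^p}:\|m\|_{\mathrm{Mar}(\Xi)}\le 1\}=:\mathfrak M_p(\Xi)<\infty$ for each fixed $p\in(q,q')$. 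The second task is the transference: one must convert an $L^p(\R)$ multiplier bound into the discrete Zygmund inequality for $\lfloor 2^n\Xi\rfloor$, uniformly in $n$ and in the choice of selected points. This is done by a de Leeuw / Fefferman--Stein style restriction-to-$\Z$ argument combined with dyadic rescaling: a trigonometric polynomial $f=\sum_{k\in\lfloor 2^n\Xi\rfloor}a_k e^{2\pi ikx}$ on $[0,1]$ is tested against a Marcinkiewicz (indeed $\mathrm{HM}(\Xi)$) multiplier that, after rescaling by $2^{-n}$, isolates the frequencies near the points $2^{-n}\lfloor 2^n\xi\rfloor\in$ neighbourhoods of $\Xi$; the rescaling invariance of the $\mathrm{HM}(\Xi)$ and $\mathrm{Mar}(\Xi)$ norms (these are scale-invariant by construction, since $\Xi$ is not assumed to have any fixed scale) is what makes the estimate uniform in $n$. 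Running this for two exponents $p_0<2<p_0'$ in $(q,q')$ and using the square-function form $\mathrm H_\Xi$ on $L^{p_0}$ together with its $L^{p_0'}$ dual, one extracts exactly the bound $\sup_n\ZZ(L^{p},\lfloor 2^n\Xi\rfloor)\lesssim \mathfrak M_{p_0}(\Xi)+\mathfrak M_{p_0'}(\Xi)<\infty$ for $p\in(q,2)$, i.e.\ $\ZZ^\star(L^p,\Xi)<\infty$; the maximal version $\ZZ^{\star\star}$ follows by applying the same argument to every admissible selection $\{p_\omega\}$, which again is covered by a single multiplier bound since moving the point inside $\overline\omega$ does not change the $\mathrm{HM}(\Xi)$ norm of the associated bump.

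**The anticipated obstacle.** I expect the delicate point to be the uniformity of the transference over all dyadic scales $n$ and all point selections simultaneously --- equivalently, the passage from "$L^p$ bound for each individual multiplier" to "$L^p$ bound for the square function $\mathrm H_\Xi$ with its Whitney intervals", which is where the scale-invariance of the $\mathrm{HM}(\Xi)$ class and the quantitative version of \cite{SjSj}'s equivalence must be used carefully (one needs that $\mathrm{Mar}(p)$ implies a bound for the full square function, not merely for single rough projections; this is the Marcinkiewicz--Zygmund/Khintchine randomization step, inserting $\pm1$ signs $\varepsilon_\omega m\mathbf 1_\omega$ and averaging). A secondary technical nuisance is the appearance of the integer-part operation $\lfloor 2^n\xi\rfloor$: one must check that rounding each frequency to the nearest integer does not destroy the multiplier estimate, which is handled by noting that the symbol $m$ can be taken constant on the Whitney intervals at scale $\gtrsim 1$, so a displacement of size $O(1)$ in a frequency of size $\gtrsim 2^n$ lands in a comparable Whitney interval and the $\mathrm{HM}(\Xi)$ norm is unaffected. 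Once these uniformity issues are dispatched, assembling $1\Rightarrow 2\Rightarrow 3\Rightarrow 1$ closes the equivalence, and the final sentence of the theorem is the special case $q\downarrow 1$ together with the observation that $\mathrm{LP}(p)$ for all $p\in(1,\infty)$ is, by the already-established equivalence with items 1--2 at every $q>1$, the same as $\ZZ^\star(L^p,\Xi)<\infty$ for all $p\in(1,2)$.
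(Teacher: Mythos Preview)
Your handling of $1\Rightarrow 2$ and $2\Rightarrow 3$ matches the paper's proof and is fine. The issue is in $3\Rightarrow 1$, specifically your treatment of $\ZZ^{\star\star}$.

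You write that ``moving the point inside $\overline\omega$ does not change the $\mathrm{HM}(\Xi)$ norm of the associated bump.'' This is false. After rescaling by $\mu=2^n$, the test multiplier has a bump $\psi((\xi-k)/\eps)$ of fixed width $\eps$ at $k=\lfloor \mu p_\omega\rfloor$. The $\mathrm{HM}(\mu\Xi)$ condition demands $\mathrm{dist}(\xi,\mu\Xi)^j|m^{(j)}(\xi)|\lesssim 1$, i.e.\ smoothness at scale $\mathrm{dist}(\xi,\mu\Xi)$. If $p_\omega$ sits in the middle of $\omega$, then $\mathrm{dist}(k,\mu\Xi)\sim \mu|\omega|/2$, which can be arbitrarily large as $\mu\to\infty$; a bump of width $\eps$ there violates $\mathrm{HM}(\mu\Xi)$ badly. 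So the $\mathrm{HM}(\Xi)$ bound you invoke does not cover this test multiplier, and the argument as written breaks.

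The paper sidesteps this entirely via a one-line observation you did not use: the selection $\{p_\omega:\omega\in\Omega_\Xi\}$ is a \emph{perturbation} of $\Xi$ in the sense of Remark~\ref{rmrk:pert} (each complementary interval of $\{p_\omega\}$ is contained, up to null sets, in the union of at most two intervals of $\Omega_\Xi$). Since perturbations of $\mathrm{LP}(p)$-sets are $\mathrm{LP}(p)$-sets with comparable constant, the set $\{p_\omega\}$ itself has $\mathrm{LP}(p)$. One then applies the leftmost bound of Corollary~\ref{c:sample} directly to the singular set $\{p_\omega\}$ (not to $\Xi$), obtaining $\ZZ^\star(L^p,\{p_\omega\})<\infty$ with a bound uniform over the selection. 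Taking the supremum over selections gives $\ZZ^{\star\star}(L^p,\Xi)<\infty$.

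Your approach can be repaired, though: the test multiplier for $\ZZ^{\star\star}$, while not in $\mathrm{HM}(\mu\Xi)$, \emph{is} in $\mathrm{Mar}(\mu\Xi)$ with bounded norm, since each complementary interval $\mu\omega$ carries at most one smooth bump of bounded variation (plus $O(1)$ spillover from adjacent intervals). Since $\mathrm{LP}(p)\Rightarrow\mathrm{Mar}(p)$ quantitatively via Khintchine randomization, you get a uniform $L^p$ bound for these test multipliers and the transference goes through. This is essentially the content of the leftmost inequality in Corollary~\ref{c:maincharmar}. So the fix is either to switch from $\mathrm{HM}(\Xi)$ to $\mathrm{Mar}(\Xi)$ in your transference, or---more economically---to invoke the perturbation argument as the paper does.
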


Theorem \ref{thm:LP}, although interesting   on its own, helps us delineate the connections between the multiscale Zygmund  property and the  $\mathrm{LP}(p)$ property. Additionally, it leads to examples of non-lacunary sets enjoying nontrivial $\ZZ^\star(L^p)$-properties, thereby greatly extending the applicability of our multiplier theorems. This is expounded in the next series of remarks

\begin{remark}[Relation with $\Lambda(q)$]As previously observed,  finiteness of $\ZZ(L^{p},\mathbb K) $ for $p\in(1,2]$ coincides with the $\Lambda(q)$-property of $\mathbb K\subset \mathbb Z$ for $q=p'\in [2,\infty)$. It follows from  the work of Pisier \cite{Pisier_78} that $\mathbb K\subset \Z$ satisfies the $\ZZ(L\sqrt{\log L})$ property if and only if it is a Sidon set; see also the monograph by Marcus and Pisier \cite{M-P_book}, where an analogous characterization is obtained in the setting of compact groups. Note that the easier of the two equivalences in the previous characterization was first proved by Rudin, \cite{rudin}*{Theorem 3.1}. Arithmetic characterizations of Sidon sets in the dual of a compact abelian group, and in particular of subsets of integers satisfying the Zygmund property, were also found  by  Pisier, see  e.g. \cite{Pisier_83} and  references therein. In \cite{Bourgain_Sidon} Bourgain, using a different approach,  obtained an additional characterization of Sidon sets in the dual of a compact abelian group and recovered the aforementioned results of Pisier. See also the  treatise by Graham and Hare \cite{G-H_book} and   references therein.

It is well known that a $\mathrm{LP}(p)$ set is necessarily a  $\Lambda(\max(p,p'))$ set; see \cite{HaKl}*{\S3}. At the same time, there exist $\Lambda(q)$-sets,  $q>2$, which are not $\mathrm{LP}(q)$-sets. Most importantly for us,  there exist sets of integers  that are $\Lambda(q)$ for all $q<\infty$ but are not $\mathrm {LP}$-sets,  and in particular there exist sets that satisfy the $\ZZ(L\sqrt{\log L})$ property but are not $\mathrm{LP}$-sets, see \cite{HaKl}*{\S4}. Combining these examples with Theorem  \ref{thm:LP}, we infer that the $\Lambda(q)$, $2<q<\infty$, property of $\mathbb K$ is   strictly weaker than  the finiteness of  $\ZZ^\star(L^{q'},\mathbb K)$. Equivalently, the $\ZZ(L^{q'})$ property is strictly weaker than the $\ZZ^\star(L^{q'})$ property.
\end{remark}

\begin{remark}[Non-lacunary examples of multiscale Zygmund sets] An  example of an $\mathrm{LP}$-set $\Xi$ which  may not be written as a finite union of finite-order lacunary sets has been constructed in \cites{HaKl,HaKlnew}. More precisely, the authors in \cite{HaKl} construct a certain family of sets $E_\infty\subset \mathbb Z$ which are \emph{not} finite unions of lacunary sets of finite order. In \cite{HaKlnew} the authors verify that for a suitable choice of parameters in the construction of $E_\infty$, the latter set gives rise to a partition of the integers that satisfies the $\mathrm{LP}$ property on the torus. In \cite{HaKlperm}, the authors  mention that the proof of the Littlewood-Paley property for the torus transfers to the real line. By Theorem~\ref{thm:LP}, we gather that $\ZZ^\star(L^p, E_\infty)<+\infty$ for all $p\in(1,2)$.
\end{remark}
 
\begin{remark}[Idempotent multipliers]Let   $\mathcal M_p(\R)$ denote the algebra of $L^p$-bounded Fourier multipliers, $1<p<\infty$.  Every measurable set $E\subset \R$ generates the idempotent multiplier $\ind_E\in \mathcal M_2(\R)$. If $\ind_E\in\mathcal M_p(\R)$ for some $p\neq 2$ then the set $E$ generates a complemented subspace of $L^p$. By a theorem of Rudin, \cite{rudin_inv}, extended by Rosenthal to non-compact groups, \cite{Ros}, all translation invariant complemented subspaces of $L^p(\R)$, $1<p<\infty$, are of the form 
\[
\left\{T_E f:\, f\in L^p(\R)\right\}\subset L^p(\R),\qquad   \mathrm{T}_E f \coloneqq \left(\ind_E \hat f\right)^\vee,
\]
for some unique measurable $E$ such that $\ind_E \in \mathcal M_p(\R)$. A theorem of Lebedev and Olevskii, \cite{LeOle}, shows that a necessary condition for $\ind_E\in \mathcal M_p(\R)$ for $p\neq 2$ is that $E$ is open up to a set of measure zero. Using the results of the previous paragraph, we can give a sufficient condition for $E$ to generate an idempotent multiplier in 
$\mathcal M_{{q}}(\R)$ via the multiscale Zygmund property.

\begin{corollary}Let $E\subset \R$ be measurable. If $E$ 
 coincides up to a set of measure zero with an open set and $\ZZ^\star(L^p,\Xi_E)<+\infty$, where $\Xi_E$ denotes the set of endpoints of the component intervals of $E$, then 
	 $\ind_E\in\mathcal M_q(\R)$ for all $q \in (p,p')$.
\end{corollary}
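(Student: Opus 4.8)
We sketch the argument. The plan is to realize $\ind_E$ as a Marcinkiewicz multiplier adapted to $\Xi_E$, and then to run the chain of equivalences provided by Theorem~\ref{thm:LP}. Since $E$ agrees with an open set up to a null set, we may assume $E$ open and write it as the disjoint union of its connected components $\omega\in\Omega_E$; put $\Xi:=\partial E$, the closure of the countable endpoint set $\Xi_E$. The hypothesis $\ZZ^\star(L^p,\Xi_E)<\infty$ forces $\Xi$ to be a closed null set---hence a singular set in the sense of the introduction---and, by a brief perturbation argument at the level of the rescaled integer sets (see the last paragraph), it also forces $\ZZ^\star(L^p,\Xi)<\infty$. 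The elementary but crucial observation is that each complementary interval $\sigma\in\Omega_{\Xi}$ can contain at most one component of $E$; hence $\ind_E$ is identically $0$ or identically $1$ on each such $\sigma$, has zero variation there, and is therefore a Marcinkiewicz multiplier with singular set $\Xi$ and $\|\ind_E\|_{\mathrm{Mar}(\Xi)}\le 1$.

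Next I would propagate the hypothesis over an interval of exponents. For $p\le r\le 2$ one has $r'\le p'$, so $\|g\|_{L^{r'}(0,1)}\le\|g\|_{L^{p'}(0,1)}$ for every trigonometric polynomial $g$; combined with the adjoint formulation \eqref{e:adjzig} of the Zygmund property, this gives $\ZZ^\star(L^r,\Xi)\le\ZZ^\star(L^p,\Xi)<\infty$ for all $r\in[p,2]$. Thus item~2 of Theorem~\ref{thm:LP} holds with threshold parameter $p$, and the equivalence of items~2 and~3 there shows that $\Xi$ has the $\mathrm{LP}(q)$ property for every $q\in(p,p')$. Since $\ind_E$ is a Marcinkiewicz multiplier with singular set $\Xi$, the equivalence of the $\mathrm{LP}(q)$ and $\mathrm{Mar}(q)$ properties due to Sj\"olin \cite{SjSj} yields that $\mathrm{T}_{\ind_E}$ is bounded on $L^q(\R)$ for all $q\in(p,p')$, that is, $\ind_E\in\mathcal M_q(\R)$ for these $q$.

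Alternatively, and without invoking Sj\"olin's theorem directly for the final step, one can use the equivalence of items~1 and~2 of Theorem~\ref{thm:LP} to upgrade the previous bound to $\ZZ^{\star\star}(L^r,\Xi)<\infty$ for all $r\in(p,2)$; then, fixing $r_1\in(p,2)$ and applying Theorem~\ref{thm:Marc} with $X_1=L^{r_1}$ and $X_2=L^2$ (for which $\ZZ^{\star\star}(L^2,\Xi)\le 1$), one obtains the finite sparse bound $\|\mathrm{T}_{\ind_E}\|_{L^{r_1},L^2}\lesssim \|\ind_E\|_{\mathrm{Mar}(\Xi)}\,\ZZ^{\star\star}(L^{r_1},\Xi)\lesssim 1$. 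The standard $L^q$ mapping properties of $(L^{r_1},L^2)$-sparse forms then give $\mathrm{T}_{\ind_E}\colon L^q\to L^q$ for $q\in(r_1,2)$, hence for all $q\in(p,2)$ upon letting $r_1\downarrow p$; the range $q\in(2,p')$ follows by duality since $\ind_E$ is real-valued, and $q=2$ is Plancherel.

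The steps involving the monotonicity of $\ZZ^\star(L^r,\cdot)$ in $r$, the sparse-form bookkeeping, and the duality argument are routine. The one point requiring a word of care, and the main obstacle, is the passage in the first paragraph from the bare endpoint set $\Xi_E$ to the closed set $\Xi=\partial E$ that is the genuine singular set of $\ind_E$. Nullity of $\partial E$ is automatic: were it of positive measure, then for $n$ large $\lfloor 2^n\partial E\rfloor$ would have positive Banach density, contradicting the classical density bound for $\Lambda(p')$-sets forced by $\ZZ^\star(L^p,\Xi_E)<\infty$ (here $p<2$, so $p'>2$; the case $p\ge 2$ is vacuous). Finiteness of $\ZZ^\star(L^p,\partial E)$ follows because, at each dyadic scale $n$, the set $\lfloor 2^n\partial E\rfloor$ is contained in the union of $\lfloor 2^n\Xi_E\rfloor$ with its translates by $\pm 1$---an accumulation point of $\Xi_E$ produces, at scale $n$, an integer either already in $\lfloor 2^n\Xi_E\rfloor$ or at distance $1$ from it---and such a bounded union of translates of a set with the $\ZZ(L^p)$ property retains it with a comparable constant.
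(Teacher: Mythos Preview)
Your proof is correct and follows the route the paper intends: monotonicity of $\ZZ^\star(L^r,\cdot)$ in $r$ feeds condition 2 of Theorem~\ref{thm:LP}, whence the $\mathrm{LP}(q)$ property for $q\in(p,p')$, and $\ind_E$ is constant on each complementary interval of the singular set, so the $\mathrm{LP}\Leftrightarrow\mathrm{Mar}$ equivalence of \cite{SjSj} (or the sparse bounds themselves) finishes. Your careful passage from the bare endpoint set $\Xi_E$ to the closed set $\partial E=\overline{\Xi_E}$, together with the density argument showing $\partial E$ is null and the translate argument showing $\ZZ^\star(L^p,\partial E)\lesssim\ZZ^\star(L^p,\Xi_E)$, fills in a genuine technical point the paper leaves implicit.

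One small streamlining of your alternative: since $\ind_E$ is constant on each $\sigma\in\Omega_{\overline{\Xi_E}}$, it is in fact a H\"ormander--Mihlin multiplier with singular set $\overline{\Xi_E}$, not merely Marcinkiewicz. Hence Theorem~\ref{thm:HMsparse} with $X_1=X_2=L^p$ applies directly using only $\ZZ^\star(L^p,\overline{\Xi_E})<\infty$, yielding an $(L^p,L^p)$ sparse bound and thus $L^q$-boundedness for all $q\in(p,p')$; there is no need to upgrade to $\ZZ^{\star\star}$ via Theorem~\ref{thm:LP} or to invoke Theorem~\ref{thm:Marc}.
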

\end{remark}

\subsection*{Structure of the article} Section \ref{s:not}  establishes the basic notation being used throughout the paper. It also contains the reduction of the multiplier classes $\mathrm{HM}(\Xi)$, $\mathrm{Mar}(\Xi)$, and corresponding square functions  to suitably defined phase plane model sums adapted to the singular set $\Xi$. The discretization of the class $\mathrm{Mar}(\Xi)$ is actually realized by viewing it as a special case  of the more general family $R_{p,1}^\Xi, 1\leq p\leq 2$. A sparse domination bound extending Theorem \ref{thm:Marc}  to this family may be found in Section \ref{s:not} as well, see Theorem \ref{thm:F}.  

Section \ref{s:gab} is the technical heart of the article. It contains Lemma \ref{l:proj}, where the multiscale Zygmund property of $\Xi$ is exploited to construct  an appropriate Gabor projection of a given function $f$ localized to an interval. The function $f$ is projected
on the subspace generated by wave packets with frequency localized coming from a Whitney decomposition of $O_\Xi$. Most importantly, the localized $L^2$ norm of this projection are kept under control by the local $X$-norms of $f$. In the same section, Lemma \ref{l:proj} is used to produce single scale and tail estimates for $\Xi$-adapted model sums.

Section \ref{s:sce} contains the main multiscale part of the arguments, mostly summarized by Proposition \ref{p:maintech}, whose proof relies again on the projection Lemma \ref{l:proj} together with the single scale estimates of Section \ref{s:gab}. Sections \ref{s:thmroughsf}, \ref{s:thmsmoothsf}, and \ref{s:nextprob} are devoted to the proofs of the main Theorems \ref{thm:roughsf}, \ref{thm:smoothsf},    \ref{thm:HMsparse} and \ref{thm:Marc}  respectively, and all rely on the above mentioned proposition or variants thereof, as well as on arguments typical of sparse domination. 

Section \ref{s:weights} contains a plethora of quantitative weighted norm inequalities for $\Xi$-singular multipliers under the assumption of controlled blowup of the constants, e.g.\ $\ZZ^\star(L^p, \Xi)$, as $p \to 1^+.$ Sections \ref{proof_equiv_Zyg_prop} and \ref{s:LPp} are respectively dedicated to the reverse controls in Theorems \ref{t:mainchar}, Corollaries \ref{c:sample}, \ref{c:sample2} and \ref{cor:sample3}, and to the proof of Theorem \ref{thm:LP}.

\subsection*{Acknowledgments} The authors are grateful to Giacomo Ascione for an expert reading and valuable suggestions on the exposition.

\section{Notation and background material} \label{s:not}
 
\subsection{Recurring notation} Throughout the paper, the same convention used in the introduction is kept: $\Xi$ is a closed null set,  {$ O_\Xi\coloneqq \R \setminus \Xi$,} and {$ \Omega_\Xi$} stands for the collection of  connected components of $O_\Xi$. The Fourier transform follows the normalization
\[
\widehat f(\xi) = \int_{\R} f(x) \e^{-2\pi i  x\xi}\, \d x, \qquad \xi \in \mathbb R.
\]
The large positive constant $C$, small positive constant $c$, and those constants implied by the almost inequality signs are meant to be absolute unless otherwise specified, and may vary at each occurrence without explicit mention.

\subsection{Dyadic grids, weighted spaces, maximal functions}Given an interval $I\subset \R$,  denote by $c_I,\ell_I$ the center and Euclidean length of $I$, respectively. Hereafter, $\mathcal D$ stands for a generic  dyadic system on $\mathbb R$. For instance
\[
\mathcal D= \bigcup_{n\in \mathbb Z}  \mathcal D_n=\bigcup_{n\in \mathbb Z}    \Big\{\left[k2^{-n}, (k+1) 2^{-n}\right):\, k \in \mathbb Z\Big\}.
\]
It is useful to isolate
\[
\mathcal D(I) \coloneqq \{J\in \mathcal D:\, J\subseteq I\}, \qquad  \mathcal D_n(I) \coloneqq \left\{J\in \mathcal D(I): \, \ell_{J} =2^{-n} \ell_I \right\}, \qquad n\in \mathbb N.
\]
If  $I\in \mathcal D$ and $n\in \mathbb N$ then there is exactly  one element $J$ of $\mathcal  D$ with $I \in \mathcal D_n(J)$. Denote this element by $ I^{\uparrow n}$  and refer to it as   the \emph{$n$-th dyadic parent of $I$.}  For $k\in \mathbb Z$ and $n \in \mathbb N$  denote by
\[
I^{\uparrow n,+k}\coloneqq I^{\uparrow n}+ k\ell_{I^{\uparrow n}}, \qquad I^{+k} \coloneqq I+k\ell_I,
\] 
the corresponding higher order parents and their translates.  Linear changes of coordinates are indicated by
$$ \mathrm{Tr}_a f:= f(\cdot - a), \qquad \mathrm{Dil}_b^p f:= b^{-\tfrac{1}{p}}f(b^{-1}\cdot), \qquad a\in \mathbb R, \quad b>0 .$$
The smooth replacement for indicators will be 
\[
\chi(x) \coloneqq \frac{1}{1+x^2}, \qquad \chi_I \coloneqq \mathrm{Tr}_{c_I} \mathrm{Dil}^\infty_{\ell_I} \chi = \chi\circ  {\mathrm{Sy}_I}^{-1},
\]
where $I\in \mathcal D$. Note that  $ \mathrm{Sy}_I$ is  the unique linear map such that $I$ is the image of $[0,1)$. 
Positively/negatively weighted  local  $L^p$-norms are denoted  by
\[
\langle f \rangle_{p,I,\pm}\coloneqq  \left(\frac{1}{|I|}\int_{\R} \left|f\chi_{I}^{\pm\mathsf{dec}}\right|^p \right)^{  \frac1p }  = \left\|\left[f\circ{\mathrm{Sy}_I} \right] \chi^{\pm\mathsf{dec}}\right\|_p
\]
where {$\mathsf{dec}$ is a large but bounded power whose value might change at each occurrence. The dependence on $\mathsf{dec}$ may thus be kept implicit in the notation. A word on functions $f$ with $\langle f \rangle_{p,I,-}$ under control: these are functions that are strongly localized to $I$ in the sense that they decay polynomially fast away from it. On the other hand $\langle f\rangle_{p,I,+}$ is a tailed average of $f$ on $I$, and an exact average if $f$ happens to be supported on $I$.

Let $X$ be a local Orlicz space as  in Definition~\ref{d:locorl}. Orlicz averages localized to a not necessarily dyadic interval $I$, and the corresponding maximal operator, are indicated with 
\begin{align}\label{e:Orl}
&\langle f \rangle_{X,I}\coloneqq
\left\|(f\cic{1}_I)\circ{\mathrm{Sy}_I} \right\|_X,  \qquad
\\ 
& \mathrm{M}_X f(x) \coloneqq \sup_{I\ni x  } \langle f \rangle_{X,I}, \qquad x\in \mathbb R,
\\
&\langle f \rangle_{X,I,\pm}\coloneqq
\left\|\left[f\circ{\mathrm{Sy}_I} \right] \chi^{\pm\mathsf{dec}} \right\|_X.  
\end{align}
When $I=[0,1)$, we use just $\langle f \rangle_{X}$ in place of $\langle f \rangle_{X,I}$ for the local Orlicz norms.

\subsection{Sparse operators} \label{ss:sop} If $0<\eta<1$ and  $E\subset F\subset \mathbb \R$ are measurable, say that $E$ is \emph{$\eta$-major}, or simply \emph{major} in $F$, if $|E|\geq \eta|F|$. A collection $\mathcal S$ of intervals in $\mathbb R$ is said to be \emph{$\eta$-sparse}  if there exists a pairwise disjoint collection of $\eta$-major subsets
$\{E_S\subset S:S\in \mathcal S\}$. A well-known principle, see e.g.\ \cite{LN}, is that for each $0<\eta<1$, a collection $ \mathcal S\subset \mathcal D$ for some fixed dyadic grid $\mathcal D$ is $\eta$-sparse if and only if the   packing condition 
\begin{equation} 
\sum_{\substack{S\in \mathcal S\\ S 
		{\subseteq} I} } |S| \leq \eta^{-1} |I|
\end{equation}
holds  uniformly over all intervals $I\subset\mathbb R$. To each sparse collection $\mathcal S$,  local Orlicz space $X$ and exponent $0<p<\infty$,  associate the sublinear sparse operators
\begin{equation}
\label{e:thesparsedefs}
\mathcal S_{X,p} f\coloneqq  \left(\sum_{S\in \mathcal S} \langle f\rangle_{X,S}^p {\cic{1}_S} \right)^{ \frac1p }, \qquad 
\mathcal S_{X,p,\pm}  f\coloneqq  \left(\sum_{S\in \mathcal S} \langle f\rangle_{X,S,\pm}^p {\cic{1}_S} \right)^{ 	 \frac1p }.
\end{equation}
{The former appears for example in the statements of Theorems~\ref{thm:roughsf}~and~\ref{thm:smoothsf}, with $p=1$ and $p=2$ respectively.} When $X=L^{q}(\R)$ for some $0<q<\infty$, the notations in \eqref{e:thesparsedefs} are replaced by the simpler   $\mathcal S_{q,p}$ and $\mathcal S_{q,p,\pm}$ respectively. The  bilinear sparse forms
\begin{equation}
\label{e:thesparsedefs_2}
{\Lambda^{\mathcal S}_{X_1,X_2}}(f_1,f_2) \coloneqq \sum_{S\in \mathcal S}|S| \langle f_1\rangle_{X_1,S}\langle f_2\rangle_{X_2,S}, \qquad  {\Lambda^{\mathcal S}_{X_1,X_2,\pm}}(f_1,f_2) \coloneqq \sum_{S\in \mathcal S}|S| \langle f_1\rangle_{X_1,S,\pm}\langle f_2\rangle_{X_2,S,\pm},
\end{equation}
are also used, with the first one  appearing for example in the  statements of Theorems~\ref{thm:HMsparse}~and~\ref{thm:Marc} through  Definition \ref{d:locspa}.  As before, when $X_j=L^{q_j}(\R)$ for some $0<q_j<\infty$ the simpler notations $\Lambda^{\mathcal S}_{q_1,q_2}$ and $ \Lambda^{\mathcal S}_{q_1,q_2,\pm}$, respectively, are preferred.

\subsection{Tiles, adapted classes}  A \emph{tile} $P=I_P\times \omega_P$ is a product of dyadic intervals with area $1$, that is \begin{equation} \label{e:alltiles}
I_P\in \mathcal D, \qquad \omega_P \in \mathcal D', \qquad \ell_{I_P} \ell_{\omega_P}=1,
\end{equation}
where $\mathcal D, \mathcal D'$ are two possibly different dyadic grids.  Overloading notation, write $\ell_P\coloneqq\ell_{I_P}$ when $P=I_P\times \omega_P$ and refer to it as the \emph{scale} of $P$. 
The notation $\mathbb P_{\mathcal D,\mathcal D'}$ refers to the collection of all tiles arising as in \eqref{e:alltiles}, and the subscript is dropped once  $\mathcal D,\mathcal D'$ are fixed and clear from context.
We use  $\mathbb Q $  to denote a generic subset of $\mathbb P= \mathbb P_{\mathcal D,\mathcal D'}$.
 In general, if $\mathcal J \subset \mathcal D', $    write
 \begin{equation} \label{e:qJ}
 \mathbb Q^{\mathcal J} =\left\{P=I_P\times \omega_P\in \mathbb Q:\, \omega_P\in \mathcal J \right\}.
 \end{equation}
Instead, when $I$ is any interval in $\mathbb R$, 
\begin{equation}
	\label{e:tilesdef}
\mathbb Q_{=}(I)  \coloneqq\left\{P=I_P\times \omega_P\in \mathbb Q:\, I_P=I\right\}, \qquad \mathbb Q(I)    \coloneqq\left\{P=I_P\times \omega_P\in \mathbb Q:\, I_P\subseteq I\right\}.
\end{equation}
 To each  tile $P$  associate the $L^1$-normalized wavelet class $\Psi_P(D)$ consisting of those $\phi\in \mathcal C^\infty(\R) $ with 
\begin{equation}
\label{e:wavsupp}  \supp\widehat \phi \subset \omega_P
, \qquad  \sup_{0 \leq j\leq D} |I_P|^{1+j} \left\| \chi_{I_P}^{-D}\big(\exp( {2\pi} i c_{\omega_P} \cdot)\phi\big)^{(j)}\right\|_\infty \leq 1,
\end{equation} 
where $D$ is a large positive integer.

\begin{remark}\label{rmrk:loc}The localization trick in the form
\[
\upphi_P \in \Psi_P(D),\quad I_P \subset   I \implies  \varphi_P \coloneqq \chi_{I_{{P}}}^{-{\mathsf{dec}}} \upphi_P \in C \Psi_P(D-\mathsf{dec})
\]
for a suitable constant $C$ depending on ${\mathsf{dec}}$, will be used often, but only finitely many times within each argument. We may thus assume that every instance of $D$ in  $\Psi_P(D)$ is much larger than 
 any instance of ${\mathsf{dec}}$ appearing in the proofs, and thus drop $D$ from the notation altogether and  write $\Psi_P$ for each instance. We do not aim to optimize the number of derivatives of the wavelets used in this paper and the choices of parameters $D = 100$ and $0 \leq \mathsf{dec}\leq 50$ will suffice.
\end{remark}

\subsection{Almost orthogonal collections}A collection of tiles $\mathbb Q\subset \mathbb P_{\mathcal D,\mathcal D'}$ will be called \emph{almost orthogonal} if
\begin{equation}\label{e:aotiles}
P,P'\in \mathbb Q,\,\omega_P \cap \omega_{P'} \neq \varnothing \implies  \omega_{P} =\omega_{P'}.
\end{equation} 
For instance,  $\mathbb Q^{\mathcal J}$ is an almost orthogonal collection whenever the elements of $\mathcal J \subset \mathcal D'$ are pairwise disjoint.
The choice of name is motivated via the following reasoning, ultimately leading to \eqref{e:aotilescons} below. To begin with, let us introduce the operators
\begin{equation}\label{e:defmulti}
T_{\mathbb Q}f \coloneqq \sum_{P\in  \mathbb Q} |I_P| \langle f,\upphi_P \rangle \psi_P,
\end{equation}
where $\mathbb Q$ is a subset of $\mathbb P_{\mathcal D,\mathcal D'}$ and $\upphi_P,\psi_P\in \Psi_{P}$ are choices of adapted wave packets for each $P \in \mathbb P_{\mathcal D,\mathcal D'}$.  

\begin{lemma}\label{lem:ao} Suppose $\mathbb Q$ satisfies \eqref{e:aotiles}. Then  $\displaystyle \left\langle T_{\mathbb Q (I)} f\right\rangle_{2,I,-} \lesssim \left\langle f\right\rangle_{2,I,+} $ uniformly over $I\in \mathcal D$.
\end{lemma}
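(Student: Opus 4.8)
The plan is to unwind, via the definition of the localized weighted norms $\langle\cdot\rangle_{2,I,\pm}$, the claim into the inequality $\|\chi_I^{-\mathsf{dec}}\,T_{\mathbb Q(I)}f\|_{L^2(\R)}\lesssim\|f\chi_I^{\mathsf{dec}}\|_{L^2(\R)}$, and then to dispose of the growing weight $\chi_I^{-\mathsf{dec}}$ by absorbing it into the wave packets. The crucial point is that every $P\in\mathbb Q(I)$ has $I_P\subseteq I$, whence $\chi_I\gtrsim\chi_{I_P}$ pointwise on $\R$, with an analogous control of derivatives; consequently, by a routine variant of the localization trick of Remark~\ref{rmrk:loc}, the functions $\widetilde\upphi_P\coloneqq\chi_I^{-\mathsf{dec}}\upphi_P$ and $\widetilde\psi_P\coloneqq\chi_I^{-\mathsf{dec}}\psi_P$ still belong to $C\Psi_P$. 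Enlarging the (bounded) exponent $\mathsf{dec}$ to an integer if necessary --- which only strengthens the claimed inequality --- we may take $\chi_I^{-\mathsf{dec}}$ to be a polynomial of degree $2\mathsf{dec}$, so that multiplication by it does not enlarge Fourier supports, hence $\supp\widehat{\widetilde\upphi_P},\ \supp\widehat{\widetilde\psi_P}\subseteq\overline{\omega_P}$ is preserved. Since $\chi_I^{-\mathsf{dec}}$ is real, $\langle f,\upphi_P\rangle=\langle f\chi_I^{\mathsf{dec}},\widetilde\upphi_P\rangle$, so $\chi_I^{-\mathsf{dec}}\,T_{\mathbb Q(I)}f=\widetilde T\!\big(f\chi_I^{\mathsf{dec}}\big)$ with $\widetilde T g\coloneqq\sum_{P\in\mathbb Q(I)}|I_P|\langle g,\widetilde\upphi_P\rangle\widetilde\psi_P$, and it suffices to prove $\|\widetilde T\|_{L^2\to L^2}\lesssim 1$.

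For this I would factor $\widetilde T=B\circ A$, where $A g\coloneqq\big(|I_P|^{1/2}\langle g,\widetilde\upphi_P\rangle\big)_{P\in\mathbb Q(I)}$ and $B\big((c_P)_P\big)\coloneqq\sum_{P\in\mathbb Q(I)}|I_P|^{1/2}c_P\widetilde\psi_P$, and bound $\|A\|_{L^2\to\ell^2}\lesssim 1$ and $\|B\|_{\ell^2\to L^2}\lesssim 1$ separately. Let $\mathcal J\coloneqq\{\omega_P:P\in\mathbb Q(I)\}$. By the hypothesis \eqref{e:aotiles}, which is inherited by $\mathbb Q(I)$, the intervals of $\mathcal J$ are pairwise disjoint, and for each fixed $\omega\in\mathcal J$ the tiles $P$ with $\omega_P=\omega$ all have the common scale $\ell_{I_P}=\ell_\omega^{-1}\eqqcolon r_\omega$, with pairwise disjoint spatial intervals $I_P$ of this length. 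Since these tiles carry the common modulation frequency $c_\omega$, which cancels in each inner product $\langle\widetilde\psi_P,\widetilde\psi_{P'}\rangle$ and $\langle\widetilde\upphi_P,\widetilde\upphi_{P'}\rangle$, the pointwise decay recorded in \eqref{e:wavsupp} yields the off-diagonal estimate $|\langle\widetilde\psi_P,\widetilde\psi_{P'}\rangle|\lesssim r_\omega^{-1}\big(1+\dist(I_P,I_{P'})/r_\omega\big)^{-10}$ and its analogue for $\widetilde\upphi$; a Schur test then produces, for each $\omega\in\mathcal J$, the single-frequency bounds
\[
\Big\|\sum_{P:\,\omega_P=\omega}c_P\widetilde\psi_P\Big\|_{L^2}^2\lesssim r_\omega^{-1}\!\!\sum_{P:\,\omega_P=\omega}\!|c_P|^2,\qquad \sum_{P:\,\omega_P=\omega}\!|\langle h,\widetilde\upphi_P\rangle|^2\lesssim r_\omega^{-1}\|h\|_{L^2}^2,
\]
the second being dual to the corresponding synthesis bound for the family $\{\widetilde\upphi_P\}_{\omega_P=\omega}$.

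It remains to assemble these across $\mathcal J$ using frequency orthogonality. For $B$: each block $\sum_{P:\,\omega_P=\omega}|I_P|^{1/2}c_P\widetilde\psi_P$ has Fourier support in $\overline\omega$, the $\omega\in\mathcal J$ are pairwise disjoint, so Plancherel splits $\|B((c_P))\|_{L^2}^2$ into the sum over $\omega$ of these blocks' squared norms, each of which is $\lesssim r_\omega\cdot r_\omega^{-1}\sum_{P:\,\omega_P=\omega}|c_P|^2$ by the first displayed bound and $|I_P|=r_\omega$; summing over $\omega$ gives $\|B\|\lesssim 1$. For $A$: since $\supp\widehat{\widetilde\upphi_P}\subseteq\overline{\omega_P}$, we may replace $g$ by its frequency restriction $g^{\omega_P}\coloneqq(\widehat g\,\mathbf{1}_{\omega_P})^{\vee}$ in every coefficient, and then, grouping by $\omega$ and using the second displayed bound followed by Plancherel,
\[
\|Ag\|_{\ell^2}^2=\sum_{\omega\in\mathcal J}r_\omega\!\!\sum_{P:\,\omega_P=\omega}\!|\langle g^{\omega},\widetilde\upphi_P\rangle|^2\ \lesssim\ \sum_{\omega\in\mathcal J}\|g^{\omega}\|_{L^2}^2=\sum_{\omega\in\mathcal J}\|\widehat g\,\mathbf{1}_\omega\|_{L^2}^2\ \le\ \|g\|_{L^2}^2 .
\]
Hence $\|\widetilde T\|=\|BA\|\le\|B\|\,\|A\|\lesssim 1$, and feeding this back into the first paragraph gives $\langle T_{\mathbb Q(I)}f\rangle_{2,I,-}\lesssim\langle f\rangle_{2,I,+}$, uniformly over $I\in\mathcal D$. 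I expect the only genuinely delicate point to be the first paragraph: the weight $\chi_I^{-\mathsf{dec}}$, unlike $\chi_{I_P}^{-\mathsf{dec}}$, grows on the larger scale of $I$, and one must check that it is nonetheless compatible both with the adaptedness of the wave packets (here $I_P\subseteq I$ is used) and with their frequency localization (here the polynomial nature of $\chi_I^{-\mathsf{dec}}$ is used); everything after that is the classical single-scale almost-orthogonality for tile families obeying \eqref{e:aotiles}.
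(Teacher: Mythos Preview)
Your proof is correct and follows essentially the same approach as the paper: absorb the weight $\chi_I^{-\mathsf{dec}}$ into the wave packets via the localization trick (your observation that $\chi_I^{-\mathsf{dec}}$ is a polynomial, hence preserves Fourier supports, makes explicit what the paper leaves implicit in its appeal to Remark~\ref{rmrk:loc}), then exploit the almost-orthogonality \eqref{e:aotiles} to bound the resulting operator on $L^2$. The paper packages the second step as a ``standard $TT^*$ argument'' with a citation, whereas you spell it out via the $B\circ A$ factorization with single-frequency Schur bounds and between-frequency Plancherel orthogonality; these are equivalent realizations of the same estimate.
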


\begin{proof} Let us denote
\[
\widetilde{T}_{\mathbb Q(I)}f \coloneqq \sum_{P\in\mathbb Q(I)}|I_P|\langle f,\widetilde{\upphi}_P\rangle\widetilde{\psi}_P,\qquad \widetilde{\psi}_P\coloneqq \chi_I ^{-{\mathsf{dec}}}\psi_P,\qquad \widetilde{\upphi}_P\coloneqq \chi_I ^{-{\mathsf{dec}}}\upphi_P.
\]
By Remark~\ref{rmrk:loc} and a standard $TT^*$ argument reliant on the almost orthogonality property \eqref{e:aotiles} as for example in \cite{APHPR}*{\S4.3}, we have
\[
\langle T_{\mathbb Q(I)}f \rangle_{2,I,-} ^2= \frac{1}{|I|}\int_{\R} \left|\widetilde{T}_{\mathbb Q(I)}(f\chi_I ^{{\mathsf{dec}}})\right|^2\lesssim\frac{1}{|I|}\int\left|f\chi_I ^{{\mathsf{dec}}}\right|^2 = \langle f \rangle_{2,I,+} ^2
\]
which is the desired estimate.
\end{proof}

If  $\mathbb Q$ satisfies \eqref{e:aotiles}, an   immediate consequence of the lemma is that
\begin{equation}\label{e:aotilescons}
\frac{1}{|I|}\sum_{P \in \mathbb Q(I) } |I_P| |\langle f,\upphi_P \rangle |^2 = \frac{\left \langle T_{\mathbb Q (I)}  f,f \right\rangle}{|I|} \leq  \langle  T_{\mathbb Q (I)}  f\rangle_{2,I,-}   \langle f\rangle_{2,I,+}  \lesssim  \langle f\rangle_{2,I,+}^2
\end{equation} 
uniformly over $I\in \mathcal D$ and choices of wave packets $\upphi_P\in \Psi_P$.

\subsection{Discretization of multipliers with singular set $\Xi$} \label{ss:disc} In this paragraph, we recast the well-known time-frequency discretization of Fourier multipliers of the type \eqref{e:multip}, originating e.g.\ in \cites{LT1,LT}; see also \cite{DPFR} for a list of more recent references and a proof of \eqref{e:cvxhullms} below.

Let $\mathcal D$ be a standard dyadic grid on $\R$. The \emph{$\mathcal D$-Whitney decomposition} of an open set  $\varnothing\subsetneq O\subsetneq \mathbb R$ is the collection $\mathbf{w}_{\mathcal D}(O)\subset \mathcal D$ of intervals $I$ 
satisfying 
\[
c_1 \ell_I \leq \dist(I,\mathbb R\setminus O) \leq c_2\ell_I
\] which are maximal with respect to inclusion. Here
 {$1< c_1 < c_2$} are fixed  numerical constants. For the arguments of this paper, it suffices to take $c_1=3,c_2=5$. 
To each  such nontrivial open set $O$ and $\mathbb Q\subset \mathbb P^{O}_{\mathcal D, \mathcal D'}$ we may associate the collection  
\begin{equation}\label{e:tilesO} 
	\mathbb Q^{O} \coloneqq  \mathbb Q^{\mathbf{w}_{\mathcal D'}(O)}.
\end{equation}{Observe that there is no  notational overload between \eqref{e:qJ} and \eqref{e:tilesO}; in the former, the superscript is a collection of intervals, while for the latter the superscript is an open set.}
The elements of $\mathbf{w}_{\mathcal D'}(O)$ are pairwise disjoint whence each $\mathbb Q^{O}$ above is almost orthogonal. The purpose of this definition is that for each  multiplier $m$ satisfying \eqref{e:multip} with singular set $\Xi$, the equality
\begin{equation}\label{e:cvxhullms}
 	\mathrm{T}_{m} =\sum_{j=1}^{3} c_j T_{	\mathbb P^{O_\Xi}_{\mathcal D, \mathcal D_j}}
\end{equation} 
holds for a canonical choice of grids $\mathcal D, \mathcal D_{j}, 1\leq j \leq 3$,  suitably chosen constants $c_j$ and adapted wave packets in \eqref{e:defmulti}. From this point onward, we will work with a fixed pair $\mathcal D, \mathcal D_j$ of grids and omit this pair from the notation, writing for instance $\mathbb P^{O}$ in place of $\mathbb P^{O}_{\mathcal D, \mathcal D_j}$. Therefore, well-known reductions summarized e.g.\ in \cite{DPF+RDF}*{\S4.9},  see  also \cite{DPFR}*{eq. (2.10)}, show that   Theorem~\ref{thm:HMsparse} follows from  the same estimate for the generic model form, namely 
\begin{equation}\label{e:cvxhullms2} 
\sup_{\substack{\mathbb Q \subset \mathbb P^{O_\Xi} \\ \# \mathbb Q <\infty } } \left| \langle T_{	\mathbb Q} f_1,f_2 \rangle \right| \lesssim \left( \prod_{j=1,2} \ZZ^\star(X_j, \Xi) \right)  \sup_{\mathcal S \,\mathrm{ sparse}} {\Lambda^{\mathcal S}_{X_1,X_2}}(f_1,f_2)  .
\end{equation} 
Similarly, the  square function estimate involving  $\mathrm{H}_{\Xi,m}$ of Theorem~\ref{thm:roughsf} is a consequence of the pointwise bound  
\begin{equation} \label{eq:modelr}
 \mathrm{H}_{\Xi,m} f  \lesssim \|m\|_{\mathrm{HM}(\Xi)}  \left\| T_{\mathbb P^{\omega}} f\right\|_{\ell^2(\omega\in \Omega_\Xi)} \lesssim \|m\|_{\mathrm{HM}(\Xi)}  \ZZ^{{\star}} (X,\Xi) \mathcal S_{X,1}f
\end{equation}   
for each $f\in L^\infty_0(\R)$ and  a suitable choice of sparse collection $\mathcal S$. In the definition of  $\mathrm{H}_{\Xi,m}$ we have tacitly used that ${\mathbb P^{O_\Xi}}$ is the union of the collections $\mathbb P^\omega$ over the connected components $\omega\in \Omega_\Xi$ of $O_\Xi$.  {In particular, through the almost orthogonal decomposition of $f$ for each fixed scale detailed in \eqref{e:gab1}, the H\"{o}rmander-Mihlin type condition in \eqref{e:hmintro} is inherited by the wavelets appropriately chosen in the classes $\Psi_P(D)$ with $P \in {\mathbb P^{O_\Xi}}$.
Finally, Theorem~\ref{thm:smoothsf} for the smooth square function $\mathrm{G}_\Xi$ is a consequence of the pointwise bound
 \begin{equation}\label{eq:models}
\mathrm{G}_{\Xi} f  \coloneqq \left( \sum_{P\in \mathbb P^ {O_\Xi}}  \left( \sup_{\upphi \in \Psi_P} |\langle f,\upphi \rangle|\right)^2  {\ind}_{I_P} \right)^{  \frac12 } \lesssim \ZZ^{{\star}} (X,\Xi)   \mathcal S_{X,2}f 
\end{equation}   
for each $f\in L^\infty_0(\R)$ and  a suitable choice of sparse collection $\mathcal S$. 

\subsection{Discretization of multipliers obeying $\Xi$-variation assumptions}\label{sec:Rpq} This subsection addresses the discretization of the multiplier class $\mathrm{Mar}(\Xi)$ as a special case   of the more general $R_{p,1}^\Xi, 1\leq p\leq \Xi$, classes defined momentarily. In order to describe these, the following equivalent version of the Lorentz sequence norms 
 \[
 \left\| \left\{a_j :\, j\in \mathbb Z \right\} \right\|_{\ell^{p,q}(j)} \coloneqq \left\|   2^{n}\left( \#\left\{j\in \mathbb Z:\, |a_j|\in [2^{n},2^{n+1})\right\}\right)^{\frac{1}{p}} \right\|
 _{\ell^q(n\in \mathbb Z)}
 \] 
with $0<p<\infty, \,0<q\leq \infty$, is used below.

\begin{definition}[$R_{p,q} ^\Xi$ multipliers] Let $\Xi\subset\R$ be a closed null set. Say that $m$ is a \emph{$R_{p,q}^\Xi$-atom} if for each $\omega \in \Omega_\Xi$ there exists $J_\omega\in\N$ and coefficients $\{a_{j,\omega}:\, 1\leq j \leq J_\omega\}$ and a collection $\{\alpha(j,\omega): \, 1\leq j\leq J_\omega\}$ consisting of pairwise disjoint subintervals of $\omega$, such that
\begin{equation}
\label{e:rpatom}
m\ind_\omega=  \sum_{1\leq j \leq J_\omega} a_{j,\omega} \ind_{\alpha(j,\omega)}, \qquad  \left\| a_{j,\omega} \right\|_{\ell^{p,q}(j)} \leq 1.
\end{equation}
The class of  \emph{$R_{p,q}^\Xi$ multipliers} is then the atomic space generated by  $R_{p,q}^\Xi$-atoms as defined above, equipped with the corresponding atomic norm. As customary, $R_{p}^\Xi$  stands for $R_{p,q}^\Xi$ when $p=q$.
\end{definition}

 These classes of multipliers have previously appeared in the literature in different forms, at least when $\Xi $ has lacunary structure.  The class of $R_{1,1}^{\Lambda_1}$ multipliers associated to the typical first order lacunary set $\Lambda_1\coloneqq\{2^k:\, k\in\Z\}$  coincides with the classical Marcinkiewicz class. More generally, the class $R_{1,1} ^\Xi$ coincides with the $\mathrm{Mar}(\Xi)$ class of \S\ref{sec:marc} with singular set $\Xi$; see the elementary argument of \cite{CCDP2018}*{Proposition 2.9}. Similarly, the class of multipliers  $R_{p,p} ^{\Lambda_1}$ coincides with the  $R_p$-multipliers appearing in \cites{BCPV,TW}; see also \cite{CRDFS}. The class $R_2=R_{2,2} ^{\Lambda_\tau}$ with singular set $\Lambda_\tau$ being a lacunary set of general order $\tau\geq 1$ has been introduced and studied in \cite{BCPV} where it is shown that these multipliers satisfy the best-possible endpoint modular estimate as in Corollary~\ref{c:sample} with $X=L\log^{\frac{\tau}{2}}(L)$ and $w\equiv 1$.
 
One more definition is needed for the discretization of $R_{p,1}^\Xi$-multipliers. Its importance is revealed by Lemma \ref{lem:Jatom} below.
\begin{definition}\label{def:Jatom} Let $J$ be a positive integer. We say that $m$ is a  \emph{$R_{p,1,J} ^\Xi$-atom}  and write $m\in\mathcal R_{p,1,J} ^\Xi$ if for every $\omega\in \Omega_\Xi$ there exist $J_\omega\leq J$ and coefficients $\{a_{j,\omega}:\, 1\leq j \leq J_\omega\}$ and a collection $\{\alpha(j,\omega): \, 1\leq j\leq J_\omega\}$ consisting of pairwise disjoint subintervals of $\omega$, such that
\[
m \ind_\omega= \frac{1}{J ^{\frac1p}}    \sum_{1\leq j \leq J_\omega} \eps_{j,\omega}  \ind_{\alpha(j,\omega)}, \qquad  \sup_{\substack{  1\leq j \leq J_\omega}} |\eps_{j,\omega}|\leq  1.
\]	
\end{definition}

\begin{lemma}\label{lem:Jatom} The class $R_{p,1} ^\Xi$ coincides with the atomic space generated by   $\bigcup_{J \in 2^\N} \mathcal R_{p,1,J}$.
\end{lemma}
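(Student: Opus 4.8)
The plan is to show the two inclusions between the atomic space generated by $\bigcup_{J\in 2^{\N}}\mathcal R_{p,1,J}^\Xi$ and $R_{p,1}^\Xi$, the main content being the decomposition of a generic $R_{p,1}^\Xi$-atom into a sum of $\mathcal R_{p,1,J}^\Xi$-atoms with summable coefficients.

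First I would handle the easy inclusion: any $m\in\mathcal R_{p,1,J}^\Xi$ is, up to the constant factor $J^{1/p}\cdot J^{-1/p}$ bookkeeping already built into Definition~\ref{def:Jatom}, an $R_{p,1}^\Xi$-atom, since on each $\omega$ one has $m\ind_\omega=\sum_{j=1}^{J_\omega} (J^{-1/p}\eps_{j,\omega})\ind_{\alpha(j,\omega)}$ with at most $J$ nonzero coefficients each of size at most $J^{-1/p}$, so that $\|a_{j,\omega}\|_{\ell^{p,1}(j)}\lesssim (\#\{j\})^{1/p}\cdot J^{-1/p}\leq 1$ uniformly in $\omega$ (using that for a finitely supported sequence taking values in a single dyadic annulus the $\ell^{p,1}$-norm is comparable to $(\text{cardinality})^{1/p}$ times the common magnitude). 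Hence every element of the atomic space generated by $\bigcup_J\mathcal R_{p,1,J}^\Xi$ lies in $R_{p,1}^\Xi$ with controlled norm.

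For the reverse inclusion, take an $R_{p,1}^\Xi$-atom $m$ as in \eqref{e:rpatom}. The idea is to perform a simultaneous dyadic layer decomposition of the coefficient families over all $\omega$. For $n\in\Z$ set, on each $\omega$, $S_{n,\omega}\coloneqq\{j:\,|a_{j,\omega}|\in[2^n,2^{n+1})\}$ and define $m_n\ind_\omega\coloneqq\sum_{j\in S_{n,\omega}}a_{j,\omega}\ind_{\alpha(j,\omega)}$, so $m=\sum_{n\in\Z}m_n$. By the $\ell^{p,1}$ normalization in \eqref{e:rpatom}, the quantity $\lambda_n\coloneqq\sup_\omega 2^n(\#S_{n,\omega})^{1/p}$ satisfies $\sum_n\lambda_n\lesssim 1$. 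For each fixed $n$, choosing $J=J(n)$ to be the least power of $2$ with $J\geq\sup_\omega\#S_{n,\omega}$, one checks that $\lambda_n^{-1}m_n$ is (a constant multiple of) an element of $\mathcal R_{p,1,J}^\Xi$: indeed on each $\omega$ it is $J^{-1/p}$ times a sum of at most $J_\omega\leq J$ indicators of pairwise disjoint subintervals with coefficients $\eps_{j,\omega}=\lambda_n^{-1}J^{1/p}a_{j,\omega}$, and $|\eps_{j,\omega}|\leq\lambda_n^{-1}J^{1/p}2^{n+1}\lesssim\lambda_n^{-1}(\#S_{n,\omega})^{1/p}2^{n+1}\lesssim 1$ by the definition of $\lambda_n$. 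Thus $m=\sum_n\lambda_n\cdot(\lambda_n^{-1}m_n)$ exhibits $m$ as an absolutely convergent sum of $\mathcal R_{p,1,J}^\Xi$-atoms (times constants), with $\sum_n\lambda_n\lesssim 1$; by definition of the atomic norms this proves $m$ belongs to the atomic space generated by $\bigcup_J\mathcal R_{p,1,J}^\Xi$ with comparable norm, and the two spaces coincide.

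The main obstacle I anticipate is purely bookkeeping rather than conceptual: making sure the single parameter $J=J(n)$ works \emph{uniformly over all $\omega\in\Omega_\Xi$} simultaneously — that is, that $\sup_\omega\#S_{n,\omega}$ is finite for each $n$ (which follows because $\|a_{j,\omega}\|_{\ell^{p,1}(j)}\leq 1$ forces $\#S_{n,\omega}\leq 2^{-np}$) — and that the pairwise-disjointness of the $\alpha(j,\omega)$ is preserved under restriction to the index set $S_{n,\omega}$, which is immediate. One should also be slightly careful that only finitely many $n$ contribute on any given $\omega$ when $m\ind_\omega$ is a finite sum, so all manipulations are legitimate before passing to the atomic-norm limit. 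Absorbing the various absolute constants into the atomic norm comparison (the statement only claims the two atomic spaces coincide, not an isometry) makes these constants harmless.
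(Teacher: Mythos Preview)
There is a genuine gap in the hard direction. Your key claim is that
\[
\lambda_n\coloneqq \sup_{\omega\in\Omega_\Xi} 2^n\big(\#S_{n,\omega}\big)^{1/p}
\quad\text{satisfies}\quad
\sum_{n\in\Z}\lambda_n\lesssim 1.
\]
But the hypothesis $\|a_{\cdot,\omega}\|_{\ell^{p,1}}\leq 1$ only gives $\sup_\omega\sum_n 2^n(\#S_{n,\omega})^{1/p}\leq 1$; you are silently interchanging the supremum over $\omega$ with the sum over $n$, and this interchange is false in general. For a concrete counterexample, take $\Omega_\Xi\supset\{\omega_k:k\geq 1\}$ and on each $\omega_k$ let $m\ind_{\omega_k}$ be a sum of $\lfloor 2^{kp}\rfloor$ indicators of pairwise disjoint, separated subintervals, all with coefficient $2^{-k}$. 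Then on $\omega_k$ the only nonempty level set is $S_{-k,\omega_k}$, of cardinality $\sim 2^{kp}$, so $\sum_n 2^n(\#S_{n,\omega_k})^{1/p}\sim 1$ and the atom condition \eqref{e:rpatom} is met; yet $\lambda_{-k}\geq 2^{-k}(2^{kp})^{1/p}=1$ for every $k\geq 1$, whence $\sum_n\lambda_n=\infty$. The obstacle you flagged as ``purely bookkeeping'' is therefore exactly the conceptual one: the level index $n$ that is large for one $\omega$ need not be the one that is large for another, and this rules out a uniform choice of scalar weights $\lambda_n$.

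For comparison, the paper does \emph{not} slice by value level but by rank in the nonincreasing rearrangement: it sets $\mathrm{ind}(J,\omega)=\{j:|a_{j,\omega}|\in(a^*_{2J,\omega},a^*_{J,\omega}]\}$ and groups the terms with rank in $(J/2,J]$ into a piece $m_J$. This has the advantage that $\#\mathrm{ind}(J/2,\omega)\leq J$ holds automatically and uniformly in $\omega$, so $m_J\in\mathcal R_{p,1,J}^\Xi$ without having to take a supremum. The coefficients in the paper's decomposition are $J^{1/p}a^*_{J/2,\omega}$, and the paper records $\sup_\omega\sum_J J^{1/p}a^*_{J,\omega}\eqsim 1$, which is the $\ell^{p,1}$ identity in rearranged form; note that this is again a $\sup_\omega\sum_J$ statement, not a $\sum_J\sup_\omega$ one, so the delicate point is the same as in your argument, and one cannot simply read off scalar, $\omega$-independent coefficients summing to $\lesssim 1$ from that display either. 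In short, your approach and the paper's are two parametrizations of the same layer-cake decomposition; the paper's rank-based indexing is cleaner because the cardinality bound is built in, but the passage from a family of $\omega$-wise convex combinations to a single global atomic decomposition is not addressed by your $\sum_n\lambda_n\lesssim 1$ claim, which is false as stated.
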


\begin{proof} The fact that each $R_{p,1,J} ^\Xi$-atom is a uniformly bounded multiple of an $R_{p,1}^\Xi$-atom follows by the routine verification of  the bound
\[
\sup_{\omega\in\Omega_{\Xi}}\left\| \left\{J_\omega ^{-	 \frac1p } \eps_{j,\omega}:\, j\in\Z\right\}\right\|_{\ell^{p,1}(j)}\lesssim 1
\]
with $\left\{\eps_{j,\omega}:\, 1\leq j\leq J_\omega,\,\omega\in\Omega_\Xi\right\}$ as in the definition of a $R_{p,1,J} ^\Xi$-atom.  It remains to check that if $m$ is an $R_{p,1}^\Xi$-atom then  $m$ may be obtained as a convex combination in $J\in 2^\N$ of $R_{p,1,J} ^\Xi$- atoms. To that end let us fix $\omega\in\Omega_\Xi$ and consider an $R_{p,
{1}} ^\Xi$-atom
\[
m\ind_\omega=\sum_{1\leq j\leq J_\omega}a_{j,\omega}\ind_{\alpha(j,\omega)}.
\]
For $J\in 2^\N$ consider the non-increasing rearrangement of the sequence $\{a_{j,\omega}:\, 1\leq j\leq J_\omega\}$, defined as
\[
a_{J,\omega} ^* \coloneqq\inf\big\{t>0: \,\#\{j: |a_{j,\omega}|>t\}\leq J\big\},
\]
and set
\[
  \mathrm{ind}(J,\omega)\coloneqq\left\{j: \,|a_{j,\omega}|\in \left(a_{2J,\omega} ^*, a_{J,\omega} ^* 	\right]\right\}.
 \]
By definition $\#\mathrm{ind}(J,\omega)\leq 2J$, whence
\[
m{ \ind_\omega}=\sum_{J\in   2^{\mathbb N}} J^{\frac1p} a_{J/2,\omega} ^*   m_{J}\cic{1}_\omega   , \qquad m_{J}\cic{1}_\omega \coloneqq \frac{1}{J^{\frac1p}}\sum_{j\in \mathrm{ind}(J/2,\omega)} \frac{a_{j,\omega}}{a_{J/2,\omega} ^*} \cic{1}_{\alpha(j,\omega)}.
\]
It is clear that $m_J\in  R_{p,1,J} ^\Xi$ and it is not difficult to see that
\[
\sup_{\omega\in \Omega} \sum_{J\in   2^{\mathbb N}} {J}^{{\frac1p}} a_{J,\omega} ^*   \eqsim \sup_{\omega\in \Omega} \left\| a_{j,\omega} \right\|_{\ell^{p,1}(j)} =1.
\]
Thus we proved that every $R_{p,q} ^\Xi$-atom is in the convex hull  over $J\in 2^\N$ of  $R_{p,1,J} ^\Xi$-atoms. In particular, these two  collections of atoms generate the same space and the proof of the lemma is complete.	
\end{proof}

The concepts above are linked via  the following two lemmas. It is convenient to separate the two cases $p\in \{1,2\}$ but a unified statement is also possible.

\begin{lemma}\label{lem:marctomodel} Let $m$ be a $R_{1,1,J} ^\Xi$-atom as in Definition~\ref{def:Jatom}. Then the multiplier operator associated to $m$ belongs to the convex hull of the model sums $T_{\mathbb Q } f$ defined as in \eqref{e:defmulti} with $\mathbb Q\subset \mathbb P^{O_{\Xi_m}}$, with $\mathbb P ^{O_{\Xi_m}}$ given as in \eqref{e:tilesO} and $\Xi_m\subset\R$ being a closed null set satisfying the Zygmund property \[\ZZ^\star(X,\Xi_m)\lesssim \ZZ^{\star\star}(X,\Xi).\]
\end{lemma}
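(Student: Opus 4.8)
The plan is to take the given $R_{1,1,J}^\Xi$-atom $m$, which satisfies $m\ind_\omega=J^{-1}\sum_{1\le j\le J_\omega}\eps_{j,\omega}\ind_{\alpha(j,\omega)}$ with $J_\omega\le J$ and $|\eps_{j,\omega}|\le 1$, and to realize $\mathrm{T}_m$ as a convex combination of model sums $T_{\mathbb Q}$ with $\mathbb Q\subset\mathbb P^{O_{\Xi_m}}$ for an auxiliary singular set $\Xi_m$ whose multiscale Zygmund constant is controlled by the \emph{maximal} multiscale Zygmund constant of $\Xi$. The first step is to choose $\Xi_m$: for each $\omega\in\Omega_\Xi$ the characteristic functions $\ind_{\alpha(j,\omega)}$ have jump discontinuities at the endpoints of the subintervals $\alpha(j,\omega)$, so it is natural to let $\Xi_m$ be the closure of the set of all such endpoints, together with $\Xi$ itself. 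Since at most $2J_\omega\le 2J$ endpoints are added inside each $\omega$, and the number $J$ of ``marked points per complementary interval'' is fixed, one should be able to bound $\ZZ^\star(X,\Xi_m)\lesssim \ZZ^{\star\star}(X,\Xi)$: indeed, any subset of $\Xi_m$ picks out, for each $\omega\in\Omega_\Xi$, at most $2J$ points of $\overline\omega$, and the maximal multiscale Zygmund property is precisely designed to control the Zygmund constant of \emph{any} selection of points from the closures of the complementary intervals. Here one uses that a finite union (of size $\le 2J$) of sets each having the $\ZZ(X)$ property with constant $\le\ZZ^{\star\star}(X,\Xi)$ again has the $\ZZ(X)$ property, with an absorbed constant depending on $J$ only; combined with the rescaling supremum in the definition of $\ZZ^\star$ this yields the claimed bound (the implicit constant being allowed to depend on $J$, which is fixed).

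The second step is the discretization itself. Each $\ind_{\alpha(j,\omega)}$ is a Fourier multiplier supported on $\alpha(j,\omega)\subset\omega$, and $\alpha(j,\omega)$ is an interval whose endpoints belong to $\Xi_m$; hence $\alpha(j,\omega)$ is a union of complementary intervals of $\Xi_m$ (up to the null set $\Xi_m$ itself). One then invokes the standard time-frequency decomposition recalled in \S\ref{ss:disc}: $\ind_{\alpha(j,\omega)}$ fails the smoothness hypothesis \eqref{e:hmintro}, but this is handled exactly as in the passage from Marcinkiewicz to H\"ormander--Mihlin multipliers on lacunary sets, namely by writing $\ind_{\alpha(j,\omega)}$ as a telescoping combination of its smooth Whitney pieces relative to $\Xi_m$ and absorbing the sharp cutoffs using the Whitney structure. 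More concretely, following \cite{CCDP2018}*{Proposition 2.9}, a bounded variation symbol on $\omega$ that is piecewise constant on Whitney-adapted intervals can be rewritten, via summation by parts, as an average of symbols each of which satisfies the H\"ormander--Mihlin condition relative to $\Xi_m$; applying the identity \eqref{e:cvxhullms} then expresses each such symbol's operator as a finite linear combination of model sums $T_{\mathbb P^{O_{\Xi_m}}_{\mathcal D,\mathcal D_j}}$, and restricting to the relevant sub-collection of tiles gives model sums $T_{\mathbb Q}$ with $\mathbb Q\subset\mathbb P^{O_{\Xi_m}}$. Summing over $1\le j\le J_\omega$ and $\omega\in\Omega_\Xi$ and dividing by $J$ produces, after normalization, a convex combination, because the $J^{-1}$ prefactor and the bound $|\eps_{j,\omega}|\le 1$ together with $J_\omega\le J$ ensure the coefficients sum to something $O(1)$.

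The main obstacle I anticipate is not the soft ``convex hull'' bookkeeping but rather making the reduction from the sharp indicators $\ind_{\alpha(j,\omega)}$ to genuinely $\mathrm{HM}(\Xi_m)$-type (hence model-sum-representable) symbols \emph{without} losing the crucial property that the tiles used have frequency intervals drawn from the Whitney decomposition of $O_{\Xi_m}$ --- this is what keeps the relevant Zygmund constant equal to $\ZZ^\star(X,\Xi_m)$ rather than something larger. One must be careful that the Whitney pieces of $\alpha(j,\omega)$ relative to $\Xi_m$ are genuinely compatible with the Whitney pieces of the full set $O_{\Xi_m}$; since the endpoints of $\alpha(j,\omega)$ lie in $\Xi_m$, the Whitney intervals of $O_{\Xi_m}$ contained in $\alpha(j,\omega)$ together with finitely many ``boundary layer'' intervals exhaust $\alpha(j,\omega)$, and the boundary-layer contribution is itself an $\mathrm{HM}(\Xi_m)$ symbol on a single Whitney interval --- this is the point where one has to be slightly more attentive, but it is routine given the machinery of \S\ref{ss:disc}. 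The remaining bound $\ZZ^\star(X,\Xi_m)\lesssim\ZZ^{\star\star}(X,\Xi)$ is then immediate from the definition of the maximal multiscale Zygmund property together with the elementary stability of the $\ZZ(X)$ property under finite unions, completing the proof.
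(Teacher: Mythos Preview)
Your approach has a genuine gap: the implicit constant in $\ZZ^\star(X,\Xi_m)\lesssim\ZZ^{\star\star}(X,\Xi)$ must be \emph{absolute}, in particular independent of $J$. You take $\Xi_m$ to be the set of \emph{all} endpoints of the $\alpha(j,\omega)$, so that $\Xi_m\cap\overline\omega$ contains up to $2J$ points. Writing $\Xi_m$ as a union of $2J$ one-point-per-interval selections and appealing to the stability estimate \eqref{e:Jzyg} yields only $\ZZ^\star(X,\Xi_m)\le\sqrt{2J}\,\ZZ^{\star\star}(X,\Xi)$, and this $\sqrt{J}$ loss cannot be absorbed. Your remark that ``$J$ is fixed'' is misleading: the paper's standing convention is that constants hidden in $\lesssim$ are absolute, and more to the point the lemma is applied, through Lemma~\ref{lem:Jatom}, to atoms $m_J$ for \emph{every} $J\in 2^{\mathbb N}$ in the proof of Theorem~\ref{thm:Marc}. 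A $J$-dependent constant there destroys the summability of the atomic decomposition.

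The paper avoids this by swapping the order of summation,
\[
\mathrm{T}_m=\frac1J\sum_{1\le j\le J}\Big(\sum_{\omega\in\Omega_\Xi}\eps_{j,\omega}\mathrm H_{\alpha(j,\omega)}\Big),
\]
and assigning to each $j$ its own singular set $\Xi_{m,j}$, namely the endpoints of $\{\alpha(j,\omega):\omega\in\Omega_\Xi\}$. Now $\Xi_{m,j}\cap\overline\omega$ has at most two points, so $\ZZ^\star(X,\Xi_{m,j})\lesssim\ZZ^{\star\star}(X,\Xi)$ with an absolute constant; the $1/J$ prefactor then furnishes the convex weights over $j$. Thus the phrase ``$\Xi_m$'' in the statement should be read as one such set per term of the convex combination, not a single set for all of them. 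As a side remark, your discretization step is also more elaborate than needed: since $\alpha(j,\omega)$ is a connected component of $O_{\Xi_{m,j}}$, the rough projection $\mathrm H_{\alpha(j,\omega)}$ is already in the convex hull of model sums over $\mathbb P^{\alpha(j,\omega)}\subset\mathbb P^{O_{\Xi_{m,j}}}$ by the discussion around \eqref{eq:modelr}, with no boundary-layer analysis required.
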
	

\begin{proof} Since $m$ is a $R_{1,1,J} ^\Xi$-atom, the multiplier operator associated with $m$ can be written in the form
\[
\mathrm{T}_m =\frac{1}{J}\sum_{1\leq j \leq J} \sum_{\omega\in\Omega_\Xi}\eps_{j,\omega} \mathrm{H}_{\alpha(j,\omega)},\qquad\sup_{1\leq j \leq J_\omega}{|\eps_{j,\omega}|}\leq 1,
\]
where we recall that $\mathrm{H}_\omega$ denotes the rough frequency projection onto some interval $\omega\subset \R$. As  already seen in the discussion leading up to \eqref{eq:modelr}, the operator $\mathrm{H}_\omega$ is in the convex hull of model forms $T_{\mathbb Q}$ with $\mathbb Q\subset \mathbb P^\omega$. Defining ${\Xi_{m,j}}$ to be the set of endpoints of the collection of intervals $\left\{\alpha(j,\omega):\, \omega\in\Omega_\Xi\right\}$ it follows that $\mathrm{T}_m$ is in the convex hull in $j$ of the model sums $T_{\mathbb Q} f$ with $\mathbb Q\subset \mathbb P^{O_{\Xi_{m,j}}}$. Note that as a direct consequence of the definitions of property $\ZZ^{\star \star}(X,\Xi)$ and of the sets $\Xi_{m,j}$ it follows that
\[
\sup_j \ZZ^{\star}(X,\Xi_{m,j}) \lesssim \ZZ^{\star \star}(X,\Xi)
\]
and the proof is complete.
\end{proof}

\begin{lemma}\label{lem:R21tomodel}Let $m$ be a $R_{2,1,J}^\Xi$ atom as given in Definition~\ref{def:Jatom}. Then the multiplier operator associated to $m$ belongs to the convex hull of the model sums $J^{-{\frac12}}T_{\mathbb Q}$  where $\mathbb Q\subset \mathbb P^{O_{\Xi_m}}$,  the set of tiles $\mathbb P ^{O_{\Xi_m}}$ is given as in \eqref{e:tilesO}, and $\Xi_m\subset\R$ is a closed null set satisfying the Zygmund property \[\ZZ^\star(X,\Xi_m)\lesssim \sqrt{J}\ZZ^{\star\star}(X,\Xi).\]
\end{lemma}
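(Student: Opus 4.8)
The plan is to follow closely the proof of Lemma~\ref{lem:marctomodel}, the only structural change being forced by the $J^{-\frac12}$ normalization in Definition~\ref{def:Jatom}: rather than handling the $J$ scales $j$ one at a time, all of them are bundled into a single, uniformly bounded step multiplier, which is then discretized, and the resulting $\sqrt J$ loss in the Zygmund constant is recovered through the $\ell^2$-subadditivity of $\ZZ^\star$ under finite unions. Concretely, writing $m=\sum_{\omega\in\Omega_\Xi}m\ind_\omega$ and recalling that $\mathrm H_\alpha=\mathrm T_{\ind_\alpha}$ denotes the rough frequency projection onto $\alpha$, Definition~\ref{def:Jatom} gives
\[
\mathrm T_m=\frac{1}{\sqrt J}\,\mathrm T_{\widetilde m},\qquad
\widetilde m\coloneqq\sum_{\omega\in\Omega_\Xi}\ \sum_{1\le j\le J_\omega}\eps_{j,\omega}\,\ind_{\alpha(j,\omega)},\qquad \sup_{j,\omega}|\eps_{j,\omega}|\le 1.
\]
The intervals $\alpha(j,\omega)$ are pairwise disjoint over all pairs $(j,\omega)$ — those sharing the same $\omega$ by hypothesis, those with different $\omega$ because $\alpha(j,\omega)\subset\omega$ and the $\omega$ are disjoint — so $\|\widetilde m\|_\infty\le 1$. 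Let $\Xi_m$ be the closure of $\bigcup_{j,\omega}\partial\alpha(j,\omega)$; since each $\omega$ contains at most $J$ of these intervals, the only accumulation points adjoined lie in $\Xi$, so $\Xi_m\subseteq\Xi\cup\bigcup_{j,\omega}\partial\alpha(j,\omega)$ is a closed null set. Moreover, since $\alpha(j,\omega)\subset\omega\subset O_\Xi$ and the $\alpha(j,\omega)$ are pairwise disjoint, one has $\Xi_m\cap\alpha(j,\omega)^{\circ}=\varnothing$; as $\partial\alpha(j,\omega)\subset\Xi_m$, each $\alpha(j,\omega)^{\circ}$ is a connected component of $O_{\Xi_m}$.

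Next I would carry out the discretization exactly as in the proof of Lemma~\ref{lem:marctomodel}, but applied directly to the single step multiplier $\widetilde m$ rather than to each scale $m_j\coloneqq\sum_\omega\eps_{j,\omega}\ind_{\alpha(j,\omega)}$ separately. By the preceding paragraph $\mathbf w_{\mathcal D'}(\alpha(j,\omega))\subset\mathbf w_{\mathcal D'}(O_{\Xi_m})$, hence $\mathbb P^{\alpha(j,\omega)}\subset\mathbb P^{O_{\Xi_m}}$; since moreover the pieces $\mathrm H_{\alpha(j,\omega)}$ act on pairwise disjoint frequency intervals and $|\eps_{j,\omega}|\le 1$, the very reasoning that in Lemma~\ref{lem:marctomodel} places $\sum_\omega\eps_{j,\omega}\mathrm H_{\alpha(j,\omega)}$ in the convex hull of the model sums over $\mathbb P^{O_{\Xi_{m,j}}}$ now places $\mathrm T_{\widetilde m}=\sum_{j,\omega}\eps_{j,\omega}\mathrm H_{\alpha(j,\omega)}$ in the convex hull of the model sums $T_{\mathbb Q}$ with $\mathbb Q\subset\mathbb P^{O_{\Xi_m}}$ — the total coefficient mass remaining $O(1)$ precisely because the spectra are disjoint. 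Dividing by $\sqrt J$ exhibits $\mathrm T_m$ as an element of the convex hull of the model sums $J^{-\frac12}T_{\mathbb Q}$, $\mathbb Q\subset\mathbb P^{O_{\Xi_m}}$, as desired.

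It remains to bound $\ZZ^\star(X,\Xi_m)$. One writes $\Xi_m\subseteq\Xi\cup\bigcup_{j=1}^{J}\Xi_{m,j}$, where $\Xi_{m,j}$ is the set of endpoints of $\{\alpha(j,\omega):\,J_\omega\ge j\}$; each $\Xi_{m,j}$ is contained in the union of two sets, each of which selects a single point of $\overline\omega$ per interval $\omega\in\Omega_\Xi$, so by \eqref{e:rathobv2} and the definition of the maximal multiscale constant $\ZZ^\star(X,\Xi_{m,j})\lesssim\ZZ^{\star\star}(X,\Xi)$ uniformly in $j$, and likewise $\ZZ^\star(X,\Xi)\le 2\,\ZZ^{\star\star}(X,\Xi)$. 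The quantitative input is the $\ell^2$-subadditivity of $\ZZ$: after disjointifying a finite family $\{\mathbb K_\nu\}$, the triangle inequality in $X'$ applied to the adjoint form \eqref{e:adjzig}, followed by Cauchy--Schwarz over $\nu$, gives $\ZZ\big(X,\bigcup_\nu\mathbb K_\nu\big)\le\big(\sum_\nu\ZZ(X,\mathbb K_\nu)^2\big)^{1/2}$, and since $\lfloor 2^n(\bigcup_\nu\Xi_\nu)\rfloor=\bigcup_\nu\lfloor 2^n\Xi_\nu\rfloor$ the same bound holds for $\ZZ^\star$. Applying this to $\Xi_m$ yields
\[
\ZZ^\star(X,\Xi_m)\le\Big(\ZZ^\star(X,\Xi)^2+\sum_{j=1}^{J}\ZZ^\star(X,\Xi_{m,j})^2\Big)^{1/2}\lesssim\sqrt J\,\ZZ^{\star\star}(X,\Xi),
\]
which is precisely the asserted Zygmund bound.

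The step I expect to be the crux is this last one, the $\ell^2$-subadditivity of $\ZZ^\star$ under finite unions: it is what converts the naive factor $J$ — which a term-by-term triangle inequality over the $J$ scales would produce — into the sharp $\sqrt J$, and this is exactly balanced against the $J^{-\frac12}$ sitting in front of $\mathrm T_{\widetilde m}$. Everything else is routine once one makes the structural observation that the pieces $m_j$ have pairwise disjoint spectra, so that they may be assembled into one $L^\infty$-bounded step multiplier prior to discretization, exactly as in the proof of Lemma~\ref{lem:marctomodel}.
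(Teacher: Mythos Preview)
Your proof is correct and follows essentially the same route as the paper's: define $\Xi_m$ as the union of the endpoint sets $\Xi_{m,j}$, discretize the bundled step multiplier into model sums over $\mathbb P^{O_{\Xi_m}}$ with the $J^{-1/2}$ prefactor, and then use exactly the $\ell^2$-subadditivity of $\ZZ^\star$ under finite unions (the paper records this as the inequality $\ZZ^\star(X,\Theta)\le\sqrt{J}\sup_j\ZZ^\star(X,\Theta_j)$ for $\Theta=\bigcup_j\Theta_j$, via the same Cauchy--Schwarz argument you give). Your treatment is slightly more careful about the closure of $\Xi_m$ possibly picking up points of $\Xi$, but otherwise the two arguments coincide.
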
	

\begin{proof}As before, use that $m\in R_{2,1,J}^\Xi$ in order to write
\[
\mathrm{T}_m=\frac{1}{{J}^{\frac12}}\sum_{1\leq j\leq J} \sum_{\omega\in\Omega_\Xi}\eps_{j,\omega}\mathrm{H}_{\alpha(j,\omega)}.
\]
This time we set $ \Xi_m\coloneqq \cup_{1\leq j\leq J} \Xi_{m,j}$ where each $\Xi_{m,j}$ is defined as in the previous proof. It follows that $T_m$ can be written as a convex combination of model sums $J^{-
{\frac 1{2}}} T_{\mathbb Q}$ with $\mathbb Q\subset \mathbb P^{O_{\Xi_m}}$.  As before,   note that $\sup_j\ZZ^\star(X,\Xi_{m,j})\lesssim \ZZ^{\star\star}(X,\Xi)$. In order to complete the proof of the lemma it thus suffices to observe that, in general, if $\Theta=\cup_{1\leq j\leq J}\Theta_j$ then there holds
\begin{equation} \label{e:Jzyg}
\ZZ^{\star}(X,\Theta)\leq \sqrt{J} \sup_{1\leq j \leq J} \ZZ^{\star }(X,\Theta_j)
\end{equation}  
as can be easily checked by applying the definition of the $\ZZ^\star$-property and the Cauchy-Schwarz inequality.
\end{proof}

Lemma~\ref{lem:R21tomodel} will be used to deduce a sparse domination theorem for $R_{p,1}^\Xi$-multipliers with $1\leq p\leq 2$,\ involving a  complex interpolation space, see \cite{HarHas}*{Theorem 5.5.1}, in the  Orlicz scale}
\begin{equation} 
\label{e:Xintp}  [X,L^2(0,1)]_{\theta}= 
{X^{1-\theta} [L^2(0,1)]^{\theta}}, \qquad 0\leq \theta \leq 1.
\end{equation}

\begin{theorem}  \label{thm:F} Suppose  $\Xi $  has the {maximal multiscale} $\ZZ(X)$ property. Let $m$ be a  $R_{p,1} ^\Xi$, $1\leq p\leq 2$,  multiplier with singular set $\Xi$. Then 
\[
\left\|\mathrm{T}_m  \right\|_{X,X_p} \lesssim \|m\|_{R_{p,1}^\Xi} \left[ \ZZ^{\star\star}(X, \Xi)   \right]^{\frac2p}
\]
where $X_p $ is given by \eqref{e:Xintp} with $\theta=\frac{2(p-1)}{p}$ and the implicit constant is absolute.
\end{theorem}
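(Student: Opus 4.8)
The plan is to deduce Theorem~\ref{thm:F} from Theorem~\ref{thm:HMsparse} (the $p=1$, rough square function input) and a bilinear interpolation argument applied between the two endpoints $p=1$ and $p=2$. First I would use the atomic structure: by the atomic definition of $R_{p,1}^\Xi$ it suffices to prove the bound for a single $R_{p,1}^\Xi$-atom $m$, and by Lemma~\ref{lem:Jatom} every such atom is a convex combination over $J\in 2^{\mathbb N}$ of $R_{p,1,J}^\Xi$-atoms, so we may fix $J$ and prove the claimed bound for $m\in\mathcal R_{p,1,J}^\Xi$ \emph{uniformly in $J$}. For such $m$, Lemma~\ref{lem:R21tomodel} (for $p=2$) and Lemma~\ref{lem:marctomodel} (for $p=1$) place $\mathrm T_m$ in the convex hull of model sums $J^{-1/p'}T_{\mathbb Q}$ with $\mathbb Q\subset\mathbb P^{O_{\Xi_m}}$ and $\ZZ^\star(X,\Xi_m)\lesssim J^{1-1/p}\ZZ^{\star\star}(X,\Xi)$; interpolating \eqref{e:Jzyg}-style between these, for general $1\le p\le 2$ one gets $\mathrm T_m$ in the convex hull of $J^{-1/p'}T_{\mathbb Q}$ with $\ZZ^\star(X,\Xi_m)\lesssim J^{1/p'}\ZZ^{\star\star}(X,\Xi)$, so it reduces to the model-sum estimate for a fixed finite $\mathbb Q\subset\mathbb P^{O_{\Xi_m}}$.

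Next I would invoke the $(X_1,X_2)$-sparse bound of Theorem~\ref{thm:HMsparse} in its model-form incarnation \eqref{e:cvxhullms2}, with $X_1=X$ and the second slot varying. At the endpoint $\theta=0$ ($p=1$) one uses $X_2=L^2(0,1)$, for which $\ZZ^\star(L^2,\Xi_m)\le 1$, so \eqref{e:cvxhullms2} gives
\[
\left|\langle T_{\mathbb Q}f_1,f_2\rangle\right|\lesssim \ZZ^\star(X,\Xi_m)\sup_{\mathcal S}\Lambda^{\mathcal S}_{X,L^2}(f_1,f_2)\lesssim J\,\ZZ^{\star\star}(X,\Xi)\sup_{\mathcal S}\Lambda^{\mathcal S}_{X,L^2}(f_1,f_2);
\]
dividing by the $J^{-1/p'}=1$ prefactor this is the $\theta=0$ endpoint of the asserted inequality, with target space $X_1=X=[X,L^2]_0$. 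At $\theta=1$ ($p=2$) one uses $X_2=X$: \eqref{e:cvxhullms2} gives $|\langle T_{\mathbb Q}f_1,f_2\rangle|\lesssim\ZZ^\star(X,\Xi_m)^2\sup_{\mathcal S}\Lambda^{\mathcal S}_{X,X}(f_1,f_2)\lesssim J\,\ZZ^{\star\star}(X,\Xi)^2\sup_{\mathcal S}\Lambda^{\mathcal S}_{X,X}$, and dividing by the $J^{-1/2}$ prefactor one gets the bound $J^{1/2}\ZZ^{\star\star}(X,\Xi)^2\sup\Lambda^{\mathcal S}_{X,X}$; to absorb the $J^{1/2}$ loss one instead keeps track of the honest $\ZZ^\star(X,\Xi_m)$ from Lemma~\ref{lem:R21tomodel}, namely $\ZZ^\star(X,\Xi_m)\lesssim\sqrt J\,\ZZ^{\star\star}(X,\Xi)$, which makes the $p=2$ model bound read $J^{-1/2}\ZZ^\star(X,\Xi_m)^2\sup\Lambda^{\mathcal S}_{X,X}\lesssim J^{1/2}\ZZ^{\star\star}(X,\Xi)^2\sup\Lambda^{\mathcal S}_{X,X}$ — so actually the clean way is to run the interpolation at the level of the \emph{sparse forms}, before re-expanding the $\ZZ^\star$ in terms of $J$ and $\ZZ^{\star\star}$.

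Concretely, I would: (i) fix a finite $\mathbb Q\subset\mathbb P^{O_{\Xi_m}}$ and, viewing $f_1\mapsto T_{\mathbb Q}f_1$ as a bounded operator from $X$ resp.\ $L^2$ into the dual sparse-form spaces, establish the two endpoint sparse estimates $\|T_{\mathbb Q}\|_{X,L^2}\lesssim\ZZ^\star(X,\Xi_m)$ and $\|T_{\mathbb Q}\|_{X,X}\lesssim\ZZ^\star(X,\Xi_m)^2$ from Theorem~\ref{thm:HMsparse}; (ii) apply the complex interpolation method for the Calder\'on product of Orlicz spaces — citing \cite{HarHas}*{Theorem 5.5.1} as the excerpt already flags — to the analytic family $f_2\in[L^2(0,1),X]$, obtaining $\|T_{\mathbb Q}\|_{X,X_p}\lesssim\ZZ^\star(X,\Xi_m)^{1+\theta}$ with $X_p=[X,L^2]_\theta$, $\theta=\frac{2(p-1)}{p}$ (so $1+\theta=\frac 2p$); (iii) substitute $\ZZ^\star(X,\Xi_m)\lesssim J^{1-1/p}\ZZ^{\star\star}(X,\Xi)$ from the Lemmas, giving the factor $J^{(1-1/p)\cdot 2/p}=J^{2/p-2/p^2}$, and multiply by the model-sum normalization $J^{-1/p'}=J^{-(1-1/p)}=J^{1/p-1}$ coming from Lemmas~\ref{lem:marctomodel}/\ref{lem:R21tomodel}; (iv) check that the total power of $J$ is $\le 0$, i.e.\ $\frac 2p-\frac 2{p^2}+\frac 1p-1=-\frac{(p-1)(p-2)}{p^2}\cdot(\text{something})\le 0$ for $1\le p\le 2$, so the bound is uniform in $J$ and the convex-combination/atomic sum closes; (v) conclude by the triangle inequality over the at most three grids in \eqref{e:cvxhullms} and over the atomic decomposition. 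The main obstacle I anticipate is step (ii): one must verify that complex interpolation genuinely applies to the \emph{bilinear sparse form bound} (equivalently, to $T_{\mathbb Q}$ as a map into the Köthe dual of the sparse-form space) — the target objects $\Lambda^{\mathcal S}_{X,X_2}$ depend on $X_2$ through the Calder\'on product structure $X_2=X^{1-\theta}[L^2]^\theta$ of the local Orlicz spaces, and one needs that $\langle f_2\rangle_{X_2,S}\lesssim\langle f_2\rangle_{X,S}^{1-\theta}\langle f_2\rangle_{L^2,S}^\theta$ (a pointwise-in-$S$ Hölder/log-convexity inequality for Luxemburg norms under the Calder\'on product) so that the sparse forms themselves interpolate with constant $1$; granting this, the operator bound interpolates by Stein–Weiss/analytic-family arguments and the rest is bookkeeping of powers of $J$ and $\ZZ^{\star\star}$.
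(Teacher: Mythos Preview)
Your interpolation strategy between the $(X,X)$ and $(X,L^2)$ sparse endpoints is the right one and is essentially what the paper does, but the $J$-bookkeeping in steps~(iii)--(iv) is wrong and the argument as written does not close. For an $R_{p,1,J}^\Xi$-atom $m$, the decomposition in the style of Lemma~\ref{lem:R21tomodel} (whose proof works verbatim for any $p$) gives prefactor $J^{-1/p}$ --- the coefficient in Definition~\ref{def:Jatom} sits in front of a multiplier bounded by $1$ with singular set $\Xi_m=\bigcup_j\Xi_{m,j}$ --- together with the Zygmund bound $\ZZ^\star(X,\Xi_m)\lesssim\sqrt{J}\,\ZZ^{\star\star}(X,\Xi)$ from \eqref{e:Jzyg}, \emph{independently of $p$}. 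Your ``interpolated'' prefactor $J^{-1/p'}$ and Zygmund bound $J^{1/p'}\ZZ^{\star\star}$ are not what either lemma produces for intermediate $p$. With your numbers the total power of $J$ equals $-(p-1)(p-2)/p^2$, which is \emph{nonnegative} on $[1,2]$ and strictly positive for $1<p<2$, so the atom estimate blows up as $J\to\infty$ and the convex combination from Lemma~\ref{lem:Jatom} diverges. With the correct numbers the power is $-\tfrac{1}{p}+\tfrac12\cdot\tfrac{2}{p}=0$ exactly, and the argument closes uniformly in $J$; note also that the interpolated exponent on $\ZZ^\star(X,\Xi_m)$ is $2-\theta=2/p$, not $1+\theta$.

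The paper's own proof sidesteps the $J$-tracking by interpolating in the \emph{multiplier variable} as well. It first establishes the endpoints $p=1$ (Theorem~\ref{thm:Marc}) and $p=2$ directly, recasts them as bounds on the bilinear map $(m,g)\mapsto\langle\mathrm{T}_m f_1,g\rangle$ into a Banach space $Y_\theta$ whose norm realizes $\sup_{\mathcal S}\Lambda^{\mathcal S}_{X,X_\theta}(f_1,\cdot)$ via the characterization of \cites{CDOBP,LMT} --- this is precisely how the paper resolves the obstacle you anticipate in step~(ii) --- and then applies bilinear complex interpolation together with the Lorentz-space identification $[R_{1,1}^\Xi,R_{2,1}^\Xi]_\theta=R_{p,1}^\Xi$. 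Your fixed-$J$ route, once the powers are corrected, is a legitimate and somewhat more elementary alternative; an even more direct variant is to apply the model-sum sparse bound (Proposition~\ref{prop:sparsemodelform}) with $X_2=X_p$ and use the log-convexity $\ZZ^\star(X_p,\Xi_m)\le\ZZ^\star(X,\Xi_m)^{1-\theta}$ of the Zygmund constant, which avoids interpolating sparse forms altogether.
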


The proof of Theorem \ref{thm:F} is postponed to Section \ref{s:nextprob}.

\section{Gabor decomposition and smooth Zygmund projection} \label{s:gab} This section develops a smooth projection theorem adapted to a singular set $\Xi$ enjoying the multiscale $\ZZ(X)$ property. This is done in \S\ref{ss:projl}. In \S\ref{ss:ss}, the projection lemma is employed to deduce a few estimates for  model operators in~\eqref{e:defmulti}.

\subsection{Gabor decomposition and sum of localized functions} The main projection argument employed in this paper uses two previously known ingredients. The first is the following almost orthogonal decomposition \cites{DLTT,LT1}.  Let  $\eta$ be a Schwartz function on $\R $ with the properties
\begin{equation}\label{e:1} 
\supp \widehat{\eta} \subset \left[0,1\right], \qquad \sum_{k\in \mathbb Z} \left| \widehat{\eta} \left({\textstyle \xi -\frac{k}{2} }\right)\right|^2 \equiv 1.
\end{equation}
For each $I \in \mathcal D,k\in \mathbb Z$ let
\[
\eta_{I,k}\coloneqq \mathrm{Tr}_{c_I} \mathrm{Dil}^{1}_{\ell_I} \textrm{Mod}_{\frac{k}{2}} \eta, \qquad \zeta_{I,k}\coloneqq \chi^{-6{\mathsf{dec}}}_I  \eta_{I,k} ,
\]
where $\mathrm{Mod}_a f(x):= \e^{2\pi i a x } f(x)$, $a \in \R$. For each $m \in \mathbb Z$, Poisson summation yields
\begin{equation}\label{e:gab1}
f=\sum_{k\in \mathbb Z} \sum_{I\in \mathcal D_m} |I| \langle f, \eta_{I,k}  \rangle\eta_{I,k}
\end{equation}
with convergence in $L^2(\R)$ and almost everywhere.  

\begin{remark} Given $A\subset \mathbb R$, set  
\begin{equation} \label{e:neighscale}
	\mathcal N(A){\coloneqq }\left\{ k\in \mathbb Z :\, \mathrm{dist}(A,k)<9\right\}.
\end{equation}
Suppose $\Xi$ has the multiscale $\ZZ(X)$ property. Triangle inequality and averaging then yield   \begin{equation} \label{e:smoothxc}
\begin{aligned}
\left\| \sum_{k\in \mathcal N(\lambda \Xi)} a_k \zeta_{[0,1),k} \right\|_{X'} & \leq   \sum_{j=-9}^{9} \left\| \sum_{k\in j+ \lfloor \lambda \Xi \rfloor } a_k \zeta_{[0,1),k} \right\|_{X'} \leq \sum_{j=-9}^{9} \ZZ(X, j +\lfloor \lambda \Xi \rfloor) \sqrt{\sum_{k\in j + \lfloor \lambda \Xi \rfloor } |a_k|^2}  \\
& \lesssim \ZZ^\star(X,\Xi)\sqrt{\sum_{k\in  \mathcal N(\lambda\Xi)  } |a_k|^2}
\end{aligned}
\end{equation}
for  all $\lambda \in  2^{\mathbb Z}$ and  all $\{a_k:\, k\in \mathcal N(\lambda \Xi)\}$.
We used that, by modulation invariance,
\begin{equation} \label{e:rathobv_bis}
  \ZZ(X,\mathbb K) = \ZZ(X,k + \mathbb K), \qquad \forall \mathbb K \subseteq \Z,\quad  k \in \Z
\end{equation}
to pass to the second line.
Property   \eqref{e:smoothxc} will be used in the proof of Lemma~\ref{l:proj}.  
\end{remark}

The second is a technical lemma used to efficiently $L^2$-estimate  sums of spatially localized functions. Its proof is literally a rewriting of \cite{MTT2}*{Lemma 5.1} and thus we omit it.

\begin{lemma} \label{l:L2sum} Let $\mathcal L\subset \mathcal D$ be a collection of pairwise disjoint  intervals. For each   $L\in \mathcal L$ let $F_{L}\in L^2(\R)$.  	Then
\[
 \left\|  \sum_{L\in \mathcal L} F_{L}\right\|_{2} \lesssim  \left( \sum_{L \in \mathcal L} |L| \right)^{\frac12} \sup_{L\in \mathcal L}\left\langle F_{L}\right\rangle_{2,L,-}.
\]
\end{lemma}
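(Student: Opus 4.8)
\emph{Plan.} Set $M:=\sup_{L\in\mathcal L}\langle F_L\rangle_{2,L,-}$. Unravelling the definition of the localized norm, this says $\int_\R|F_L(x)|^2\bigl(1+((x-c_L)/\ell_L)^2\bigr)^{\mathsf{dec}}\,\d x\le M^2|L|$, i.e.\ each $F_L$ is concentrated on $L$ with an $L^2$--tail decaying polynomially of order $\mathsf{dec}$ away from $L$. By duality it suffices to bound $\sum_{L\in\mathcal L}\bigl|\langle F_L,g\rangle\bigr|$ by $CM\bigl(\sum_{L\in\mathcal L}|L|\bigr)^{1/2}$ for every $g\in L^2(\R)$ with $\|g\|_2=1$. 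On a single scale this is immediate: if $\mathcal L'\subset\mathcal L$ consists of intervals of one fixed common length, splitting the weight $\chi_L^{\pm\mathsf{dec}}$ across the two factors and applying Cauchy--Schwarz twice gives
\[
\sum_{L\in\mathcal L'}\bigl|\langle F_L,g\rangle\bigr|\le\sum_{L\in\mathcal L'}\|F_L\chi_L^{-\mathsf{dec}}\|_2\,\|g\chi_L^{\mathsf{dec}}\|_2\le M\Bigl(\sum_{L\in\mathcal L'}|L|\Bigr)^{1/2}\Bigl(\int_\R|g(x)|^2\sum_{L\in\mathcal L'}\chi_L(x)^{2\mathsf{dec}}\,\d x\Bigr)^{1/2},
\]
and $\sum_{L\in\mathcal L'}\chi_L^{2\mathsf{dec}}\lesssim 1$ pointwise because the $L\in\mathcal L'$ are pairwise disjoint and of equal length, so the last factor is $\lesssim\|g\|_2=1$.

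The remaining task is to combine the scales. Write $\mathcal L=\bigsqcup_n\mathcal L_n$, with $\mathcal L_n$ the intervals of $\mathcal L$ of length $2^{-n}$, and put $G_n:=\sum_{L\in\mathcal L_n}F_L$, $a_n:=\sum_{L\in\mathcal L_n}|L|$; the single--scale estimate gives $\|G_n\|_2\lesssim M a_n^{1/2}$, and $G_n$ inherits a polynomial localization to $\bigcup\mathcal L_n$. Expanding $\bigl\|\sum_n G_n\bigr\|_2^2=\sum_n\|G_n\|_2^2+2\sum_{n<n'}\operatorname{Re}\langle G_n,G_{n'}\rangle$, the diagonal is $\lesssim M^2\sum_n a_n\le M^2\sum_{L\in\mathcal L}|L|$. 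For $n<n'$ one inserts the scale-$2^{-n'}$ weights on the finer factor, sums over $L'\in\mathcal L_{n'}$ using their pairwise disjointness (so $\sum_{L'\in\mathcal L_{n'}}\chi_{L'}^{2\mathsf{dec}}\lesssim 1$ pointwise), and uses the decay of the tail of $G_n$ away from $\bigcup\mathcal L_n$. Here one invokes the disjointness of the \emph{whole} family $\mathcal L$ — in particular that intervals of $\mathcal L$ contained in a common parent $P$ have total measure $\le|P|$ — to control how many short $L'$ can meet the bulk of $G_n$; a geometric summation over the dyadic distance between $L'$ and $\bigcup\mathcal L_n$, together with the choice of $\mathsf{dec}$ large (which is within the standing parameter range), then closes the estimate.

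The main obstacle is exactly this cross--scale bookkeeping. One cannot simply estimate $\bigl|\langle F_L,F_{L'}\rangle\bigr|$ pair by pair and sum: the polynomial tail of a single long interval $L$ can overlap the bulks of infinitely many short, pairwise disjoint $L'$ that accumulate just outside $L$, and correspondingly the all--scales sum $\sum_{L'}\chi_{L'}^{\pm\mathsf{dec}}$ is only logarithmically, not uniformly, bounded. The disjointness of $\mathcal L$ must therefore be exploited at the level of entire scales $\mathcal L_{n'}$ and of dyadic distance shells rather than on individual pairs — which is precisely the argument of \cite{MTT2}*{Lemma 5.1}, and is the reason the proof is a verbatim rewriting thereof.
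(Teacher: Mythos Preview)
Your proposal and the paper's treatment coincide: the paper omits the proof entirely and simply points to \cite{MTT2}*{Lemma 5.1}, and you ultimately do the same, while adding a correct single-scale argument and a clear diagnosis of why the naive duality bound $\sum_{L\in\mathcal L}\chi_L^{2\mathsf{dec}}\lesssim 1$ fails across scales. Your cross-scale sketch is only a heuristic outline---as you yourself flag---but since you explicitly defer to the cited lemma for the actual bookkeeping, there is no gap relative to the paper's own ``proof.''
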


\subsection{The projection lemma} \label{ss:projl}   In this paragraph,  $\Xi$ is a closed null set with the 
{multiscale} $\ZZ(X)$ property. Notice that the next lemma involves the tile collection $\mathbb P^{O_\Xi}$ as defined in \eqref{e:tilesO} and the frequencies $\mathcal N(\lambda \Xi)$ from   \eqref{e:neighscale}.

\begin{lemma}[Projection]\label{l:proj}  Let $L\in \mathcal D$ and $f$ be a function with $\supp f \subset L$. Then
\[
g \coloneqq \sum_{m\in \mathbb Z} \sum_{k\in  \mathcal N(\ell_L \Xi)} |L| \langle f , \eta_{L^{+m},k}  \rangle\eta_{L^{+m},k}  
\] 
satisfies
\begin{align} \label{e:goodproj}
&\left\langle f,\upphi_{P}\right\rangle = \langle g,\upphi_{P}\rangle , \qquad \forall P\in \mathbb P^{O_\Xi}:\, \ell_P \geq \ell_L,\, \forall \upphi_P\in \Psi_P, 
\\ \label{e:l2proj} & 
\left\langle g \right\rangle_{2,L,-}\lesssim \ZZ^{\star}(X,\Xi) \left\|f\circ \mathrm{Sy}_L\right\|_{X}. 
\end{align}
\end{lemma}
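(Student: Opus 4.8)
The plan is to prove the two conclusions \eqref{e:goodproj} and \eqref{e:l2proj} separately, the first being essentially formal and the second being where the Zygmund property enters.

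\textbf{Step 1: the reproducing property \eqref{e:goodproj}.} Starting from the Gabor expansion \eqref{e:gab1} at a fixed scale $m$, namely $f = \sum_{k\in\Z}\sum_{I\in\mathcal D_m}|I|\langle f,\eta_{I,k}\rangle\eta_{I,k}$, I would observe that since $\supp f\subset L$, the coefficients $\langle f,\eta_{I,k}\rangle$ are negligible unless $I$ is spatially close to $L$; more precisely, after a harmless enlargement of the collection of tiles (absorbing the spatial tails into the $\chi_I^{-6\mathsf{dec}}$ factors and the wave packet classes as in Remark~\ref{rmrk:loc}), one may restrict the sum over $I\in\mathcal D_m$ of length $\ell_L$ to the translates $L^{+m}$. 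Fixing any tile $P\in\mathbb P^{O_\Xi}$ with $\ell_P\ge \ell_L$ and any $\upphi_P\in\Psi_P$, the key point is that $\widehat{\upphi_P}$ is supported in $\omega_P\in\mathbf{w}_{\mathcal D_j}(O_\Xi)$, a Whitney interval of $O_\Xi$ of length $\ell_{\omega_P}=\ell_P^{-1}\le \ell_L^{-1}$. Since $\widehat{\eta_{I,k}}$ is supported (up to the $2^{-6\mathsf{dec}}$-order Schwartz tails, which contribute nothing to the exact pairing after the Poisson-summation identity) in an interval of length $\ell_I^{-1}=\ell_L^{-1}$ around frequency $k/(2\ell_L)$, the only indices $k$ for which $\langle\upphi_P,\eta_{L^{+m},k}\rangle\ne 0$ are those with $k/(2\ell_L)$ within $O(\ell_L^{-1})$ of $\omega_P$, hence within $O(\ell_L^{-1})$ of $\Xi$ (as $\omega_P$ abuts $\Xi$ by the Whitney property with constants $c_1=3,c_2=5$); concretely $k\in\mathcal N(\ell_L\Xi)$ after accounting for the numerical constants baked into \eqref{e:neighscale}. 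Therefore restricting the inner $k$-sum in \eqref{e:gab1} to $\mathcal N(\ell_L\Xi)$ does not change the pairing against $\upphi_P$, i.e. $\langle f,\upphi_P\rangle=\langle g,\upphi_P\rangle$.

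\textbf{Step 2: the $L^2$ bound \eqref{e:l2proj}.} Writing $g=\sum_{m\in\Z}F_{L^{+m}}$ with $F_{L^{+m}}\coloneqq \sum_{k\in\mathcal N(\ell_L\Xi)}|L|\langle f,\eta_{L^{+m},k}\rangle\eta_{L^{+m},k}$, each $F_{L^{+m}}$ is spatially localized to $L^{+m}$ (up to Schwartz tails controlled by $\chi_{L^{+m}}$), so Lemma~\ref{l:L2sum} applied to the pairwise disjoint family $\{L^{+m}\}_{m\in\Z}$ reduces matters to a \emph{single-interval} estimate: it suffices to show $\langle F_{L^{+m}}\rangle_{2,L^{+m},-}\lesssim \ZZ^\star(X,\Xi)\,\|f\circ\mathrm{Sy}_L\|_X$ uniformly in $m$, since then $\|g\|_2\lesssim (\sum_m|L|)^{1/2}\cdot(\cdots)$ — but one must be careful, as $\sum_m|L|$ diverges; the resolution is that $\langle f,\eta_{L^{+m},k}\rangle$ itself decays rapidly in $m$ because $\supp f\subset L$, so one groups a factor of $\chi_{L^{+m}}(c_L)^{\mathsf{dec}}$-type decay out of $F_{L^{+m}}$ and applies Lemma~\ref{l:L2sum} to $\{\chi_{L^{+m}}^{-\mathsf{dec}}F_{L^{+m}}\}$, the weighted $\ell^2$-sum of the decay factors being $O(1)$. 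By rescaling via $\mathrm{Sy}_L$ we may assume $L=[0,1)$, so that $\eta_{L^{+m},k}$ becomes (a translate by $m$ of) $\eta_{[0,1),k}=\mathrm{Mod}_{k/2}\eta$ and $F_{[0,1)}=\sum_{k\in\mathcal N(\Xi)}\langle f,\eta_{[0,1),k}\rangle\,\eta_{[0,1),k}$ with $\supp f\subset[0,1)$.

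\textbf{Step 3: duality and the smooth Zygmund inequality \eqref{e:smoothxc}.} Here is the heart of the matter. To estimate $\langle F_{[0,1)}\rangle_{2,[0,1),-}$, i.e. essentially $\|\chi^{-\mathsf{dec}}F_{[0,1)}\|_2$, I would test against an arbitrary $h\in L^2$ with $\|h\|_2\le 1$: $\langle \chi^{-\mathsf{dec}}F_{[0,1)},h\rangle = \sum_{k\in\mathcal N(\Xi)}\langle f,\eta_{[0,1),k}\rangle\langle \eta_{[0,1),k},\chi^{\mathsf{dec}}h\rangle$. Setting $b_k\coloneqq \langle \chi^{\mathsf{dec}}h,\eta_{[0,1),k}\rangle$, the $L^2$-almost-orthogonality of the Gabor system $\{\eta_{[0,1),k}\}$ (equivalently \eqref{e:gab1} with $\mathcal D_0$ giving a frame) yields $\sum_k|b_k|^2\lesssim\|\chi^{\mathsf{dec}}h\|_2^2\lesssim 1$. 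Meanwhile $\langle f,\eta_{[0,1),k}\rangle = \langle f,\zeta_{[0,1),k}\rangle$ up to tails since $\supp f\subset[0,1)$ makes the $\chi_{[0,1)}^{-6\mathsf{dec}}$ cutoff harmless, and then by the definition of the Luxemburg/Orlicz duality pairing, $\big|\sum_{k\in\mathcal N(\Xi)}\langle f,\zeta_{[0,1),k}\rangle\,\overline{b_k}\big| = \big|\langle f,\sum_{k\in\mathcal N(\Xi)}\overline{b_k}\zeta_{[0,1),k}\rangle\big|\le \|f\|_X\,\big\|\sum_{k\in\mathcal N(\Xi)}\overline{b_k}\zeta_{[0,1),k}\big\|_{X'}$. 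Now \eqref{e:smoothxc} (with $\lambda=1\in 2^\Z$) gives $\big\|\sum_{k\in\mathcal N(\Xi)}\overline{b_k}\zeta_{[0,1),k}\big\|_{X'}\lesssim \ZZ^\star(X,\Xi)\,(\sum_k|b_k|^2)^{1/2}\lesssim \ZZ^\star(X,\Xi)$. Taking the supremum over $h$ and combining with Steps 1--2 (and undoing the rescaling) yields \eqref{e:l2proj}.

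\textbf{Main obstacle.} The genuinely delicate point is the bookkeeping of the Schwartz tails: passing between $\eta_{I,k}$ and $\zeta_{I,k}=\chi_I^{-6\mathsf{dec}}\eta_{I,k}$, between $f$ and $f\chi_L^{\mathsf{dec}}$, and controlling $\sum_m$ of the localized pieces all require that the polynomial decay recovered from $\supp f\subset L$ beats the polynomial loss from the $\chi^{-\mathsf{dec}}$ weights in the definition of $\langle\cdot\rangle_{2,L,-}$ — this is exactly the type of estimate the paper sets up $\mathsf{dec}$ and Remark~\ref{rmrk:loc} to handle, so the choices $D=100$, $0\le\mathsf{dec}\le 50$ give room to spare, but it must be done carefully. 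The conceptual content, by contrast, is entirely in Step 3: the smooth Zygmund inequality \eqref{e:smoothxc} is precisely what converts the frequency-support restriction to $\mathcal N(\ell_L\Xi)$ into an $X\to L^2$ gain with constant $\ZZ^\star(X,\Xi)$.
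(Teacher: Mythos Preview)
Your Step 1 is correct in spirit, though note that $\widehat{\eta_{I,k}}$ has genuinely compact support, so the frequency comparison is exact (there are no Schwartz tails to manage), and the sum over $I\in\mathcal D_m$ at scale $\ell_L$ already \emph{is} the sum over the translates $L^{+m}$---there is nothing to restrict.

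The real gap is in Step 3. Pairing $\chi^{-\mathsf{dec}}F_{[0,1)}$ with $h$ gives coefficients $b_k=\langle\eta_{[0,1),k},\chi^{-\mathsf{dec}}h\rangle$, not $\langle\eta_{[0,1),k},\chi^{\mathsf{dec}}h\rangle$ as you write; since the weight $\chi^{-\mathsf{dec}}$ \emph{grows}, the Gabor frame bound you invoke yields only $\sum_k|b_k|^2\lesssim\|\chi^{-\mathsf{dec}}h\|_2^2$, which is not controlled by $\|h\|_2$. You flag precisely this competition of polynomial weights as the main obstacle, but your explicit computation does not resolve it. The paper's fix has two ingredients. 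First, rewrite $g_m=\chi_m^{6\mathsf{dec}}\gamma_m$ with $\gamma_m\coloneqq\sum_k\langle f\chi_m^{6\mathsf{dec}},\zeta_{m,k}\rangle\zeta_{m,k}$, so that $g_m\chi^{-\mathsf{dec}}=[\chi^{-\mathsf{dec}}\chi_m^{6\mathsf{dec}}]\gamma_m$ with a multiplicative factor of size $(1+m^2)^{\mathsf{dec}}$; it then suffices to bound $\|\gamma_m\|_2$. Second---and this is the point your argument is missing---expand $h$ itself in the full Gabor frame $\{\eta_{n,z}\}$ before pairing with $\gamma_m$. This produces coefficients $a_{m,k}=\sum_{\kappa,n}\langle\zeta_{m,k},\eta_{n,k+\kappa}\rangle\langle\eta_{n,k+\kappa},h\rangle$, and since $\zeta_{m,k}$ is still Schwartz (the polynomial $\chi_m^{-6\mathsf{dec}}$ is beaten by the faster decay of $\eta$), the kernel satisfies $|\langle\zeta_{m,k},\eta_{m+u,k+\kappa}\rangle|\lesssim(1+u^2)^{-1}$; a Schur-type estimate then gives $\sum_k|a_{m,k}|^2\lesssim\sum_{n,z}|\langle h,\eta_{n,z}\rangle|^2\lesssim 1$, and only now does \eqref{e:smoothxc} apply to control $\|h_m\|_{X'}$.

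Your Step 2 also does not work as stated: Lemma~\ref{l:L2sum} controls the unweighted norm $\|\sum_L F_L\|_2$, whereas the target $\langle g\rangle_{2,L,-}$ carries the growing weight $\chi_L^{-\mathsf{dec}}$. The paper does not use Lemma~\ref{l:L2sum} here at all; it applies the triangle inequality over $m$ and extracts the decay $(1+m^2)^{-6\mathsf{dec}}$ from $\|f\chi_m^{6\mathsf{dec}}\cic{1}_{[0,1)}\|_X$ (available because $\supp f\subset[0,1)$), which beats the $(1+m^2)^{\mathsf{dec}}$ loss coming from the weight.
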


\begin{proof} Statement \eqref{e:goodproj} is immediate from the representation \eqref{e:1} applied to $f$,
\[
f= \sum_{m\in \mathbb Z} \sum_{k\in \mathbb Z} |L| \langle f , \eta_{L^{+m},k}  \rangle\eta_{L^{+m},k}  
\]
and from the comparison of the frequency supports of $f-g$ and $\upphi_P,P  \in \mathbb P^{O_\Xi}$, $\ell_P\geq \ell_L$. To wit, if $P  \in \mathbb P^{O_\Xi}$, the definitions force the existence of $\xi\in \Xi$ such that 
\[
\omega_P \subset \left[\xi-6\ell_{P}^{-1},  \xi+6\ell_{P}^{-1}\right)  \subset \left[\xi-6\ell_{L}^{-1},  \xi+6\ell_{L}^{-1}\right) \subset \ell_{L}^{-1}\bigg[ \left\lfloor \ell_{L}\xi \right \rfloor -7 ,  \left\lfloor \ell_{L}\xi \right \rfloor +7 \bigg)
\]
and the latter set is disjoint from the support of $f-g$.

Turn to the proof of \eqref{e:l2proj}. By the invariance over linear changes of coordinates of the assumptions and  statement, it suffices to prove the case $L=[0,1), $ so that $L^{+m}=[m,m+1)$. For this reason, we may write $\eta_{m,k}$ in place of $\eta_{[m,m+1),k}  $, $\chi_m=\chi_{[m,m+1)}$ and similarly,  $\mathbb K=\mathcal N(1\Xi)$ and
\[
g= \sum_{m\in \mathbb Z} g_m, \qquad g_m\coloneqq \sum_{k\in \mathbb K}  \langle f , \eta_{m,k}  \rangle\eta_{m,k} . 
\]
By the triangle inequality, it suffices to prove
\begin{equation}
\label{e:projl3}
\sup_{\|h\|_2=1}
\left|\langle g_m\chi^{-{\mathsf{dec}}},h \rangle\right| \lesssim  \ZZ^\star(X,\Xi) (1+m^2)^{-4{\mathsf{dec}}}\left\|f \right\|_{X}.
\end{equation}
In fact, rewriting
\[
 g_m\chi^{-{\mathsf{dec}}} = \gamma_m \cdot\left[ \chi^{-{\mathsf{dec}}} \chi_m^{6{\mathsf{dec}}}\right], \qquad  \gamma_m\coloneqq    \sum_{k\in \mathbb K}  \langle f\chi_m^{6{\mathsf{dec}}} , \zeta_{m,k}  \rangle\zeta_{m,k} 
\]
and using  that $\|\chi^{-{\mathsf{dec}}} \chi_m^{{\mathsf{dec}}}\|_\infty \lesssim (1+|m|^2)^{{\mathsf{dec}}}$,
it is enough to prove that 
\begin{equation} \label{e:projl1}
\sup_{\|h\|_2=1}
\left|\langle \gamma_m ,h \rangle\right| \lesssim  \ZZ^\star(X,\Xi)(1+m^2)^{-6{\mathsf{dec}}}\left\|f \right\|_{X} {.}
\end{equation}
We apply the representation formula \eqref{e:gab1} to $h$ of $L^2$-norm 1, obtaining
\begin{equation} \label{e:projl2}
h= \sum_{z \in \mathbb  Z} \sum_{n\in \mathbb Z} \langle h,\eta_{n,z} \rangle  \eta_{n,z}, \qquad \sum_{z \in \mathbb  Z} \sum_{n\in \mathbb Z} |\langle h,\eta_{n,z} \rangle  |^2\lesssim 1.
\end{equation}
Then, by disjointness  of frequency supports,
\[
\begin{split} 
\left| \langle \gamma_m ,h \rangle \right|&= \left|\sum_{\kappa\in \{0,\pm1\} } \sum_{k\in \mathbb K} \sum_{n\in \mathbb Z} \langle f\chi_m^{6{\mathsf{dec}}} , \zeta_{m,k}  \rangle \left \langle \zeta_{m,k} , \eta_{n,k+\kappa} \right \rangle\langle \eta_{n,k+\kappa}, h \rangle \right| =    \left| \langle f\chi_m^{6{\mathsf{dec}}}, h_{m } \rangle \right|  
\\ 
&
\leq \| f\chi_m^{6{\mathsf{dec}}}\|_X    \|h_{m}\|_{X'} \lesssim   (1+m^2)^{-6{\mathsf{dec}}}  \| f \|_X \|h_{m}\|_{X'} ,
\end{split}
\]
having set
\[
h_{m}= \sum_{k\in \mathbb K} a_{m,k} \zeta_{m,k}, \qquad a_{m,k} =\sum_{\kappa\in \{0,\pm1\} }  \sum_{n\in \mathbb Z} \left \langle \zeta_{m,k} , \eta_{n,k+\kappa} \right \rangle\langle \eta_{n,k+\kappa}, h \rangle
\]
and used $\supp f\subset [0,1)$, $ \|\chi_m^{6{\mathsf{dec}}}\cic{1}_{[0,1)}\|_\infty \lesssim (1+m^2)^{-6{\mathsf{dec}}}$. To finish the proof of \eqref{e:projl1} it remains to appeal to \eqref{e:smoothxc} translated to $[m,m+1)$ and estimate 
\[
\begin{split}
\|h_{m}\|_{X'} &\leq \ZZ^\star(X,\Xi) \left(\sum_{k\in \mathbb K} |a_{m,k} |^2 \right)^{\frac12} 
\\ 
& \lesssim \ZZ^\star(X,\Xi) \left(\sup_{\substack{ \kappa \in \{0,\pm1\}\\ k\in \mathbb K}}  \sum_{u\in \mathbb Z}|\langle \zeta_{m,k} , \eta_{m+u,k+\kappa}\rangle|^{{2}} \right)^{\frac12} \left( \sum_{\substack{z \in \mathbb  Z\\ n\in \mathbb Z}} |\langle h,\eta_{n,z} \rangle  |^2\right)^{ \frac12} \lesssim \ZZ^\star(X,\Xi),
\end{split}
\]
taking into account \eqref{e:projl2}
and the easy estimate $|\langle \zeta_{m,k} , \eta_{m+u,k+\kappa}\rangle|\lesssim (1+|u|^2)^{-1}$. The proof of \eqref{e:projl1}, and consequently of the lemma, is thus complete.  
 \end{proof}

\subsection{Tail estimates for the multipliers.} \label{ss:ss} We  now turn to a first series of applications of Lemma~\ref{l:proj}. As anticipated, throughout this paragraph $\Xi$ is a fixed singular set with the multiscale $\ZZ(X)$ property for $X\in \{X_1,X_2\}$. For brevity, $C_{X_j}\coloneqq  \ZZ^\star (X_j, \Xi)$ for $j=1,2$. Hereafter, $\mathbb Q$ is a generic subset of $\mathbb P^{O_\Xi}$ so that the collections   $\mathbb{Q} _=(G)$ contain tiles  from $\mathbb Q$  of fixed spatial component $G\in \mathcal D$ and frequency component belonging to the  $\mathcal D'$-Whitney decomposition of the open set $O_\Xi$. The latter fact is what makes the $\ZZ(X)$ property relevant. Furthermore,    unless otherwise mentioned $\upphi_P$ stands for a generic element of $\Psi_P$, $P\in \mathbb P $. 

\begin{lemma} \label{l:newtail_fin} Let $f_1, f_2 \in L^\infty_0(\R^d)$ be given functions, and 
$G_0, G_1, G_2 \in \mathcal{D}$ be intervals of equal length $\ell$ such that $\max\{\dist(G_0,G_1), \dist(G_0,G_2) \} \geq \ell/2$. 
Then
\begin{align}\label{e:sszygm_fin}	
\qquad & \left(  \sum_{ P\in \mathbb{Q} (G_0) }  {|I_P|^2} \left|\langle f_1 {\ind_{G_1}}, \upphi_P  \rangle\right|^2 \left| \chi_P^{\mathsf{dec}} (x) \ind_{{G_2}} (x) \right|^2 \right)^{\frac12}
\\
& \qquad \qquad \qquad \qquad \quad \lesssim C_{X_1} \prod_{j=1,2} \left( 1 + \frac{\dist(G_0,G_j)}{\ell} \right)^{2-\mathsf{dec}} \inf_{G_0 {\cup G_1} \cup G_2} \mathrm{M}_{X_1} f_1,
\\ \label{eq:decayzyg_fin}
& \left\| T_{ \mathbb{Q} (G_0) } (f_1 \ind_{G_1}) {\ind_{G_2}} \right\|_{2} \lesssim C_{X_1} \sqrt{\ell} \prod_{j=1,2} \left( 1 + \frac{\dist(G_0,G_j)}{\ell} \right)^{2-\mathsf{dec}} \inf_{G_0 \cup G_1 {\cup G_2}} \mathrm{M}_{X_1} f_1, \\
\label{e:newtail2_fin}
& \left|\left\langle T_{\mathbb{Q} (G_0) }\left(f_1 \ind_{G_1}\right),\ind_{G_2} f_2 \right\rangle  \right|\lesssim \prod_{j=1,2} C_{X_j} \sqrt{\ell} \left( 1 + \frac{\dist(G_0,G_j)}{\ell} \right)^{2-\mathsf{dec}} \inf_{G_0 \cup G_j} \mathrm{M}_{X_j}f_j. 
\end{align}
\end{lemma}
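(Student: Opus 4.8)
\emph{Strategy.} The three displays are established in the order \eqref{e:sszygm_fin}, \eqref{eq:decayzyg_fin}, \eqref{e:newtail2_fin}, all through the same mechanism: polynomial decay of the wavelets $\upphi_P\in\Psi_P$ and of the cutoffs $\chi_P$ to produce the tail factors $(1+\dist(G_0,G_j)/\ell)^{2-\mathsf{dec}}$; the projection Lemma~\ref{l:proj}, which is the sole point where the multiscale Zygmund property enters and accounts for the constants $C_{X_j}=\ZZ^\star(X_j,\Xi)$, used to trade the crude $L^2$ averages of $f_j$ supplied by almost orthogonality for the Orlicz averages $\mathrm M_{X_j}f_j$; and the single-scale almost orthogonality of $\mathbb Q\subset\mathbb P^{O_\Xi}$, that is Lemma~\ref{lem:ao} and \eqref{e:aotilescons}, to sum over the frequency components $\omega_P$ at a fixed spatial scale. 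By hypothesis at least one of $G_1,G_2$ lies at distance $\geq\ell/2$ from $G_0$, and this is what I would exploit at the decisive step.

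\emph{Proof of \eqref{e:sszygm_fin} and \eqref{eq:decayzyg_fin}.} I would decompose $T_{\mathbb Q(G_0)}(f_1\ind_{G_1})$ according to the scale $2^{-n}\ell$ of $I_P$, writing $\mathbb Q(G_0)=\bigsqcup_{n\geq0}\bigsqcup_{L\in\mathcal D_n(G_0)}\mathbb Q_{=}(L)$. For fixed $n$ I split the input spatially, $f_1\ind_{G_1}=\sum_{L'\in\mathcal D_n(G_1)}f_1\ind_{L'}$, and apply Lemma~\ref{l:proj} with $L=L'$: since every $P\in\mathbb Q_{=}(L)$ has $\ell_P=2^{-n}\ell=\ell_{L'}$, the reproducing identity \eqref{e:goodproj} gives $\langle f_1\ind_{L'},\upphi_P\rangle=\langle g_{L'},\upphi_P\rangle$ for the projection $g_{L'}$ of $f_1\ind_{L'}$, while \eqref{e:l2proj} yields $\langle g_{L'}\rangle_{2,L',-}\lesssim C_{X_1}\langle f_1\rangle_{X_1,L'}$. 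The single-scale piece $T_{\mathbb Q_{=}(L)}$ is then controlled by Lemma~\ref{lem:ao} and the rapid decay of $g_{L'}$ away from $L'$; the localized pieces are summed over $L\in\mathcal D_n(G_0)$ by Lemma~\ref{l:L2sum} for \eqref{eq:decayzyg_fin}, or pointwise in $x$ by the single-scale square-function bound for \eqref{e:sszygm_fin}. The tail of $\upphi_P$ over $G_1$ and the size of $\chi_P$ over $G_2$ each contribute a factor $(1+\dist(G_0,G_j)/\ell)^{2-\mathsf{dec}}$ together with a summable gain $2^{-n\kappa}$ in the scale, so the series in $n$ converges; finally a routine rescaling estimate for Orlicz averages bounds $\langle f_1\rangle_{X_1,L'}$ by $(1+\dist(G_0,G_j)/\ell)\,\inf_{G_0\cup G_1\cup G_2}\mathrm M_{X_1}f_1$, the extra power being absorbed into the decay by taking $\mathsf{dec}$ large.

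\emph{Proof of \eqref{e:newtail2_fin}.} Expanding via \eqref{e:defmulti}, $\langle T_{\mathbb Q(G_0)}(f_1\ind_{G_1}),\ind_{G_2}f_2\rangle=\sum_{P\in\mathbb Q(G_0)}|I_P|\langle f_1\ind_{G_1},\upphi_P\rangle\overline{\langle f_2\ind_{G_2},\psi_P\rangle}$, and I group this sum by the scale $2^{-n}\ell$ of $I_P$. By the symmetry of the two factors I may assume $\dist(G_0,G_1)\geq\ell/2$; then the tail of $\upphi_P$ over $G_1$ gives $|\langle f_1\ind_{G_1},\upphi_P\rangle|\lesssim 2^{-nD}(\cdots)$, and a Cauchy--Schwarz in $P$ at each scale reduces matters, up to a convergent error series in $n$, to the top scale $n=0$, i.e.\ to $\langle T_{\mathbb Q_{=}(G_0)}g_1,g_2\rangle$ where $g_1,g_2$ are the projections of $f_1\ind_{G_1}$ and $f_2\ind_{G_2}$ produced by Lemma~\ref{l:proj} with $L$ of length $\ell$. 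For this term one applies single-scale almost orthogonality to both inputs, uses the decay of $g_1,g_2$ away from $G_1,G_2$, and invokes the Orlicz-rescaling estimate exactly as above, now on both factors, to arrive at $\prod_{j=1,2}C_{X_j}\sqrt{\ell}\,(1+\dist(G_0,G_j)/\ell)^{2-\mathsf{dec}}\inf_{G_0\cup G_j}\mathrm M_{X_j}f_j$, as claimed.

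\emph{Main obstacle.} The delicate point, already visible in \eqref{e:sszygm_fin}, is the incompatibility of scales between the two summations — over the frequency components $\omega_P$, which sees only the $L^2$ norm through Lemma~\ref{lem:ao}, and over the spatial scales of $I_P$, absorbed by the wavelet decay — and the passage to the Orlicz average, available through Lemma~\ref{l:proj} only at the matching scale $\ell_L=\ell_{I_P}$; a single application of the projection lemma at scale $\ell$ reaches only the top-scale tiles. It is precisely the hypothesis $\max\{\dist(G_0,G_1),\dist(G_0,G_2)\}\geq\ell/2$ that produces the geometric gain making the interchange of these summations legitimate, and in \eqref{e:newtail2_fin} the same hypothesis does double duty, the far factor first collapsing the bilinear sum to its single top scale, where the projection lemma then applies cleanly to both inputs at once.
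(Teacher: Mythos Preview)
Your approach to \eqref{e:sszygm_fin} and \eqref{eq:decayzyg_fin} is correct and follows the same route as the paper: split $\mathbb Q(G_0)$ and the input $f_1\ind_{G_1}$ by scale, apply the projection Lemma~\ref{l:proj} at each matching scale, control each single-scale piece via Lemma~\ref{lem:ao}, transfer the weighted $L^2$ averages from $R$ to the support of the input using the decay of $\chi$, and sum using the geometric gain supplied by the separation hypothesis.

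There is, however, a genuine gap in your treatment of \eqref{e:newtail2_fin}. You propose to dispose of the scales $n\geq 1$ through the crude wavelet tail bound $|\langle f_1\ind_{G_1},\upphi_P\rangle|\lesssim 2^{-nD}(\cdots)$ followed by Cauchy--Schwarz in $P$, reserving the projection lemma for the top scale $n=0$ only. This does not work: the pointwise tail bound is uniform over the frequency components $\omega_P$ and hence is not summable on its own; if you fall back on almost orthogonality \eqref{e:aotilescons} instead, you obtain at each scale an error controlled by $L^2$ averages $\langle f_1\ind_{G_1}\rangle_{2,L,+}\langle f_2\ind_{G_2}\rangle_{2,L,+}$. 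Since $L^2\subseteq X_j$, these $L^2$ averages are in general \emph{not} dominated by $C_{X_j}\inf\mathrm M_{X_j}f_j$ (the inequality runs the other way), so the error series does not yield the claimed right-hand side. The fix is exactly what you already do for the first two estimates and is precisely the paper's argument: at \emph{every} scale $m$, for $R\in\mathcal D_m(G_0)$ and $S_j\in\mathcal D_m(G_j)$, apply Lemma~\ref{l:proj} to \emph{both} $f_1\ind_{S_1}$ and $f_2\ind_{S_2}$, bound the single-scale bilinear piece by Cauchy--Schwarz and Lemma~\ref{lem:ao} as $\ell_R\langle g_1\rangle_{2,R,+}\langle g_2\rangle_{2,R,+}$, transfer to $\langle g_j\rangle_{2,S_j,-}\lesssim C_{X_j}\langle f_j\rangle_{X_j,S_j}$ via the decay of $\chi$, and then sum over $R,S_1,S_2,m$.
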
}

\begin{remark} The presence of the multiscale collections, e.g.\ $\mathbb{Q} (G_0) $, in the statement is more convenient for applications below. However, at the root of the lemma lies a purely single scale analysis, which is exemplified by the intermediate inequality \eqref{e:sszygm_new}. Consulting \eqref{e:sszygm_new} in advance might facilitate the parsing of the rather technical and crowded statements contained in Lemma \ref{l:newtail_fin}. 
\end{remark}

\begin{proof}[Proof of Lemma~\ref{l:newtail_fin}]  We begin the proof of \eqref{e:sszygm_fin} showing an \emph{a priori} weaker version of the inequality in the case of dyadic intervals $R, S \in \mathcal{D}$ such that $\ell_R = \ell_S$ restricting $\mathbb{Q} (R)$ to $\mathbb{Q}_{=}(R)$, namely
\begin{equation} \label{e:sszygm_new}
\left(  \sum_{P\in \mathbb{Q} _=(R)}   \left|\langle f_1 \ind_{S}, \upphi_P  \rangle\right|^2 \right)^{  \frac12}  \lesssim C_{X_1} \left( 1 + \frac{\dist(R,S)}{\ell_R} \right)^{1-\mathsf{dec}}   \inf_{R \cup S} \mathrm{M}_{X_1} f_1.
\end{equation}
We apply Lemma~\ref{l:proj} to $f_1 \ind_{S}$ yielding the projection $g_1$ with 
\[
\langle f_1\ind_{S},\upphi_P\rangle=\langle g_1,  \upphi_P\rangle,\qquad \forall P\in\mathbb Q_{=}(R),\quad\upphi_P\in\Psi_P,
\] 
and $\langle g_1 \rangle _{2,S,-}\lesssim C_{X_1} \langle f_1 \rangle _{X_1,S}$.  Since $\mathbb P^{O_\Xi}$ satisfies \eqref{e:aotiles}, the left hand side of \eqref{e:sszygm_new} can be estimated {applying the $L^2$-bound for $T_{\mathbb Q_=(R)}$ from Lemma~\ref{lem:ao}} as
\[
\begin{split}
\left(  \sum_{P\in \mathbb Q _{=}(R)}  \left|\langle f_1 \ind_{S}, \upphi_P  \rangle\right|^2 \right)^{\frac12} \lesssim \langle g_1 \rangle_{2,R,+}.
\end{split}
\]
Using the decay properties of the function $\chi$ in the first line and the estimates on the averages of the functions $g_1$ to pass to the second line, we have
\[
\begin{split}
\langle g_1 \rangle_{2,R,+} ^2 &\leq \frac{1}{\ell_R}\int_{\R}|g_1|^2 \chi_{S} ^{-2\mathsf{dec}}(\chi_{R}\chi_{S}) ^{2\mathsf{dec}} \lesssim \left(1+\frac{\dist(R,S)}{\ell_R}\right)^{-2\mathsf{dec}}\langle g_1\rangle_{2,S,-} ^2
\\
& \lesssim C_{X_1}^2 \left(1+\frac{\dist(R,S)}{\ell_R}\right)^{-2\mathsf{dec}} \langle f_1 \rangle_{X_1,S}^{{2}} \lesssim  C_{X_1}^{{2}} \left(1+\frac{\dist(R,S)}{\ell_R}\right)^{
	{2}-2\mathsf{dec}} \langle f_1 \rangle_{X_1,R ^*}^2
\end{split}
\]
where $R ^* \coloneq 3\left(1+\frac{\dist(R,S)}{\ell_R}\right) R$ so that $R,S \subseteq R ^*$, hence the proof is complete. To obtain the proof of \eqref{e:sszygm_fin}, we observe that splitting $ \mathbb{Q} (G_0)$ and $G_1$, and summing up it suffices to prove that for $m \in \N$, $R\in \mathcal D_m (G_0)$, and $S \in \mathcal D_m (G_1)$ we have
\begin{equation}
\begin{split}
& \left(  \sum_{ P\in \mathbb{Q}_= (R) }  {|I_P|^2} \left|\langle f_1 \ind_{S} , \upphi_P  \rangle\right|^2 \left| \chi_P^{{\mathsf{dec}}} (x)  \ind_{G_2} (x) \right|^2 \right)^{\frac12} 
\\
& \qquad \lesssim C_{X_1} \left( 1 + \frac{\dist(R,S)}{\ell_R} \right)^{1-\mathsf{dec}} \left( 1 + \frac{\dist(R,G_2)}{\ell_R} \right)^{1-\mathsf{dec}} \inf_{G_0 \cup G_1 \cup G_2} \mathrm{M}_{X_1} f_1 . 
\end{split}
\end{equation}
However, using \eqref{e:sszygm_new} to pass to the second line, we have
\[
\begin{split} 
& \left(  \sum_{ P\in \mathbb{Q}_= (R) }  {|I_P|^2} \left|\langle f_1 \ind_{S}, \upphi_P  \rangle\right|^2 \left| \chi_P^{{\mathsf{dec}}} (x)  \ind_{G_2} (x) \right|^2 \right)^{\frac12} \leq  \left( \sum_{ P\in \mathbb{Q}_= (R) } \left|\langle f_1 \ind_{S}, \upphi_P  \rangle\right|^2 \right)^{\frac12} \sup_{{G_2}} \chi^{\mathsf{dec}}_{R}(x) 
\\
&\qquad \lesssim \left(  C_{X_1} \inf_{R ^*} \mathrm{M}_{X_1} f_1 \right) \left( 1 + \frac{\dist(R,S)}{\ell_R} \right)^{1-\mathsf{dec}}
\left( 1+ \frac{\mathrm{dist}(R,
	{G_2}) }{\ell_R}\right)^{-\mathsf{dec}}
\\
&\qquad \lesssim C_{X_1} \left( 1 + \frac{\dist(R,S)}{\ell_R} \right)^{1-\mathsf{dec}} \left(1+ \frac{\mathrm{dist}(R,
	{G_2}) }{\ell_R}\right)^{1-\mathsf{dec}} \inf_{{R ^{**}}} \mathrm{M}_{X_1} f_1 ,
\end{split}
\]
where $R ^{**} \coloneq 3\left(1+\frac{\dist(R,	{G_2})}{\ell_R}\right) R^{*}$ so that $G_0, G_1, G_2 \subseteq R ^{**}$, hence the proof is complete.

The proof of \eqref{eq:decayzyg_fin} is almost identical and relies on the inequality in the case of dyadic intervals $R,S \in \mathcal{D}$ such that $\ell_R = \ell_S $ restricting $\mathbb{Q}(R)$ to $\mathbb{Q}_{=} (R)$, namely
\begin{equation}\label{eq:decayzyg}
\left\|T_{\mathbb{Q} _=(R) }\left(f_1 \ind_{S}\right)\right\|_{2} \lesssim \sqrt{\ell_R} C_{X_1}  \left(1 + \frac{\dist(R,S)}{\ell_R}\right)^{1-\mathsf{dec}} \inf_{R \cup S} \mathrm{M}_{X_1}f_1.
\end{equation}
To prove \eqref{eq:decayzyg}, apply again  Lemma~\ref{l:proj} to $f_1\ind_{S}$ thus constructing the projection $g_1$ with 
\[
\langle f_1\ind_{S},\upphi_P\rangle=\langle g_1,  \upphi_P\rangle,\qquad \forall P\in\mathbb Q_{=}(R),\quad\upphi_P\in\Psi_P,
\] 	
with $\langle g_1 \rangle _{2,S,-}\lesssim C_{X_1} \langle f_1 \rangle _{X_1,S}$. Using   \eqref{e:aotiles} of $\mathbb P^{O_\Xi}$,  the left hand side of \eqref{eq:decayzyg} is  estimated {applying the $L^2$-bound for $T_{\mathbb Q_=(R)}$ from Lemma~\ref{lem:ao}} as
\[
\left\|T_{\mathbb Q_=(R) }\left(f_1 \ind_{S}\right)\right\|_{2} \lesssim \sqrt{\ell_R}  \langle g_1 \rangle_{2,S,+}.
\]
As before we can pass from $R$ to $S$ to estimate
\[
\begin{split}
\langle g_1 \rangle_{2,R,+} ^2 & \lesssim \left(1 + \frac{\dist(R,S)}{\ell_R}\right)^{-2\mathsf{dec}} \langle g_1\rangle_{2,S,-} ^2 \lesssim C_X ^2  \left(1 + \frac{\dist(R,S)}{\ell_R}\right)^{-2\mathsf{dec}} \langle f_1\rangle_{X_1,S} ^2 
\\
&\lesssim  C_X ^2
\left(1 + \frac{\dist(R,S)}{\ell_R}\right)^{2-2\mathsf{dec}}  \langle f_1 \rangle_{X_1, R ^*}^2
\end{split}
\]
where $R ^* \coloneq 3\left(1+\frac{\dist(R,S)}{\ell_R}\right) R$ so that $R,S \subseteq R ^*$, hence concluding the proof of \eqref{eq:decayzyg}. 

To prove \eqref{eq:decayzyg_fin}, we observe that splitting $ \mathbb{Q} (G_0)$ and $G_1$ and summing up it suffices to prove that for $m \in \N$, $R\in \mathcal D_m (G_0)$, and $S \in \mathcal D_m (G_1)$ we have
\begin{equation}
\left\| T_{ \mathbb{Q}_= (R) } (f_1 \ind_{S}) \ind_{G_2} \right\|_{2} \lesssim C_{X_1} \sqrt{\ell} \left( {1 +} \frac{\dist(R,S)}{\ell} \right)^{2-\mathsf{dec}} {\left( 1 + \frac{\dist(R,G_2)}{\ell} \right)^{2-\mathsf{dec}}} \inf_{G_0 \cup G_1 {\cup G_2}} \mathrm{M}_{X_1} f_1.
\end{equation}
However, using \eqref{eq:decayzyg} to pass to the second line, we have
\[
\begin{split} \left\| T_{ \mathbb{Q}_= (R) } (f_1 \ind_{S}) \ind_{G_2} \right\|_{2} & \leq  \left\| T_{ \mathbb{Q}_= (R) } (f_1 \ind_{S}) \right\|_{2} \sup_{G_2} \chi^{\mathsf{dec}}_{R}(x) 
\\
&\lesssim \left( \sqrt{\ell_R} C_{X_1} \left(1 + \frac{\dist(R,S)}{\ell_R}\right)^{1-\mathsf{dec}} \inf_{R ^*} \mathrm{M}_{X_1} f_1 \right)
\left( 1+ \frac{\mathrm{dist}(R,G_2) }{\ell_R}\right)^{-\mathsf{dec}}
\\
&\lesssim C_{X_1} \sqrt{\ell_R} \left( 1 + \frac{\dist(R,S)}{\ell_R} \right)^{1-\mathsf{dec}} \left(1+ \frac{\mathrm{dist}(R,G_2) }{\ell_R}\right)^{1-\mathsf{dec}} \inf_{{R ^{**}}} \mathrm{M}_{X_1} f_1 ,
\end{split}
\]
where $R ^{**} \coloneq 3\left(1+\frac{\dist(R,G_2)}{\ell_R}\right) R^{*}$ so that $G_0, G_1, G_2 \subseteq R ^{**}$, hence the proof is complete. 

Finally, the proof of \eqref{e:newtail2_fin} is  also similar and relies on the inequality in the case of dyadic intervals $R,S_1,S_2 \in \mathcal{D}$ such that $\ell_R = \ell_{S_1} = \ell_{S_2} $ restricting $\mathbb{Q}(R)$ to $\mathbb{Q}_{=} (R)$, namely
\begin{equation}\label{e:newtail2}
\left|\left\langle T_{\mathbb{Q}_=(R) }\left(f_1 \ind_{S_1}\right),\ind_{S_2} f_2 \right\rangle  \right|\lesssim  \prod_{j=1,2} C_{X_j} \sqrt{\ell_R} 
\left(\frac{\dist(R,S_j)}{\ell_{R}} \right)^{1-\mathsf{dec}} \inf_{R \cup S_j} \mathrm{M}_{X_j}f_j.
\end{equation}
To prove \eqref{e:newtail2}, apply Lemma~\ref{l:proj} to $f_j\ind_{S_j}$ and constructing the projections $g_j$, $j\in\{1,2\}$, with $\langle f_j\ind_{S_j},\upphi_P\rangle=\langle g_j,\upphi_P\rangle$for all $P\in \mathbb Q_{=}(R)$ and $\langle g_j\rangle_{2,S_j,-}\lesssim C_{X_j}\langle f_j\rangle_{X_j,S_j}$. Then we apply the Cauchy-Schwarz inequality {and the $L^2$-bound for $T_{\mathbb Q_=(R)}$ from Lemma~\ref{lem:ao}} to estimate
\[
\begin{split}
\left|\left\langle T_{\mathbb Q_=(R) }\left(f_1 \ind_{S_1}\right),\ind_{S_2} f_2 \right\rangle\right|&\leq \sum_{P\in \mathbb Q_{=}(R)} {|I_P|} \left|\langle g_1,\upphi_P\rangle\right|\left|\langle g_2,\psi_P\rangle \right|\lesssim \ell_R \langle g_1 \rangle_{2,R,+} \langle g_2\rangle_{2,R,+}
\end{split}
\]
and the proof is completed by using the same argument as for \eqref{eq:decayzyg} in order to pass from $\langle g_j\rangle_{2,R,+}$ to $\langle g_j\rangle_{2,S_j,-}$. This concludes the proof of \eqref{e:newtail2}. In order to prove \eqref{e:newtail2_fin}, again it suffices to split $ \mathbb{Q} (G_0)$, $G_1$, and $G_2$, use \eqref{e:newtail2}, and sum up the contributions for different $R \in\mathcal D_m (G_0)$ and $S_j \in \mathcal D_m (G_j)$.
\end{proof}

The next lemmas use the obvious disjoint splitting
\[
\mathbb P^{O_\Xi}\supset\mathbb Q = \bigcup_{\omega \in \Omega_\Xi}\mathbb Q^{\omega}.
\]
For $P\in \mathbb Q^{\omega}$ let $P^{\downarrow,\omega}=I_{P}\times (\omega_P -a_\omega)$. Then $\upphi_{P}=\exp(2\pi i a_\omega\cdot) \upphi_{P^{\downarrow,\omega}}$ for some $\upphi_{P^{\downarrow,\omega}}\in \Psi_{P^{\downarrow,\omega}}$. Thus the distance from the origin to the  frequency support of $\upphi_{P^{\downarrow,\omega}}$ is $\sim \ell_{P^{\downarrow,\omega}}^{-1}$, whence the latter is a bump function adapted to $I_P$. Clearly  
\begin{equation} \label{e:Ttil}
T_{\mathbb Q^\omega} f = \exp(2\pi i a_\omega\cdot) \sum_{P\in \mathbb Q^\omega} |I_P| \langle f,\upphi_P \rangle\upphi_{P^{\downarrow,\omega}} \eqqcolon \exp(2\pi i a_\omega\cdot) Z_{\mathbb Q^\omega} f  
\end{equation}
whence the equality, used in   Lemma~\ref{l:3sszyg} below,
\begin{equation}\label{e:Ttil2}
\left\| T_{\mathbb Q^{\omega}} f\right\|_{\ell^2(\omega\in \Omega_\Xi)} =\left\| Z_{\mathbb Q^{\omega}} f\right\|_{\ell^2(\omega\in \Omega_\Xi)}.
\end{equation}
Recall that $\mathcal D_{m}(I)=\{J\in \mathcal D(I):\ell(J)=2^{-m} \ell(I)\}.$ In the next 
lemma we deal with the first and second terms in the decomposition
\begin{equation}\label{e:splitsf}
\mathbb Q = \left(\bigcup_{|j|\leq 1} \mathbb Q (L^{+j})\right) \sqcup  \left(\bigcup_{\substack{|j|\geq 2 }} \mathbb Q (L^{+j})\right)\sqcup  \left( 
{\bigcup_{ m\geq 1} \bigcup_{j\in \mathbb Z}} \mathbb Q_=(L^{\uparrow m,+j})\right) \eqqcolon 
{\mathbb Q(3L)}\sqcup  \mathbb Q_{\mathsf{far},L}  \sqcup  \mathbb Q_{\mathsf{ov},L} 
\end{equation}
that we will use for a generic $\mathbb Q\subset \mathbb P^{O_\Xi}$ and any $L\in\mathcal D$. 

\begin{lemma} \label{lem:estimates} We have
\begin{align}\label{eq:2asszyg}	
& \sup_{\R \setminus 5L}  \left\| T_{\mathbb Q^{\omega}(3L)} f \right\|_{\ell^2(\omega\in \Omega_\Xi)}\lesssim C_X \inf_{L} \mathrm{M}_{X} (f \chi_{L}^{\mathsf{dec}}), 
\\
\label{eq:2sszyg}	
& \sup_{L} \left\|  T_{\mathbb Q_{\mathsf{far},L}^\omega } f \right\|_{\ell^2(\omega\in \Omega_\Xi)} \lesssim C_X \inf_{L} \mathrm{M}_{X} (f \chi_{L}^{\mathsf{dec}}),
\\
\label{eq:farfccc}	
& \sup_{\R} \left\| T _{\mathbb Q_{\mathsf{far},L}^\omega } (f  \ind_{L}  )  \right\|_{\ell^2(\omega\in \Omega_\Xi)} \lesssim C_X \inf_{L} \mathrm{M}_{X} (f \ind_{L}  ) . 
\end{align}
\end{lemma}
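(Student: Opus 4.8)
The plan is to prove each of the three estimates by reducing to the single-scale inequalities already contained in Lemma~\ref{l:newtail_fin}, namely \eqref{e:sszygm_new} and \eqref{eq:decayzyg}, after the scale-by-scale splitting
\[
\mathbb Q(L^{+j}) = \bigsqcup_{m\ge 0}\ \bigsqcup_{R\in\mathcal D_m(L^{+j})} \mathbb Q_=(R),
\]
which, varied over $|j|\le 1$ resp.\ $|j|\ge 2$, recovers the pieces $\mathbb Q(3L)$ and $\mathbb Q_{\mathsf{far},L}$ of \eqref{e:splitsf}. Two structural facts are used throughout: first, for a fixed $\omega\in\Omega_\Xi$ the family $\mathbb Q^\omega_=(R)$ has cardinality $O(1)$, since its frequency components are Whitney intervals of $\omega$ of the one length $\ell_R^{-1}$; together with the pairwise disjointness of all $\omega_P$, $P\in\mathbb P^{O_\Xi}$, this lets one pass, at each fixed scale, from the $\ell^2(\omega\in\Omega_\Xi)$-norm of an $\ell^1$-over-tiles quantity to an honest $\ell^2$ over tiles, i.e.\ $\sum_\omega\big(\sum_{P\in\mathbb Q^\omega_=(R)}a_P\big)^2\lesssim\sum_{P\in\mathbb Q_=(R)}a_P^2$. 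Second, $|I_P|\,|\psi_P(x)|\lesssim\chi_{I_P}^{\mathsf{dec}}(x)$ for $\psi_P\in\Psi_P$, which is the only way the evaluation point enters.

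To land the bounds on the \emph{localized} maximal functions $\mathrm M_X(f\chi_L^{\mathsf{dec}})$ and $\mathrm M_X(f\ind_L)$ rather than on the global $\mathrm M_X f$, the key device is the localization trick of Remark~\ref{rmrk:loc}: for $I_P\subseteq L^{+j}$ write $\langle f,\upphi_P\rangle=\langle f\chi_{L^{+j}}^{\mathsf{dec}},\chi_{L^{+j}}^{-\mathsf{dec}}\upphi_P\rangle$ with $\chi_{L^{+j}}^{-\mathsf{dec}}\upphi_P\in C\Psi_P$. Feeding $f_1=f\chi_{L^{+j}}^{\mathsf{dec}}$, split into the pieces $(f\chi_{L^{+j}}^{\mathsf{dec}})\ind_S$ over dyadic $S$ with $\ell_S=\ell_R$, into \eqref{e:sszygm_new} and summing in $S$ produces the maximal function of $f\chi_{L^{+j}}^{\mathsf{dec}}$, which is pointwise $\lesssim\mathrm M_X(f\chi_L^{\mathsf{dec}})$ when $|j|\le1$ (there $\chi_{L^{+j}}\lesssim\chi_L$) and $\lesssim(1+|j|)^{C\mathsf{dec}}\mathrm M_X(f\chi_L^{\mathsf{dec}})$ in general. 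The residual inf-of-maximal-function terms are turned into $\inf_L\mathrm M_X(\,\cdot\,)$ via the elementary bound $\langle g\rangle_{X,I,+}\lesssim\inf_L\mathrm M_X g$ valid whenever $I\supseteq L$ (the tailed average over $I$ is controlled by a geometric average of $\langle g\rangle_{X,2^kI}$, $k\ge0$, and each $2^kI\supseteq L$), together with the scaling bound $\|h(\,\cdot/a)\|_X\le a\|h\|_X$.

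Concretely: for \eqref{eq:2asszyg} the tiles in $\mathbb Q(3L)$ have $I_P\subseteq 3L$ while $x\notin 5L$ forces $\dist(x,I_P)\gtrsim\ell_L\ge\ell_{I_P}$, so $|I_P|\,|\psi_P(x)|\lesssim(\ell_{I_P}/\ell_L)^{\mathsf{dec}}=2^{-m\mathsf{dec}}$ gives geometric decay in $m$; combined with \eqref{e:sszygm_new} through the localization device ($|j|\le1$) and summed over $R\in\mathcal D_m(3L)$, $m\ge0$ and over the auxiliary $S$, this yields $\lesssim C_X\langle f\chi_L^{\mathsf{dec}}\rangle_{X,3L,+}\lesssim C_X\inf_L\mathrm M_X(f\chi_L^{\mathsf{dec}})$. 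For \eqref{eq:2sszyg} the tiles sit in $L^{+j}$, $|j|\ge2$, and $x\in L$, so $|I_P|\,|\psi_P(x)|\lesssim(1+|j|)^{-\mathsf{dec}}2^{-m\mathsf{dec}}$; the factor $(1+|j|)^{-\mathsf{dec}}$ offsets the loss $(1+|j|)^{C\mathsf{dec}}$ from replacing $\chi_{L^{+j}}^{\mathsf{dec}}$ by $\chi_L^{\mathsf{dec}}$, and the surplus decay from the extra powers in \eqref{e:sszygm_new}, from $m$, and from summation over the far $S$ closes a convergent double series in $m,j$, again giving $C_X\inf_L\mathrm M_X(f\chi_L^{\mathsf{dec}})$. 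For \eqref{eq:farfccc}, $f\ind_L$ is supported on $L$ and the tiles on $L^{+j}$, $|j|\ge2$; applying \eqref{e:sszygm_new} with $f_1=f\ind_L$ and $S\in\mathcal D_m(L)$, and using $\inf_{R\cup S}\mathrm M_X(f\ind_L)\lesssim\langle f\rangle_{X,L}$ (the maximal function of the $L$-supported function $f\ind_L$ on the far interval $R$ is $\lesssim\langle f\rangle_{X,L}$ by compression), gives after summing in $S$ that $\big(\sum_{P\in\mathbb Q_=(R)}|\langle f\ind_L,\upphi_P\rangle|^2\big)^{1/2}\lesssim C_X\,2^{-m(\mathsf{dec}-2)}(|j|-1)^{1-\mathsf{dec}}\langle f\rangle_{X,L}$; multiplying by $|I_P|\,|\psi_P(x)|\lesssim\chi_R^{\mathsf{dec}}(x)$, using $\sum_{R\in\mathcal D_m(L^{+j})}\chi_R^{\mathsf{dec}}(x)\lesssim 1$ for every $x$, and summing the geometric series in $m$ and $j$, one obtains $\|T_{\mathbb Q^\omega_{\mathsf{far},L}}(f\ind_L)\|_{\ell^2(\omega\in\Omega_\Xi)}(x)\lesssim C_X\langle f\rangle_{X,L}\le C_X\inf_L\mathrm M_X(f\ind_L)$, uniformly in $x$; this last estimate is the cleanest prototype of the scheme. (Alternatively one could route \eqref{eq:2sszyg}, \eqref{eq:farfccc} through the $L^2$-bound \eqref{eq:decayzyg} and Lemma~\ref{l:L2sum}, but the $\ell^2$-over-tiles route above is more economical.)

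The main obstacle is purely bookkeeping. There are several competing decay mechanisms — the wave-packet tails $\chi_{I_P}^{\mathsf{dec}}$, the single-scale Zygmund decay inherited from Lemma~\ref{l:proj} through \eqref{e:sszygm_new}, the scale index $m$, and (for the far pieces) the translate index $j$ — each producing at worst a loss that is linear in $m$ or in $|j|$; one must check that these losses are absorbed by taking $D$, hence $\mathsf{dec}$, sufficiently large. Simultaneously, the passages between $\chi_{L^{+j}}^{\mathsf{dec}}$ and $\chi_L^{\mathsf{dec}}$, and between the tailed average $\langle\,\cdot\,\rangle_{X,I,+}$, the maximal function at a point, and $\inf_L\mathrm M_X$, have to be tracked so that the conclusion is genuinely in terms of the infimum over $L$ as stated. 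Everything beyond this is a routine rerun of the analysis already carried out in the proof of Lemma~\ref{l:newtail_fin}.
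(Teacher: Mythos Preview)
Your argument is correct and follows the same underlying strategy as the paper: localization trick plus the tail estimates of Lemma~\ref{l:newtail_fin}. The only difference is packaging. You decompose each $\mathbb Q(L^{+j})$ scale by scale into $\mathbb Q_=(R)$, $R\in\mathcal D_m(L^{+j})$, and then invoke the single-scale intermediate inequality \eqref{e:sszygm_new}, re-doing the summation in $m$ and in the auxiliary partition index $S$ by hand. The paper skips this: it first uses the localization trick to reduce to proving the bounds with $\inf_L\mathrm M_X f$ in place of $\inf_L\mathrm M_X(f\chi_L^{\mathsf{dec}})$, and then applies the already-packaged multiscale estimate \eqref{e:sszygm_fin} of Lemma~\ref{l:newtail_fin} directly, after partitioning the spatial location $x$, the support of $f$, and the tile location into translates $L^{+k}$, $L^{+j}$. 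Since \eqref{e:sszygm_fin} already contains the sum over scales and the wave-packet tail $\chi_P^{\mathsf{dec}}(x)$, the three estimates then follow by one-line summations in $j,k$. Your route gives the same conclusion but reproves a chunk of Lemma~\ref{l:newtail_fin} inside this proof; the paper's route is shorter because that work has already been done.
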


\begin{proof}  By the localization trick in Remark~\ref{rmrk:loc} it suffices to bound the left hand sides by $\inf_{L} \mathrm{M}_{X} f $, without the $\chi_L^{\mathsf{dec}}$ factor. The desired inequality follows by {the} triangle inequality, partitioning $3L$, $\R$ and $\R \setminus (2k_0+1)L$ into intervals of the form $\{ L^{-1}, L, L^{+1} \}$ and $L^{+j}$, $L^{+k}$ for $j,k \in \Z$ with $|k| \geq k_0 + 1$, and \eqref{e:sszygm_fin} of Lemma~\ref{l:newtail_fin}.
\end{proof}

For the third term   in the decomposition \eqref{e:splitsf} we obtain an  oscillation inequality instead. Compare with \eqref{e:Ttil}-\eqref{e:Ttil2}.

\begin{lemma} 
\label{l:3sszyg}	$\displaystyle \sup_{x,y\in 9L} \left\|   Z_{\mathbb Q^\omega_{\mathsf{ov},L}} f (x) -
  Z_{\mathbb Q_{\mathsf{ov},L}^\omega } f (y)  \right\|_{\ell^2(\omega\in\Omega_\Xi)} \lesssim C_X \inf_{L} \mathrm{M}_{X} 
  {(f \chi_{L}^{\mathsf{dec}})}$.
\end{lemma}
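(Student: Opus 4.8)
The strategy is to exploit that $Z_{\mathbb Q^\omega_{\mathsf{ov},L}} f$ is built from wave packets $\upphi_{P^{\downarrow,\omega}}$ attached to tiles $P$ whose spatial interval $I_P$ is a \emph{dyadic ancestor} of $L$ (more precisely $I_P = L^{\uparrow m,+j}$ with $m\ge 1$), so each $\upphi_{P^{\downarrow,\omega}}$ is, after removing the modulation by $a_\omega$, a bump adapted to $I_P$ with frequency support in an interval of length $\lesssim \ell_{I_P}^{-1}\le \ell_L^{-1}$; in particular it is essentially constant at the scale of $L$. Therefore the oscillation $Z_{\mathbb Q^\omega_{\mathsf{ov},L}}f(x)-Z_{\mathbb Q^\omega_{\mathsf{ov},L}}f(y)$ for $x,y\in 9L$ is governed by the gradient $|I_P|\,\langle f,\upphi_P\rangle\cdot\|\upphi'_{P^{\downarrow,\omega}}\|_\infty$ summed against $|x-y|\le C\ell_L$. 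First I would write, for fixed $\omega$ and $x,y\in 9L$,
\[
\left|Z_{\mathbb Q^\omega_{\mathsf{ov},L}}f(x)-Z_{\mathbb Q^\omega_{\mathsf{ov},L}}f(y)\right|
\le \sum_{m\ge 1}\sum_{j\in\Z}\sum_{P\in\mathbb Q^\omega_=(L^{\uparrow m,+j})}|I_P|\,\left|\langle f,\upphi_P\rangle\right|\,
\frac{\ell_L}{\ell_{I_P}}\,\big\|\ell_{I_P}\,\upphi'_{P^{\downarrow,\omega}}\big\|_{L^\infty(9L)},
\]
where the last factor is $\lesssim \ell_{I_P}^{-1}\chi_{L^{\uparrow m,+j}}^{\mathsf{dec}}(c_L)$ by the $\Psi_P$-normalisation, giving an extra gain $2^{-m}$ from $\ell_L/\ell_{I_P}=2^{-m}$ and spatial decay $(1+|j|)^{-\mathsf{dec}}$ from the tail of $\chi$.

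Next I would take the $\ell^2(\omega\in\Omega_\Xi)$-norm. For each fixed ancestor scale $m$ and translate $j$, the interior sum over $\omega$ and over $P\in\mathbb Q^\omega_=(L^{\uparrow m,+j})$ is handled by Cauchy--Schwarz in $\omega$ combined with the almost orthogonality estimate \eqref{e:aotilescons}: indeed $\sum_{\omega}\sum_{P\in\mathbb Q^\omega_=(R)}|I_P|\,|\langle f,\upphi_P\rangle|^2 = \sum_{P\in\mathbb Q_=(R)}|I_P|\,|\langle f,\upphi_P\rangle|^2$, but this crude bound does \emph{not} see the $\ZZ(X)$ structure and would only give $\langle f\rangle_{2,R,+}$, not the sharp $C_X$-times-$\mathrm{M}_X f$ we want. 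So instead I would invoke the single-scale estimate \eqref{e:sszygm_new} of Lemma~\ref{l:newtail_fin} directly with $R=S=L^{\uparrow m,+j}$: this produces
\[
\Big(\sum_{P\in\mathbb Q_=(L^{\uparrow m,+j})}|\langle f,\upphi_P\rangle|^2\Big)^{1/2}\lesssim C_X \inf_{L^{\uparrow m,+j}}\mathrm{M}_X f \lesssim C_X (1+|j|)^{\mathsf{dec}/2}\inf_L\mathrm{M}_X(f\chi_L^{\mathsf{dec}}),
\]
after passing from the inf over $L^{\uparrow m,+j}$ to an inf over $L$ at the cost of a polynomial factor in $|j|$ absorbed into the surplus decay. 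Multiplying by $|I_P|=\ell_{I_P}=2^m\ell_L$, by the gradient gain $\ell_L/\ell_{I_P}=2^{-m}$, and by the bump value $\ell_{I_P}^{-1}=2^{-m}\ell_L^{-1}$ yields a net factor $2^{-m}$ per scale together with the $\chi$-tail $(1+|j|)^{-\mathsf{dec}/2}$; summing the geometric series in $m\ge 1$ and the summable series in $j$ closes the estimate with an absolute constant.

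The main obstacle is bookkeeping the three competing length factors correctly — the $|I_P|$ from the definition of $T_{\mathbb Q}$, the $\ell_L/\ell_{I_P}$ from the mean value theorem applied to $\upphi_{P^{\downarrow,\omega}}$ on $9L\subset I_P$, and the $\ell_{I_P}^{-1}$ from the $L^\infty$-normalisation of the derivative — and checking they multiply to a genuinely summable $2^{-m}$; a parallel care is needed so that the $\chi_{I_P}^{-\mathsf{dec}}$ localisation tails (Remark~\ref{rmrk:loc}) survive being split off before applying \eqref{e:sszygm_new}. Secondarily, one must be slightly careful that \eqref{e:sszygm_new} as stated asks $\ell_R=\ell_S$, which is satisfied here with $R=S$; and that the modulation $\exp(2\pi i a_\omega\cdot)$ has been removed in passing to $Z_{\mathbb Q^\omega}$, so the wave packets $\upphi_{P^{\downarrow,\omega}}$ really are low-frequency bumps at scale $\ell_L$, which is exactly what makes the oscillation small. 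Once these three points are verified, the estimate is a routine summation, entirely parallel to the proof of \eqref{e:sszygm_fin} with the extra derivative gain.
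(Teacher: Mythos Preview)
Your proposal is correct and follows essentially the same approach as the paper: decompose $\mathbb Q_{\mathsf{ov},L}$ into the pieces $\mathbb Q_=(L^{\uparrow m,+j})$, apply a Lipschitz (mean value) estimate to the demodulated bumps $\upphi_{P^{\downarrow,\omega}}$ to extract the gain $2^{-m}(1+|j|)^{-\mathsf{dec}}$, control the remaining $\ell^2$ sum of coefficients $|\langle f,\upphi_P\rangle|$ via the single-scale Zygmund inequality \eqref{e:sszygm_new}, pass from $\inf_{L^{\uparrow m,+j}}$ to $\inf_L$ at polynomial cost in $|j|$, and sum. The only cosmetic difference is that the paper works directly in $\ell^2$ over all $P\in\mathbb Q_=(L^{\uparrow m,+j})$ (combining the $\omega$'s), whereas you first bound each fixed-$\omega$ contribution in $\ell^1$ over $P$ and then take $\ell^2(\omega)$; since each $\omega$ contributes $O(1)$ tiles at a fixed spatial scale this is harmless.
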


\begin{proof}
 Fix $x,y \in 9L$. By the triangle inequality and the localization trick {in Remark~\ref{rmrk:loc}}, it suffices to estimate
\[
\left\|  Z_{\mathbb Q^\omega_=(L^{\uparrow m,+j})} f (x) -  Z_{\mathbb Q^\omega_=(L^{\uparrow m,+j})} f (y)  \right\|_{\ell^2(\Omega_\Xi)} \lesssim C_X 2^{-m}  (1+|j|)^{-10}\inf_{L}  \mathrm{M}_X f.
\]
To do so, apply a Lipschitz estimate and 
{\eqref{e:sszygm_new}} to obtain
\[
\begin{split}
& 
\left\| {Z}_{\mathbb Q^\omega(L^{\uparrow m,+j})} f (x) -  {Z}_{\mathbb Q^\omega(L^{\uparrow m,+j})} f (y)  \right\|_{\ell^2(\Omega_\Xi)} \\
& {\quad} = {| I_P |} \left(\sum_{\omega \in \Omega_\Xi} \sum_{P\in \mathbb Q^\omega_=(L^{\uparrow m,+j})} \left| \langle f, \upphi_P \rangle \right|^2 \left| \upphi_{P^{\downarrow,\omega}}(x) - \upphi_{P^{\downarrow,\omega}}(y) \right|^2\right)^{	 \frac12 } 
\\ 
& \quad \lesssim \frac{|x-y|}{\ell(L^{\uparrow m,+j})} (1+|j|)^{-\mathsf{dec}} \left(\sum_{\omega \in \Omega} \sum_{P\in \mathbb Q^\omega_=(L^{\uparrow m,+j})} \left| \langle f, \upphi_P \rangle \right|^2  \right)^{  \frac12 } \lesssim C_X 2^{-m}  (1+|j|)^{-\mathsf{dec}}\inf_{L^{\uparrow m,+j}} \mathrm{M}_X f 
\\ 
& \quad \lesssim C_X 2^{-m}  (1+|j|)^{-\mathsf{dec}+2}\inf_{4jL^{\uparrow m,+j}} \mathrm{M}_X f \leq C_X 2^{-m}  (1+|j|)^{-10}\inf_{L}  \mathrm{M}_X f,
\end{split}
\]
where the latter step simply holds because $4jL^{\uparrow m,+j}$ contains $L$. The proof is complete.
\end{proof}

An easier estimate in the spirit of \eqref{eq:farfccc} of Lemma~\ref{lem:estimates} is available for compactly supported functions.

\begin{lemma} \label{lem:ovfccc}	$\displaystyle  \left\|   T_{\mathbb Q^\omega_{\mathsf{ov},L} } (f\ind_{{L}})  \right\|_{\ell^2(\omega\in \Omega_\Xi)} \lesssim C_X \inf_{L} \mathrm{M}_{X} (f\ind_{
{L}})  $.
\end{lemma}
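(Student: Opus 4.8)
The plan is to reduce the statement to a single-scale estimate via the decomposition $\mathbb Q_{\mathsf{ov},L}^\omega = \bigsqcup_{m\geq 1}\bigsqcup_{j\in\mathbb Z}\mathbb Q_{=}^\omega(L^{\uparrow m,+j})$ coming from \eqref{e:splitsf}, and then to exploit the compact support of $f\ind_L$, which makes matters much simpler than in Lemma~\ref{l:3sszyg} since no tail/oscillation argument is needed. First I would apply the localization trick of Remark~\ref{rmrk:loc} to dispose of any decay factors, and use the triangle inequality in $\ell^2(\Omega_\Xi)$ together with Minkowski to reduce to proving, for each $m\geq1$ and $j\in\mathbb Z$,
\[
\left\| T_{\mathbb Q^\omega_=(L^{\uparrow m,+j})}(f\ind_L) \right\|_{\ell^2(\omega\in\Omega_\Xi)} \lesssim C_X \, 2^{-m}(1+|j|)^{-10}\inf_L \mathrm M_X(f\ind_L),
\]
whence summing the geometric series in $m$ and the summable series in $j$ yields the claim.

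For the single-scale bound I would first use the pointwise identity $\|T_{\mathbb Q^\omega(R)}f\|_{\ell^2(\Omega_\Xi)} = \|Z_{\mathbb Q^\omega(R)}f\|_{\ell^2(\Omega_\Xi)}$ from \eqref{e:Ttil}--\eqref{e:Ttil2}, and recall that the wave packets $\upphi_{P^{\downarrow,\omega}}$ with $P\in\mathbb Q^\omega_=(L^{\uparrow m,+j})$ are $L^1$-normalized bumps adapted to $I_P\in\mathcal D_m(L^{\uparrow m,+j})$. Since $f\ind_L$ is supported in $L$, while the cube $L^{\uparrow m,+j}$ is at distance $\gtrsim |j|\,\ell(L^{\uparrow m,+j})$ from $L$ (when $|j|\geq 2$), the measurement of $\|T_{\mathbb Q^\omega_=(L^{\uparrow m,+j})}(f\ind_L)\|_{\ell^2(\Omega_\Xi)}$ over all of $\R$ splits into: the behaviour of the coefficients $\langle f\ind_L,\upphi_P\rangle$, which by \eqref{e:sszygm_new} of Lemma~\ref{l:newtail_fin} (with $R=L^{\uparrow m,+j}$, $S$ a dyadic cube of the same length covering $L$, summed over the finitely many such $S$) is controlled by $C_X(1+\mathrm{dist}(R,L)/\ell_R)^{1-\mathsf{dec}}\inf_L\mathrm M_X(f\ind_L)$; and an $L^2$ estimate of the output $Z_{\mathbb Q^\omega}$ against the localized spatial support. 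Concretely, I would run exactly the $L^2$-estimate strategy of \eqref{eq:decayzyg}--\eqref{eq:decayzyg_fin}: apply Lemma~\ref{l:proj} to $(f\ind_L)\ind_S$ to obtain the projection, invoke the almost-orthogonality $L^2$-bound of Lemma~\ref{lem:ao} for $T_{\mathbb Q_=(R)}$ (the collection $\mathbb P^{O_\Xi}$ satisfies \eqref{e:aotiles}), and transfer the average from $R$ to $S$ using the $\chi$-decay, picking up the factor $(1+\mathrm{dist}(R,S)/\ell_R)^{-\mathsf{dec}}$. The scale gain $2^{-m}$ comes from the ratio $\ell(L^{\uparrow m,+j})^{-1}\cdot\ell(L) $ implicit in passing from the coefficient normalization at scale $L^{\uparrow m,+j}$ to the compactly supported input at scale $L$, exactly as in the Lipschitz step of Lemma~\ref{l:3sszyg}; one can also simply note that when $j=0$ the containment $L\subset L^{\uparrow m}$ forces $|I_P|=2^{-m}|L^{\uparrow m}|$ and a crude $\ell^2$-bound on the coefficients combined with the $L^1$-normalization of $\upphi_P$ gives the factor $2^{-m/2}$, which after summing in the (boundedly many overlapping) cubes still yields geometric decay.

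The main obstacle, and the place requiring care, is bookkeeping the two independent decays — the spatial one in $j$ (separation of $L$ from $L^{\uparrow m,+j}$) and the scale one in $m$ (nesting) — while keeping the right hand side anchored at $\inf_L \mathrm M_X(f\ind_L)$ rather than at an inflated cube. This is handled exactly as in the proofs of \eqref{e:sszygm_fin} and Lemma~\ref{l:3sszyg}: after applying \eqref{e:sszygm_new} the average $\langle f\ind_L\rangle_{X_1,R^*}$ over the enlarged cube $R^* = 3(1+\mathrm{dist}(R,L)/\ell_R)R$ absorbs one power of $(1+\mathrm{dist}(R,L)/\ell_R)$, and since $f\ind_L$ is supported in $L$ and $L\subseteq R^*$ one simply has $\langle f\ind_L\rangle_{X_1,R^*}\lesssim (\ell_L/\ell_{R^*})\inf_L\mathrm M_X(f\ind_L)$ up to constants, which produces an additional decay in both $m$ and $j$; choosing $\mathsf{dec}$ large enough makes the resulting double series over $m\geq1$, $j\in\mathbb Z$ absolutely convergent with sum $\lesssim C_X\inf_L\mathrm M_X(f\ind_L)$. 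No new ideas beyond those already deployed in Lemma~\ref{l:newtail_fin} and Lemma~\ref{l:3sszyg} are needed; the compact support of the input is what removes the delicate oscillation argument and makes this lemma ``easier'', as the preamble advertises.
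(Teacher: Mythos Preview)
Your approach — split $\mathbb Q_{\mathsf{ov},L}$ into the single-scale pieces $\mathbb Q_=(L^{\uparrow m,+j})$, apply the coefficient bound \eqref{e:sszygm_new} with $R=L^{\uparrow m,+j}$ and $S=L^{\uparrow m}$, extract decay in $j$ from separation and in $m$ from the compact support of $f\ind_L$, then sum — is exactly the paper's, and the argument is correct in outline.

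A few of your supporting remarks are confused and should be cleaned up. First, for $P\in\mathbb Q_=(L^{\uparrow m,+j})$ one has $I_P=L^{\uparrow m,+j}$, so $|I_P|=2^m\ell_L$, not $|I_P|\in\mathcal D_m(L^{\uparrow m,+j})$ or $|I_P|=2^{-m}|L^{\uparrow m}|$. Second, the lemma is a \emph{pointwise} estimate on $\|T_{\mathbb Q^\omega}(\cdot)(x)\|_{\ell^2(\Omega_\Xi)}$; the $L^2$-in-space machinery \eqref{eq:decayzyg}--\eqref{eq:decayzyg_fin} and a fresh invocation of Lemma~\ref{l:proj} are unnecessary — the paper uses only \eqref{e:sszygm_new} together with the trivial bound $|I_P||\psi_P(x)|\lesssim 1$. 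Third, the claimed inequality $\langle f\ind_L\rangle_{X,R^*}\lesssim(\ell_L/\ell_{R^*})\langle f\rangle_{X,L}$ holds as written only for $X=L^1$; for general convex $X$ the factor is weaker (e.g.\ $(\ell_L/\ell_{R^*})^{1/p}$ when $X=L^p$). The paper's own proof writes $2^{-m}$ at the analogous step, sharing this imprecision; the substantive point is simply that $\inf_{L^{\uparrow m}}\mathrm M_X(f\ind_L)$ decays geometrically in $m$ because the support of $f\ind_L$ stays fixed while the averaging scale grows, and any geometric rate suffices for the summation.
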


\begin{proof} Fixing $m\geq 0$ and $j\in\Z$, apply \eqref{e:sszygm_new} to obtain
\[
\begin{split}
\left\| T_{\mathbb P_=(L^{\uparrow m,+j})} (f\ind_L {\ind_{L^{\uparrow m}}}) \right\|_{{\ell^2(\Omega_\Xi)}} (x) &\lesssim \left(C_X\inf_{L^{\uparrow m}} \M_X (f\ind_L)\right) \left(1+\frac{\dist(L^{\uparrow m},L^{\uparrow m,+j})}{2^m\ell_L}\right)^{-\mathsf{dec}}
\\
&\lesssim  C_X 2^{-m}\langle f \rangle_{X,L}(1+|j|)^{-9}.
\end{split}
\]
A routine summation completes the proof.
\end{proof}

\section{Stopping collection estimates} \label{s:sce} This section contains the main multiscale part of the argument. The projection Lemma~\ref{l:proj} is used to estimate the localized $L^2$-norm of model 
operators in~\eqref{e:defmulti} restricted to tiles from $\mathbb P^{O_\Xi}$, see Proposition~\ref{p:maintech} below. In the subsequent paragraphs, 
localized, weak-type, and bilinear estimates are derived from the proposition in preparation  for the proofs of the main theorems.

\subsection{Stopping collections and an $L^2$-estimate} \label{ss:stopL2}Let $I\in \mathcal D$ and $f\in  L^\infty_0(\R)$ be fixed throughout this paragraph. Also fix  $\mathcal L \subset \mathcal  D(3I)$ to be a collection of pairwise disjoint dyadic intervals whose union is $B\subset 3I$. Such {a} collection $\mathcal L$ will be referred to as a 
\emph{stopping collection}, following the terminology introduced in \cite{CCDP2018}{;} see also \cite{CDPPV22}. Let
\begin{equation} \label{e:stopcol}
\begin{split}
&\mathcal L'=\left\{L'\in \mathcal D(I):\, L'=L^{+j} \textrm{ for some } L\in \mathcal L, \, j=0,\pm1 \right\},
\\ 
& \mathcal G=\left\{G\in \mathcal D(I): \, G\not\subset 3L \textrm{ for all } L \in \mathcal L  \right\}, 
\end{split}
\end{equation}
and let $\mathcal L''$ be the collection of maximal elements in $\mathcal L'$. The collection $\mathcal G$ has  the property that
\[
\begin{split}
& G\in \mathcal G,\, L \in \mathcal L, \,\ell_G<\ell_L \implies \mathrm{dist}(L,G) \geq \ell_L ,
\\
& G\in \mathcal D(I)\setminus \mathcal G \implies G \in \mathcal D( {L''}) \textrm{ for some }  L''\in \mathcal L''.
\end{split}
\] 
Define $\mathbb P^{O_\Xi}_{\mathcal G}(I){\coloneqq} \{\mathbb P^{O_\Xi}(I): \, I_P\in \mathcal G\}$,  and partition for each $L\in \mathcal L$
\begin{equation}\label{e:gsplit}
\begin{split}
& \mathcal G = \mathcal G_0(L)\cup \bigcup_{\substack k\in \mathbb Z \setminus\{0,\pm1\}}\mathcal G_{k}(L)  {,}
\\ 
&\mathcal G_0(L)\coloneqq\{G\in \mathcal G: \, \ell_G > \ell_L\}, 
\\ 
& \mathcal G_{k}(L) \coloneqq \mathcal G \cap \mathcal D(L^{+k}), \quad k \in \mathbb Z\setminus\{0, \pm 1\}.
\end{split}
\end{equation}
Moreover,  set $\widetilde{\mathcal{G}}_k (L) \coloneqq \mathcal G_{k}(L) \setminus \{ L^{+k} \}$. Below, we turn to the estimation of the operator
\begin{equation}
\label{e:dec1.1}   T_{\mathbb P^{O_\Xi}_{\mathcal G}(I)}= T_{\mathbb P^{O_\Xi}(I)} -\sum_{L \in {\mathcal L''}} 
T_{\mathbb P^{O_\Xi}(L)}
\end{equation} 
in terms of the local quantity
\begin{equation}
\label{e:lambdf} \lambda_{f,X} \coloneqq 
\left\|f\cic{1}_{3I\setminus B}\right\|_\infty+  \sup_{L\in \mathcal L} \inf_{L} \mathrm{M}_X f,
\end{equation}
cf. \eqref{e:Orl}. When the space $X$ is fixed and understood from context in \eqref{e:lambdf}, write $\lambda_f$ instead.

\begin{proposition}\label{p:maintech}  There holds  
\begin{equation}\label{e:Tgoodloc}
 \left\|T_{\mathbb P_{\mathcal G}^{O_\Xi}(I)} f \right\|_2 \lesssim \ZZ^\star(X,\Xi)\sqrt{|I|}\lambda_{f\chi_{I}^{\mathsf{dec}}}.
\end{equation}
\end{proposition}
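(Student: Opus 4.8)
The plan is to split $T_{\mathbb P^{O_\Xi}_{\mathcal G}(I)}f$ according to the spatial decomposition of the tile set induced by the stopping collection $\mathcal L$, and then to apply Lemma~\ref{l:L2sum} to each piece after grouping tiles into pairwise disjoint spatial ``shells''. Concretely, first decompose $f = f\cic{1}_{3I\setminus B} + \sum_{L\in\mathcal L} f\cic{1}_L$, and correspondingly note that $\mathbb P^{O_\Xi}_{\mathcal G}(I)$ consists of tiles $P$ with $I_P\in\mathcal G$, so that by \eqref{e:gsplit} each $L\in\mathcal L$ interacts with $\mathcal G$ only through the far regions ($G\in\mathcal G_k(L)$, $|k|\geq 2$, or $G\in\mathcal G_0(L)$ with $\ell_G>\ell_L$, where $\mathrm{dist}(L,G)\geq\ell_L$). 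The first step is therefore to control the contribution of $f\cic{1}_{3I\setminus B}$, which is an $L^\infty$ function with norm $\leq\lambda_{f\chi_I^{\mathsf{dec}}}$; here the almost-orthogonality of $\mathbb P^{O_\Xi}$ together with Lemma~\ref{lem:ao} (or rather its localized refinement through $\langle\cdot\rangle_{2,I,-}\lesssim\langle\cdot\rangle_{2,I,+}$) immediately gives an $L^2$-bound of the desired shape, with constant $O(1)$ rather than $\ZZ^\star(X,\Xi)$.

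The heart of the argument is the contribution of $\sum_{L\in\mathcal L} f\cic{1}_L$. For each $L\in\mathcal L$, write the relevant tiles over $\mathcal G$ as the union of $\bigcup_{G\in\mathcal G_0(L)}\mathbb P^{O_\Xi}_=(G)$ and $\bigcup_{|k|\geq 2}\mathbb P^{O_\Xi}(L^{+k})\cap\mathbb P^{O_\Xi}_{\mathcal G}$, and apply the tail estimates of Lemma~\ref{l:newtail_fin}: inequality \eqref{eq:decayzyg_fin} gives, for each output spatial interval $G$, a bound $\|T_{\mathbb Q(G)}(f\cic{1}_L)\cic{1}_G\|_2\lesssim C_X\sqrt{\ell_G}(1+\mathrm{dist}(G,L)/\ell_G)^{2-\mathsf{dec}}\inf_L\mathrm M_X f$. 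The plan is then to sum over $L\in\mathcal L$ first — for a fixed output location, the distances $\mathrm{dist}(G,L)$ over distinct $L\in\mathcal L$ are lacunary-like because the $L$'s are pairwise disjoint dyadic intervals — producing a quantity $F_G$ supported essentially on $G$ with $\langle F_G\rangle_{2,G,-}\lesssim C_X\sup_{L\in\mathcal L}\inf_L\mathrm M_X f\lesssim C_X\lambda_{f\chi_I^{\mathsf{dec}}}$, uniformly in $G$. Now apply Lemma~\ref{l:L2sum} to the collection $\{G\}\subset\mathcal G$: since all such $G$ lie in $\mathcal D(I)$ and the collection can be organized into $O(1)$ subfamilies of pairwise disjoint intervals (splitting by the ``top'' intervals among them), one gets $\|\sum_G F_G\|_2\lesssim(\sum_G|G|)^{1/2}\sup_G\langle F_G\rangle_{2,G,-}\lesssim\sqrt{|I|}\,C_X\lambda_{f\chi_I^{\mathsf{dec}}}$, as desired. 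The localization trick of Remark~\ref{rmrk:loc} handles the passage between $f$ and $f\chi_I^{\mathsf{dec}}$.

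The main obstacle I anticipate is the bookkeeping needed to make the double summation (over $L\in\mathcal L$ and over output intervals $G\in\mathcal G$) genuinely summable and to arrange the pieces as spatially disjoint families so that Lemma~\ref{l:L2sum} applies with the correct $\sqrt{|I|}$ gain rather than a lossy $\ell^1$-triangle-inequality bound. One must be careful that the polynomial tail gains $(1+\mathrm{dist}/\ell)^{2-\mathsf{dec}}$ from Lemma~\ref{l:newtail_fin} are strong enough after summing over all $L$ sharing a given scale and over all scales; this is where the freedom to take $\mathsf{dec}$ large is used. A secondary subtlety is that $\mathbb P^{O_\Xi}_{\mathcal G}(I)$ excludes tiles with $I_P\in\mathcal D(I)\setminus\mathcal G$, i.e.\ tiles sitting inside some $L''\in\mathcal L''$; one must check that the subtraction in \eqref{e:dec1.1} exactly removes these, so that every remaining tile $P$ has its spatial interval $I_P$ at distance $\gtrsim\ell_{I_P}$ (or larger scale) from every $L\in\mathcal L$, which is precisely what triggers the decay in Lemma~\ref{l:newtail_fin}. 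Once this geometric picture is pinned down, the remaining estimates are the routine tail summations already packaged in Lemmas~\ref{l:newtail_fin}, \ref{lem:estimates}, and \ref{lem:ovfccc}.
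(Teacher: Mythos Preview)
Your plan has a genuine gap in the treatment of the term $\sum_{L\in\mathcal L} T_{\mathbb P_{\uparrow,L}}(f\cic{1}_L)$, where $\mathbb P_{\uparrow,L}$ collects the tiles with $I_P=G\in\mathcal G_0(L)$, i.e.\ $\ell_G>\ell_L$. You intend to invoke Lemma~\ref{l:newtail_fin}, eq.~\eqref{eq:decayzyg_fin}, to extract a factor $(1+\dist(G,L)/\ell_G)^{2-\mathsf{dec}}$ and then sum. But for these tiles there is no separation: the tower of dyadic ancestors $G$ of $L$ all lie in $\mathcal G_0(L)$ and satisfy $\dist(G,L)=0$, so the decay factor is identically~$1$. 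Moreover, Lemma~\ref{l:newtail_fin} is stated for intervals $G_0,G_1,G_2$ of \emph{equal} length with nontrivial separation; it does not cover the regime $L\subsetneq G$. Even granting a bound $\|T_{\mathbb Q_=(G)}(f\cic{1}_L)\|_2\lesssim C_X\sqrt{\ell_G}\,\lambda_f$ for each such $G$, the sum over $\ell_G=2^m\ell_L$, $m\geq 1$, diverges geometrically. Your heuristic that ``distances $\dist(G,L)$ over distinct $L\in\mathcal L$ are lacunary-like'' also fails: for a fixed large $G$, arbitrarily many pairwise disjoint $L$'s (of small scale) can sit inside $G$ with $\dist(G,L)=0$.

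This is exactly the obstruction that forces the paper's use of the Projection Lemma~\ref{l:proj}. The point is that for $f\cic{1}_L$ one has only $X$-control, not $L^2$-control, and the tiles in $\mathbb P_{\uparrow,L}$ give no spatial decay to compensate. The paper replaces each $f\cic{1}_L$ by its Gabor projection $g_L$, which agrees with $f\cic{1}_L$ against every $\upphi_P$ with $\ell_P\geq\ell_L$ (so in particular against all tiles in $\mathbb P_{\uparrow,L}$) and satisfies the $L^2$ bound $\langle g_L\rangle_{2,L,-}\lesssim C_X\langle f\rangle_{X,L}$. One then reassembles $\sum_L T_{\mathbb P_{\uparrow,L}}(g_L)=T_{\mathbb P_{\mathcal G}(I)}(g)-\text{(tail terms)}$ with $g=\sum_L g_L$, uses Lemma~\ref{l:L2sum} to get $\|g\|_2\lesssim C_X\sqrt{|I|}\lambda_f$, and finishes the main term via the plain $L^2$ bound of Lemma~\ref{lem:ao}. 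Your proposal correctly identifies the far pieces $\mathcal G_k(L)$, $|k|\geq 2$, and the $f\cic{1}_{3I\setminus B}$ piece (and you should also include $f\cic{1}_{\R\setminus 3I}$), but the ``up'' term cannot be handled by tail estimates alone; the projection step is the missing idea.
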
 

\begin{proof} During this proof, as no confusion may arise, we drop the superscript ${O_\Xi}$ from all occurrences and write for brevity $C_X\coloneqq \ZZ^\star(X,\Xi)$. Notice that it is enough to prove the weaker estimate
\begin{equation}\label{e:Tgood} 
\left\|T_{\mathbb P_{\mathcal G}(I)} f\right\|_2 \lesssim C_X\sqrt{|I|}\lambda_f 
\end{equation}
as in fact \eqref{e:Tgoodloc} then follows from \eqref{e:Tgood} and the localization trick {in Remark~\ref{rmrk:loc}}. The proof of the proposition is thus reduced to the proof of \eqref{e:Tgood} which involves estimating several pieces{.} Namely, after setting
\begin{equation} \label{e:upL}
\mathbb P_{\uparrow,L}\coloneqq\left\{P\in \mathbb P (I):\, I_P\in \mathcal G_0(L)\right\},
\end{equation} 
decompose
\begin{align}\label{e:dec1.2b} 
& \quad T_{\mathbb P_{\mathcal G}(I)}  \left(f\cic{1}_{\R\setminus 3I}\right)
\\ \label{e:dec1.3a}
&+ T_{\mathbb P_{\mathcal G}(I)}  \left(f\cic{1}_{3I\setminus B}\right)  
\\  \label{e:dec1.1a} 
 T_{\mathbb P_{\mathcal G}(I)} f= &
\\ \label{e:dec1.4a} & 
+ \sum_{L\in \mathcal L} T_{\mathbb P_{\uparrow,L}} \left(f\cic{1}_{L}\right)
\\  \label{e:dec1.4b} &
+ \sum_{L\in \mathcal L} \sum_{|k|\geq 2} T_{\mathbb P_{\mathcal  G_{k}(L)}}\left(f\cic{1}_L\right) .
\end{align}
Applying Lemma~\ref{l:proj} to each $f\cic{1}_L$ {and} denoting by $g_L$ the corresponding output,
we have
\begin{align}\label{e:gLs}
&\sup_{L\in \mathcal L} \langle g_L \rangle_{2,L,-}\lesssim C_X \lambda_f{,}
\\
\label{e:gL2}  &  \left\|g \coloneqq \sum_{L \in \mathcal L} g_L\right\|_2 \lesssim  C_X\lambda_f\sqrt{\sum_{L\in \mathcal L} |L|}{,}
\end{align}
where \eqref{e:gL2} follows from \eqref{e:gLs} and Lemma~\ref{l:L2sum}. It is then easy to see that
\begin{align}\eqref{e:dec1.4a}
& =  \sum_{L\in \mathcal L} T_{\mathbb P_{\uparrow,L}} \left(f\cic{1}_{L}\right)  = \sum_{L\in \mathcal L} T_{\mathbb P_{\uparrow,L}} \left(g_L\right) 
\\ \label{e:main} 
& =T_{\mathbb P_\mathcal G(I)} \left(g\right) 
\\  \label{e:dec15b}
&\quad -  \sum_{L\in \mathcal L} \sum_{|k|\geq 2 }  T_{\mathbb P_{
		{\widetilde{\mathcal G}_k(L)}}} \left(g_L\right) {.}
\end{align}
It remains to estimate the term \eqref{e:dec1.3a} and main term \eqref{e:main}, followed by the tail terms \eqref{e:dec1.2b},  \eqref{e:dec1.4b}, \eqref{e:dec15b}.

\subsubsection*{Estimates for \eqref{e:dec1.3a} and \eqref{e:main}}  These terms are easily estimated by applying the $L^2$-bound for $T_{\mathbb P_{\mathcal G}(I)}$ {from Lemma~\ref{lem:ao}} in the form
\[
\left\|T_{\mathbb P_{\mathcal G}(I)} h \right\|_{L^2(\R)}\lesssim \sqrt{|I|} \langle h \rangle_{2,I,+},
\]
and then using the definition of $\lambda_f$ for the proof of \eqref{e:dec1.3a}, and \eqref{e:gLs} for the proof of \eqref{e:main}, respectively.

\subsubsection*{Estimate for  \eqref{e:dec1.2b}}\label{sec:dec1.2b}
The inequality 
\[
\left\| T_{\mathbb P_{\mathcal{G}} (I)} ( f\cic{1}_{\R \setminus 3I}) \right\|_2 \lesssim  C_X \inf_{I}\M_Xf \sqrt{|I|} \lesssim  C_X {\inf_{3I}\M_Xf}\sqrt{|I|}
\]
follows by {the} triangle inequality, partitioning $\R$ and $\R \setminus 3 I$ into intervals of the form $I^{+j}$, $I^{+k}$ for $j,k \in \Z$ with $|k| \geq 2$, and using \eqref{eq:decayzyg_fin} of Lemma~\ref{l:newtail_fin}. Since
\[
\bigcup_{L\in\mathcal L} L\subset 3I,\qquad \inf_{3I}\M_X f\leq \inf_{L\in\mathcal L}\inf_L\M_X f \leq \lambda_f,
\]
the proof of \eqref{e:dec1.2b} is complete.

\subsubsection*{Estimate for \eqref{e:dec1.4b}} In this paragraph, $\mathsf{dec}$  is fixed to being the one from {Lemma~\ref{l:proj}}, and   $\mathsf{dec}'= \mathsf{dec} -10$ {is good enough for Lemma~\ref{l:L2sum}}. We start by the basic estimate
\begin{equation}\label{e:dec1.4b1} \left\|\textrm{\eqref{e:dec1.4b}} \right\|_{2} \leq \sum_{|k|\geq 2} \left\|\sum_{L\in\mathcal L} F_L \right\|_2
\end{equation}
where we have set $F_L\coloneqq T_{\mathbb P_{\mathcal{G}_k(L)}} (f\ind_L)$. An application of Lemma~\ref{l:L2sum} thus tells us that 
\[
\begin{split}
\left\| \sum_{L\in\mathcal L} F_L\right\|_{2} &\lesssim \sqrt{\sum_{L\in\mathcal L}|L|} \sup_{L\in\mathcal L}\langle F_L\rangle_{2,L,-} \lesssim \sqrt{\sum_{L\in\mathcal L}|L|} \sup_{L\in\mathcal L}\sum_{j \in \mathbb{Z} } \frac{(1 + |j|)^{\mathsf{dec}'}}{\sqrt{|L|}} \left\| T_{\mathbb P_{\mathcal{G}_k(L)}}(f \ind_L) \ind_{L^{+j}}\right\|_2 .
\end{split}
\]
Fixing momentarily $L\in\mathcal L$ and $j \in \mathbb{Z}$, we apply \eqref{eq:decayzyg_fin} of Lemma~\ref{l:newtail_fin} to estimate
\[
\begin{split}
\left\| T_{\mathbb P_{\mathcal{G}_k(L)}}(f \ind_L) \ind_{L^{+j}} \right\|_2 &\lesssim C_X \sqrt{|L|} |k|^{2-\mathsf{dec}} (1 + |j|)^{2-\mathsf{dec}} \inf_L \M_Xf 
\\
&\lesssim C_X \sqrt{|L|} |k|^{2-\mathsf{dec}} (1 + |j|)^{2-\mathsf{dec}} \lambda_f
\end{split}
\]
which yields the desired estimate upon summing in $j \in \Z$, and in $k \in \mathbb{Z} \setminus \{ 0,\pm 1 \}$ in \eqref{e:dec1.4b1}.

\subsubsection*{Estimate for \eqref{e:dec15b}}  In this paragraph,   $\mathsf{dec}$  is fixed to being the one from \eqref{e:gLs}{, hence from Lemma~\ref{l:proj}}, and   $\mathsf{dec}'= \mathsf{dec} -10$.  We make use of the  operator
\[
Q_{k,L} h  \coloneqq 
\chi_{L}^{-\mathsf{dec}'} T_{\mathbb P_{ \widetilde{\mathcal G}_{k}(L)}} ( \chi_{L^{+k}}^{-\mathsf{dec}} h) {\eqcolon} \chi_{L}^{-\mathsf{dec}'} \chi_{L^{+k}}^{\mathsf{dec}'}\widetilde T_{{\mathbb P_{
\widetilde{\mathcal G}_{k}(L)}}}   h {.}
\]
Note that $\|Q_{k,L}\|_{2\to 2}\lesssim (1+|k|)^{\mathsf{dec}'}$ as we have localized using $ \chi_{L} $ instead of $ \chi_{L^{+k}}$, and applied Lemma~\ref{lem:ao}. At this point

\begin{equation} \label{e:loctrick2}
\begin{split}
&\quad \left\langle   T_{\mathbb P_{\widetilde{\mathcal G}_{k}(L)}} g_L \right\rangle_{2,L,-} 
=  \frac{\left\|\chi_{L}^{-\mathsf{dec}'}  T_{\mathbb P_{\widetilde{\mathcal G}_{k}(L)}}( g_L)   \right\|_2}{\sqrt{|L|}}  
= \frac{\left\| Q_{k,\ell} \left(\chi_{L^{+k}}^{\mathsf {dec}}g_L\right)  \right\|_2}{\sqrt{|L|}} 
\\ 
& \lesssim  (1+|k|)^{\mathsf{dec}'} \left\|  \chi_{L^{+k}}^{\mathsf {dec}}\chi_{L}^{\mathsf {dec}}  \right\|_\infty  \frac{\left\|\chi_{L}^{-\mathsf{dec}}    g_L  \right\|_2}{\sqrt{|L|}}   \lesssim (1+|k|)^{-10}   \left\langle    g_L \right\rangle_{2,L,-}
\lesssim C_X (1+|k|)^{-10}\lambda_f.
\end{split}
\end{equation}
We may then apply Lemma~\ref{l:L2sum} to  $T_{\widetilde{\mathcal G}_{k}(L)} (g_L)$, thus obtaining for $|k|\geq 2$
\[
\left\|\sum_{L\in \mathcal L}   T_{\mathbb P_{\widetilde{\mathcal G}_{k}(L)}}  g_L  \right\|_2 \lesssim  C_X  |k|^{-10}  \lambda_f\sqrt{\sum_{L\in \mathcal L} |L|}
\]
whence the estimate
\[
\left\|\textrm{\eqref{e:dec15b}}  = -\sum_{k\in \mathbb Z}  \sum_{L\in \mathcal L}    T_{\mathbb P_{\widetilde{\mathcal G}_{k}(L)}} \left(g_L\right) \right\|_2 \lesssim   C_X \lambda_f\sqrt{\sum_{L\in \mathcal L} |L|}\lesssim C_X \sqrt{|I|}\lambda_f
\]
follows upon summing in $|k|\geq 2$. 

Summing the estimates for the terms \eqref{e:dec1.2b}, \eqref{e:dec1.3a}, \eqref{e:dec1.4b}, \eqref{e:main} and \eqref{e:dec15b} above concludes the proof of \eqref{e:Tgood} and with that the proof of the proposition is complete.
\end{proof}

\subsection{Localized estimates} Hereafter, $\mathbb Q\subset \mathbb P^{O_\Xi}$  denotes again a generic subcollection. Proposition~\ref{p:maintech} can then be reformulated as
\begin{equation} \label{e:maintechref} \left\langle T_{\mathbb Q (I)} f\right\rangle_{2,I,-} \lesssim \ZZ^\star(X,\Xi)\sup_{P\in \mathbb Q (I)} \inf_{I_P} \mathrm{M}_X \left(f\chi_I^{\mathsf{dec}}\right).
\end{equation}
Indeed, letting $\lambda\coloneqq \sup_{P\in \mathbb Q (I)} \inf_{I_P} \mathrm{M}_X (f\chi_I ^{\mathsf{dec}})$, we define the stopping collection $\mathcal L_{\mathrm{all}}$ to be the maximal elements of the collection 
\[
\left\{ L \in\mathcal D(I): \, \langle f\chi_I ^{\mathsf{dec}}\rangle_{X,L} > 2 \lambda\right\}.
\]
Note that $\inf_L \M_X\left(f\chi_I ^{\mathsf{dec}}\right)\geq 2\lambda$ for every $L\in\mathcal L_{\mathrm{all}}$ and so $\mathbb Q(I)$ coincides with $\mathbb Q_{\mathcal G}(I)$ for the choice of stopping collection $\mathcal L_{\mathrm{all}}$. Proposition~\ref{p:maintech} {and the localization trick of Remark~\ref{rmrk:loc}} then implies
\[
\langle  T_{\mathbb Q(I)}f\rangle_{2,I,-} \lesssim  \ZZ^\star(X,\Xi)\lambda_{f\chi_I ^{\mathsf{dec}}}\lesssim  \ZZ^\star(X,\Xi)\lambda,
\]
the last approximate inequality following by the definition of the stopping collection. 

Secondly, the tail estimate 
\begin{equation}\label{e:maintechtiles} \left\langle \cic{1}_{\R\setminus 3I} T_{\mathbb Q  (I)} f\right\rangle_{2,I,-} \lesssim \ZZ^\star(X,\Xi)  \inf_{I} \mathrm{M}_X \left(f\chi_I^{\mathsf{dec}}\right)
\end{equation}
can also be easily deduced from \eqref{e:maintechref} as follows. Fix $I\in \mathcal D$, let $m\geq 0$ and $J\in \mathcal D_m(I)$. Then
\begin{equation} \label{e:useless}
\begin{split}
 \left\langle \cic{1}_{\R\setminus 3I} T_{\mathbb Q_{=}  (J)} f\right\rangle_{2,I,-} &{=} \frac{ \left\|\chi_{I}^{-\mathsf{dec}} \left[ \cic{1}_{\R\setminus 3I} T_{\mathbb Q_{=}  (J)} f\right] \right\|_2}{\sqrt{|I|}} =  \frac{ \left\|\left[\chi_{I}^{-\mathsf{dec} } \chi_{J}^{\mathsf{dec} }   \cic{1}_{\R\setminus 3I}\right]  \widetilde{T}_{\mathbb Q_{=}  (J)} (f\chi_J ^{\mathsf{dec}})   \right\|_2}{\sqrt{|I|}} 
\\ 
& \leq    \left\|\chi_{I}^{-\mathsf{dec} } \chi_{J}^{\mathsf{dec} } \cic{1}_{\R\setminus 3I}\right\|_\infty\frac{\left\| \widetilde{T}_{\mathbb Q_{=}  (J)} (f\chi_J ^{\mathsf{dec}})  \right\|_2}{\sqrt{|I|}} \lesssim   2^{-m\mathsf{dec}}\frac{\left\| \widetilde{T}_{\mathbb Q_{=}  (J)} (f\chi_J ^{\mathsf{dec}})   \right\|_2}{\sqrt{|I|}}
\\ 
& \lesssim   \ZZ^\star(X,\Xi) 2^{-m\mathsf{dec}} \inf_{J} \mathrm{M}_X (f\chi_J ^{\mathsf{dec}}) \lesssim  \ZZ^\star(X,\Xi)  2^{-m(\mathsf{dec}-1)} \inf_{I} \mathrm{M}_X (f\chi_I ^{\mathsf{dec}}),
\end{split}
\end{equation} 
{where $ \widetilde{T}_{\mathbb Q_{=}  (J)} f \coloneq \chi_J ^{-\mathsf{dec}} T_{\mathbb Q_{=}  (J)} (\chi_J ^{-\mathsf{dec}} f)$.}
In the passage to the last line we applied \eqref{e:maintechref} for the collection $\mathbb Q_{=}(J)$. This estimate is summable over $J\in \mathcal D_m$ and $m\geq 0$ which completes the proof of \eqref{e:maintechtiles}. Of course, we could just as well have deduced \eqref{e:maintechtiles} directly from the tail estimates carried out in the proof of Proposition~\ref{p:maintech}.

Estimates \eqref{e:maintechref} and \eqref{e:maintechtiles} have vector-valued counterparts in the following sense. If  $\mathbb Q\subset\mathbb P^{O_\Xi} $, we have
\begin{equation}\label{e:maintechrefvv}
\begin{split}
 \left\langle   \left\|{T}_{\mathbb Q^\omega (I)} f\right\|_{\ell^2(\omega\in \Omega_\Xi)}\right\rangle_{2,I,-}& \lesssim  \ZZ^\star(X,\Xi) \sup_{P\in \mathbb Q (I)} \inf_{I_P} \mathrm{M}_X \left(f\chi_I^{\mathsf{dec}}\right)
\\
\left\langle \cic{1}_{\R\setminus 3I}\left\|{T}_{\mathbb Q^\omega (I)} f\right\|_{\ell^2(\omega\in \Omega_\Xi)}\right\rangle_{2,I,-} &\lesssim \ZZ^\star(X,\Xi)   \inf_{I} \mathrm{M}_X \left(f\chi_I^{\mathsf{dec}}\right).
\end{split}
\end{equation}
This estimates follow easily, for example by using the corresponding scalar estimates \eqref{e:maintechref} and \eqref{e:maintechtiles} and a randomization argument involving Khintchine's inequality; we omit the details.

\subsection{Weak-type estimates for multiplier operators} The estimates we obtained up to this point may be easily repurposed to obtain a local weak-type estimate for the multipliers $T_{\mathbb Q}$ of \eqref{e:defmulti}, when $\mathbb Q\subset \mathbb P^{O_\Xi}$ as above.

\begin{proposition} \label{p:weaktype} There exists an absolute constant $\Theta$ such that the following holds. Let $\mathbb Q\subset \mathbb P^{O_\Xi}$. Then 
\[
 \left|\left\{ x\in I : \,\left|T_{\mathbb Q  (I^{+j})} f(x)\right|> \Theta \ZZ^\star(X,\Xi)\langle f \rangle_{X,I,+} \right\} \right|  \leq 2^{-9}  |I|
\]
for all $ I\in \mathcal D$, $|j|\leq 2$.
\end{proposition}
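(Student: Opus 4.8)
The plan is to reduce everything to the localized $L^2$-estimate \eqref{e:maintechref} together with the tail estimate \eqref{e:maintechtiles}, and then apply Chebyshev's inequality. First I would observe that by translation it suffices to treat the case $j=0$, so we must bound the measure of the set where $|T_{\mathbb Q(I)}f|$ exceeds $\Theta \ZZ^\star(X,\Xi)\langle f\rangle_{X,I,+}$. Write $\nu \coloneqq \ZZ^\star(X,\Xi)$ and normalize $\langle f\rangle_{X,I,+}=1$; since the quantity $T_{\mathbb Q(I)}f$ only depends on $f$ through its values in a neighborhood of $I$ (up to rapidly decaying tails captured by $\chi_I^{\mathsf{dec}}$), one may as well replace $f$ by $f\chi_I^{\mathsf{dec}}$ in what follows, at the cost of absolute constants.

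The key step is to split the operator into a main part localized to $3I$ and a tail part living off $3I$. For the tail part, \eqref{e:maintechtiles} gives
\[
\left\langle \cic{1}_{\R\setminus 3I}\, T_{\mathbb Q(I)} f\right\rangle_{2,I,-} \lesssim \nu\, \inf_I \mathrm{M}_X\!\left(f\chi_I^{\mathsf{dec}}\right) \lesssim \nu\, \langle f\rangle_{X,I,+},
\]
using that $\langle f\chi_I^{\mathsf{dec}}\rangle_{X,J}\lesssim \langle f\rangle_{X,I,+}$ for any $J\subseteq I$ by monotonicity of Orlicz averages and the definition of $\mathrm{M}_X$; hence $\|\cic{1}_{\R\setminus 3I}T_{\mathbb Q(I)}f\|_{L^2(I)}\lesssim \nu\sqrt{|I|}$, and Chebyshev bounds the corresponding bad set by $C\nu^2/(\Theta\nu)^2 |I| = (C/\Theta^2)|I|$. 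For the main part, I would apply \eqref{e:maintechref}, or rather Proposition~\ref{p:maintech} directly, with the stopping collection $\mathcal L$ chosen to be the maximal dyadic intervals $L\subseteq I$ for which $\langle f\chi_I^{\mathsf{dec}}\rangle_{X,L} > 2\langle f\rangle_{X,I,+}$. Then $\lambda_{f\chi_I^{\mathsf{dec}},X}\lesssim \langle f\rangle_{X,I,+}$ by construction (the $L^\infty$ piece over $3I\setminus B$ is controlled since outside all the stopping intervals the localized average is small, and one uses that $f$ may be taken supported near $I$), so \eqref{e:Tgoodloc} yields $\|T_{\mathbb P_{\mathcal G}^{O_\Xi}(I)}f\|_2\lesssim \nu\sqrt{|I|}$, and again Chebyshev controls the set where this exceeds $\Theta\nu\langle f\rangle_{X,I,+}$ by $(C/\Theta^2)|I|$. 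It remains to handle $T_{\mathbb Q(I)}f - T_{\mathbb P_{\mathcal G}^{O_\Xi}(I)}f = \sum_{L''\in\mathcal L''}T_{\mathbb P^{O_\Xi}(L'')}f$ (cf. \eqref{e:dec1.1}), which is supported on $\bigcup_{L''}3L''$; since $\sum_{L}|L|\leq C|I|/\langle\cdots\rangle$... more precisely, the stopping intervals satisfy a Carleson-type packing bound $\sum_{L\in\mathcal L}|L|\lesssim |I|$ because their $X$-averages exceed a fixed multiple of $\langle f\rangle_{X,I,+}$ and $\mathrm{M}_X$ is weak-type; this bounds $|\bigcup 3L''|\lesssim |I|$ but with a constant that is not small. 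To fix this, I would instead choose the stopping threshold to be $A\langle f\rangle_{X,I,+}$ with $A$ a large absolute constant, which forces $\sum_L|L|\leq 2^{-20}|I|$ by the $B_p$-type $L^p$-bound for $\mathrm{M}_X$ (available here by the standing $B_p$ hypothesis, cf. Definition~\ref{d:locorl} and P\'erez's theorem), absorbing the exceptional set contribution.

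Putting the three pieces together: the bad set is contained in the union of the two Chebyshev sets of size $\leq (C/\Theta^2)|I|$ each and the stopping-interval union of size $\leq 2^{-20}|I|$; choosing $\Theta$ a sufficiently large absolute constant makes the total at most $2^{-9}|I|$. The main obstacle I anticipate is the bookkeeping around the exceptional set $\bigcup_{L''\in\mathcal L''}3L''$: one needs the packing constant for the stopping collection to be genuinely small, not merely $O(1)$, which is exactly what the high threshold $A$ combined with the quantitative $L^p$-boundedness of $\mathrm{M}_X$ delivers; all other estimates are direct consequences of Proposition~\ref{p:maintech} and \eqref{e:maintechtiles} together with Chebyshev. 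A secondary technical point is to make sure the localization trick of Remark~\ref{rmrk:loc} is applied consistently so that replacing $f$ by $f\chi_I^{\mathsf{dec}}$ and estimating $\langle f\chi_I^{\mathsf{dec}}\rangle_{X,J}$ by $\langle f\rangle_{X,I,+}$ is legitimate; this is routine but must be stated.
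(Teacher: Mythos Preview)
Your overall strategy---stopping collection, $L^2$ control of the good part via \eqref{e:maintechref}, exceptional set of small measure, then Chebyshev---is the same as the paper's. But there is one genuine gap and two smaller issues worth fixing.

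\textbf{The gap: the bad part is not compactly supported.} You write that the remainder $\sum_{L''\in\mathcal L''}T_{\mathbb Q(L'')}f$ is ``supported on $\bigcup_{L''}3L''$'' and therefore disposable by making the exceptional set small. This is false: $T_{\mathbb Q(L'')}f$ is a sum of wave packets $\psi_P$ with $I_P\subset L''$, which are localized near $L''$ but have Schwartz tails, so the bad part is not zero off the exceptional set. The paper fixes exactly this point: it enlarges the exceptional set to $\widetilde E=\bigcup_{L\in\mathcal L}5L$, and on $\R\setminus\widetilde E$ it controls $\|\cic{1}_{\R\setminus\widetilde E}\,T_{\mathbb Q_{\mathsf b}(I)}f\|_2$ by combining the tail estimate \eqref{e:maintechtiles} on each $L^{+j}$ with Lemma~\ref{l:L2sum} to sum over $L$. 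Only then does Chebyshev apply on $I\setminus\widetilde E$. Your reference to \eqref{e:maintechtiles} earlier in the write-up is actually what is needed here, not for the output of $T_{\mathbb Q(I)}f$ restricted off $3I$.

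\textbf{The first split is vacuous.} Your ``tail part'' $\cic{1}_{\R\setminus 3I}T_{\mathbb Q(I)}f$ vanishes identically on $I$ (since $I\subset 3I$), so it contributes nothing to the level set $\{x\in I:\ldots\}$. The role of \eqref{e:maintechtiles} is not here but in the previous paragraph.

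\textbf{The $B_p$ hypothesis is neither available nor needed.} Proposition~\ref{p:weaktype} is used in the proof of Theorem~\ref{thm:roughsf}, which does not assume $B_p$; so you cannot invoke it. Fortunately you do not have to: the modular weak-type inequality \eqref{e:wot1} for $\mathrm{M}_X$, which holds for any Young function, together with convexity of $\mathrm{Y}_X$ ($\mathrm{Y}_X(cs)\le c\,\mathrm{Y}_X(s)$ for $c\le 1$) gives $|\{x\in I:\mathrm{M}_X(f\chi_I^{\mathsf{dec}})>A\langle f\rangle_{X,I,+}\}|\lesssim A^{-1}|I|$, which is all you need to make the exceptional set small. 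This is exactly how the paper argues in \eqref{e:measElam} (and more explicitly in the proof of \eqref{e:wot}).
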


\begin{proof} To keep the notation compact,  set again $C_X\coloneqq \ZZ^\star(X,\Xi)$ in this proof, and  restrict attention to the case $j=0$ in what follows, as the cases $0<|j|\leq 2$ follow by a completely analogous argument. Also, the statement is invariant under replacement of $f$ by $f\circ \mathrm{Sy}_I$ and $I$ by $[0,1)$. We can thus restrict ourselves to proving the case $I=[0,1)$.
Define
\[
E \coloneqq \left\{x\in I :\, \mathrm{M}_X (f\chi_I^{\mathsf{dec}})> 2^{14}\langle f \rangle_{X,I,+}  \right\}
\]	
and let $L\in \mathcal L$ be the collection of the maximal elements of $\mathcal D(I)$ such that $L \subset E $. Define also 
\[
\widetilde{E}\coloneqq \bigcup_{L\in \mathcal L} 5L.
\]
Using pairwise disjointness of $L\in \mathcal L$ and the weak-type $(X,L^{1})$-inequality of $\mathrm{M}_X$, which in dimension $1$ holds with constant $3$, let us first check that
\begin{equation}\label{e:measElam}
|\widetilde{E }| \leq 5\sum_{L\in \mathcal L} |L|\leq 5|E | \leq    \frac{15}{ 2^{14}\langle f \rangle_{X,I,+} } \|f\chi_I^{\mathsf{dec}}\|_{X} =  \frac{15 }{2^{14}} { |I|} \leq  2^{-10}  |I|.
\end{equation}
Say $P\in \mathbb Q_{\mathsf{g} }(I)  $ if $P \in\mathbb Q(I) $ and $I_P$  is not contained in $3L$ for any $L\in  \mathcal L$. Then 
\[
\mathbb Q (I) = \mathbb Q_{ \mathsf{g} }(I) \sqcup  \mathbb Q_{ \mathsf{b} }(I) , \qquad \mathbb Q_{ \mathsf{b} }(I)   \coloneqq \bigcup_{L\in \mathcal L} \bigcup_{j=0,\pm 1} \mathbb Q (L^{+j}).
\]
By construction,  and applying estimate \eqref{e:maintechref}
\begin{equation}\label{e:wtseq1}
\left\langle T_{\mathbb Q_{  \mathsf{g} } (I)} f\right\rangle_{2,I,-}\lesssim  C_X \sup_{P\in \mathbb Q_{ \mathsf{g} }(I)} \inf_{I_P} \mathrm{M}_X \left(f\chi_I^{\mathsf{dec}}\right) \lesssim   C_X  \langle f \rangle_{X,I,+}.
\end{equation} 
Furthermore, applying Lemma~\ref{l:L2sum} and estimate \eqref{e:maintechtiles},
\begin{equation}\label{e:wtseq2} 
\begin{split}
\left\|  \cic{1}_{\R\setminus \widetilde{ E }}T_{\mathbb Q_{ \mathsf{b} } (I)} f \right\|_{2} &= \left\| \sum_{L\in \mathcal L} \sum_{j=0,\pm1} \cic{1}_{\R\setminus \widetilde{ E }}  T_{\mathbb Q_{  \mathsf{b} } (L^{+j})} f\right\|_2\lesssim C_X \sqrt{|I|} \sup_{L\in \mathcal L} \inf_{L} \mathrm{M}_X f\\ &\lesssim C_X \sqrt{|I|}    \langle f \rangle_{X,I,+}.
\end{split}
\end{equation}
Let $C$ equal four times the largest of the implicit constants appearing in the inequalities \eqref{e:wtseq1} and \eqref{e:wtseq2}. Then  
\[
\begin{split}
 &\quad \left|\left\{ x\in I \setminus \widetilde{ E} : \, |T_{\mathbb Q  (I)} f(x)|>\Theta C_X\langle f \rangle_{X,I,+} \right\}  \right|  \leq \frac{1}{[\Theta C_X  \langle f \rangle_{X,I,+}]^2} \left\|   \cic{1}_
{\R\setminus \widetilde{ E }}T_{\mathbb Q  (I)}f \right\|_2^2   
\\ 
&\qquad \leq  \frac{[C C_X    \langle f \rangle_{X,I,+}]^2}{[\Theta C_X  \langle f \rangle_{X,I,+}]^2} |I| = \frac{C^2}{\Theta^2} |I| \leq 2^{-10} |I|
\end{split}
\]
provided $\Theta \geq 2^{5} C$. Combining the latter estimate with \eqref{e:measElam} returns the claim.
\end{proof}

\begin{remark}\label{rmrk:vvaluedwtype} The vector-valued version of the estimate in Proposition~\ref{p:weaktype} holds, namely  
\[
\left|\left\{ x\in I : \, \left\| {T}_{\mathbb Q^\omega  (I^{+j})} f(x)\right\|_{\ell^2(\omega \in \Omega_\Xi)}>  \Theta  \ZZ^\star(X,\Xi)\langle f \rangle_{X,I,+} \right\} \right|  \leq 2^{-9}  |I|.
\] 
This follows by an obvious modification of the proof of Proposition~\ref{p:weaktype} above, using the estimates in \eqref{e:maintechrefvv}.
\end{remark}

\subsection{A bilinear stopping estimate} This paragraph contains a strengthening of Proposition~\ref{p:maintech} into a bilinear form estimate.   Hereafter $\mathcal L$ stands for a fixed stopping collection, as in \S\ref{ss:stopL2},  $\mathcal G$ is defined as in \eqref{e:stopcol}, and the decomposition \eqref{e:gsplit} is referred.  All instances of \eqref{e:lambdf} and \eqref{e:dec1.1}, as well as the splitting \eqref{e:gsplit}, refer to  the stopping collection $\mathcal L$.

\begin{proposition}\label{p:maintechdual} $\displaystyle \left|\left \langle T_{\mathbb P^{O_\Xi}_{\mathcal G}(I)} f_1,f_2  \right\rangle \right| \lesssim |I| \prod_{j=1,2} 
	\ZZ^\star(X_j,\Xi) \lambda_{f_j, X_j}.$
\end{proposition}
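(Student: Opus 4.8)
The plan is to mirror the decomposition carried out in the proof of Proposition~\ref{p:maintech}, but now pairing each piece against $f_2$ rather than estimating its $L^2$-norm, and to exploit the symmetry between the two functions wherever possible. Write $C_j\coloneqq\ZZ^\star(X_j,\Xi)$ and drop the superscript $O_\Xi$ throughout. First I would reduce to proving the bound with $\lambda_{f_j}$ in place of $\lambda_{f_j,X_j}$ by the localization trick of Remark~\ref{rmrk:loc}, absorbing the tail factors $\chi_I^{\mathsf{dec}}$ into the wavelets. Then I would split $T_{\mathbb P_{\mathcal G}(I)}$ exactly as in \eqref{e:dec1.2b}--\eqref{e:dec1.4b}, namely into the contributions of $f_1\cic{1}_{\R\setminus 3I}$, $f_1\cic{1}_{3I\setminus B}$, $\sum_{L}T_{\mathbb P_{\uparrow,L}}(f_1\cic{1}_L)$ and $\sum_{L}\sum_{|k|\geq2}T_{\mathbb P_{\mathcal G_k(L)}}(f_1\cic{1}_L)$, and rewrite the third piece using \eqref{e:main}--\eqref{e:dec15b} in terms of the projections $g=\sum_L g_L$ coming from Lemma~\ref{l:proj} applied to each $f_1\cic{1}_L$ (so that $\langle g_L\rangle_{2,L,-}\lesssim C_1\lambda_{f_1}$ and $\|g\|_2\lesssim C_1\lambda_{f_1}\sqrt{\sum_L|L|}$). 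In parallel, and this is the new ingredient, I would apply Lemma~\ref{l:proj} \emph{also} to a suitable localization of $f_2$ associated to the same stopping intervals, obtaining projections $g_L'$ with $\langle g_L'\rangle_{2,L,-}\lesssim C_2\lambda_{f_2}$, using the identity $\langle T_{\mathbb Q}f_1,f_2\rangle=\langle f_1,T_{\mathbb Q}^\ast f_2\rangle$ and the fact that $T_{\mathbb Q}^\ast$ is again a model sum over $\mathbb P^{O_\Xi}$ with the roles of the two wave packet families swapped, so Lemma~\ref{l:proj} applies symmetrically.

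For the main term, $\langle T_{\mathbb P_{\mathcal G}(I)}g,f_2\rangle$, I would write this as $\langle g, T_{\mathbb P_{\mathcal G}(I)}^\ast f_2\rangle$ and then, since $T_{\mathbb P_{\mathcal G}(I)}^\ast f_2$ is frequency-supported in $O_\Xi$ at scales $\gtrsim$ those appearing in $g$, replace $f_2$ by its Lemma~\ref{l:proj}-projection $g'$ built from the stopping data of $f_2$; this reduces the estimate to $|\langle g,g'\rangle|\le\|g\|_2\|g'\|_2\lesssim C_1C_2\lambda_{f_1}\lambda_{f_2}\sum_L|L|\lesssim C_1C_2|I|\lambda_{f_1}\lambda_{f_2}$, after using the $L^2$-bound of Lemma~\ref{lem:ao} for $T_{\mathbb P_{\mathcal G}(I)}$ on the difference. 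For the term \eqref{e:dec1.3a} with $f_1\cic{1}_{3I\setminus B}$, which is bounded by $\lambda_{f_1}$ in sup-norm, I would pair directly against $f_2$ using the $L^2$ adjoint bound $\|T_{\mathbb P_{\mathcal G}(I)}^\ast f_2\|_2\lesssim\sqrt{|I|}\langle f_2\rangle_{2,I,+}$ from Lemma~\ref{lem:ao} and then dominate $\langle f_2\rangle_{2,I,+}$ by $\lambda_{f_2,X_2}$ via Hölder in the Orlicz space (using $L^2\subseteq X_2$ is false in general — rather one uses $X_2\subseteq L^1$ and that the relevant local average is controlled by $\mathrm M_{X_2}f_2$; here I would instead pair the $L^\infty$ bound on $f_1\cic1_{3I\setminus B}$ against $\|\cic1_I T_{\mathbb P_{\mathcal G}(I)}^\ast f_2\|_1\lesssim\sqrt{|I|}\,\langle T^\ast_{\mathbb P_{\mathcal G}(I)}f_2\rangle_{2,I,-}\cdot\sqrt{|I|}$ and bound the latter by $C_2\langle f_2\rangle_{X_2,I,+}\lesssim C_2\lambda_{f_2}$ using \eqref{e:maintechref} for the adjoint). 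The tail terms \eqref{e:dec1.2b}, \eqref{e:dec1.4b} and \eqref{e:dec15b} are handled exactly as in Proposition~\ref{p:maintech} but tested against $f_2$ through the bilinear tail estimate \eqref{e:newtail2_fin} of Lemma~\ref{l:newtail_fin}, which already produces the product $C_1C_2$ together with a geometrically summable factor in the relevant translation/scale parameters; summing these contributions against $\lambda_{f_1}\lambda_{f_2}\sqrt{\sum_L|L|}\cdot\sqrt{|I|}$-type bounds closes the argument.

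Concretely, the order I would carry out the steps is: (i) localization reduction; (ii) set up the stopping collection and the two families of Lemma~\ref{l:proj}-projections $g_L,g_L'$ for $f_1,f_2$; (iii) main term via $\langle g,g'\rangle$ and the $TT^\ast$ bound of Lemma~\ref{lem:ao}; (iv) the $3I\setminus B$ term via $L^\infty$-vs-$L^1$ pairing and \eqref{e:maintechref} for the adjoint; (v) the three tail terms via \eqref{e:newtail2_fin} together with Lemma~\ref{l:L2sum}-type summation as in Proposition~\ref{p:maintech}; (vi) collect. The main obstacle I anticipate is step (iii)–(iv): making the ``project $f_2$ as well'' step rigorous requires checking that the adjoint model sum restricted to $\mathbb P_{\mathcal G}(I)$ still has all its frequency components inside $O_\Xi$ so that Lemma~\ref{l:proj}'s exact-reproduction property \eqref{e:goodproj} applies, and that the stopping collection for $f_2$ can be chosen compatibly (one may simply take the \emph{same} $\mathcal L$ and note \eqref{e:goodproj} holds regardless, since it only depends on frequency supports, not on the choice of $f$); the error term $T_{\mathbb P_{\mathcal G}(I)}(g-f_1)$ paired against $g'-f_2$ must then be seen to vanish or be absorbable, which follows because on $\mathbb P_{\mathcal G}(I)$ one has $\langle f_1,\upphi_P\rangle=\langle g,\upphi_P\rangle$ and $\langle f_2,\psi_P\rangle=\langle g',\psi_P\rangle$ simultaneously, so in fact $\langle T_{\mathbb P_{\mathcal G}(I)}f_1,f_2\rangle$ restricted to the ``good'' tiles equals $\langle T_{\mathbb P_{\mathcal G}(I)}g,g'\rangle$ exactly. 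Everything else is a routine bookkeeping of geometric series already performed in Proposition~\ref{p:maintech} and Lemma~\ref{l:newtail_fin}.
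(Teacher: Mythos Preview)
Your plan has the right ingredients, but the resolution of your anticipated obstacle in step (iii) is incorrect, and this is a genuine gap. The claim that $\langle f_1,\upphi_P\rangle=\langle g,\upphi_P\rangle$ for all $P\in\mathbb P_{\mathcal G}(I)$ is false: Lemma~\ref{l:proj} only guarantees $\langle f_1\cic{1}_L,\upphi_P\rangle=\langle g_L,\upphi_P\rangle$ when $\ell_P\geq\ell_L$, whereas $\mathbb P_{\mathcal G}(I)$ contains tiles with $\ell_P<\ell_L$ for any given $L$ (those with $I_P\subset L^{+k}$, $|k|\geq 2$). This is precisely why the tail terms \eqref{e:dec1.4b} and \eqref{e:dec15b} arise in Proposition~\ref{p:maintech}; you cannot collapse the full bilinear form to $\langle T_{\mathbb P_{\mathcal G}(I)}g,g'\rangle$. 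Even granting that, the reduction of $\langle T_{\mathbb P_{\mathcal G}(I)}g,f_2\rangle$ to $|\langle g,g'\rangle|$ is unexplained---the operator does not simply disappear. Your scheme could be rescued by handling $\langle T_{\mathbb P_{\mathcal G}(I)}g,f_2\rangle$ via $\|g\|_2\,\|T^\ast_{\mathbb P_{\mathcal G}(I)}f_2\|_2$ and invoking Proposition~\ref{p:maintech} for the adjoint, but then the preceding decomposition of $f_1$ was largely unnecessary.

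The paper's proof is considerably simpler: it splits both functions along $B$ versus $\R\setminus B$ upfront, writing
\[
\langle Tf_1,f_2\rangle=\langle Tf_1,\cic{1}_{\R\setminus B}f_2\rangle+\langle\cic{1}_{\R\setminus B}f_1,T^\ast(\cic{1}_Bf_2)\rangle+\langle T(\cic{1}_Bf_1),\cic{1}_Bf_2\rangle.
\]
In the first two (symmetric) terms the uncut function is paired against a piece either on $3I\setminus B$, where $|f_j|\leq\lambda_{f_j}$ and Cauchy--Schwarz plus Proposition~\ref{p:maintech} as a black box finishes (this is your step (iv) idea, used more systematically), or on $\R\setminus 3I$, handled by \eqref{e:newtail2_fin}. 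Only the $B$-versus-$B$ term needs direct work, and there the bilinear single-scale estimates \eqref{e:newtail2}--\eqref{e:newtail2_fin} replace \eqref{e:sszygm_new}--\eqref{e:sszygm_fin} in the arguments of Lemmas~\ref{lem:ovfccc} and~\ref{lem:estimates} respectively. No second projection of $f_2$ is needed.
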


\begin{remark} Before the proof of the general case, note that 
if $X_2=L^2$ the proposition follows directly from Proposition~\ref{p:maintech}. Indeed\[
\begin{split}
\left|\left\langle T_{\mathbb P^{O_\Xi}_{\mathcal G}(I)} f_1,f_2  \right\rangle \right| & \leq|I|\left\langle T_{\mathbb P^{O_\Xi}_{\mathcal G}(I)} f_1\right\rangle_{2,I,-}  \left\langle  f_2  \right\rangle_{2,I,+} = \sqrt{|I|}
\left\| \widetilde{T}_{\mathbb P^{O_\Xi}_{\mathcal G}(I)} f_1\right\|_{2} \left\langle  f_2  \right\rangle_{2,I,+} \\ & \lesssim |I |	\ZZ^\star(X_1,\Xi)   \lambda_{f_1, X_1}\lambda_{f_2, L^2}
\end{split}
\] 
applying the localization trick {in Remark~\ref{rmrk:loc}} in the first equality, and Proposition~\ref{p:maintech} with $X=X_1,f=f_1$ to pass to the second line.
\end{remark}

\begin{proof}[Proof of Proposition~\ref{p:maintechdual}]   As before, the superscript $O_\Xi$ is dropped from all instances in this proof, and $C_{X_j}\coloneqq 	\ZZ^\star(X_j,\Xi).$ The basic decomposition is
\[
\begin{split}
\left\langle  T_{\mathbb P_{\mathcal G}(I)} f_1,f_2  \right  \rangle =  \left\langle T_{\mathbb P_{\mathcal G}(I)} f_1,\ind_{\R\setminus B} f_2   \right\rangle  +   \left\langle f_1 \ind_{\R\setminus B} ,  T_{\mathbb P_{\mathcal G}(I)}^\star \left( \ind_{  B} f_2  \right) \right\rangle +
 \left\langle  T_{\mathbb P_{\mathcal G}(I)}\left( f_1 \ind_{ B} \right),    \ind_{  B} f_2   \right\rangle.
\end{split}
\]
The first two terms are estimated via the  exact same strategy, relying on the further splitting
\begin{equation}\label{e:maintechdual0}
\left\langle T_{\mathbb P_{\mathcal G}(I)} f_1,\ind_{\R\setminus B} f_2   \right\rangle = \left\langle T_{\mathbb P_{\mathcal G}(I)} f_1,\ind_{3I\setminus B} f_2   \right\rangle + \left\langle
T_{\mathbb P_{\mathcal G}(I)} f_1,\ind_{\mathbb R\setminus 3I} f_2   \right\rangle.
\end{equation}
For the first term in the above display we have
\begin{equation}
	\label{e:maintechdual1}
	\left| \left\langle T_{\mathbb P_{\mathcal G}(I)} f_1,\ind_{3I\setminus B} f_2   \right\rangle \right| 
	{\lesssim} \sqrt{|I|}\left\| T_{\mathbb P_{\mathcal G}(I)} f_1 \right\|_2 \|f_2 \ind_{3I\setminus B}\|_\infty  \lesssim |I| C_{X_1} \lambda_{f_1,X_1}\lambda_{f_2,X_2},
\end{equation}
with a straightforward application of Proposition~\ref{p:maintech} and  using Definition~\eqref{e:lambdf}.

For the second term we partition $\R$ and $\R \setminus 3{I}$ into intervals of the form $ I^{+j}$, $ I^{+k}$ for $j,k \in \Z$ with $|k| \geq 2$, and apply \eqref{e:newtail2_fin} of Lemma~\ref{l:newtail_fin}. It is not difficult to see that the summation returns a control of the second term {in} \eqref{e:maintechdual0} by the correct right hand side. It remains to handle $\left\langle  T_{\mathbb P_{\mathcal G}(I)}\left( f_1 \ind_{ B} \right),    \ind_{  B} f_2  \right \rangle$. 
Arguing as in \eqref{e:dec1.4a}-\eqref{e:dec1.4b} 
\[
T_{\mathbb P_{\mathcal G}(I)}(f\ind_B)= \sum_{L\in\mathcal L} T_{\mathbb P_{\uparrow,L}} \left(f\cic{1}_{L}\right) +\sum_{L\in\mathcal L} \sum_{|k|\geq 2} T_{\mathbb P_{\mathcal  G_{k}(L)}}(f\ind_L).
\]	
Applying this to $f=f_1$ yields
\[
\left\langle  T_{\mathbb P_{\mathcal G}(I)}\left( f_1 \ind_{ B} \right),    \ind_{  B} f_2  \right \rangle = \sum_{L\in\mathcal L} \left\langle T_{\mathbb P_{\uparrow,L}} \left(f\cic{1}_{L}\right) , \ind_{  B} f_2  \right \rangle +\sum_{L\in\mathcal L} \sum_{|k|\geq 2} \left\langle T_{\mathbb P_{\mathcal  G_{k}(L)}}(f\ind_L) , \ind_{  B} f_2  \right \rangle .
\]
For the first term we argue as in Lemma~\ref{lem:ovfccc} with \eqref{e:sszygm_new} replaced by \eqref{e:newtail2}. For the second term we argue as in the proof of \eqref{eq:farfccc} of Lemma~\ref{lem:estimates} with \eqref{e:sszygm_fin} replaced by \eqref{e:newtail2_fin}, completing the proof.
\end{proof}

 \section{Proof of Theorem~\ref{thm:roughsf}} \label{s:thmroughsf} By virtue of the considerations in \S\ref{ss:disc}, it suffices to prove the same estimate of Theorem~\ref{thm:roughsf} for the model square function $\left\|T_{\mathbb P^{\omega}} f  \right\|_{\ell^2(\omega \in \Omega_\Xi)}$, cf.\ \eqref{eq:modelr}. We do so by reducing to the localized estimate of the next proposition.

\begin{proposition} \label{p:modelr} Let ${I_0}\in \mathcal D$. Given any $f\in  L^\infty_0(\R)$, there exists a sparse collection $\mathcal S$ with the property that
\label{p:rough}
\[
\cic{1}_{I_0}\left\| T_{\mathbb P^{\omega}(3{I_0})}f \right\|_{\ell^2(\omega \in \Omega_\Xi)} \lesssim \ZZ^\star (X,\Xi){ \mathcal S}_{X,1} f
\]
pointwise almost everywhere. The implicit constant is absolute.
\end{proposition}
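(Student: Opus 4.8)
The plan is to establish Proposition~\ref{p:modelr} via the standard sparse stopping-time recursion, fed by the localized $L^2$ and tail estimates already available for $\mathbb{P}^{O_\Xi}$-model sums. I would first fix $I_0 \in \mathcal{D}$ and $f \in L^\infty_0(\mathbb{R})$ and set $C_X := \ZZ^\star(X,\Xi)$. The construction is recursive on a family of stopping intervals: start with the top interval $I_0$, and given a current stopping interval $Q$, declare its children to be the maximal $L \in \mathcal D(Q)$ such that
\[
\langle f \rangle_{X,L,+} > \Lambda \, C_X^{-1} \langle f \chi_Q^{\mathsf{dec}} \rangle_{X,Q}
\]
for a suitable large absolute constant $\Lambda$ (chosen compatibly with the constant $\Theta$ from Proposition~\ref{p:weaktype} and Remark~\ref{rmrk:vvaluedwtype}), together with the exceptional set coming from the vector-valued weak-type estimate. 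The packing/sparsity of the resulting collection $\mathcal S$ follows from the weak-type-$(X,L^1)$ bound for $\mathrm{M}_X$ exactly as in the proof of \eqref{e:measElam} in Proposition~\ref{p:weaktype}: the union of the children of $Q$ has measure at most a small fixed fraction of $|Q|$, so $\mathcal S$ is $\eta$-sparse for some absolute $\eta$.

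Next I would run the usual decomposition of the model sum restricted to $3I_0$ along the stopping tree. For each stopping interval $Q$ with children $\{Q'\}$, split $\mathbb{P}^{O_\Xi}(3Q)$ into the part with spatial intervals not contained in any $3Q'$ (the ``good'' part $\mathbb{Q}_{\mathsf g}(Q)$) and the ``bad'' part $\bigcup_{Q'}\bigcup_{|j|\le 1}\mathbb{P}^{O_\Xi}(Q'^{+j})$, as in Proposition~\ref{p:weaktype}; iterate the bad part down the tree. On the good part at level $Q$, apply the vector-valued localized bound \eqref{e:maintechrefvv}:
\[
\cic{1}_Q \bigl\langle \| T_{\mathbb{Q}_{\mathsf g}^\omega(Q)} f\|_{\ell^2(\omega)} \bigr\rangle_{2,Q,-} \lesssim C_X \sup_{P \in \mathbb{Q}_{\mathsf g}(Q)} \inf_{I_P} \mathrm{M}_X(f \chi_Q^{\mathsf{dec}}) \lesssim \Lambda \langle f \rangle_{X,Q,+},
\]
where the second inequality uses the stopping condition (no child of $Q$ was triggered inside these $I_P$). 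Together with the tail estimate \eqref{e:maintechrefvv} for the contribution outside $3Q$ — which is summable against the sparse intervals via the rapid decay $2^{-m(\mathsf{dec}-1)}$ exactly as in \eqref{e:useless} — this controls the portion of the square function associated to $Q$ by $C_X \langle f\rangle_{X,Q,+}\cic{1}_Q \lesssim C_X \langle f \rangle_{X,Q}\cic{1}_Q$ in $L^2(Q)$, hence pointwise after the recursion closes. Summing the $\ell^2$-valued contributions over the disjoint-ish stopping intervals and invoking the triangle inequality in $\ell^2(\omega)$, one arrives at the pointwise bound $\cic{1}_{I_0}\|T_{\mathbb{P}^\omega(3I_0)}f\|_{\ell^2(\omega)} \lesssim C_X \sum_{Q \in \mathcal S}\langle f\rangle_{X,Q}\cic{1}_Q = C_X\, \mathcal S_{X,1} f$.

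The technically delicate point — and what I expect to be the main obstacle — is the bookkeeping that turns the family of \emph{local} $L^2$ bounds into a \emph{pointwise} almost-everywhere domination by $\mathcal S_{X,1}f$. One has to argue that on each stopping interval $Q$, after subtracting the children's contributions, the residual part of the square function is pointwise bounded a.e.\ by a constant multiple of $C_X\langle f\rangle_{X,Q}$ on $Q$, which requires the weak-type estimate of Remark~\ref{rmrk:vvaluedwtype} to absorb a controlled-measure exceptional set into the next stopping generation, and then to sum these pointwise bounds telescopically. A secondary subtlety is handling the interaction of tiles whose spatial support straddles the boundary of $3Q$ versus $3Q'$; this is precisely where the ``overlapping'' and ``far'' decompositions \eqref{e:splitsf} and Lemmas~\ref{lem:estimates}, \ref{l:3sszyg}, \ref{lem:ovfccc} enter, providing the off-diagonal decay needed for the series over the sparse tree to converge with an absolute constant. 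Once these pieces are assembled, Theorem~\ref{thm:roughsf} follows from Proposition~\ref{p:modelr} by the standard trick of covering $\mathbb{R}$ with a bounded-overlap family of top intervals $I_0$ containing $\supp f$ and merging the resulting sparse collections, together with \eqref{eq:modelr}.
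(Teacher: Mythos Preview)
Your outline is essentially the paper's approach: a sparse recursion driven by the vector-valued weak-type estimate of Remark~\ref{rmrk:vvaluedwtype}, with the far/overlapping decomposition \eqref{e:splitsf} handling the telescoping across generations. Two points need correction.

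First, the stopping threshold $\langle f\rangle_{X,L,+} > \Lambda C_X^{-1}\langle f\chi_Q^{\mathsf{dec}}\rangle_{X,Q}$ is wrong: the $C_X^{-1}$ must be dropped. With it, when $C_X$ is large the threshold becomes small, the children of $Q$ cover too much of $Q$, and sparseness fails. The paper stops the maximal function at level $2^{15}\langle f\rangle_{X,I,+}$ with no $C_X$ factor; the $C_X$ appears only in the \emph{square-function} stopping level $3\Theta C_X\langle f\rangle_{X,I,+}$, where the weak-type bound of Remark~\ref{rmrk:vvaluedwtype} already absorbs it. Relatedly, the paper does not take the children to be the maximal intervals \emph{contained} in the bad set $E$, but the maximal $L$ with $|L\cap E|>2^{-3}|L|$; this guarantees that each child $L$ retains a substantial set $G_L\subset L\setminus E$ of ``good points'', which is essential for the next step.

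Second, the passage ``in $L^2(Q)$, hence pointwise after the recursion closes'' hides the actual mechanism. For $x$ in a child $L$ you must bound the \emph{difference} $\bigl|\|T_{\mathbb P^\omega(3I)}f(x)\|_{\ell^2}-\|T_{\mathbb P^\omega(3L)}f(x)\|_{\ell^2}\bigr|$ pointwise by $KC_X\langle f\rangle_{X,I,+}$. After splitting off the far piece via \eqref{eq:2sszyg}, the remaining overlapping term is $\|Z_{\mathbb Q^\omega_{\mathsf{ov},L}}f(x)\|_{\ell^2}$; note that the oscillation Lemma~\ref{l:3sszyg} controls $Z$, not $T$, and one must invoke the identity \eqref{e:Ttil2} to pass between them. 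The paper then inserts a good point $y_L\in G_L$, uses Lemma~\ref{l:3sszyg} to bound $\|Z_{\mathbb Q^\omega_{\mathsf{ov},L}}f(x)-Z_{\mathbb Q^\omega_{\mathsf{ov},L}}f(y_L)\|_{\ell^2}$, and controls $\|Z_{\mathbb Q^\omega_{\mathsf{ov},L}}f(y_L)\|_{\ell^2}$ by the triangle inequality from the fact that \emph{both} $\|T_{\mathbb Q^\omega}f(y_L)\|_{\ell^2}$ and $\|T_{\mathbb P^\omega(3L)}f(y_L)\|_{\ell^2}$ are small at $y_L$ (the latter needing a second application of the weak-type estimate at the child level, whence the refined set $G_L\subset F_L\setminus E_L$). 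You allude to these lemmas but not to this good-point insertion, which is the crux. Finally, one needs a short limiting argument \eqref{e:limarg} to terminate the telescoping sum, and an appeal to \cite{CR} to replace the tailed averages $\langle f\rangle_{X,Q,+}$ by the sharp $\langle f\rangle_{X,Q}$ in the final sparse operator.
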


This reduction is carried out in \S\ref{ss:redr} while the proof of Proposition~\ref{p:modelr} is given in \S\ref{ss:pfpropr}.

\subsection{Reduction to Proposition~\ref{p:modelr}} \label{ss:redr}  Fix a function $f\in L^\infty _0 (\R)$. As $\mathcal D$ is a standard dyadic grid,  we may find $I_0\in\mathcal D$
 with the property that $\supp f\subset (1+3^{-1})I_0$. The reduction consists in handling the square sum of the tail terms
\[
T_{\mathbb P^\omega} ^{\mathrm{out}}f\coloneqq T_{\mathbb P^\omega}f-\ind_{5I_0}T_{\mathbb P^\omega(3{I_0})}f, \qquad \omega \in \Omega_\Xi,
\]
by means of the following lemma.
\begin{lemma}\label{l:tails} Let $I_0\subset \R$ be a finite interval and $f\in L^\infty _0  (\R)$ with $\supp f \subset {(1+3^{-1})I_0} $. Then
\[
\left\|{T}_{\mathbb P^\omega} ^\mathrm{out} f \right\|_{\ell^2(\omega\in\Omega_\Xi)}\lesssim \ZZ^{\star}(X,\Xi) \langle f \rangle_{X,3I_0}.
\]
\end{lemma}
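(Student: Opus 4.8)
The plan is to estimate $\|{T}_{\mathbb P^\omega} ^\mathrm{out} f \|_{\ell^2(\omega\in\Omega_\Xi)}$ pointwise at a point $x$ by splitting the tile collection according to the relative position of the spatial interval $I_P$ with respect to $I_0$ and the point $x$. Recall the decomposition \eqref{e:splitsf} applied with $L=I_0$, which writes $\mathbb P^{O_\Xi}$ as $\mathbb P^{O_\Xi}(3I_0)\sqcup \mathbb P^{O_\Xi}_{\mathsf{far},I_0}\sqcup \mathbb P^{O_\Xi}_{\mathsf{ov},I_0}$. Since $\supp f\subset (1+3^{-1})I_0$ is compactly supported inside $3I_0$, the relevant estimates are precisely the compactly supported tail bounds \eqref{eq:farfccc} of Lemma~\ref{lem:estimates} and Lemma~\ref{lem:ovfccc}, which handle the $\mathsf{far}$ and $\mathsf{ov}$ pieces respectively. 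For the $3I_0$ piece, I further split off the part on which $x$ lies: $\mathbb P^\omega(3I_0)$ itself is treated inside $5I_0$ by Proposition~\ref{p:modelr} (which is subtracted in the definition of $T^{\mathrm{out}}$), and its contribution for $x\notin 5I_0$ is covered by \eqref{eq:2asszyg} of Lemma~\ref{lem:estimates} applied with $f\ind_{3I_0}=f$.

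More precisely, I would write
\[
{T}_{\mathbb P^\omega} ^\mathrm{out} f = \ind_{\R\setminus 5I_0}T_{\mathbb P^\omega(3I_0)}f + T_{\mathbb P^\omega_{\mathsf{far},I_0}}f + T_{\mathbb P^\omega_{\mathsf{ov},I_0}}f,
\]
and bound the $\ell^2(\omega)$ norm of each summand. The first summand is controlled by $C_X\inf_{I_0}\mathrm M_X(f\chi_{I_0}^{\mathsf{dec}})$ using \eqref{eq:2asszyg}; since $f$ is supported in $(1+3^{-1})I_0$ and we may absorb the $\chi_{I_0}^{\mathsf{dec}}$ factor, this is $\lesssim C_X\langle f\rangle_{X,3I_0}$. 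For the second summand, since $\supp f\subset (1+3^{-1})I_0\subset I_0$ up to enlarging $I_0$ by a fixed factor (or directly using $f=f\ind_{3I_0}$), Lemma~\ref{lem:estimates} \eqref{eq:farfccc} gives $\|T_{\mathbb P^\omega_{\mathsf{far},I_0}}(f\ind_{I_0})\|_{\ell^2(\omega)}\lesssim C_X\inf_{I_0}\mathrm M_X(f\ind_{I_0})\lesssim C_X\langle f\rangle_{X,3I_0}$, noting $\inf_{I_0}\mathrm M_X(f\ind_{I_0})\le \langle f\rangle_{X,I_0}\lesssim \langle f\rangle_{X,3I_0}$. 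For the third summand, Lemma~\ref{lem:ovfccc} applied to $f\ind_{I_0}=f$ gives $\|T_{\mathbb P^\omega_{\mathsf{ov},I_0}}(f\ind_{I_0})\|_{\ell^2(\omega)}\lesssim C_X\inf_{I_0}\mathrm M_X(f\ind_{I_0})\lesssim C_X\langle f\rangle_{X,3I_0}$. Summing the three contributions yields the claimed bound with $C_X=\ZZ^\star(X,\Xi)$.

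One technical point requiring care is the mismatch between the hypothesis $\supp f\subset (1+3^{-1})I_0$ (which is the natural assumption inherited from the reduction in \S\ref{ss:redr}, ensuring $I_0\in\mathcal D$ can be chosen) and the statements of Lemmas~\ref{lem:estimates} and \ref{lem:ovfccc}, which are phrased with $f\ind_L$ for a dyadic $L$. Since $(1+3^{-1})I_0\subset 2I_0$, and $2I_0$ is covered by at most finitely many (namely two or three) dyadic siblings of $I_0$ at the same scale, I would write $f=\sum_{|j|\le 1} f\ind_{I_0^{+j}}$ and apply the cited lemmas to each piece $f\ind_{I_0^{+j}}$, using translation-invariance of all the relevant quantities and the fact that $\langle f\ind_{I_0^{+j}}\rangle_{X,I_0^{+j}}\lesssim \langle f\rangle_{X,3I_0}$; the finitely many contributions are then summed by the triangle inequality. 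This bookkeeping is routine but is the only place where one must be slightly attentive; the substantive content of the lemma is entirely contained in the already-established tail estimates of Section~\ref{s:gab}. There is no genuine obstacle here — the lemma is a straightforward packaging of Lemma~\ref{lem:estimates} and Lemma~\ref{lem:ovfccc}.
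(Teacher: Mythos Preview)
Your proposal is correct and follows essentially the same route as the paper's proof: decompose $T_{\mathbb P^\omega}^{\mathrm{out}}f$ into the three pieces $\ind_{\R\setminus 5I_0}T_{\mathbb P^\omega(3I_0)}f$, $T_{\mathbb P^\omega_{\mathsf{far},I_0}}f$, and $T_{\mathbb P^\omega_{\mathsf{ov},I_0}}f$, and handle them respectively via \eqref{eq:2asszyg}, \eqref{eq:farfccc}, and Lemma~\ref{lem:ovfccc}. The paper's proof is terser and does not spell out the support-mismatch bookkeeping you raise, but your handling of it (splitting $f=\sum_{|j|\le 1} f\ind_{I_0^{+j}}$) is a correct and standard way to make that step rigorous.
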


\begin{proof}  Firstly, the estimate 
\[
\left\|\ind_{\R\setminus 5I_0} {T}_{\mathbb P^\omega (3I_0)}f \right\|_{\ell^2(\omega\in \Omega_\Xi)}\lesssim   \ZZ^\star(X,\Xi) \inf_{I_0} \M_X(f )\lesssim \ZZ^\star(X,\Xi) \langle f \rangle_{X,3I_0}
\]
is a consequence of \eqref{eq:2asszyg} of Lemma~\ref{lem:estimates}. {It will thus suffice to control the $\ell^2(\omega\in \Omega_\Xi)$-norm of} 
\[
T_{\mathbb P^\omega}f- T_{\mathbb P^\omega(3{I_0})}f =T_ {\mathbb P_{\mathsf{far},I_0} ^\omega}f +T_{\mathbb P_{\mathsf{ov},I_0} ^\omega }f,\qquad \omega\in \Omega_\Xi,
\]
where the equality relies upon \eqref{e:splitsf}.
Now the estimates
\[
{\left\| T_ {\mathbb P_{\mathsf{far},I_0} ^\omega}f \right\|_{\ell^2(\omega\in \Omega_\Xi)}, \left\| T_ {\mathbb P_{\mathsf{ov},I_0} ^\omega}f \right\|_{\ell^2(\omega\in \Omega_\Xi)}} \lesssim \ZZ^\star(X,\Xi) \langle f \rangle_{X,3I_0}
\]
follow by \eqref{eq:farfccc} of Lemma~\ref{lem:estimates} and Lemma~\ref{lem:ovfccc}, respectively. 
\end{proof}

With the estimates for the tails being taken care of by Lemma~\ref{l:tails}, the proof of Theorem~\ref{thm:roughsf} reduces to proving the sparse estimate for the operator $f\mapsto \|\ind_{5I_0} {T}_{\mathbb P^\omega (3I_0)}f\|_{\ell^2(\omega \in \Omega_\Xi)}$. To that end we write
\[
\ind_{5I_0} T_{\mathbb P^\omega(3I_0)}=\sum_{0\leq |j|\leq 2} \ind_{I_0 ^{+j}} \widetilde{T}_{\mathbb P^\omega ({3} I_0 ^{+j})} f+\sum_{|j|=1} \ind_{I_0 ^{+j}} \widetilde{T}_{\mathbb P^\omega (I_0 ^{-j})}f+\sum_{
|j|=2} \ind_{I_0 ^{+j}}  \widetilde{T}_{\mathbb P^\omega (I_0\cup I_0 ^{{-} \mathrm{sgn}(j)} )}f
\]
where $\widetilde T$ is an operator of the form \eqref{e:Ttil} given by possibly different wave packets than the ones defining $T$, allowing the choices $\upphi_P,\psi_P\equiv 0$ for $P\in\mathbb P$. The first summand in the display above is estimated in $\ell^2(\omega\in \Omega_\Xi)$ by a constant multiple of  $\ZZ^\star(X,\Xi) \mathcal S_{X,1}f$ by appealing to Proposition~\ref{p:modelr}. The last two summands are estimated   by a constant multiple of  $ \ZZ^\star(X,\Xi) \langle f\rangle_{X,3I_0}$ using an appropriate modification of \eqref{eq:2asszyg} of Lemma~\ref{lem:estimates} after noting that
\[
|j|=1\implies \dist(I_0 ^{+j},I_0 ^{-j})\geq |I_0|,\qquad |j|=2\implies \dist(I_0 ^{+j},I_0\cup I_0 ^{{-} \mathrm{sgn}(j)})\geq |I_0|,
\]
and the reduction is complete.

\subsection{Proof of Proposition~\ref{p:modelr}} \label{ss:pfpropr}  As usual, we adopt the shorthand $C_{X}\coloneqq \ZZ^\star(X,\Xi)$. The main step of the proof is carried out in the lemma below.

\begin{lemma} \label{l:itersp1} Let $I \in \mathcal D$. Then there exists a pairwise disjoint collection $\mathcal L(I)\subset \mathcal D(I)$ with the properties that
\begin{align}
\label{e:iterative1} 
&\left|\cic{1}_{I}\left\|  T_{\mathbb P^{\omega}(3{I})}f \right\|_{\ell^2({\omega \in \Omega_\Xi})} -  \sum_{L \in \mathcal L(I)}  \cic{1}_{L}\left\|T_{\mathbb P^{\omega}(3{L})}f \right\|_{\ell^2({\omega \in \Omega_\Xi})}\right|\leq KC_X   \left\langle f \right \rangle_{X,I,+} \cic{1}_I, 
 \\
\label{e:iterative2} &
\sum_{L \in \mathcal L(I) } |L| \leq 2^{-4} |I|.
\end{align}
\end{lemma}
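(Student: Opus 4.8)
The plan is to prove Lemma~\ref{l:itersp1} by a single-scale stopping-time construction, after which Proposition~\ref{p:modelr} follows by routine iteration. Fix $I\in\mathcal D$ and set $\lambda \coloneqq 2^{14}\langle f\rangle_{X,I,+}$. Following the scheme of Proposition~\ref{p:weaktype} and Remark~\ref{rmrk:vvaluedwtype}, I would let $\mathcal L(I)$ be the collection of maximal dyadic intervals $L\in\mathcal D(I)$ with $\langle f\chi_I^{\mathsf{dec}}\rangle_{X,L}>\lambda$. By the weak-type $(X,L^1)$ bound for $\mathrm M_X$ (valid with constant $3$ in dimension one) together with pairwise disjointness,
\[
\sum_{L\in\mathcal L(I)}|L|\;\leq\;\frac{3}{\lambda}\,\|f\chi_I^{\mathsf{dec}}\|_X \;=\; \frac{3}{2^{14}}\,|I|\;\leq\;2^{-4}|I|,
\]
which is \eqref{e:iterative2}.

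For \eqref{e:iterative1} the key point is that, with this choice of $\mathcal L(I)$, the collection $\mathbb P^{O_\Xi}_{\mathcal G}(3I)$ (with $\mathcal G$ and $\mathcal L''$ built from the stopping collection $\mathcal L(I)$ as in \S\ref{ss:stopL2}) is exactly the part of $\mathbb P^{O_\Xi}(3I)$ whose spatial intervals are not swallowed by the $3L$'s, so that by \eqref{e:dec1.1}
\[
\cic{1}_{I}\Bigl\|T_{\mathbb P^{\omega}(3I)}f\Bigr\|_{\ell^2(\omega\in\Omega_\Xi)}
\;-\;\sum_{L\in\mathcal L''}\cic{1}_{I}\Bigl\|T_{\mathbb P^{\omega}(L)}f\Bigr\|_{\ell^2(\omega\in\Omega_\Xi)}
\;=\;\cic{1}_{I}\Bigl\|T_{\mathbb P^{\omega}_{\mathcal G}(3I)}f\Bigr\|_{\ell^2(\omega\in\Omega_\Xi)},
\]
modulo controlled errors coming from the difference between the $L''$-indexed sum and the $\mathcal L(I)$-indexed sum of $\cic{1}_L\|T_{\mathbb P^{\omega}(3L)}f\|_{\ell^2}$. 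The right-hand side is estimated by the vector-valued form of Proposition~\ref{p:maintech}: since each $L\in\mathcal L(I)$ is a stopping interval, $\lambda_{f\chi_I^{\mathsf{dec}},X}\lesssim\lambda\lesssim C_X^{-1}\cdot(C_X\langle f\rangle_{X,I,+})$ up to absolute constants, hence $\|\cic{1}_I\|T_{\mathbb P^{\omega}_{\mathcal G}(3I)}f\|_{\ell^2}\|_\infty$... more precisely one uses the localized $L^2$-estimate \eqref{e:maintechrefvv} on $I$ itself together with the weak-type control of Remark~\ref{rmrk:vvaluedwtype} and a standard good-$\lambda$/level-set argument to upgrade the $L^2$ bound on the good part to the pointwise bound $KC_X\langle f\rangle_{X,I,+}\cic{1}_I$. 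The passage from $\cic{1}_I\|T_{\mathbb P^{\omega}(L)}f\|_{\ell^2}$ to $\cic{1}_L\|T_{\mathbb P^{\omega}(3L)}f\|_{\ell^2}$ for $L\in\mathcal L(I)$ is handled exactly as in the reduction of \S\ref{ss:redr}: the discrepancy $\cic{1}_{I\setminus L}\|T_{\mathbb P^{\omega}(L)}f\|_{\ell^2}$ and the far/overlapping tails $T_{\mathbb P^\omega_{\mathsf{far},L}}$, $T_{\mathbb P^\omega_{\mathsf{ov},L}}$ are bounded by $C_X\langle f\rangle_{X,I,+}$ via \eqref{eq:2asszyg}, \eqref{eq:farfccc}, Lemma~\ref{lem:ovfccc}, and these errors are absorbed into the $KC_X\langle f\rangle_{X,I,+}\cic{1}_I$ term on the right of \eqref{e:iterative1}.

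Granting Lemma~\ref{l:itersp1}, Proposition~\ref{p:modelr} follows by the usual recursive stopping-tree argument: start with $\mathcal S_0=\{I_0\}$, and having built generation $\mathcal S_n$, let $\mathcal S_{n+1}=\bigcup_{I\in\mathcal S_n}\mathcal L(I)$; the packing estimate \eqref{e:iterative2} shows that $\mathcal S=\bigcup_n\mathcal S_n$ is sparse (indeed $\sum_{L\in\mathcal S,\,L\subseteq I}|L|\lesssim|I|$), while telescoping \eqref{e:iterative1} along the tree and using that $\sum_{I\in\mathcal S_n}|I|\to0$ gives
\[
\cic{1}_{I_0}\Bigl\|T_{\mathbb P^{\omega}(3I_0)}f\Bigr\|_{\ell^2(\omega\in\Omega_\Xi)}\;\lesssim\;C_X\sum_{S\in\mathcal S}\langle f\rangle_{X,S,+}\cic{1}_S\;\lesssim\;C_X\,\mathcal S_{X,1}f
\]
pointwise a.e., after replacing the tailed averages $\langle f\rangle_{X,S,+}$ by plain averages over a bounded dilate, which is harmless upon enlarging the sparse collection by a fixed factor.

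I expect the main obstacle to be the rigorous implementation of the second paragraph: verifying that the stopping collection $\mathcal L(I)$ makes $\mathbb P^{O_\Xi}(3I)\setminus\bigcup_{L\in\mathcal L''}\mathbb P^{O_\Xi}(L)$ coincide with $\mathbb P^{O_\Xi}_{\mathcal G}(3I)$ up to the boundary tiles, and then cleanly accounting for all the tail terms $T_{\mathbb P^\omega_{\mathsf{far},L}}$, $T_{\mathbb P^\omega_{\mathsf{ov},L}}$ and the set-difference terms $\cic{1}_{I\setminus L}\|T_{\mathbb P^{\omega}(L)}f\|_{\ell^2}$ so that every error is genuinely $\lesssim C_X\langle f\rangle_{X,I,+}$ on $I$. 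The vector-valued $L^2$-to-pointwise upgrade on the good part is also slightly delicate, but it is by now standard and should follow verbatim from the weak-type estimate of Remark~\ref{rmrk:vvaluedwtype} combined with the localized $L^2$-bound \eqref{e:maintechrefvv}.
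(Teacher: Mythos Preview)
Your proof sketch has a genuine gap in the treatment of the overlapping tiles $\mathbb Q_{\mathsf{ov},L}$. You propose to control $\|T_{\mathbb P^\omega_{\mathsf{ov},L}}f\|_{\ell^2}$ pointwise on $L$ via \eqref{eq:farfccc} and Lemma~\ref{lem:ovfccc}, but those estimates apply only to $f\cic{1}_L$, i.e.\ to compactly supported inputs. For the full $f$ there is \emph{no} pointwise bound on $\|T_{\mathbb Q^\omega_{\mathsf{ov},L}}f\|_{\ell^2}$; only its oscillation on $9L$ is controlled, by Lemma~\ref{l:3sszyg}. Consequently, on $L$ one can at best write
\[
\left|\left\|T_{\mathbb P^\omega(3I)}f(x)\right\|_{\ell^2}-\left\|T_{\mathbb P^\omega(3L)}f(x)\right\|_{\ell^2}\right|\leq \left\|T_{\mathbb Q^\omega_{\mathsf{far},L}}f(x)\right\|_{\ell^2}+\left\|Z_{\mathbb Q^\omega_{\mathsf{ov},L}}f(x)\right\|_{\ell^2},
\]
and the second term needs to be anchored to a \emph{good point} $y_L\in L$ at which the full square function $\|T_{\mathbb P^\omega(3I)}f\|_{\ell^2}$ and the localized one $\|T_{\mathbb P^\omega(3L)}f\|_{\ell^2}$ are simultaneously small; then Lemma~\ref{l:3sszyg} transfers control from $y_L$ to $x$. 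Your stopping collection, built only from the superlevel set of $\mathrm M_X(f\chi_I^{\mathsf{dec}})$, does not guarantee that such $y_L$ exist.

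This is exactly why the paper defines the exceptional set $E$ to also include $\{\|T_{\mathbb P^\omega(3I)}f\|_{\ell^2}>3\Theta C_X\lambda\}$, invoking the weak-type estimate of Remark~\ref{rmrk:vvaluedwtype} to keep $|E|$ small, and then defines $\mathcal L$ via the density condition $|L\cap E|>2^{-3}|L|$. This ensures that $L\setminus E$ is major in each $L$, so a point $y_L$ with the required smallness exists. Relatedly, your displayed ``equality'' between $\ell^2$-norms is actually only a triangle inequality, and the hoped-for pointwise upgrade of the $L^2$-bound \eqref{e:maintechrefvv} on $T_{\mathbb P^\omega_{\mathcal G}(3I)}f$ via ``a standard good-$\lambda$ argument'' does not close without the missing ingredients above: the $L^2$ bound controls the good part only off a small set, and on that set the pointwise estimate \eqref{e:iterative1} would fail without the good-point mechanism.
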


\begin{proof} For brevity, write  $\lambda  \coloneqq \left\langle f \right \rangle_{X,I,+}$. Below, we plan to apply \eqref{e:splitsf} with $\mathbb Q\coloneqq \mathbb P^{O_\Xi}(3I)$  for some $L\in \mathcal D(3I)$. This reads
\begin{equation}
\label{e:splitsfbis}
\begin{split} \mathbb Q\coloneqq \mathbb P^{O_\Xi}(3I)=
{\mathbb P^{O_\Xi}(3L)}\sqcup  \mathbb Q_{\mathsf{far},L}  \sqcup \mathbb Q_{\mathsf{ov},L} .
\end{split}
\end{equation}
Note that \eqref{eq:2sszyg} in Lemma~\ref{lem:estimates} and Lemma~\ref{l:3sszyg} may be legitimately appealed to,  as those estimates hold for generic $\mathbb Q \subset   \mathbb P^{O_\Xi} $, {to} which \eqref{e:splitsf} is applied.  To begin the proper proof, start with defining the set
\begin{equation}\label{e:roughset1} 
E\coloneqq \left\{x\in I:\, \left\|  T_{\mathbb Q^\omega }f (x) \right\|_{\ell^2(\Omega_\Xi)}> 3\Theta C_X \lambda  \right\}  \cup\left\{x\in 3I:\, \mathrm{M}_X (f\chi_I^{\mathsf{dec}})(x)>  2^{15} \lambda\right\},  
\end{equation}
{for some sufficiently large constant $\Theta$} and notice that
\begin{equation}\label{e:roughset2}
|E| \leq \sum_{|j|\leq 1 } \left| \left\{x\in I:\, \left\|  T_{\mathbb P^{\omega}(I^{+j})}f(x)  \right\|_{\ell^2(\Omega_\Xi)} > 
\Theta C_X \lambda  \right\} \right|  +2^{-9} |I|\leq 3\cdot 2^{-9} |I| + 2^{-9} |I| = 2^{-7}|I|
\end{equation}
where the first bound is obtained by the maximal theorem and the second by applying Proposition~\ref{p:weaktype} in the form of Remark~\ref{rmrk:vvaluedwtype}. 
Let $\mathcal{L}$ be the collection of the maximal elements $L$ of $\mathcal D(3I)$ such that
\begin{equation}
\label{e:roughset3} |L\cap E|> 2^{-3} |L|.
\end{equation}
The elements of $\mathcal L$ are pairwise disjoint and contained in the set  $\widetilde E $ where the dyadic maximal function of $\cic{1}_E$ is larger than $2^{-3}$, whence the  packing condition \eqref{e:iterative2}.  
By maximality, $|L\cap E|\leq 2^{-2} |L|$ for each $L\in \mathcal L$. It follows that for each {$L\in \mathcal{L}$}, the set $F_L\coloneqq L\setminus E$ has the properties
\begin{align}
\label{e:iterative3a} & |F_L| \geq 3\cdot 2^{-2} |L|, 
\\ 
 \label{e:iterative3} &\sup_{F_L} \mathrm{M}_X   (f\chi_I^{\mathsf{dec}}) \leq  2^{15} \lambda,
\\ 
 \label{e:iterative4} &\sup_{F_L} \left\| T_{\mathbb Q^\omega}f \right\|_{\ell^2(\Omega_\Xi)}  \leq 3  \Theta C_X  \lambda.
\end{align}
To further refine $F_L$ define  for each $L\in \mathcal L$
\begin{equation}\label{e:roughset4} 
E_L\coloneqq \left\{x\in L:\, \left\|  T_{\mathbb P^{\omega}(3L)}f (x) \right\|_{\ell^2(\Omega_\Xi)}> 3\Theta C_X  \left\langle f \right \rangle_{X,L,+} \right\}.  
\end{equation}
Another application of   Proposition~\ref{p:weaktype} {in the form of Remark~\ref{rmrk:vvaluedwtype}} tells us that $|E_L|\leq {3 \cdot} 2^{-9}|L|$, therefore $G_L \coloneqq F_L \setminus E_L $ satisfies also 
\begin{align}
\label{e:iterative3a+}  
|G_L|& \geq   2^{-1} |L|, 
\\ 
\label{e:iterative4+} 
\sup_{G_L} \left\|  T_{\mathbb P^{\omega}(3L)}f \right\|_{\ell^2(\Omega_\Xi)} &\leq 3  \Theta C_X  \left\langle f \right \rangle_{X,L,+} \leq K C_X \lambda
\end{align}
where $K$ is an absolute implicit constant and the last bound is obtained via the control
\[
\left\langle f \right \rangle_{X,L,+}  \leq \inf_{L}   \mathrm{M}_X (f\chi_L^{\mathsf{dec}})  \leq \sup_{F_L}  \mathrm{M}_X (f\chi_I^{\mathsf{dec}}) \lesssim \lambda.
\]
For any   $y_L\in  G_L$,  comparing with  \eqref{e:splitsfbis}, 
\begin{equation}\label{e:iterative6}
\begin{split}
\left\|T_{\mathbb Q_{\mathsf{ov},L}^\omega}f(y_L) \right\|_{\ell^2(\Omega_\Xi)} &\leq \left\| T_{\mathbb Q^\omega}f(y_L) \right\|_{\ell^2(\Omega_\Xi)}  + \left\|  T_{\mathbb P^\omega(3L)}f(y_L) \right\|_{\ell^2(\Omega_\Xi)} + \left\|  T_{\mathbb Q_{\mathsf{far}, L}^\omega}f(y_L) \right\|_{\ell^2(\Omega_\Xi)}
\\ 
&\leq KC_X \lambda 
\end{split}
\end{equation}
for some absolute constant $K$, as the first term on the right hand side of \eqref{e:iterative6} is controlled by \eqref{e:iterative4}, the central term  is controlled by \eqref{e:iterative4+}, and the  rightmost term is bounded by \eqref{eq:2sszyg} of Lemma~\ref{lem:estimates}.

We are ready to prove the sparse bound \eqref{e:iterative1}. First, note that the intervals $\mathcal L$ are contained in and cover the set $\widetilde E \supset E$. Suppose $x\in I\setminus \widetilde E$. Then $x\not \in E$ as well and 
\begin{equation}
\begin{split}
\label{e:iterative1b} \left|
\cic{1}_{I}\left\|  T_{\mathbb P^{\omega}(3{I})}f \right\|_{\ell^2(\Omega_\Xi)} -  \sum_{L \in \mathcal L(I)}  \cic{1}_{L}\left\|T_{\mathbb P^{\omega}(3{L})}f \right\|_{\ell^2(\Omega_\Xi)} \right|= \cic{1}_{I}\left\|  T_{\mathbb P^{\omega}(3{I})}f \right\|_{\ell^2(\Omega_\Xi)} \leq 3\Theta C_X \lambda,
\end{split}
\end{equation}
by {the} definition of $E$, complying with \eqref{e:iterative1}. Next,  assume $x\in \widetilde E$ so that  $x\in L$ for some unique  $L \in \mathcal L$. 
In this case, 
\begin{equation}
\label{e:iterative5}  
\begin{split}    
&\quad 
\left|\left\|  T_{\mathbb P^{\omega}(3{I})}f \right\|_{\ell^2(\Omega_\Xi)} -   \cic{1}_{L}\left\|T_{\mathbb P^{\omega}(3{L})}f \right\|_{\ell^2(\Omega_\Xi)}   \right|
\leq     \left\|  Z_{\mathbb Q_{\mathsf{ov},L}^\omega}f(x)  \right\| _{\ell^2(\Omega_\Xi)}+\left\|  T_{\mathbb Q_{\mathsf{far},L}^\omega }f(x)  \right\| _{\ell^2(\Omega_\Xi)} 
\\ 
& \leq \left\|  Z_{\mathbb Q_{\mathsf{ov},L}^\omega}f(x)  \right\| _{\ell^2(\Omega_\Xi)}+ KC_X \lambda  
\\   
& \leq \left[ \left\|  Z_{\mathbb Q^\omega_{\mathsf{ov},L}}f(x)  -   Z_{\mathbb Q^\omega_{\mathsf{ov},L}}f(y_L) \right\| _{\ell^2(\Omega_\Xi)}+\left\|  Z_{\mathbb Q^\omega_{\mathsf{ov},L}}f(y_L)  \right\| _{\ell^2(\Omega_\Xi)}\right] + KC_X \lambda 
\\ 
& \leq  KC_X\lambda.
\end{split}
\end{equation}
In the chain above, we have  applied \eqref{e:splitsfbis}, used  equality \eqref{e:Ttil2} and {the} triangle inequality for the first bound, {and} 	then applied \eqref{eq:2sszyg} of Lemma~\ref{lem:estimates} to control {the term} $\vec T_{\mathbb Q_{\mathsf{far},L}}f(x)$ and pass to the second line. At that point we picked any $y_L\in  G_L$, used {the} triangle inequality, applied Lemma~\ref{l:3sszyg} to control the difference term and applied \eqref{e:iterative6} together with the   equality \eqref{e:Ttil2}  to control the inserted term. The above inequality completes the proof of \eqref{e:iterative1} and thus the proof of the iterative lemma.
\end{proof}

With Lemma~\ref{l:itersp1} in hand, we are ready to complete the proof of Proposition~\ref{p:modelr}. The first step is the following inductive construction of a pairwise disjoint collection $\mathcal S_n\subset \mathcal D$ for each $n\geq 0$. Begin with $\mathcal S_0=\{I_0\}$. For $n\geq 0$,  suppose $\mathcal S_{n}$ has already been constructed. For each $I\in \mathcal S_n$, apply Lemma~\ref{l:itersp1} to $I$. This application returns the pairwise disjoint collection $\mathcal L(I)$, and consequently we define
\[
\mathcal S_{n+1}(I)\coloneqq \mathcal L(I), \quad I \in \mathcal S_n, \qquad \mathcal S_{n+1}\coloneqq \bigcup_{I\in \mathcal S_n} \mathcal S_{n+1}(I),
\]
completing the inductive step. Leveraging \eqref{e:iterative2}, it is not difficult to see that $\mathcal S\coloneqq \bigcup_{n\geq 0} \mathcal S_n$ is a sparse collection, with major  pairwise disjoint subsets
\[
E_I\coloneqq I \setminus \bigcup_{J\in \mathcal S_{n+1}(I)} J, \qquad I \in \mathcal S_n, \quad n\geq 0.
\]
Setting
\[
\Delta_I(x) \coloneqq \cic{1}_{I}(x)\left\| T_{\mathbb P^{\omega}(3{I})}f(x) \right\|_{\ell^2( \Omega_\Xi)} - \sum_{J\in \mathcal S_{n+1}(I)}   \cic{1}_{J}(x)\left\|T_{\mathbb P^{\omega}(3{J})}f(x) \right\|_{\ell^2(  \Omega_\Xi)}, \quad x\in \mathbb R,
\]
estimate \eqref{e:iterative1} may instead be rewritten as
\[
\begin{split}
&\left|\Delta_I \right|\leq KC_X   \left\langle f \right \rangle_{X,I,+} \cic{1}_I, \qquad I \in \mathcal S_n.
\end{split}
\]
Telescoping and applying the above estimate, we obtain for each $n\geq 0$ 
\begin{equation}
\label{e:limarg0}
\begin{split}&\quad \left|
\cic{1}_{I_0}\left\| T_{\mathbb P^{\omega}(3{I_0})}f \right\|_{\ell^2( \Omega_\Xi)} - \sum_{J\in \mathcal S_{n+1}}   \cic{1}_{J}\left\|T_{\mathbb P^{\omega}(3{J})}f \right\|_{\ell^2(  \Omega_\Xi)}\right| =  
\left|\sum_{k=0}^n {\sum_{I \in \mathcal S_k}}  \Delta_I \right| \leq \sum_{k=0}^n \sum_{I \in \mathcal S_k} \left| \Delta_I \right|
\\ 
& \leq  K C_X  \sum_{k=0}^n \sum_{I \in \mathcal S_k}    \left \langle f \right \rangle_{X,I,+} \cic{1}_I \eqqcolon K C_X \mathcal S_{X,1,+}  f.
\end{split}
\end{equation}
As $f\in L^\infty_0(\R)$, we have that $\mathcal S_{X,1,+}  f$ belongs to $L^{1,\infty}(\mathbb R)$ and is finite almost everywhere. Assuming for the moment that up to a subsequence
\begin{equation}\label{e:limarg}
\lim_{n\to \infty} F_n(x)=0 \quad  \textrm{a.e.} \, x\in \mathbb R,\qquad  F_n\coloneqq \sum_{J\in \mathcal S_{n}}    \cic{1}_{J}\left\|T_{\mathbb P^{\omega}(3{J})} f \right\|_{\ell^2(  \Omega_\Xi)}
\end{equation}
we get, by passing to the limit in \eqref{e:limarg0}, the intermediate sparse estimate
\begin{equation}\label{e:intspars1}
\cic{1}_{I_0}\left\| T_{\mathbb P^{\omega}(3{I_0})}f \right\|_{\ell^2(\omega \in \Omega_\Xi)} \lesssim {\ZZ^\star(X,\Xi)}{ \mathcal S}_{X,1,+} f.
\end{equation}
Subsequently, \eqref{e:intspars1} upgrades to the claim of Proposition~\ref{p:modelr}, with a possibly different sparse collection, by appealing to \cite{CR}*{Theorem~A {and Corollary A.1}}. It remains to prove \eqref{e:limarg}. The easiest way to do so is to prove that $\|F_n\|_2\to 0$ and pass to a subsequence. To see this, note that the intervals $J\in \mathcal S_n$ are pairwise disjoint by construction, as revealed by an easy induction argument. The localization trick then yields
\[
\|F_n\|_2^2 \leq \sum_{J \in \mathcal S_n} \left\|\left\|T_{\mathbb P^{\omega}(3{J})} f \right\|_{\ell^2(  \Omega_\Xi)}\right\|_{2}^2 \lesssim \sum_{J \in \mathcal S_n}  |J| \langle f\rangle_{2,J,+}^2 \lesssim \|f\|_\infty^2  \sum_{J \in \mathcal S_n}  |J|  \lesssim 2^{-4n}|I_0|\|f\|_\infty^2
\]
where the last inequality is also easily proved by induction on $n$. Therefore \eqref{e:limarg} holds, up to a subsequence, and the proof of Proposition~\ref{p:modelr} is complete.

\section{Proof of Theorem~\ref{thm:smoothsf}} \label{s:thmsmoothsf}  The proof of Theorem~\ref{thm:smoothsf} consists of the combination of an abstract Carleson embedding theorem generalizing \cite{DPF+RDF}*{Prop.\ 2.4} with an application of Proposition~\ref{p:maintech}. The first tool is developed in \S\ref{ss:gce} while the {proof of Theorem~\ref{thm:smoothsf} is concluded in the} final paragraph  \S\ref{ss:ptb}.

\subsection{Generalized Carleson embedding} \label{ss:gce}{For a}  fixed dyadic grid $\mathcal D$ {and a collection} $\mathcal I\subset \mathcal D$, the  $\mathcal I$-\emph{balayage}  of  the sequence of complex numbers $\mathsf{a}=\{a_Q:Q\in \mathcal D\}$ is
\begin{equation}
\label{e:balaspars}
A_{\mathcal I}[\mathsf{a}]\coloneqq \sum_{Q\in \mathcal I} a_Q \cic{1}_Q.   
\end{equation} 
The definition of {a} subordinated Carleson sequence \cite{DPF+RDF}*{eq.~(2.3)} can be modified as follows. {Firstly}, {let us call} $\mathcal G(I)\subset \mathcal D(I)$  \emph{major} if the maximal dyadic elements $\mathcal M (I)$ of $\mathcal D(I)\setminus \mathcal G(I)$ satisfy
\[
\sum_{L\in \mathcal M (I)} |L| \leq \frac{1}{4}|I|. 
\]
Observe that, necessarily, any $\mathcal{G}(I)$ {which is} major contains $I$. {Fixing an Orlicz space $X$, say that $\mathsf{a}$ is a \emph{generalized Carleson sequence subordinated to $f$} if}
\begin{equation}\label{e:packing}\frac{1 }{|I|} \inf_{\substack{\mathcal G(I)\subset \mathcal D(I) \\ \mathcal G(I)\textrm{ major}}}
\sum_{J\in \mathcal G(I) } |J||a_J| \leq C {\langle f\rangle_{X,I,+}^2}
\end{equation}
{with $C>0$ a numerical constant which is independent of $I\in \mathcal D$ and $f$}. The least constant $C$ such that \eqref{e:packing} holds is denoted by $\|\mathsf{a}\|_{\mathcal D}$ and termed the \emph{generalized Carleson norm} of $\mathsf{a}$. 
A minor {modification of} the proof of \cite{DPF+RDF}*{Prop.~2.4} yields the following result.

\begin{proposition}  \label{p:balayage}  For $f\in L^\infty _0 (\R)$  and  a generalized  Carleson sequence  $\mathsf a$ subordinated to $f$,  there exists a sparse collection $\mathcal J$ of intervals with the property  
\begin{equation}\label{e:spros2}
A_{\mathcal D}[\mathsf{a}] \lesssim \|\mathsf{a}\|_{\mathcal D}  \sum_{J \in \mathcal J} \cic{1}_J \langle f\rangle_{X,J}^2  
\end{equation}
pointwise almost everywhere.
\end{proposition}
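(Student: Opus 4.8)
The plan is to follow the template of \cite{DPF+RDF}*{Prop.~2.4} and build the sparse collection $\mathcal J$ by a stopping time adapted to the local Orlicz averages $\langle f\rangle_{X,I,+}$. First I would set up the recursion: starting from a fixed top interval, given a stopping interval $I$ in the current generation, let $\mathcal S(I)$ be the collection of maximal dyadic children $J\subsetneq I$ for which $\langle f\rangle_{X,J,+} > C_0 \langle f\rangle_{X,I,+}$, for a suitably large numerical constant $C_0$. A standard weak-type argument for the Orlicz maximal function $\mathrm{M}_X$ (dimension one, constant $3$, as already used in the proof of Proposition~\ref{p:weaktype}) gives $\sum_{J\in\mathcal S(I)}|J| \leq \tfrac14|I|$ once $C_0$ is chosen large enough, which both makes the resulting $\mathcal J = \bigcup_n \mathcal S_n$ sparse and, crucially, makes $\mathcal G(I) := \mathcal D(I)\setminus\bigcup_{J\in\mathcal S(I)}\mathcal D(J)$ a \emph{major} collection in the sense of the hypothesis — this is exactly why the definition of ``major'' was tuned with the constant $\tfrac14$.

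Next I would exploit the subordination hypothesis \eqref{e:packing} on the major collection $\mathcal G(I)$ just produced. On each $Q\in\mathcal D$ there is a unique smallest $J=\pi(Q)\in\mathcal J$ with $Q\subseteq J$, and $Q$ belongs to $\mathcal G(J)$ by construction; so grouping the balayage by the stopping interval $\pi(Q)$ gives
\[
A_{\mathcal D}[\mathsf a] = \sum_{Q\in\mathcal D} a_Q\cic{1}_Q = \sum_{J\in\mathcal J}\ \sum_{Q\in\mathcal G(J)} a_Q\cic{1}_Q.
\]
For the inner sum over $Q\in\mathcal G(J)$ one does not get a pointwise bound directly from \eqref{e:packing}, which only controls $\tfrac{1}{|J|}\sum_{Q\in\mathcal G(J)}|Q||a_Q|$. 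The standard fix, following \cite{DPF+RDF}, is to observe that on the major set $E_J := J\setminus\bigcup_{J'\in\mathcal S(J)}J'$ (which satisfies $|E_J|\geq\tfrac34|J|$) the nested intervals $Q\in\mathcal G(J)$ containing a fixed point $x\in E_J$ behave like a Carleson sequence, and a Carleson-embedding / $L^1$-averaging argument produces the pointwise bound $\sum_{Q\in\mathcal G(J),\,Q\ni x} |a_Q| \lesssim \|\mathsf a\|_{\mathcal D}\,\langle f\rangle_{X,J,+}^2$ for almost every $x\in J$; one absorbs the finitely many ``bad'' boundary contributions by passing to the sparse collection consisting of $J$ together with the at most one further level, as in the reference. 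Since $\langle f\rangle_{X,J,+}\lesssim\langle f\rangle_{X,3J}$ (the tailed average is comparable to the genuine Orlicz average over a fixed dilate), after relabelling $\mathcal J$ to include the tripled intervals — and re-sparsifying, which only costs an absolute factor — we obtain \eqref{e:spros2}.

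The main obstacle I anticipate is the passage from the \emph{averaged} packing condition \eqref{e:packing} to a \emph{pointwise} bound on the summed balayage over $\mathcal G(J)$: the definition of ``generalized Carleson sequence'' here is genuinely weaker than the subordination used in \cite{DPF+RDF} because the infimum is taken over all major $\mathcal G(I)$, so one must be careful that the particular major collection arising from the stopping time is admissible and that the Carleson-embedding step is applied to the right sub-collection. I expect this to be handled exactly as the ``minor modification'' alluded to in the statement: one runs the $L^1$-normalized Carleson embedding theorem against the major set $E_J$, uses that the dyadic intervals in $\mathcal G(J)$ through a generic point of $E_J$ form a chain, and invokes the elementary fact that a Carleson sequence on a chain has $\ell^1$-summable coefficients with the Carleson constant. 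Everything else — sparsity of $\mathcal J$, comparability of $\langle f\rangle_{X,I,+}$ with $\langle f\rangle_{X,cI}$, and the final re-sparsification via \cite{CR}*{Theorem~A and Corollary~A.1} as in \S\ref{ss:pfpropr} — is routine.
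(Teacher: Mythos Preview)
Your outline has a genuine gap at the point you yourself flag, and it is not a minor modification. The packing condition \eqref{e:packing} controls the \emph{infimum} over major subcollections of $\mathcal D(I)$; it gives you no bound whatsoever on $\sum_{Q\in\mathcal G(I)}|Q||a_Q|$ for the particular major collection $\mathcal G(I)$ you manufacture by stopping on $\langle f\rangle_{X,J,+}$. So the step ``exploit the subordination hypothesis on the major collection $\mathcal G(I)$ just produced'' simply does not go through. Relatedly, the claim that ``a Carleson sequence on a chain has $\ell^1$-summable coefficients with the Carleson constant'' is false: take $a_{I_n}\equiv 1$ on a nested chain of $N$ dyadic intervals --- the Carleson constant is $\lesssim 1$ but the pointwise sum at the bottom is $N$. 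One cannot pass from the averaged bound $|I|^{-1}\sum_{Q}|Q||a_Q|$ to a pointwise bound by a one-shot embedding; this is precisely what a John--Nirenberg recursion is for.

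The paper's argument proceeds differently and addresses both issues at once. For each $R$ it \emph{first} fixes a major collection $\mathcal G^\ast(R)$ that realizes the infimum in \eqref{e:packing} within a factor of two, records its complement $\mathcal M^\ast(R)$, and \emph{then} performs a John--Nirenberg stopping inside $\mathcal G^\ast(R)$ on the running sum $\sum_{W\supseteq Z,\,W\in\mathcal G^\ast(R)} a_W$, selecting the maximal $Z$ where this exceeds $24\|\mathsf a\|_{\mathcal D}\langle f\rangle_{X,R,+}^2$. The packing bound on $\mathcal G^\ast(R)$ forces these $Z$'s to occupy at most $|R|/12$, and together with $\sum_{L\in\mathcal M^\ast(R)}|L|\leq |R|/4$ the next generation has total length $\leq |R|/3$. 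One recurses into both $\mathcal S(R)$ and $\mathcal M^\ast(R)$; there is no stopping on the $f$-averages at all. The quantity $\langle f\rangle_{X,R,+}^2$ enters only as the right-hand side of the packing bound, and the passage to untailed averages via \cite{CR} is as you describe.
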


\begin{proof} We follow closely the proof of \cite{DPF+RDF}*{Prop.~2.4}. We fix a function $f$ with compact support and we select an interval $Q\in \mathcal{D}$ such that $\supp f \subset (1+5^{-1})Q$. Let $\mathsf a$ be a fixed {generalized} Carleson sequence subordinated to $f$ and let $\mathcal{G}^\ast(I)$ be a fixed major collection of intervals    that realizes the infimum in \eqref{e:packing} within a factor of two. 
Denote by $\mathcal{M}^\ast (I) $ the set of maximal dyadic intervals in $\DD (I)\setminus \mathcal{G}^\ast (I)$. Following \cite{DPF+RDF} we {have the basic estimate}
\begin{equation} \label{e:splitting_as_in_RDF}
 A_\DD {[\mathsf{a}]} \leq \sum_{|j|\leq 1} A_{\DD (Q^{+j})} {[\mathsf{a}]} +\sum_{\substack{k\geq 1 \\ |j|\leq 1}} a_{Q^{\uparrow k, +j}} \ind_{Q^{\uparrow k, +j}} + \sum_{\substack{ k\geq 0 \\ 2 \leq |j|\leq 3}} A_{\DD (Q^{\uparrow k, +j})} {[\mathsf{a}]},
\end{equation}
arising from the decomposition
\begin{equation}
\DD\subseteq \bigg( \bigcup_{\vert j\vert \leq 1} \DD(Q^{+j}) \bigg) \cup \bigg( \bigcup_{\substack{k\geq 1 \\ \vert j \vert\leq 1}} 
  \big\{ Q^{\uparrow k, +j} \big\} \bigg)\cup \bigg( \bigcup_{\substack{k\geq 0 \\ 2\leq \vert j \vert \leq 3}} \DD(Q^{\uparrow k, +j})\bigg).
\end{equation}

The second and third terms in \eqref{e:splitting_as_in_RDF} correspond to the tails and can be treated as in \cite{DPF+RDF} by accounting for the obvious modifications, obtaining
\[
\sum_{\substack{k\geq 1 \\ |j|\leq 1}} a_{Q^{\uparrow k, +j}} \ind_{Q^{\uparrow k, +j}} + \sum_{\substack{ k\geq 0 \\ 2 \leq |j|\leq 3}} A_{\DD (Q^{\uparrow k, +j})} {[\mathsf{a}]} \lesssim \|\mathsf{a}\|_{\mathcal D}\sum_{I\in\mathcal{R}(Q)}\langle f \rangle^2_{X,I}\ind_{I},
 \]
where $\mathcal{R}(Q)$ is a suitabl{y} constructed sparse collection.

Hence we are left to deal with the main term, namely with the first term in \eqref{e:splitting_as_in_RDF}. Following \cite{DPF+RDF}, we want to control it by the intermediate estimate
\begin{equation}\label{e:intermediate_estimate_as_in_RDF}
A_{\DD(J)} {[\mathsf{a}]} \lesssim \| \mathsf{a}\|_{\mathcal D} \sum_{I \in \mathcal{R}(J)} \langle f \rangle^2_{X,I,+} \ind_I
\end{equation}
for each $J\in \{ Q^{+j}: \, |j|\leq 1 \}$, where $\mathcal{R}(J)$ is another suitably constructed sparse collection. From \eqref{e:intermediate_estimate_as_in_RDF}  we deduce a sparse bound 
\[
A_{\DD(J)} {[\mathsf{a}]} \lesssim \| \mathsf{a}\|_{\mathcal D} \sum_{I\in \mathcal{R}'(J)} \langle f \rangle^2_{X,I}\ind_I
\]
for some possibly different sparse collection $\mathcal{R}'(J)$ by the very same arguments in \cite{DPF+RDF} noticing that \cite{CR}*{Theorem~A {and Corollary~A.1}} remain true if we replace the usual local averages with local averages in the Orlicz space $X$. Hence we are {left with proving} \eqref{e:intermediate_estimate_as_in_RDF}. For each $R\in \DD(J)$ let $\mathcal{G}^\ast (R)$ 
be as above. Recall that $\mathcal{M}^\ast (R)$ consists of the maximal elements of $\DD (R)\setminus \mathcal{G}^\ast (R)$ and it is such that
\[
\sum_{L\in\mathcal{M}^\ast (R)} \vert L \vert \leq \frac{\vert R\vert}{4}.
\]
For each $R\in \DD(J)$ we define
\[
\mathcal{S} (R):= \text{ maximal elements of } \left\lbrace Z\in \mathcal{G}^\ast(R): \, \sum_{\substack{W\in \mathcal{G}^\ast (R)
\\ Z\subset W}} a_W > 24 \| \mathsf{a}\|_{\mathcal D} \langle f \rangle^2_{X,R,+} \right\rbrace. 
\]
Using the definition of $\mathcal{S}(R)$ and the Carleson condition we have
\[
\sum_{Z\in\mathcal{S}(R)}\vert Z\vert \leq \frac{1}{ 12 \|\mathsf{a}\|_\DD \langle f \rangle^2_{X,R,+}} \sum_{Z\in\mathcal{S}(R)} \int_Z A_{\mathcal{G}^\ast (R)} \leq \frac{|R|}{12}.
\]
Armed with this fact, we perform the following step of recursive nature, which will lead us to the desired sparse domination in \eqref{e:intermediate_estimate_as_in_RDF}. We consider the decomposition
\[ 
A_{\mathcal{D}(R)} {[\mathsf{a}]} = {A}_{\mathcal{G}^\ast (R) \setminus \bigcup_{Z
\in \mathcal{S}(R)} \mathcal{D}(Z)} {[\mathsf{a}]} + \sum_{Z \in \mathcal{S}(R)}
A_{\mathcal{D}(Z)} {[\mathsf{a}]} + \sum_{L \in \mathcal{M}^\ast(R)}
A_{\mathcal{D}(L)} {[\mathsf{a}]}. 
\]
It follows from the definition of $\mathcal{S}(R)$ that
\[ 
{A}_{\mathcal{G}^\ast (R) \setminus \bigcup_{Z\in \mathcal{S}(R)} \mathcal{D}(Z)} {[\mathsf{a}]} \leq  24  \|\mathsf{a}\|_\DD \langle f \rangle_{X,R,+}^2 \ind_{R}. 
\]
Moreover, we have that
\[ 
\sum_{Z \in \mathcal{S}(R)} |Z| + \sum_{L \in \mathcal{M}^\ast (R) }
|L| \leq \frac{|R|}{3}. 
\]
We can therefore construct a sparse collection $\mathcal{R}(J)$ by the very same John-Nirenberg-type recursive argument as in \cite{DPF+RDF}, applying the above decomposition to each $\mathcal{D}(Z)$ and each $\mathcal{D}(L)$. This concludes the proof of \eqref{e:intermediate_estimate_as_in_RDF}.
\end{proof}

\subsection{Deducing Theorem~\ref{thm:smoothsf}} \label{ss:ptb} As usual, we employ the shorthand $C_X\coloneqq \ZZ^\star(X,\Xi)$. As explained in \S\ref{ss:disc}, it suffices to prove the same estimate for the model $G_\Xi$. For a suitable choice of wave packets $\{\upphi_P:P\in \mathbb P\}$, 
\[
{[G_\Xi f]^2}\leq 2 \sum_{P\in \mathbb{P}^{O_\Xi}}    |\langle f,\upphi_P \rangle|^2 1_{I_P}  =  2\sum_{I\in\mathcal{D}} a_I\ind_{I}, \qquad a_I \coloneqq \sum_{ P\in\mathbb{P}^{O_\Xi}_= (I) } |\langle f,\upphi_P \rangle|^2.
\]
By virtue of Proposition~\ref{p:balayage}, it is then enough to show that $\{a_I:\, I\in \mathcal D\}$ is a generalized Carleson sequence subordinated to $f$. We fix $I\in\mathcal{D}$ and turn to the task of constructing the major collection $\mathcal{G}(I)$. Let 
\begin{equation} \label{e:defXorl}
	E_I\coloneqq   \left\{ x\in \R:\, \M_X (f\chi_I^{\rm{dec}})(x)>\Theta \langle f \rangle_{X,I,+}\right\} {.}
\end{equation}
We claim the estimate
\begin{equation}
	\label{e:wot}
	|E_I| \leq 2^{-9}|I|,
\end{equation} and postpone the proof at the end of the section. Therefore, if  $\mathcal{L}(I)$ stands for  the collection of maximal elements of $\mathcal D$ contained in the set $E_I\cap 3I$,
\begin{align}
\label{e:pack3} &  \sum_{L \in \mathcal L(I)} |L| \leq |E_I| \leq 2^{-9} |I|,
\\ \label{e:pack4} & \sup_{L\in \mathcal L(I)} \inf_L \mathrm{M}_X\left(f\chi_I^{\mathsf{dec}}\right) \lesssim \langle f \rangle_{X,I,+} {.}
\end{align}
In particular \eqref{e:pack3} shows that $\mathcal G(I)${, constructed from $\mathcal{L}(I)$ as $\mathcal G$ from $\mathcal L$ in \eqref{e:stopcol} of \S\ref{ss:stopL2},} is major. Constructing    $T_{\mathbb P_{\mathcal G}^{O_\Xi}(I) }$ as in   \eqref{e:dec1.1} {we have}
\begin{align}\label{e:pack5} 
  \sum_{G\in \mathcal G(I)}|G|a_G&= \sum_{G\in \mathcal G(I)}|G|\sum_{P \in \mathbb P^{O_\Xi}_=(G)} |\langle f,\upphi_P \rangle|^2  
\lesssim \left\| T_{\mathbb P_{\mathcal G}^{O_\Xi}(I) } f\right\|_{2}^2 
\\ 
&\lesssim C_X^2|I|\bigg( \left\| f\chi_I^{\mathsf{dec}}\right\|_{L^\infty(3I\setminus B)} + \sup_{L\in\mathcal{L}}\inf_L  \left[\mathrm{M}_X\left(f\chi_I^{\mathsf{dec}}\right) \right] \bigg)^2 \lesssim C_X^2 |I|\langle f \rangle_{X,I,+}^2,
\end{align}
having applied Proposition~\ref{p:maintech}  to pass to the second line. This {verifies} the assumption of Proposition~\ref{p:balayage} and therefore completes the proof of Theorem~\ref{thm:smoothsf}.

\begin{proof}[Proof of \eqref{e:wot}] Let $\mathcal D'$ be any dyadic grid on $\R $. {Because of our assumptions on the Orlicz space $X$ and its Young function ${\mathrm{Y}_X}$ in Definition~\ref{d:locorl}}, the maximal function 
\[
\mathrm{M}_{X,\mathcal D'} g \coloneqq \sup_{I\in \mathcal D'}\cic{1}_I\langle g \rangle_{X,I}.
\]
enjoys the modular inequality 
\[
\left|
\left\{x\in \mathbb R:\, \mathrm{M}_{X,\mathcal D'} g(x) >\lambda \right\}\right| \leq \int_{\R} \mathrm{Y}_X\left( \frac{g(x)}{\lambda} \right)\, \d x,
\] 
{see \cite{Perez95}*{Lemma 4.1} and \cite{CMP}*{Theorem 5.5}}. Using the three lattice theorem as in \cite{LN}, it follows that
\begin{equation}
	\label{e:wot1}
\left| E_{g,\lambda}\coloneqq 
\left\{x\in \mathbb R:\, \mathrm{M}_{X} g(x) >\lambda \right\}\right| \leq 3 \int_{\R} \mathrm{Y}_X\left( \frac{3g(x)}{\lambda} \right)\, \d x.
\end{equation}
Now, notice that $E_I$ defined in \eqref{e:defXorl} coincides with the set $E_{f\chi_I^{\mathsf{dec}}, \Theta \langle f \rangle_{X,I,+}}$. Therefore, provided that $\Theta>3$, there holds 
\[
\begin{split}
|E_I| & \leq 3 	\int_{\R} \mathrm{Y}_X\left( \frac{ 3}{\Theta} \frac{f(x)\chi_I^{\mathsf{dec}}(x)}{\langle f \rangle_{X,I,+}} \right)\, \d x 
\leq {{\frac{9}{\Theta}}} \int_{\R} \mathrm{Y}_X\left(   \frac{f(x)\chi_I^{\mathsf{dec}}(x)}{\langle f \rangle_{X,I,+}} \right)\, \d x
 \\ 
 & \leq 
 {{\frac{9}{\Theta}}} |I| \int_{\R} \mathrm{Y}_X\left( \frac{f(\ell_I y+c_I)\chi^{\mathsf{dec}}(y)}{\langle f \rangle_{X,I,+}} \right)\, \d y  \leq 
 |I|
\end{split}
\]
where the second inequality {uses the convexity of ${\mathrm{Y}_X}$}, the step to the second line  is a change of variable and the subsequent bound is the Orlicz norm definition of $\langle f \rangle_{X,I,+}$. The bound \eqref{e:wot} is then obtained by choosing  $\Theta >3$ sufficiently large.
\end{proof}

\section{Proofs of Theorems~\ref{thm:HMsparse}, \ref{thm:Marc}, \ref{thm:F}} \label{s:nextprob} In this section we complete the proofs of Theorems~\ref{thm:HMsparse}, \ref{thm:Marc} and \ref{thm:F}.  {Using} the discretization arguments in \S\ref{ss:disc} and \S\ref{sec:Rpq}, the proofs of Theorems~\ref{thm:HMsparse}~and~\ref{thm:Marc} reduce to a general sparse estimate for model operators $f\mapsto T_{\mathbb Q}f$ with $\mathbb Q\subset \mathbb P^{O_\Xi}$ a finite collection of tiles.

\begin{proposition}\label{prop:sparsemodelform}Let $\Xi\subset\R$ be a null closed set and $\mathbb P^{O_\Xi}$ be defined as in \eqref{e:tilesO}. For $f_1,f_2\in L^\infty _0 (\R)$, and {$I_0 \in \mathcal D$ with  $\supp f_j\subset 3I_0$ for $j\in\{1,2\}$},  there holds
\[
\sup_{\substack{\mathbb Q \subset \mathbb P^{O_\Xi} \\ \# \mathbb Q <\infty } } 
\left|\langle \ind_{I_0}T_{\mathbb Q(3I_0)} f_1,f_2\rangle\right| \lesssim \left(\prod_{j=1} ^2 \ZZ^\star(X_j,\Xi)\right) \sup_{\mathcal S \,\mathrm{ sparse}} {\Lambda^{\mathcal S}_{X_1,X_2}}(f_1,f_2)
\]
with absolute implicit constant.
\end{proposition}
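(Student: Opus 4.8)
The strategy is the standard sparse-domination recursion, driven by the single-scale localized and bilinear estimates of Section~\ref{s:sce}, most notably Proposition~\ref{p:maintechdual} and Proposition~\ref{p:weaktype}. Fix a finite collection $\mathbb Q\subset \mathbb P^{O_\Xi}$ and suppress the superscript $O_\Xi$. The plan is to build a sparse collection $\mathcal S\subset\mathcal D(3I_0)$ recursively, starting from $\mathcal S_0=\{I_0\}$, so that at each generation the ``error'' between $\langle \ind_I T_{\mathbb Q(3I)}f_1,f_2\rangle$ and the sum of the analogous quantities over the children is controlled by a single term $|I|\,\ZZ^\star(X_1,\Xi)\ZZ^\star(X_2,\Xi)\langle f_1\rangle_{X_1,3I}\langle f_2\rangle_{X_2,3I}$. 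Concretely, given $I$ in the current generation, I would define a bad set $E_I\subset 3I$ as the union of $\{x\in 3I:\ \mathrm M_{X_1}(f_1\chi_I^{\mathsf{dec}})(x)>\Theta\langle f_1\rangle_{X_1,I,+}\}$, the analogous set for $f_2$ with $X_2$, and the two level sets $\{x\in I^{+j}:\ |T_{\mathbb Q(I^{+j})}f_1(x)|>\Theta\ZZ^\star(X_1,\Xi)\langle f_1\rangle_{X_1,I,+}\}$ over $|j|\le 2$, together with the dual version for $T_{\mathbb Q}^\star f_2$; by the weak-type Proposition~\ref{p:weaktype}, its dual, and the modular bound \eqref{e:wot1} for $\mathrm M_{X_j}$, one gets $|E_I|\le 2^{-7}|I|$, whence the maximal dyadic subintervals $\mathcal L(I)$ of $E_I\cap 3I$ satisfy the packing $\sum_{L\in\mathcal L(I)}|L|\le 2^{-4}|I|$, making the resulting collection sparse.

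The core estimate is then, for each $I$ in generation $n$ with children $\mathcal L(I)$,
\[
\Big| \ind_I \langle T_{\mathbb Q(3I)} f_1,f_2\rangle - \sum_{L\in\mathcal L(I)} \langle T_{\mathbb Q(3L)}f_1\ind_L, f_2\ind_L\rangle\Big| \lesssim |I|\,\ZZ^\star(X_1,\Xi)\ZZ^\star(X_2,\Xi)\,\langle f_1\rangle_{X_1,3I}\langle f_2\rangle_{X_2,3I}.
\]
To prove this I would expand $T_{\mathbb Q(3I)}=T_{\mathbb P_{\mathcal G}(I)}+\sum_{L\in\mathcal L''}T_{\mathbb Q(3L)}$ as in \eqref{e:dec1.1}, where $\mathcal G$ is associated to the stopping collection $\mathcal L(I)$. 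The ``good'' piece $\langle T_{\mathbb P_{\mathcal G}(I)}f_1,f_2\rangle$ is directly estimated by Proposition~\ref{p:maintechdual}, since on $3I\setminus B$ the functions are pointwise bounded by $\Theta\langle f_j\rangle_{X_j,I,+}$ (outside $E_I$) and the $\mathcal L$-infimum of $\mathrm M_{X_j}f_j$ is $\lesssim\langle f_j\rangle_{X_j,I,+}$, so $\lambda_{f_j,X_j}\lesssim\langle f_j\rangle_{X_j,I,+}$; one also uses $\langle f_j\rangle_{X_j,I,+}\lesssim\langle f_j\rangle_{X_j,3I}$. The remaining pieces $\langle T_{\mathbb Q(3L)}f_1,f_2\rangle$ over $L\in\mathcal L''$ must be compared with the desired telescoping terms $\langle T_{\mathbb Q(3L)}(f_1\ind_L),f_2\ind_L\rangle$; the difference involves $f_j$ restricted to $3L\setminus L$, and is handled by the tail estimates \eqref{e:newtail2_fin} of Lemma~\ref{l:newtail_fin} (distance $\gtrsim\ell_L$ between the relevant intervals), summed over $L\in\mathcal L''$ using their bounded overlap and the packing $\sum_L|L|\lesssim|I|$. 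Finally, because $\supp f_j\subset 3I_0$ one must also absorb the difference between $\ind_{I_0}T_{\mathbb Q(3I_0)}$ and the restriction to the starting interval, which is again a tail term of the type in Lemma~\ref{l:newtail_fin}.

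Iterating and telescoping over generations, one obtains
\[
\Big|\langle\ind_{I_0}T_{\mathbb Q(3I_0)}f_1,f_2\rangle - \sum_{J\in\mathcal S_{n+1}}\langle T_{\mathbb Q(3J)}(f_1\ind_J),f_2\ind_J\rangle\Big| \lesssim \ZZ^\star(X_1,\Xi)\ZZ^\star(X_2,\Xi)\sum_{k=0}^n\sum_{I\in\mathcal S_k}|I|\,\langle f_1\rangle_{X_1,3I}\langle f_2\rangle_{X_2,3I},
\]
and the right-hand side is dominated by $\ZZ^\star(X_1,\Xi)\ZZ^\star(X_2,\Xi)$ times $\Lambda^{\mathcal S}_{X_1,X_2}(f_1,f_2)$ for the sparse collection $\mathcal S=\bigcup_n\mathcal S_n$ (after enlarging each $3I$ to a comparable dyadic interval, which keeps sparseness, or invoking \cite{CR}*{Theorem~A and Corollary~A.1} as in Section~\ref{s:thmroughsf}). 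The leftover sum over $J\in\mathcal S_{n+1}$ tends to $0$ along a subsequence: since the $J\in\mathcal S_n$ are pairwise disjoint and $\sum_{J\in\mathcal S_n}|J|\lesssim 2^{-4n}|I_0|$, the localization trick gives $|\langle T_{\mathbb Q(3J)}(f_1\ind_J),f_2\ind_J\rangle|\lesssim |J|\langle f_1\rangle_{2,J,+}\langle f_2\rangle_{2,J,+}\lesssim|J|\,\|f_1\|_\infty\|f_2\|_\infty$, so the total is $O(2^{-4n})\to 0$.

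\textbf{Main obstacle.} The delicate point is the bookkeeping in the recursive step: one must simultaneously localize $f_1$ and $f_2$, produce a \emph{bilinear} telescoping identity whose error terms are genuinely local (so they sum against a single sparse term), and make sure the stopping set $E_I$ simultaneously controls the maximal functions of both $f_1,f_2$ and the level sets of both $T_{\mathbb Q}f_1$ and $T_{\mathbb Q}^\star f_2$ — all while keeping $|E_I|\le 2^{-7}|I|$. The asymmetry between $T_{\mathbb Q(3I)}$ acting on $f_1$ and the self-adjointness needed to transfer localization onto $f_2$ is exactly where Proposition~\ref{p:maintechdual} and the tail Lemma~\ref{l:newtail_fin} do the heavy lifting; combining them correctly, rather than any individual estimate, is the crux.
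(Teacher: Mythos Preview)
Your approach is workable but heavier than necessary. The paper never telescopes the bilinear forms; instead it partitions the \emph{tiles}. After fixing $\mathbb Q$ and $I_0$, the paper stops only on the Orlicz maximal functions: for each $I$ in the current generation, $\mathcal L_I$ is the collection of maximal $J\in\mathcal D(I)$ with $\langle f_1\rangle_{X_1,J}>\Theta\langle f_1\rangle_{X_1,3I}$ or $\langle f_2\rangle_{X_2,J}>\Theta\langle f_2\rangle_{X_2,3I}$. The resulting sparse tree $\mathcal S=\bigcup_m\mathcal S_m$ then gives a disjoint decomposition $\mathbb Q=\bigsqcup_{I\in\mathcal S}\mathbb Q_{\mathcal G_I}(I)$ with $\mathbb Q_{\mathcal G_I}(I)=\{P\in\mathbb Q(I):I_P\in\mathcal G_I\}$ as in \eqref{e:stopcol}. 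By construction $\lambda_{f_j,X_j}\lesssim\langle f_j\rangle_{X_j,3I}$ for each $I$, so Proposition~\ref{p:maintechdual} applied to each piece gives $|\langle T_{\mathbb Q_{\mathcal G_I}(I)}f_1,f_2\rangle|\lesssim|I|\prod_j\ZZ^\star(X_j,\Xi)\langle f_j\rangle_{X_j,3I}$, and summation over $I\in\mathcal S$ finishes. There are no cross terms, no leftover to send to zero, and neither Proposition~\ref{p:weaktype} nor the level sets of $T_{\mathbb Q}f_1$, $T_{\mathbb Q}^\star f_2$ play any role.

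Your route is modeled on the pointwise telescoping of Section~\ref{s:thmroughsf}, which is forced there because one is bounding a pointwise quantity. For a bilinear form the tile partition is available and more direct: by taking the detour through telescoped forms you manufacture the cross terms $\langle T_{\mathbb Q(3L)}f_1,f_2\rangle-\langle T_{\mathbb Q(3L)}(f_1\ind_L),f_2\ind_L\rangle$, which can indeed be summed via Lemma~\ref{l:newtail_fin} and the packing of $\mathcal L''$, but this is extra work. Also note that the $T_{\mathbb Q}$ and $T_{\mathbb Q}^\star$ level sets in your stopping set $E_I$ are never used downstream --- your own argument only appeals to Proposition~\ref{p:maintechdual} and the tail lemma, both of which require just the maximal-function stopping --- so those components of $E_I$ can be dropped.
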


Given the proposition above and the discussion that preceded it, the proof of Theorem~\ref{thm:HMsparse} follows via \eqref{e:cvxhullms} in \S\ref{ss:disc}. Similarly, the proof of Theorem~\ref{thm:Marc} follows by the proposition above in combination with Lemma~\ref{lem:marctomodel}. We prove Proposition \ref{prop:sparsemodelform} at the end of this section.

\begin{proof}[Proof of Theorem \ref{thm:F}] The case $p=1$ is a special case of Theorem \ref{thm:Marc} with $X_1=X_2=X$. The case $p=2$ is dealt with next. By Lemma \ref{lem:Jatom}, it suffices to obtain a uniform estimate for atoms  $m\in R_{2,1,J}^\Xi$, $J\in  2^{\mathbb N}$.  To this aim, if $\Xi_m$ is the set defined within 		Lemma~\ref{lem:R21tomodel}, 
an application of Proposition \ref{prop:sparsemodelform} with $X_1=X,X_2=L^2$ yields
\[
\begin{split}
\sup_{\substack{\mathbb Q \subset \mathbb P^{O_{\Xi_m}} \\ \# \mathbb Q <\infty } } 
\left|\langle \ind_{I_0}T_{\mathbb Q(3I_0)} f_1,f_2\rangle\right|& \lesssim   \ZZ^{\star}(X, \Xi_m)  \ZZ^{\star}(L^2, \Xi_m) \sup_{\mathcal S \,\mathrm{ sparse}} {\Lambda^{\mathcal S}_{X,L^2}}(f_1,f_2)
\\ 
& \lesssim \sqrt{J} \ZZ^{\star\star}(X, \Xi) \sup_{\mathcal S \,\mathrm{ sparse}} {\Lambda^{\mathcal S}_{X,L^2}}(f_1,f_2) 	  
\end{split}
\]
and the $p=2$ case of the theorem follows by taking advantage once more of Lemma~\ref{lem:R21tomodel}.

The case $1<p<2$  is obtained by means of an interpolation argument of independent interest. While the exposition is tailored to this specific setting, the argument may easily be   formulated as an interpolation result for generic sparse bounds. Fixing a nonzero function $f_1\in L^\infty_0(\R)$, let $\mathcal E$ be the union of the three canonical shifted dyadic grids on $\R$, see e.g.\ \cite{LN}. For $\theta \in [0,1]$, referring to {the notation in \eqref{e:Xintp},} and $g\in L^\infty_0(\R)$ say, define
\begin{equation}\label{e:sparsenorm_1} 
\begin{split} G_\theta[g](x,I) &\coloneqq \cic{1}_{I}(x) \langle f_1 \rangle_X \langle g \rangle_{X_\theta}, \qquad (x,I) \in \R \times \mathcal E, 
\\  
\|g\|_{Y_\theta} &\coloneqq \left\| \left\|G_\theta[g](x,I)\right\|_{\ell^\infty(I \in \mathcal E)} \right\|_{L^1(x\in \mathbb R)}.
\end{split}
\end{equation}
It is immediate to check that $Y_\theta$ is a Banach space norm for each $\theta \in [0,1]$. Furthermore, the characterization of sparse bounds from e.g.\ \cites{CDOBP,LMT} tells us that
\begin{equation}
\label{e:sparsenorm_2} 
\sup_{\mathcal S \,\mathrm{ sparse}} \mathcal S_{X,X_\theta}(f_1,g)  \sim \|g\|_{Y_\theta} , \qquad \theta \in [0,1].
\end{equation}
Theorems \ref{thm:Marc} and the $p=2$ case  just obtained  can therefore be restated as   estimates for the complex valued map {$(m,g)\mapsto  \Lambda(m,g)\coloneqq \langle \mathrm{T}_m f_1,g\rangle$}
\[
\left|\Lambda(m,g) \right| \lesssim \ZZ^{\star\star}(X,\Xi)^{2-\theta} \|m\|_{R_{\frac{2}{2-\theta},1}^\Xi}\|g\|_{Y_\theta}
\]
for $\theta=0,1$ respectively. Applying  the bilinear complex interpolation theorem  \cite{GMs}*{Theorem 1.1}, 
and recalling the well-known characterization of complex interpolates of Lorentz spaces \cite{HS} which in this particular case reads $[R_{1,1}^\Xi, R_{2,1}^\Xi]_\theta = R_{\frac{2}{2-\theta},1}^\Xi $, we obtain the estimate
\[
\left|\Lambda(m,g) \right| \lesssim \ZZ^{\star\star}(X,\Xi)^{2-\theta} \|m\|_{R_{\frac{2}{2-\theta},1}^\Xi}\|g\|_{Y_\theta}, \qquad 0<\theta<1{,}
\]
which is exactly  the claim of the theorem for $p=\frac{2}{2-\theta}$. 	\end{proof}
 
\begin{proof}[Proof of Proposition~\ref{prop:sparsemodelform}] We begin by fixing a finite collection of tiles $\mathbb Q\subset \mathbb P^{O_\Xi}$ and a dyadic {interval} $I_0$ such that $\mathbb Q=\mathbb Q(3I_0)$ and $\supp f_i\subset 3I_0$ for $i\in\{1,2\}$. We let $\mathcal D_{n_0}$ denote the dyadic grid of the same generation as $I_0$, namely $|I_0|=2^{-n_0}$.	

In order to facilitate the reader we recall some notation from the previous sections. Given a collection of intervals $\mathcal S\subset\mathcal D$ and an interval $I\in\mathcal S$ we will consider \emph{stopping collections} $\mathcal L_I$, indexed by the elements of $\mathcal S$, with the following properties
\begin{align}
&\mathcal L_I\subset \mathcal D(3I),\quad B_I\eqqcolon \bigcup_{L\in\mathcal L_I}L\subset 3I,\quad\mathcal G_I\coloneqq\big\{G\in\mathcal D(I):\, G\nsubseteq 3L\text{ for every }L\in\mathcal L_I\big\},
\\
&\mathbb Q(I)\coloneqq \mathbb Q^{O_\Xi}_{\mathcal G}(I)=\left\{P\in\mathbb Q^{O_\Xi} (I):\, I_P\in\mathcal G_I\right\},\qquad\lambda_{i,I}\coloneqq \lambda_{f_i,X_i}= \left\|f_i\ind_{3I\setminus B_I}\right\|_\infty+\sup_{L\in\mathcal L_I}\inf_{L}\M_{X_i}f_i.
\end{align}
We will construct a \emph{sparse collection} $\mathcal S\subset\cup_{n\geq n_0}\mathcal D_n$  such that
\[
\mathbb Q =\bigsqcup_{I\in\mathcal S} \mathbb Q(I),\qquad \lambda_{i,I}\lesssim \langle f_i \rangle_{X_i,I},\qquad I\in\mathcal S.
\]
Once this is done we can use Proposition~\ref{p:maintechdual} to complete the proof. Indeed
\[
\begin{split}
\left|\langle T_{\mathbb Q}f_1,f_2\rangle\right|&\leq \sum_{I\in\mathcal S}\left|\langle T_{\mathbb Q(I)}f_1,f_2\rangle\right|\lesssim \sum_{I\in\mathcal S}|I|\prod_{j=1} ^2 \ZZ^{\star}(X_j,\Xi){\lambda_{j,I}}
\\
&\lesssim \prod_{j=1} ^2 \ZZ^{\star}(X_j,\Xi)\sum_{I\in\mathcal S} \langle f_1 \rangle_{X_1,I}\langle f_2 \rangle_{X_2,I}|I|
\end{split}
\]
as desired. It thus remains to construct the sparse family $\mathcal S$ with the desired properties above. 

This is done exactly as in \cite{DPFR}*{\S6}. The construction is inductive, where we begin by setting $\mathcal S_0=\mathcal D_{n_0}$. Then for $m\geq 1$ and for each $I\in\mathcal S_{m-1}$ we choose $\mathcal L_I$ to denote the collection of maximal elements of the collection
\[
J\in\mathcal D,\qquad J\subseteq I,\qquad J\in\bigcup_{j=1,2}\left\{J:\, \langle f_j\rangle_{J,X_j}>\Theta \langle f_j\rangle_{X_j,3I}\right\}.
\]
We then set 
\[
\mathcal S_m\coloneqq \bigcup_{I\in\mathcal S_{m-1}}\mathcal L_I,\qquad \mathcal S\coloneqq \bigcup_{m\geq 0} \mathcal S_m
\]
and the desired properties can be verified as in \cite{DPFR}*{\S6}, inserting the bounds for the maximal functions $\M_{X_j}$ for $j\in\{1,2\}$ in order to verify the sparseness condition.
\end{proof}

\section{Weighted norm inequalities} \label{s:weights} This section summarizes the weighted norm inequalities that can be derived from Theorems \ref{thm:roughsf}, \ref{thm:smoothsf}, \ref{thm:Marc} and \ref{thm:F}, assuming  either
\begin{align}\label{e:growthassumpt_1}
\sup_{0<\eps\leq 1} \eps^{\frac{\tau}{2}} \ZZ^{\star}(L^{1+\eps},\Xi) &\eqqcolon K^\star(\tau,\Xi) <\infty, 
\\ 	
 \label{e:growthassumpt_2}
\sup_{0<\eps\leq 1} \eps^{\frac{\tau}{2}} \ZZ^{\star\star}(L^{1+\eps},\Xi) &\eqqcolon K^{\star\star}(\tau,\Xi)  <\infty,
\end{align}
where $\tau>0$ is a fixed growth order. These conditions are verified when $\Xi$ is a lacunary set of order $\tau \in \mathbb N$, see \eqref{e:zygplac}. For $1<p<\infty$, the $p$-th dual  of a weight $w$ is the weight $\sigma\coloneqq w^{-\frac{1}{p-1}}$. The standard characteristics
\begin{equation} \label{e:char_weight}
\begin{aligned}
& [w]_{A_p} \coloneq \sup_{I  } \langle w \rangle_{1,I} \langle w^{-1} \rangle_{\frac{1}{p-1},I}^{-1}  , \qquad  1 < p < \infty, 
\\
& [w]_{A_1} \coloneq \sup_{I } \langle w \rangle_{1,I} \| w^{- 1} \ind_I \|_{\infty}^{-1} ,
\\ 
& [w]_{A_\infty} \coloneqq \sup_{I}\frac{ \langle \mathrm{M}(w\cic{1}_I) \rangle_{1,I}}{\langle w   \rangle_{1,I}},
\\ 
& [w]_{\mathrm{RH}_s} \coloneq \sup_{I} \frac{\langle w \rangle_{s,I} }{\langle w \rangle_{1,I}}, \qquad 1<s<\infty,
\end{aligned}
\end{equation}
 with {the supremum in the definitions above} being taken over all bounded intervals $I\subset \mathbb R$, are referred throughout this section. See e.g.\ \cite{CMP} for a thorough review of their  basic properties.

 \subsection{Main estimates} The first corollary contains  weighted bounds for the $\Xi$-Marcinkie\-wicz square function $\mathrm{H}_\Xi$. {By virtue of the considerations in \S\ref{ss:disc}, analogous bounds can be proved for $\mathrm{H}_{\Xi,m}$, with $m\in \mathrm{HM}(\Xi)$.}


\setcounter{counter}{2} \setcounter{corollary}{1}

\begin{corollary} \label{cor:rsf1}
Let $1<p<\infty$, $s\in\{p,\infty\}.$ Referring to the square function $\mathrm{H}_\Xi$ defined in \eqref{e:Hxi}, the  operator norm bounds
\begin{equation}
\label{e:rsf1}
\|\mathrm{H}_\Xi \|_{L^{p}(w) \to L^{p,s}(w)} \lesssim_{p} K^\star(\tau,\Xi)  [\sigma]_{A_{p'}}^{\frac\tau 2} [w]_{A_p}^{\frac1p} 
\begin{cases} [\sigma]_{A_\infty}^{\frac{1}{p}}  +[w]_{A_\infty}^{\frac{1}{p'}}  & s=p,\\  [w]_{A_\infty}^{\frac{1}{p'}} &  s=\infty ,
\end{cases}
\end{equation}
hold with implicit constant depending  on $1<p<\infty$ only.
\end{corollary}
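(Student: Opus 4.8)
The starting point is the pointwise sparse bound of Theorem~\ref{thm:roughsf}: for every $f\in L^\infty_0(\R)$ there is a sparse collection $\mathcal S$ with
\[
\mathrm{H}_\Xi f \lesssim \ZZ^\star(X,\Xi)\,\mathcal S_{X,1}f = \ZZ^\star(X,\Xi)\sum_{Q\in\mathcal S}\langle f\rangle_{X,Q}\cic{1}_Q.
\]
The plan is to specialize $X=L^{1+\eps}(0,1)$, optimize in $\eps$ using \eqref{e:growthassumpt_1}, and then feed the resulting sparse operator into the now-standard quantitative weighted theory for sparse forms. Concretely, for a weight $w$ and its $p'$-dual $\sigma$, I would pair $\mathrm{H}_\Xi f$ against a test function $g\in L^{p',1}(w)$ (using the $L^{p,s}$-norm duality, with $s=p$ dualized against $L^{p',p'}$ and $s=\infty$ against $L^{p',1}$, so that the restricted weak-type formulation is available), reducing matters to estimating $\sum_{Q\in\mathcal S}|Q|\langle f\rangle_{L^{1+\eps},Q}\langle g\rangle_{1,Q}$.

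The key step is to dominate $\langle f\rangle_{L^{1+\eps},Q}$ by a weighted average adapted to $w$. Writing $f = (f\sigma^{-1})\sigma$ on $Q$ and applying H\"older with exponents chosen so that the $\sigma$-factor is controlled by a reverse-H\"older/$A_\infty$ quantity of $\sigma$, one gets
\[
\langle f\rangle_{L^{1+\eps},Q}\lesssim \langle \sigma\rangle_{1,Q}\,\langle f\sigma^{-1}\rangle^\sigma_{r,Q}
\]
for a suitable $r=r(\eps,p)>1$ close to $1$, where $\langle\cdot\rangle^\sigma_{r,Q}$ denotes the $\sigma$-weighted $L^r$ average; the cost of raising the exponent from $1$ to $r$ is a power of $[\sigma]_{A_\infty}$, and after summing one recognizes the sparse operator as (essentially) $M^\sigma_r$, the $\sigma$-weighted maximal operator of order $r$. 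The classical $A_\infty$ bound $\|M^\sigma_r\|_{L^p(\sigma)\to L^p(\sigma)}\lesssim (\text{something like }[\sigma]_{A_\infty})^{1/p}$ for $r$ near $1$, together with the well-known sparse-form weighted estimate (see e.g.\ the references \cites{BFP,CDPPV22} cited in the paper and the $A_2$/$A_p$--$A_\infty$ machinery), then produces the bound
\[
\|\mathrm{H}_\Xi\|_{L^p(w)\to L^{p,s}(w)}\lesssim_p \ZZ^\star(L^{1+\eps},\Xi)\,[w]_{A_p}^{1/p}\Big([\sigma]_{A_\infty}^{1/p}+[w]_{A_\infty}^{1/p'}\Big)[\sigma]_{A_\infty}^{c\eps}
\]
(for $s=p$, with the obvious simplification for $s=\infty$ where only the $g$-side is dualized in $L^{p',1}$). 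Here the extra factor $[\sigma]_{A_\infty}^{c\eps}$ or, after using $\langle\sigma\rangle_{1,Q}$ versus the $A_{p'}$ balance, a factor involving $[\sigma]_{A_{p'}}^{c\eps}$, records the discrepancy between the $L^{1+\eps}$-average and the $L^1$-average.

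The final step is the optimization in $\eps$. Choosing $\eps \sim 1/\log([\sigma]_{A_{p'}}+e)$ makes $[\sigma]_{A_{p'}}^{c\eps}\lesssim 1$, while \eqref{e:growthassumpt_1} gives $\ZZ^\star(L^{1+\eps},\Xi)\le \eps^{-\tau/2}K^\star(\tau,\Xi)\lesssim K^\star(\tau,\Xi)(\log[\sigma]_{A_{p'}})^{\tau/2}$. Since $[\sigma]_{A_\infty}\le[\sigma]_{A_{p'}}$ and the logarithm is dominated by any positive power, $(\log[\sigma]_{A_{p'}})^{\tau/2}[\sigma]_{A_\infty}^{c\eps}$ — more precisely the full product of logarithmic and small-power factors — is $\lesssim [\sigma]_{A_{p'}}^{\tau/2}$, yielding the stated exponent $[\sigma]_{A_{p'}}^{\tau/2}$; alternatively one argues that the product $\eps^{-\tau/2}[\sigma]_{A_{p'}}^{c\eps}$ is minimized, up to constants, by the above choice of $\eps$ and equals a constant times $[\sigma]_{A_{p'}}^{\tau/2}$ only after also absorbing it against the $[w]_{A_p}^{1/p}$ factor via $[\sigma]_{A_{p'}}=[w]_{A_p}^{p'/p}$; in either case one lands on \eqref{e:rsf1}. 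I expect the main obstacle to be bookkeeping the precise interplay of the three characteristics $[w]_{A_p}$, $[w]_{A_\infty}$, $[\sigma]_{A_\infty}$ through the sparse-form estimate so that the exponents come out exactly as claimed — this is where one must invoke the sharp sparse domination estimates of \cites{BFP,CDPPV22} in their $A_p$--$A_\infty$ form rather than a crude $A_p$-only bound — while the $\eps$-optimization and the passage from $L^{1+\eps}$ to weighted averages are routine once the $B_p$-type Orlicz maximal bounds of P\'erez \cite{Perez95} are in hand.
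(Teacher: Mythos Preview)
Your overall strategy --- specialize Theorem~\ref{thm:roughsf} to $X=L^{1+\eps}$, bound the resulting sparse operator $\mathcal S_{1+\eps,1}$ in weighted spaces, and optimize over $\eps$ via \eqref{e:growthassumpt_1} --- is exactly what the paper does. But two of the steps you sketch do not hold up as written.

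First, the $\eps$-cost you posit is not the right one. You claim that replacing $L^1$-averages by $L^{1+\eps}$-averages in the sparse form incurs only a factor $[\sigma]_{A_{p'}}^{c\eps}$, valid for all $\eps\in(0,1]$, and then optimize at $\eps\sim 1/\log[\sigma]_{A_{p'}}$. If this were true you would obtain $(\log[\sigma]_{A_{p'}})^{\tau/2}$ rather than $[\sigma]_{A_{p'}}^{\tau/2}$ --- a genuine strengthening of the corollary --- and your subsequent ``$\lesssim[\sigma]_{A_{p'}}^{\tau/2}$'' would just be throwing this away. In fact the cost is not of the form $[\sigma]_{A_{p'}}^{c\eps}$: bounding the weighted norm of $\mathcal S_{1+\eps,1}$ (or $\mathcal S_{1+\eps,1+\eps}$) in terms of $[w]_{A_p}$ and the two $A_\infty$ constants ultimately requires the sharp reverse H\"older inequality for $\sigma$, and that is only available when $\eps\lesssim[\sigma]_{A_\infty}^{-1}\le[\sigma]_{A_{p'}}^{-1}$. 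For larger $\eps$ the $A_{p/(1+\eps)}$-constant of $w$ is not comparable to $[w]_{A_p}$ and the whole scheme collapses. The paper accordingly \emph{fixes} $\eps=\eps(w,p)\coloneqq(2^8p\,[\sigma]_{A_{p'}})^{-1}$, and then $\eps^{-\tau/2}\sim[\sigma]_{A_{p'}}^{\tau/2}$ is the honest source of that factor in \eqref{e:rsf1}.

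Second, your direct dualization of $\mathcal S_{1+\eps,1}$ via an estimate of the type ``$\langle f\rangle_{1+\eps,Q}\lesssim\langle\sigma\rangle_{1,Q}$ times a $\sigma$-weighted average of $f\sigma^{-1}$'' is too vague to track the mixed $A_p$--$A_\infty$ exponents, and it is not the route the paper takes. The paper separates the problem into two pieces: a good-$\lambda$ inequality (Lemma~\ref{l:912}) comparing $\mathcal S_{1+\eps,1}$ to $\mathcal S_{1+\eps,1+\eps}$ at cost $[w]_{A_\infty}^{\eps/(1+\eps)}$, and then the power-trick identity $\|\mathcal S_{1+\eps,1+\eps}\|_{L^p(w)}=\|\mathcal S_{1,1}\|_{L^{p/(1+\eps)}(w)}^{1/(1+\eps)}$ (and its weak-type analogue) together with the known $A_p$--$A_\infty$ bounds for $\mathcal S_{1,1}$ from \cite{LKC} (strong type) and \cite{LISU} (weak type); this is Lemma~\ref{l:911}. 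The good-$\lambda$ overhead is then $\lesssim_p 1$ precisely because $\eps\sim[\sigma]_{A_{p'}}^{-1}$. In particular, the weak-type case $s=\infty$ rests on a specific weak-type sparse bound from \cite{LISU}, not on a generic $L^{p',1}$ duality as you suggest.
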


\begin{remark} The case $s=p$ of \eqref{e:rsf1} can be slightly improved to
	\begin{equation}
\label{e:rsf1imp}
\|\mathrm{H}_\Xi \|_{L^{p}(w) } \lesssim_{p} K^\star(\tau,\Xi)  [\sigma]_{A_\infty}^{\frac\tau 2} [w]_{A_p}^{\frac1p} 
\left( [\sigma]_{A_\infty}^{\frac{1}{p}}  +[w]_{A_\infty}^{\frac{1}{p'}}  \right), \qquad 1<p<\infty.
\end{equation}
We chose to state \eqref{e:rsf1} as  the two cases may be given a unified proof. An argument for \eqref{e:rsf1imp} is sketched at the end of \S\ref{ss:pfcorw1}. 
\end{remark}

\begin{remark}  As a further corollary, we obtain the less precise estimates
\begin{align} \label{e:strongapbd0}
\left\|\mathrm{H}_\Xi\right\|_{L^{p}(w)} & \lesssim  K^\star(\tau,\Xi)  [w]_{A_p}^{\alpha(p,\tau)} , \qquad \alpha(p,\tau)\coloneqq \frac{\tau}{2(p-1)} + \max\left\{\frac{1}{p-1},1 \right\},
\\ \label{e:weakapbd0}
\qquad \left\|\mathrm{H}_\Xi \right\|_{L^{p}(w)\to L^{p,\infty}(w) }& \lesssim 
K^\star(\tau,\Xi) [w]_{A_p}^{\beta(p,\tau)} , \qquad \beta(p,\tau) \coloneqq \frac{\tau}{2(p-1)} + 1.
\end{align}
When $\Xi$ is   lacunary set of order $\tau$,  estimate \eqref{e:strongapbd0} was proved by Lerner \cite{LerMRL2019}*{Theorem 1.1} for $\tau=1$ and in \cite{Bak2021}*{Eq.\ (4.3)} for $\tau \geq 2$.  In these, it is   also established that the exponent $\alpha(p,\tau)$ is sharp for $1<p\leq 2$.
Sharpness follows from the  knowledge of the blowup rate  $\left\|\mathrm{H}_\Xi\right\|_{L^{p}(\R)}\sim   (p-1)^{-(\frac\tau2+1)}$ as $p\to 1^+$,  see  Bourgain's paper \cite{BouConst} for $\tau=1$ and  \cite{Bak2021}   for $\tau \geq 2$, together with an application of \cite{LPR}*{Theorem 1.2}.   The weak-type estimate \eqref{e:weakapbd0} is new; of course, it is only relevant in the  range $1<p< 2$, being superseded by \eqref{e:strongapbd0} otherwise; we cannot comment on its sharpness outside of the well known case $\tau=0$.
\end{remark}

The following corollary summarizes the weighted bounds for the smooth higher order square function
{$\mathrm{G}_{\Xi}$, with $\Xi$ as above.


\setcounter{counter}{3} \setcounter{corollary}{1}

\begin{corollary}\label{cor:ssfw} Referring to the square function $\mathrm{G}_\Xi$ defined in \eqref{e:Gxi}, the  operator norm bounds
\[
\left \| \mathrm{G}_{\Xi}\right \|_{L^p(w)} \lesssim  K^\star(\tau,\Xi)
\begin{cases}
[w]_{A_p} ^{\frac{1+\frac{\tau}{2}}{p-1}},\quad &1<p\leq 3,\vspace{1.5em}
\\
[\sigma]_{A_\infty} ^{\frac{\tau}{2}}[w]_{A_p} ^{\frac 1p}\left([w]_{A_\infty} ^{\frac12-\frac1p}+[\sigma]_{A_\infty} ^{\frac1p}\right),\quad &3\leq p<\infty,
\end{cases}
\]
hold with implicit constant depending  on $1<p<\infty$ only.  
\end{corollary}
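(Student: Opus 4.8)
The plan is to derive Corollary~\ref{cor:ssfw} from the quadratic sparse bound of Theorem~\ref{thm:smoothsf} combined with the growth assumption \eqref{e:growthassumpt_1}, by running two complementary weighted arguments according to whether $1<p\leq 3$ or $p\geq 3$. The common starting point is that, by Theorem~\ref{thm:smoothsf} applied with $X=L^{1+\eps}(0,1)$ for a suitable $\eps=\eps(p)\in(0,1]$, for each $f\in L^\infty_0(\R)$ there is a sparse collection $\mathcal S$ with
\[
\mathrm{G}_\Xi f \lesssim \ZZ^\star(L^{1+\eps},\Xi)\left(\sum_{Q\in\mathcal S}\langle f\rangle_{1+\eps,Q}^2\cic{1}_Q\right)^{\frac12}\lesssim \eps^{-\frac\tau2}K^\star(\tau,\Xi)\,\mathcal S_{1+\eps,2}f,
\]
so the problem is reduced to quantitative weighted $L^p(w)$ bounds for the quadratic sparse form $\mathcal S_{1+\eps,2}$, tracking the dependence on $\eps$ and on the weight characteristics. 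First I would recall (or cite, e.g.\ from \cite{CMP} or the sparse-domination literature) the two standard facts: $\langle f\rangle_{1+\eps,Q}\le \big(\langle f^{1+\eps}\rangle_{1,Q}\big)^{\frac1{1+\eps}}$, i.e.\ $\mathcal S_{1+\eps,2}f$ is controlled by a classical $L^1$-averaged quadratic sparse form applied to $|f|^{1+\eps}$ raised to the power $\tfrac1{1+\eps}$; and the sharp weighted bounds for the quadratic sparse operator $\mathcal S_{1,2}=\big(\sum_{Q}\langle\cdot\rangle_{1,Q}^2\cic{1}_Q\big)^{1/2}$, which—by the argument of Lerner, see \cite{LerMRL2019} or the Corollary~\ref{cor:rsf1} proof in \S\ref{ss:pfcorw1}—obeys $\|\mathcal S_{1,2}\|_{L^p(w)}\lesssim_p [\sigma]_{A_\infty}^{1/2}[w]_{A_p}^{1/p}([w]_{A_\infty}^{1/2-1/p}+[\sigma]_{A_\infty}^{1/p})$ for $p\ge 2$, with $\sigma=w^{-1/(p-1)}$.

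For the range $p\ge 3$ I would simply take $\eps=1$, so $X=L^2(0,1)$ and $\ZZ^\star(L^2,\Xi)\le 1$ (no $\tau$-growth is needed, but writing $K^\star(\tau,\Xi)$ with the trivial bound $\eps=1$ keeps the statement uniform), and invoke Theorem~\ref{thm:smoothsf} directly: $\mathrm{G}_\Xi f\lesssim K^\star(\tau,\Xi)\,\mathcal S_{2,2}f$. Then I observe $\mathcal S_{2,2}f=\big(\sum_Q\langle f^2\rangle_{1,Q}\cic{1}_Q\big)^{1/2}\le \big(\mathcal A_{\mathcal S}(f^2)\big)^{1/2}$ where $\mathcal A_{\mathcal S}$ is the linear (non-quadratic) sparse operator; equivalently $\|\mathcal S_{2,2}f\|_{L^p(w)}^2=\|\mathcal A_{\mathcal S}(f^2)\|_{L^{p/2}(w)}$, and $p/2\ge 3/2>1$, so the sharp weighted bound for $\mathcal A_{\mathcal S}$ on $L^{p/2}(w)$ (the Cruz-Uribe–Martell–Pérez / Hytönen–Pérez estimate) gives $\|\mathcal A_{\mathcal S}(f^2)\|_{L^{p/2}(w)}\lesssim [w]_{A_{p/2}}^{\max(1,2/(p-2))}\|f^2\|_{L^{p/2}(w)}$, and converting $[w]_{A_{p/2}}$ back in terms of $[w]_{A_p},[\sigma]_{A_\infty},[w]_{A_\infty}$ via the usual reverse-Hölder/factorization identities yields the stated second line. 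Alternatively, and more cleanly, I would directly quote the known sharp $L^p(w)$ bound for the \emph{quadratic} sparse form (exactly the one used for Corollary~\ref{cor:rsf1}, valid for $p\ge 2$) applied to $\mathcal S_{2,2}$; since $\mathcal S_{2,2}\le \mathcal S_{1,2}(|\cdot|)$ fails but $\mathcal S_{2,2}f = \mathcal S_{1,2}$-type form with averages of $f^2$, the substitution $p\mapsto p$, weight unchanged, reproduces $[\sigma]_{A_\infty}^{\tau/2}[w]_{A_p}^{1/p}([w]_{A_\infty}^{1/2-1/p}+[\sigma]_{A_\infty}^{1/p})$ after absorbing $K^\star(\tau,\Xi)$.

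For the range $1<p\le 3$ the trick $\eps=1$ loses too much because the relevant dual exponent degenerates; instead I would choose $\eps$ comparable to $p-1$, say $1+\eps=\tfrac{1+p}{2}<p$ (so $\eps\sim p-1$ and $\eps^{-\tau/2}\sim (p-1)^{-\tau/2}$), and use $\mathrm{G}_\Xi f\lesssim (p-1)^{-\tau/2}K^\star(\tau,\Xi)\,\mathcal S_{1+\eps,2}f$ with $1+\eps\in(1,p)$. Writing $r=1+\eps$, I have $\mathcal S_{r,2}f=\big(\mathcal A_{\mathcal S}(f^r)\big)^{1/r}$ in the quadratic-to-linear sense, i.e.\ $\|\mathcal S_{r,2}f\|_{L^p(w)}^r=\|\mathcal A_{\mathcal S}^{(r)}(f)\|$ where $\mathcal A_{\mathcal S}^{(r)}$ is the sparse form with $r$-averages, and the sharp $L^p(w)$ bound for the sparse form with $L^r$-averages (again from \cite{CMP} / the sparse literature, the $A_{p/r}$-type estimate) produces an exponent $\max\big(1,\tfrac{1}{p/r-1}\big)\cdot\tfrac1r$ on $[w]_{A_{p/r}}$; optimizing the choice of $r$ in $(1,p)$ and re-expressing in $[w]_{A_p}$ yields the exponent $\tfrac{1+\tau/2}{p-1}$ after a short computation balancing the $(p-1)^{-\tau/2}$ factor from $\eps^{-\tau/2}$ against the weighted-operator exponent. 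The cleanest route is to first prove the case $w\equiv 1$: Theorem~\ref{thm:smoothsf} with $X=L^{1+\eps}$, $\eps\sim p-1$, gives the unweighted blowup $\|\mathrm{G}_\Xi\|_{L^p}\lesssim (p-1)^{-(1+\tau/2)}K^\star(\tau,\Xi)$ (matching the sharp rate quoted in the remark after Corollary~\ref{cor:rsf1}), and then the weighted bound follows by the sparse-form/$A_p$ extrapolation machinery with exact tracking of characteristics, exactly as in \S\ref{ss:pfcorw1} but starting from the quadratic rather than the $L^1$-linear sparse form.

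The main obstacle I anticipate is the bookkeeping at the junction $p=3$: one must verify that the two exponents agree there, $\tfrac{1+\tau/2}{p-1}\big|_{p=3}=\tfrac{1+\tau/2}{2}$ versus the second-line exponent $[\sigma]_{A_\infty}^{\tau/2}[w]_{A_p}^{1/p}(\cdots)$ evaluated at $p=3$, and that the choice of $\eps$ transitions continuously (one uses $\eps\sim p-1$ for $p\le 3$ and $\eps=1$ for $p\ge3$, which match at $p=3$ up to constants). A secondary technical point is justifying the identity $\mathcal S_{r,2}f=\big(\mathcal A_{\mathcal S}(|f|^r)\big)^{1/r}$-type reduction and the attendant sharp weighted bounds for the \emph{quadratic} sparse operator with $L^r$-averages in the full range $1<p<\infty$ with explicit dependence on $[w]_{A_p},[w]_{A_\infty},[\sigma]_{A_\infty}$ and on $r$; this is known but scattered, so I would state it as a lemma (citing \cite{LMT}, \cite{CMP}, \cite{LerMRL2019}) and prove it by duality plus the Carleson embedding theorem, paralleling the computation that is already carried out for $\mathrm{H}_\Xi$ in the proof of Corollary~\ref{cor:rsf1}. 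Everything else—absorbing $\eps^{-\tau/2}$ into $K^\star(\tau,\Xi)$ via \eqref{e:growthassumpt_1}, and converting $A_{p/r}$ characteristics back to $A_p$, $A_\infty$ via the standard factorization $[w]_{A_{p/r}}\le [w]_{A_p/?}$ identities—is routine.
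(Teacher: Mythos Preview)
Your proposal has a genuine gap in the range $p\geq 3$. Taking $\eps=1$ reduces the problem to bounding $\|\mathcal S_{2,2}f\|_{L^p(w)}^2=\|\mathcal S_{1,1}(|f|^2)\|_{L^{p/2}(w)}$, but the sharp weighted bound for $\mathcal S_{1,1}$ on $L^{p/2}(w)$ involves $[w]_{A_{p/2}}$ and the $\tfrac p2$-dual $w^{-2/(p-2)}$, not $[w]_{A_p}$ and $\sigma=w^{-1/(p-1)}$. Since $A_{p/2}\subsetneq A_p$, there is no ``conversion'' of $[w]_{A_{p/2}}$ in terms of $[w]_{A_p},[\sigma]_{A_\infty},[w]_{A_\infty}$: a general $A_p$ weight need not be in $A_{p/2}$ at all, so your argument yields no bound for the weight class claimed in the corollary. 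The alternative you sketch (applying the quadratic sparse bound directly to $\mathcal S_{2,2}$) does not help either, for the same reason.

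The paper's fix is exactly to make the choice of $\eps$ \emph{weight-dependent}: one takes $\eps=\widetilde\eps(w,p)\sim [\sigma]_{A_\infty}^{-1}$, writes $\|\mathcal S_{1+\eps,2}\|_{L^p(w)}=\|\mathcal S_{1,\frac{2}{1+\eps}}\|_{L^{p/(1+\eps)}(w)}^{1/(1+\eps)}$, applies the sharp mixed estimate for $\mathcal S_{1,q}$ (Lacey--Li), and then uses the sharp reverse H\"older inequality for $\sigma$ to absorb the shift from $A_{p/(1+\eps)}$ back to $A_p$. The factor $\eps^{-\tau/2}$ from \eqref{e:growthassumpt_1} is precisely what produces the $[\sigma]_{A_\infty}^{\tau/2}$ in the stated bound; with $\eps=1$ that factor could never appear. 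For $1<p<3$ the paper does not optimize $\eps\sim p-1$ either: it specializes the $p\geq 3$ estimate to $p=3$, obtaining $\|\mathrm G_\Xi\|_{L^3(w)}\lesssim K^\star(\tau,\Xi)[w]_{A_3}^{(1+\tau/2)/2}$, and then invokes sharp Rubio de Francia extrapolation. Your final ``cleanest route'' sentence is in the right spirit, but extrapolation must start from a weighted estimate at a single exponent (here $p=3$), not from the unweighted blowup rate.
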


\begin{remark} {As in \eqref{e:strongapbd0} we can conclude the following less precise estimate
\[
\|\mathrm{G}_\Xi\|_{L^p(w)}\lesssim K^\star(\tau,\Xi) [w]_{A_p} ^{\alpha_1(p,\tau)},\qquad \alpha_1(p,\tau)\coloneqq \frac{\tau}{2(p-1)}+\max\left(\frac{1}{p-1},\frac{1}{2}\right).
\]} 
When $\Xi$ is a lacunary set of order $\tau\geq 0$, we have the blowup rate	$\left \| \mathrm{G}_{\Xi}\right \|_{L^p(\R)} \sim (p-1)^{-(1+\frac{\tau}{2})}$ {as $p\to 1^+$}, see for example \cite{BCPV}*{\S6.1.2}, and an application of \cite{LPR}*{Theorem 1.2} reveals that the exponent in estimate of Corollary \ref{cor:ssfw} for $1<p\leq 3$ is sharp.
\end{remark}

The next estimate is a corollary of Theorem \ref{thm:Marc} on Marcinkiewicz multipliers. An analogous  estimate  for   H\"ormander-Mihlin multipliers \eqref{e:hmintro}, with $K^\star(\tau,\Xi)$ replacing $K^{\star\star}(\tau,\Xi)$, may instead be deduced from Theorem \ref{thm:HMsparse}. 


\setcounter{counter}{5} \setcounter{corollary}{1}

\begin{corollary}\label{cor:Mw} Let $m$ be a Marcinkiewicz, or equivalently, $R_{1,1}^\Xi$ multiplier, with singular set $\Xi$ and $\|m\|_{R_{1,1}^\Xi}\leq 1$. Then \begin{equation}
\label{e:mweigh}
\left\| \mathrm{T}_m \right\|_{L^p(w)}\lesssim  \left[K^{\star \star} (\tau,\Xi)  \right]^{2}[\sigma]_{A_\infty} ^{\frac {\tau}{2}} [w]_{A_\infty} ^{\frac{\tau}{2}}[w]_{A_p}^{\frac1p} \left( [\sigma]_{A_\infty}^{\frac{1}{p}}  +[w]_{A_\infty}^{\frac{1}{p'}}  \right), \qquad 1<p<\infty.
\end{equation}
\end{corollary}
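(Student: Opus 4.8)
The plan is to derive Corollary~\ref{cor:Mw} from the sparse domination of Theorem~\ref{thm:Marc} by specializing the two Orlicz spaces and then invoking the general weighted machinery for sparse forms. Concretely, I would apply Theorem~\ref{thm:Marc} with $X_1 = X_2 = L^{1+\eps}(0,1)$ for a parameter $\eps\in(0,1]$ to be optimized at the end. Since $\|m\|_{\mathrm{Mar}(\Xi)}\le 1$, this yields
\[
\bigl\|\mathrm{T}_m\bigr\|_{L^{1+\eps}, L^{1+\eps}} \lesssim \bigl[\ZZ^{\star\star}(L^{1+\eps},\Xi)\bigr]^2 \lesssim \eps^{-\tau}\bigl[K^{\star\star}(\tau,\Xi)\bigr]^2,
\]
using the growth hypothesis \eqref{e:growthassumpt_2}. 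By Definition~\ref{d:locspa}, this means there is a sparse bound controlling $|\langle \mathrm{T}_m f_1, f_2\rangle|$ by a supremum over sparse collections of $\sum_{Q\in\mathcal S}|Q|\langle f_1\rangle_{1+\eps,Q}\langle f_2\rangle_{1+\eps,Q}$, with constant $\lesssim \eps^{-\tau}[K^{\star\star}(\tau,\Xi)]^2$.

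\textbf{Weighted step.} The next step is to quote the known quantitative weighted estimate for $(L^{r},L^{r})$-sparse forms with $r=1+\eps$: there is a standard lemma (of the type appearing in Appendix~\ref{s:app}, e.g.\ Proposition~\ref{p:weighmod}, or in the references \cite{CMP}, \cite{LN}) stating that an operator with an $(L^r,L^r)$-sparse bound of norm $N$ satisfies
\[
\|\mathrm{T}\|_{L^p(w)} \lesssim_{p,r} N\,[w]_{A_{p/r}}^{\frac{1}{p-r}}\Bigl([w]_{A_\infty}^{\frac1{p'}\cdot\frac{1}{(p/r)'}}+[\sigma]_{A_\infty}^{\frac1p}\cdot(\cdots)\Bigr)
\]
for $p>r$, where $\sigma = w^{-\frac{1}{p-1}}$; more precisely, I would use the sharp form that tracks $[w]_{A_\infty}$ and $[\sigma]_{A_\infty}$ separately, exactly as in the proof of Corollary~\ref{cor:rsf1} in \S\ref{ss:pfcorw1} (which is the blueprint: there one passes from an $(X,L^1)$ sparse bound to the $A_p$-$A_\infty$ estimate \eqref{e:rsf1}; here one does the analogous passage for an $(L^{1+\eps},L^{1+\eps})$ sparse bound, which is essentially the bilinear version). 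The key algebraic facts I would invoke are: $[w]_{A_{p/r}} \lesssim [w]_{A_p}$ for $r$ close to $1$ (since $A_p\subset A_{p/r}$ when $p/r>1$, with controlled constants as $r\to1^+$, using a reverse-Hölder/openness argument), and the self-improvement $[\sigma]_{A_\infty},[w]_{A_\infty}\le [w]_{A_p},[\sigma]_{A_{p'}}$ etc. After this step one has, for each $\eps\in(0,1]$ with $p>1+\eps$,
\[
\|\mathrm{T}_m\|_{L^p(w)} \lesssim_p \eps^{-\tau}[K^{\star\star}(\tau,\Xi)]^2\, [w]_{A_p}^{\frac1p}\,[\sigma]_{A_\infty}^{c\eps}[w]_{A_\infty}^{c'\eps}\cdot\bigl([\sigma]_{A_\infty}^{1/p}+[w]_{A_\infty}^{1/p'}\bigr)\cdot(\text{extra }[w]_{A_p}^{O(\eps)}\text{ factors}).
\]

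\textbf{Optimization in $\eps$.} The final step — and the main point of the argument — is to choose $\eps$ to balance the blow-up $\eps^{-\tau}$ against the weighted factors raised to powers $\sim\eps$. One takes
\[
\eps \sim \frac{1}{\log\bigl(e+[w]_{A_\infty}+[\sigma]_{A_\infty}\bigr)}\quad\text{(truncated so that }p>1+\eps\text{, i.e. }\eps<p-1\text{)},
\]
so that $[\sigma]_{A_\infty}^{c\eps}\sim 1$, $[w]_{A_\infty}^{c'\eps}\sim 1$, and then $\eps^{-\tau}\sim \bigl[\log(e+[w]_{A_\infty}+[\sigma]_{A_\infty})\bigr]^\tau \lesssim_\delta \bigl([w]_{A_\infty}[\sigma]_{A_\infty}\bigr)^{\delta}$. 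Using $\log x \lesssim x^{1/\tau}$ one in fact gets the clean power bound $\eps^{-\tau}\lesssim [w]_{A_\infty}^{\tau/2}[\sigma]_{A_\infty}^{\tau/2}$ appearing in the statement (this is the same "$\eps^{-\tau}\to [\cdot]_{A_\infty}^{\tau/2}$ trade" used to pass from modular to weighted estimates elsewhere in the paper, cf.\ the discussion around \eqref{e:zygplac}). Collecting terms produces exactly
\[
\|\mathrm{T}_m\|_{L^p(w)} \lesssim [K^{\star\star}(\tau,\Xi)]^2\,[\sigma]_{A_\infty}^{\tau/2}[w]_{A_\infty}^{\tau/2}[w]_{A_p}^{1/p}\bigl([\sigma]_{A_\infty}^{1/p}+[w]_{A_\infty}^{1/p'}\bigr),
\]
which is \eqref{e:mweigh}.

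\textbf{Main obstacle.} The technical heart is the second step: getting the \emph{sharp} $A_p$--$A_\infty$ weighted estimate for a bilinear $(L^{1+\eps},L^{1+\eps})$ sparse form with constants that are \emph{uniform as $\eps\to 0^+$} — in particular controlling $[w]_{A_{p/(1+\eps)}}$ by $[w]_{A_p}$ and the various reverse-Hölder exponents without losing the $\eps$-uniformity, and keeping the dependence on $[w]_{A_\infty},[\sigma]_{A_\infty}$ linear in the exponent (so that the logarithmic optimization closes). Once one is willing to quote the sharp sparse-form weighted bounds from \cites{CMP,LN} (or re-derive them as in \S\ref{ss:pfcorw1}) with explicit $\eps$-dependence, the rest is the bookkeeping of the $\eps$-optimization described above; I would organize the write-up so that this optimization lemma is isolated and stated once, then applied here and in Corollaries~\ref{cor:rsf1} and \ref{cor:ssfw} uniformly.
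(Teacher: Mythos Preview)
Your overall strategy---apply Theorem~\ref{thm:Marc} with $X_j=L^{1+\eps_j}$, feed the resulting sparse form into a sharp weighted bound, then optimize---is exactly the paper's route. The execution, however, is cleaner in the paper and sidesteps what you flag as the ``main obstacle.''

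Two concrete points. First, the paper uses \emph{two} independent parameters $\eps_1,\eps_2$, one for each slot of the sparse form, rather than a single $\eps$; this matters because the optimal choices are asymmetric in $w$ and $\sigma$. Second, and more importantly, there is no logarithmic optimization: the paper sets
\[
\widetilde\eps_1=\frac{1}{2^8 p\,[\sigma]_{A_\infty}},\qquad \widetilde\eps_2=\frac{1}{2^8 p'\,[w]_{A_\infty}},
\]
which are precisely the openness thresholds from the sharp reverse H\"older inequality for $\sigma$ and $w$. With these choices, \cite{LKC}*{Theorem~1.2} gives
\[
\bigl\|\Lambda^{\mathcal S}_{1+\widetilde\eps_1,1+\widetilde\eps_2}\bigr\|_{L^p(w)\times L^{p'}(\sigma)}\lesssim_p [w]_{A_p}^{1/p}\bigl([\sigma]_{A_\infty}^{1/p}+[w]_{A_\infty}^{1/p'}\bigr)
\]
with \emph{no} residual $\eps$-dependent factors---the reverse H\"older absorbs them completely. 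The Zygmund factor $(\widetilde\eps_1\widetilde\eps_2)^{-\tau/2}$ then equals $[\sigma]_{A_\infty}^{\tau/2}[w]_{A_\infty}^{\tau/2}$ on the nose (up to $p$-constants), and \eqref{e:mweigh} follows.

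So your ``technical heart''---controlling $[w]_{A_{p/(1+\eps)}}$ by $[w]_{A_p}$ uniformly, tracking $[w]_{A_p}^{O(\eps)}$ error factors, and then killing them with $\eps\sim 1/\log[\cdot]$---is an unnecessary detour. The reverse H\"older choice of $\eps_j$ makes the weighted step exact, and the $A_\infty$ powers in the statement come directly from $\eps_j^{-\tau/2}$, not from bounding $(\log x)^\tau$ by $x^{\tau/2}$.
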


\begin{remark} Further 
	{estimate} of the right hand side of \eqref{e:mweigh} leads to the less precise bound
\begin{equation}\label{e:mweighless}
\left\| \mathrm{T}_m \right\|_{L^p(w)} \lesssim [w]_{A_p}^{\gamma(p,\tau)} , \qquad \gamma(p,\tau)\coloneqq \frac{\tau p'}{2} +\max\left\{1,\frac{1}{p-1}\right\}, 
\end{equation}
for $1<p<\infty$: details are provided in \S\ref{ss:corw3}. When $\Xi $ is a first order lacunary set, for which $\tau=1$, \eqref{e:mweighless} coincides with the upper bound obtained by Lerner in \cite{LerMRL2019}*{Theorem 1.2}. Interestingly, the approach of  \cite{LerMRL2019} is to forgo direct estimation of  weighted norms of $\mathrm{T}_m$,  appealing instead to the weighted Chang-Wilson-Wolff inequality  to reduce to a rough square function bound analogous to that of Corollary \ref{cor:rsf1}. Our proof avoids the Chang-Wilson-Wolff inequality which may not be available for a generic set $\Xi$. 
\end{remark}

Finally, we turn to weighted estimates for the rougher classes $R_{\mu,1}^\Xi$, $1<\mu\leq 2$.


\setcounter{counter}{7} \setcounter{corollary}{0}

\begin{corollary} \label{cor:mult} Let $1\leq \mu \leq 2$. Let $m$ be a $R_{\mu,1}^\Xi$ multiplier with singular set $\Xi$ and $\|m\|_{R_{\mu,1}^\Xi}\leq 1$. Then 
\[
\left\| T_m \right\|_{L^p(w)}\lesssim 
\left[K^{\star \star} (\tau,\Xi)  \right]^{\frac 2\mu} \begin{cases} \left([w]_{A_p}[w]_{\mathrm{RH}_{\frac{\mu}{\mu-p(\mu-1)}}}\right)^{\beta} & 1<p<\mu',
\\
 [w]_{A_{\frac{p}{\mu}} }^\beta  & \mu<p<\infty,
\end{cases}
\]
where the exponent $\beta=\beta(p,\mu,\tau)$ depends only on the indicated parameters  and may be explicitly computed.
\end{corollary}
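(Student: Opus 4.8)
The plan is to deduce Corollary~\ref{cor:mult} from Theorem~\ref{thm:F}, which gives the sparse bound $\|\mathrm{T}_m\|_{X,X_\mu}\lesssim \|m\|_{R_{\mu,1}^\Xi}\left[\ZZ^{\star\star}(X,\Xi)\right]^{2/\mu}$ with $X=L^{1+\eps}(0,1)$ for a suitable $\eps=\eps(p,\mu)$ to be chosen, and $X_\mu=[L^{1+\eps},L^2]_{\theta}$ with $\theta=\frac{2(\mu-1)}{\mu}$. First I would identify the target interpolation space: since $[L^{r_0},L^{r_1}]_\theta=L^r$ with $\frac1r=\frac{1-\theta}{r_0}+\frac{\theta}{r_1}$, taking $r_0=1+\eps$ and $r_1=2$ we get $X_\mu=L^{r(\eps)}(0,1)$ with $\frac{1}{r(\eps)}=\frac{2-\theta}{2(1+\eps)}+\frac{\theta}{2}=\frac{1}{\mu(1+\eps)}\cdot\frac{2-\theta}{2}\cdot\mu+\frac{\mu-1}{\mu}$; a short computation gives $r(\eps)\to \mu$ as $\eps\to0^+$, so $X_\mu$ is an Orlicz $L^{\mu_\eps}$-space with $\mu_\eps$ slightly larger than $\mu$, and the exponent blows up in a controlled way, namely $\eps^{\tau/2}\ZZ^{\star\star}(L^{1+\eps},\Xi)\leq K^{\star\star}(\tau,\Xi)$ by \eqref{e:growthassumpt_2}. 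Thus Theorem~\ref{thm:F} produces a genuine $(L^{1+\eps},L^{\mu_\eps})$ sparse bound
\[
\left|\langle \mathrm{T}_m f_1,f_2\rangle\right| \lesssim \eps^{-\tau/2}\left[K^{\star\star}(\tau,\Xi)\right]^{2/\mu}\sum_{Q\in\mathcal S}|Q|\langle f_1\rangle_{1+\eps,Q}\langle f_2\rangle_{\mu_\eps,Q}.
\]

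The second step is to run the standard conversion of an $(r_1,r_2)$ sparse bound into quantitative weighted norm inequalities on $L^p(w)$, following e.g.\ the machinery recalled in \S\ref{s:weights} and in \cite{CMP}. For a bilinear sparse form $\sum_Q|Q|\langle f_1\rangle_{r_1,Q}\langle f_2\rangle_{r_2,Q}$ one has, whenever $r_1<p$ and $r_2<p'$, the bound $\lesssim C(p,r_1,r_2,w)\|f_1\|_{L^p(w)}\|f_2\|_{L^{p'}(\sigma)}$ with $\sigma=w^{1-p'}$; in the regime $\mu<p<\infty$ one can take $r_2=\mu_\eps$ close to $\mu$ and $r_1=1+\eps$ with $\eps$ small, and the weighted constant reduces to a power of $[w]_{A_{p/\mu}}$ (the relevant characteristic once one rescales by $\mu$), producing the second branch. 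In the complementary regime $1<p<\mu'$ duality forces the ``difficult'' exponent on $f_2$, and the sparse form is controlled by the product $[w]_{A_p}[w]_{\mathrm{RH}_s}$ with $s=\frac{\mu}{\mu-p(\mu-1)}$ — exactly the reverse-Hölder exponent appearing already in Corollary~\ref{cor:sample3}; here one should invoke Proposition~\ref{p:weighmod} (stated in Appendix~\ref{s:app}) or the two-weight sparse testing lemmas, to convert $\langle f_2\rangle_{\mu_\eps,Q}$-averages into $A_p\cap\mathrm{RH}_s$ quantities. In both branches the resulting exponent $\beta=\beta(p,\mu,\tau)$ is a finite, explicitly computable combination of the sparse exponents and of $\tau/2$ coming from the $\eps^{-\tau/2}$ factor after optimizing the free parameter $\eps$ as $\eps\sim (p-\mu)$ or $\eps\sim(\mu'-p)$ respectively; since the statement only claims existence and explicit computability of $\beta$, I would not optimize further.

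The main obstacle I anticipate is bookkeeping rather than conceptual: one must verify that the interpolation space $X_\mu$ from \eqref{e:Xintp} really is (up to constants independent of $\eps$, or with controlled $\eps$-dependence) the Lebesgue space $L^{\mu_\eps}(0,1)$ with the stated Young function, and track how $\ZZ^{\star\star}(L^{1+\eps},\Xi)$ enters — this is where assumption \eqref{e:growthassumpt_2} is used, and one needs $\mu_\eps<p$ (resp.\ $\mu_\eps'<p'$, i.e.\ $\mu_\eps<p$ again) to have room for the weighted estimates, which is why $\eps$ must be taken sufficiently small depending on how close $p$ is to the endpoint $\mu$ or $\mu'$; this is precisely the mechanism that makes $\beta$ blow up as $p\to\mu^+$ or $p\to(\mu')^-$. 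A secondary technical point is the passage through Proposition~\ref{p:weighmod} to handle the reverse-Hölder factor in the range $1<p<\mu'$: one should check that the Orlicz averages produced by Theorem~\ref{thm:F} are dominated by $L^{\mu_\eps}$-averages (immediate since $X_\mu\subseteq L^{\mu_\eps}$ is a Lebesgue space here) so that the classical sparse-to-weighted dictionary applies verbatim. Modulo these verifications the corollary follows by assembling the two branches and reading off $\beta$.
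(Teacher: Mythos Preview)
Your overall strategy matches the paper's: start from Theorem~\ref{thm:F} with $X=L^{1+\eps}$, compute $X_\mu=L^{\mu_\eps}$, and convert the resulting $(L^{1+\eps},L^{\mu_\eps})$ sparse bound into weighted norm inequalities. However, the branch $\mu<p<\infty$ as you wrote it has a gap. With $(r_1,r_2)=(1+\eps,\mu_\eps)$ the sparse form bounds $\|\mathrm{T}_m\|_{L^p(w)}$ only when $\mu_\eps<p'$, i.e.\ when $p<\mu'$; for $p\geq \mu'$ the condition $r_2<p'$ fails and the conversion is unavailable. Moreover, even in the overlap $\mu<p<\mu'$, a $(1+\eps,\mu_\eps)$-sparse form does not reduce to a power of $[w]_{A_{p/\mu}}$: that characteristic appears only when the large exponent sits on $f_1$. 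The paper fixes both issues by applying Theorem~\ref{thm:F} in adjoint form (using $\mathrm{T}_m^\star=\mathrm{T}_{\bar m}$ and $\bar m\in R_{\mu,1}^\Xi$) to obtain instead a $(\mu_\eps,1+\eps)$-sparse bound, cf.\ \eqref{e:consthf2}; then $r_1=\mu_\eps<p$ and $r_2=1+\eps<p'$ hold throughout $\mu<p<\infty$, and \cite{LKC}*{Theorem~1.2} gives exactly $[w]_{A_{p/\mu}}^\beta$ after a reverse H\"older step.

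Two smaller points. First, Proposition~\ref{p:weighmod} yields modular (weak-type) inequalities, not strong $L^p(w)$ bounds, so it is not the right tool here; the paper uses the sparse-to-weighted bounds of \cite{LKC}*{Theorem~1.2} directly in both ranges. Second, the paper chooses $\eps$ as a function of the \emph{weight}, namely $\eps\sim[w^\upsilon]_{A_\infty}^{-1}$ with $\upsilon=\frac{\mu}{\mu-p(\mu-1)}$, so that the reverse H\"older inequality cleanly absorbs the $\eta(\eps)$-perturbations of the weight characteristics and $\eps^{-\tau/\mu}$ becomes a power of the weight; your choice $\eps\sim(p-\mu)$ or $\eps\sim(\mu'-p)$ can also be made to work but yields a constant depending on $\tau$ rather than $\tau$ entering the exponent $\beta$.
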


\subsection{Proof of Corollary \ref{cor:rsf1}} \label{ss:pfcorw1} The proof hinges on the inequality
\begin{equation} \label{e:constha}
	\|\mathrm{H}_\Xi\|_{L^p(w) \to L^{p,s}(w)} \lesssim K^\star(\tau,\Xi)\sup_{\mathcal S \;\mathrm{sparse}}\inf_{0<\eps\leq 1}  \frac{\|\mathcal{S}_{1+\eps,1}\|_{L^p(w) \to L^{p,s}(w)}}{\eps^{\frac\tau 2}}
\end{equation}
 for $ 1<p<\infty, \, s\in\{p,\infty\}$, which  is a consequence of Theorem \ref{thm:roughsf}, with reference to the   notation \eqref{e:thesparsedefs}. The main steps are summarized in the upcoming two lemmas.

\begin{lemma} \label{l:911} Let $1<p<\infty$, $\sigma= w^{-\frac{1}{p-1}}$ be the $p$-th dual weight of $w$ and
\[\eps(w,p) \coloneqq \frac{1}{2^8 p  [\sigma]_{A_{p'}} }.
\]
 There holds
\[
\|\mathcal{S}_{1+\eps(w,p),1+\eps(w,p)}\|_{L^{p}(w) \to L^{p,s}(w)} \lesssim_{p}  
[w]_{A_p}^{\frac1p} \begin{cases}
 [\sigma]_{A_\infty}^{\frac{1}{p}}  +[w]_{A_\infty}^{\frac{1}{p'}}  & s=p,
\\ [w]_{A_\infty}^{\frac{1}{p'}} &  s=\infty ,
\end{cases}	
\]	
uniformly  over the sparse collection $\mathcal S$. 	
\end{lemma}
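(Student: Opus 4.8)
\textbf{Proof plan for Lemma~\ref{l:911}.}
The plan is to estimate the $L^p(w)\to L^{p,s}(w)$ norm of the sparse operator $\mathcal S_{1+\eps,1+\eps}$ (with $\eps=\eps(w,p)$ chosen as in the statement) by exploiting the fact that, after passing to the $p$-th dual weight $\sigma=w^{-1/(p-1)}$, the Orlicz averages $\langle g\rangle_{L^{1+\eps},Q}$ behave almost like $L^1$ averages once $\eps$ is small relative to $[\sigma]_{A_{p'}}$. First I would recall the well-known self-improvement/openness property of the $A_{p'}$ class: there is an absolute $c_0>0$ such that if $\eps\leq c_0/[\sigma]_{A_{p'}}$ then $\sigma^{1+\eps}\in A_{p'}$ with $[\sigma^{1+\eps}]_{A_{p'}}\lesssim [\sigma]_{A_{p'}}$; the precise constant $2^8 p$ in $\eps(w,p)$ is tuned exactly so this holds with room to spare. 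Equivalently, raising to a small power, the weight $w$ and its dual remain quantitatively controlled, and the key scalar inequality $\langle g\rangle_{1+\eps,Q}=\langle |g|^{1+\eps}\rangle_{1,Q}^{1/(1+\eps)}$ together with a H\"older/$A_\infty$ argument will let me dominate $\langle g\rangle_{1+\eps,Q}$ by a constant (uniform in $\eps$ in the allowed range) times $\langle g\rangle_{1,Q}$ on average against $\sigma$.

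The core of the argument is then the passage from $\mathcal S_{1+\eps,1+\eps}$ to the ordinary ($L^1$-average) sparse operator $\mathcal S_{1,1}$, whose weighted bounds are classical. Concretely, I would use that, by the generalized H\"older inequality for sparse forms and the openness of the reverse H\"older class,
\[
\sum_{Q\in\mathcal S}|Q|\,\langle f\rangle_{1+\eps,Q}\,\langle h\rangle_{(1+\eps)',Q}\lesssim \sum_{Q\in\mathcal S}|Q|\,\langle f\rangle_{1,Q}\,\langle h\rangle_{1,Q}
\]
up to constants controlled by $[\sigma]_{A_\infty}$, $[w]_{A_\infty}$; dualizing, this gives the pointwise/bilinear-form comparison $\mathcal S_{1+\eps,1+\eps}\lesssim \mathcal A_{\mathcal S}$ where $\mathcal A_{\mathcal S}f=\sum_{Q}\langle f\rangle_{1,Q}\mathbf 1_Q$ is the standard sparse operator, in the sense appropriate to $L^p(w)\to L^{p,s}(w)$. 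Then I invoke the sharp quantitative weighted bounds for $\mathcal A_{\mathcal S}$: in the strong case $s=p$ one has the mixed $A_p$--$A_\infty$ bound $\|\mathcal A_{\mathcal S}\|_{L^p(w)}\lesssim_p [w]_{A_p}^{1/p}\big([w]_{A_\infty}^{1/p'}+[\sigma]_{A_\infty}^{1/p}\big)$ (Hyt\"onen--P\'erez), and in the weak case $s=\infty$ the bound $\|\mathcal A_{\mathcal S}\|_{L^p(w)\to L^{p,\infty}(w)}\lesssim_p [w]_{A_p}^{1/p}[w]_{A_\infty}^{1/p'}$; these are exactly the two expressions appearing on the right-hand side of the lemma. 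I would cite these from the weighted-theory literature referenced in the paper (e.g. \cite{CMP} and the standard sources on sparse domination) rather than reprove them.

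\textbf{Main obstacle.} The delicate point is bookkeeping the constants in the transition $\mathcal S_{1+\eps,1+\eps}\to\mathcal A_{\mathcal S}$: one must check that the loss incurred by replacing $L^{1+\eps}$ averages with $L^1$ averages is genuinely $O(1)$ (uniformly in $\eps\in(0,\eps(w,p)]$) and is \emph{absorbed} into the same $A_\infty$ characteristics that already appear, rather than producing an extra, uncontrolled factor of $[\sigma]_{A_{p'}}$ or similar. This is where the specific choice $\eps(w,p)=\big(2^8 p[\sigma]_{A_{p'}}\big)^{-1}$ is used: it guarantees $(1+\eps)'\sim p'$ and that $\sigma^{1+\eps}$, $w^{1+\eps}$ sit in the same $A$-classes with comparable characteristics, so the H\"older step costs only an absolute constant. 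Once that uniformity is in hand, the rest is a direct citation of known sparse weighted estimates, and one takes the infimum over $\eps$ in \eqref{e:constha} by simply using the admissible value $\eps(w,p)$, which is consistent with the factor $[\sigma]_{A_{p'}}^{\tau/2}$ appearing in \eqref{e:rsf1} after multiplying by $\eps(w,p)^{-\tau/2}$.
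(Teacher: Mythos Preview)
Your plan contains a genuine gap: the comparison $\mathcal S_{1+\eps,1+\eps}\lesssim \mathcal A_{\mathcal S}$ that you propose goes the wrong way. By Jensen's inequality one always has $\langle f\rangle_{1,Q}\leq \langle f\rangle_{1+\eps,Q}$ and $\langle h\rangle_{1,Q}\leq \langle h\rangle_{(1+\eps)',Q}$, so the bilinear inequality you write is reversed term by term, and no ``H\"older/$A_\infty$ argument on average against $\sigma$'' can flip it with an $O(1)$ constant independent of $f$. Reverse H\"older self-improvement applies to the \emph{weight}, not to an arbitrary $f$; there is no mechanism to bound an $L^{1+\eps}$ average of a generic function by its $L^1$ average.

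The paper's route avoids any such comparison. It uses instead the elementary algebraic identity
\[
\mathcal S_{1+\eps,1+\eps}f=\Big(\mathcal S_{1,1}\big(|f|^{1+\eps}\big)\Big)^{\frac{1}{1+\eps}},
\]
which immediately gives $\|\mathcal S_{1+\eps,1+\eps}\|_{L^p(w)\to L^{p,s}(w)}=\|\mathcal S_{1,1}\|_{L^q(w)\to L^{q,s}(w)}^{1/(1+\eps)}$ with $q=p/(1+\eps)$. One then cites the known mixed $A_q$--$A_\infty$ bounds for $\mathcal S_{1,1}$ on $L^q(w)$ (\cite{LKC} for $s=q$, \cite{LISU} for $s=\infty$). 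The role of the specific $\eps(w,p)=(2^8 p[\sigma]_{A_{p'}})^{-1}$ is not to tame the function averages but to ensure, via the sharp reverse H\"older inequality for $\sigma$ \cite{HPRe}, that $[w]_{A_q}\lesssim[w]_{A_p}$ and that the dual weight at exponent $q$ has $A_\infty$ characteristic comparable to $[\sigma]_{A_\infty}$. That is where the choice of $\eps$ actually enters; your identification of its purpose (controlling $\langle f\rangle_{1+\eps,Q}$ versus $\langle f\rangle_{1,Q}$) is misplaced.
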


\begin{proof} For brevity write $\eps=\eps(w,p)$ and
set $q=\frac{p}{1+\eps}$. Then 
\[
\|\mathcal{S}_{1+\eps,1+\eps}\|_{L^{p}(w) }
= \|\mathcal{S}_{1,1}\|_{L^{q}(w)}^{\frac{1}{1+\eps}}, \qquad\|\mathcal{S}_{1+\eps,1+\eps}\|_{L^{p}(w) \to L^{p,\infty}(w)}
= \|\mathcal{S}_{1,1}\|_{L^{q}(w) \to L^{q,\infty}(w)}^{\frac{1}{1+\eps}}.
\] 
The strong norm of $\mathcal{S}_{1,1}$ above is estimated via an appeal to \cite{LKC}*{Theorem 1.2} and a subsequent use of the sharp reverse H\"older inequality of \cite{HPRe}, where the latter motivates the definition of $\eps(w,p)$. See \cite{DHL}*{Section 7} for details of a similar computation. The weak norm of $\mathcal{S}_{1,1}$ is instead estimated by an appeal to \cite{LISU}*{Theorem 1.2}  and a similar usage of the reverse H\"older inequality. 
\end{proof}

\begin{lemma} \label{l:912}
For $1<p<\infty, s\in\{p,\infty\} $ we have
\[
\|\mathcal{S}_{1+\eps,1}\|_{L^p(w) \to L^{p,s}(w)} \lesssim_p  [w]_{A_\infty}^{\frac{\eps}{1+\eps}}  \|\mathcal{S}_{1+\eps,1+\eps}\|_{L^p(w) \to L^{p,s}(w)} 	  
\]
with constant independent of $\eps \in (0,1]$.
\end{lemma}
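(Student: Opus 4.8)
The plan is to reduce to a pointwise comparison governed by the \emph{overlap function} of the sparse collection, and then to absorb that comparison using that the overlap function lies in dyadic $\mathrm{BMO}$ together with a weighted John--Nirenberg inequality. First I would perform the usual three-grid reduction, so that $\mathcal S\subset\mathcal D$ for a fixed dyadic grid, and by monotone convergence assume $\mathcal S$ finite --- while insisting that every constant stay uniform in the number of generations of $\mathcal S$ and in $\eps\in(0,1]$. Write $a_S\coloneqq\langle f\rangle_{1+\eps,S}$, $N\coloneqq\sum_{S\in\mathcal S}\ind_S$, and $h\coloneqq\mathcal S_{1+\eps,1+\eps}f$, so that $\mathcal S_{1+\eps,1}f=\sum_S a_S\ind_S$ and $h^{1+\eps}=\sum_S a_S^{1+\eps}\ind_S$. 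For fixed $x$ the elements of $\mathcal S$ containing $x$ are nested, hence $N(x)$ in number, so H\"older's inequality with exponents $1+\eps$ and $\tfrac{1+\eps}{\eps}$ in this finite sum gives the pointwise bound $\mathcal S_{1+\eps,1}f\le N^{\eps/(1+\eps)}h$. Equivalently, testing against the extremal $g\ge0$ with $\|g\|_{L^{p'}(w)}=1$ and performing the same H\"older split on $\sum_S a_S\int_S gw$ yields
\[
\|\mathcal S_{1+\eps,1}f\|_{L^p(w)}\;\le\;\Big(\int h^{1+\eps}gw\Big)^{\frac1{1+\eps}}\Big(\int N\,gw\Big)^{\frac{\eps}{1+\eps}},
\]
which isolates $N$ as the sole source of loss. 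In either form, the lemma is reduced (for $s=p$; $s=\infty$ is treated the same way distributionally) to an estimate of the type $\|N^{\eps/(1+\eps)}h\|_{L^p(w)}\lesssim[w]_{A_\infty}^{\eps/(1+\eps)}\|\mathcal S_{1+\eps,1+\eps}\|_{L^p(w)\to L^p(w)}\|f\|_{L^p(w)}$; note that $N^{\eps/(1+\eps)}$ is \emph{not} a bounded multiplier on all of $L^p(w)$, so the structure of $h=\mathcal S_{1+\eps,1+\eps}f$ must genuinely be used.

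The next ingredient is that the overlap function is a dyadic $\mathrm{BMO}$ function of \emph{absolute} norm. Indeed $\eta$-sparseness of $\mathcal S$ is equivalent to the Carleson packing $\sum_{S\in\mathcal S,\,S\subseteq Q}|S|\le\eta^{-1}|Q|$ for all $Q\in\mathcal D$, and since $N\ind_Q=b_Q\ind_Q+\widetilde N_Q$ with $b_Q\coloneqq\#\{S\in\mathcal S:S\supseteq Q\}$ constant and $\widetilde N_Q\coloneqq\sum_{S\subsetneq Q}\ind_S\ge0$, the packing bound gives $\langle\widetilde N_Q\rangle_{1,Q}\le\eta^{-1}$, whence $\|N\|_{\mathrm{BMO}_{\mathcal D}}\lesssim1$. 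Consequently, by the weighted John--Nirenberg inequality for $A_\infty$ weights --- whose constant degrades linearly in $[w]_{A_\infty}$ --- one has, for all $Q\in\mathcal D$ and $t>0$, the bound $w(\{x\in Q: N(x)>b_Q+t\})\lesssim e^{-ct/[w]_{A_\infty}}w(Q)$ with $c>0$ absolute (and the analogous Lebesgue-measure statement with $e^{-ct}$).

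With these in hand, I would split $\mathcal S_{1+\eps,1}f$ into the contribution of the first $\sim[w]_{A_\infty}$ generations of $\mathcal S$ --- which by the H\"older argument above is dominated \emph{pointwise} by $\lesssim[w]_{A_\infty}^{\eps/(1+\eps)}h$, hence in $L^{p,s}(w)$ by $[w]_{A_\infty}^{\eps/(1+\eps)}\|\mathcal S_{1+\eps,1+\eps}\|_{\mathrm{op}}\|f\|_{L^p(w)}$, with no further loss --- and the contribution of the remaining generations, which is supported on the level set $\{N\gtrsim[w]_{A_\infty}\}$. On the latter one runs a level-set recursion along the sets $V_m\coloneqq\{N>[w]_{A_\infty}\rho^m\}$, $\rho$ a large fixed constant: $V_m$ is a union of pairwise disjoint maximal dyadic intervals $R$ with $b_R\asymp[w]_{A_\infty}\rho^m$, the weighted John--Nirenberg decay gives $w(V_{m+1}\cap R)\lesssim e^{-c\rho^m}w(R)$, and on each such $R$ one separates the piece of the sparse sum coming from intervals $S\supseteq R$ (a finite, Hölder-controllable sum, costing $([w]_{A_\infty}\rho^{m})^{\eps/(1+\eps)}$) from the piece coming from $S\subsetneq R$ (split further into a bounded number of generations, treated as before, and a tail supported in $V_{m+1}$, fed back into the recursion). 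Summing the resulting geometric-in-$m$ series $\sum_m([w]_{A_\infty}\rho^m)^{p\eps/(1+\eps)}e^{-c\rho^m}\asymp[w]_{A_\infty}^{p\eps/(1+\eps)}$ --- uniformly in $\eps\in(0,1]$, the exponent $\tfrac{p\eps}{1+\eps}$ being bounded by $p$ --- yields the claim. The weak-type case $s=\infty$ proceeds identically, splitting $\{\mathcal S_{1+\eps,1}f>\lambda\}$ along the level sets of $N$ and using the weak operator bound for $\mathcal S_{1+\eps,1+\eps}$; it is in fact simpler.

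The hard part will be closing the level-set recursion with constants \emph{independent of $\eps$}. The delicate point is that one must convert the smallness of $w(V_m)$ (and of $|V_m|$) into smallness of the relevant norm of $\mathcal S_{1+\eps,1+\eps}f$ restricted to $V_m$; since restricting $f$ itself to $V_m$ does not make it small, this forces a \emph{simultaneous} recursion on the $\ell^1$-sparse sum $\mathcal S_{1+\eps,1}f$ and the $\ell^{1+\eps}$-sparse sum $h$ along the level sets of $N$, where at each step one isolates a ``local'' part --- estimated through the $L^p(w)\to L^{p,s}(w)$ operator norm of $\mathcal S_{1+\eps,1+\eps}$ applied to $f$ cut off to the (measure-small) set $V_m$ --- and an ``incoming'' part from larger intervals that is reabsorbed à la John--Nirenberg. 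A secondary, bookkeeping-type difficulty is ensuring that no constant (in particular those from $L^{1+\eps}$-boundedness of the dyadic maximal operator or from reverse H\"older inequalities) blows up as $\eps\to0^+$; this is arranged by having every $\eps$-dependent factor occur raised to the power $\tfrac1{1+\eps}$, or by phrasing the auxiliary estimates directly in terms of $|f|^{1+\eps}$ rather than $f$, for which no $\eps^{-1}$ ever appears.
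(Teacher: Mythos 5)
Your opening moves are sound and in fact capture the true mechanism behind the lemma: the pointwise H\"older bound $\mathcal S_{1+\eps,1}f\le N^{\eps/(1+\eps)}\,\mathcal S_{1+\eps,1+\eps}f$ through the overlap function $N$, and the exponential decay of the level sets of $N$ (Carleson packing plus the $A_\infty$ property, giving $w(\{x\in Q: N_Q(x)>t\})\lesssim e^{-ct/[w]_{A_\infty}}w(Q)$). But the architecture you propose to convert these into the norm inequality has a genuine gap, which you flag yourself and which cannot be repaired within that architecture: your recursion along the level sets $V_m=\{N>[w]_{A_\infty}\rho^m\}$ needs the smallness of $w(V_m)$ (or $|V_m|$) to produce smallness of $\|\mathcal S_{1+\eps,1+\eps}f\cdot\ind_{V_m}\|_{L^p(w)}$, or of $\|f\ind_{V_m}\|_{L^p(w)}$ after cutting off $f$. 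Neither is small in general: $f$ may be entirely concentrated on $V_m$, and feeding $f\ind_{V_m}$ back into the operator norm is circular, restarting the original problem with no geometric gain in the function part. The factor $e^{-c\rho^m}$ you want to sum against $([w]_{A_\infty}\rho^m)^{p\eps/(1+\eps)}$ lives only at the level of measures of sets, not of norms of restrictions, so the series in $m$ does not close as claimed.

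The missing idea is to stay entirely at the distributional level, at a fixed height $\lambda$, and localize to the maximal dyadic intervals $Q$ of $\{\mathcal S_{1+\eps,1}f>\lambda\}$ rather than to level sets of $N$. On such a $Q$ the contribution of the intervals $S\supseteq Q$ is at most $\lambda$ by maximality, so no recursion on the ``incoming'' part is ever needed; at a point of $Q$ where $\mathcal S_{1+\eps,1}f>2\lambda$ and $\mathcal S_{1+\eps,1+\eps}f\le\gamma\lambda$, your H\"older step forces the local overlap $N_Q$ to exceed $\gamma^{-(1+\eps)/\eps}$, and the weighted John--Nirenberg bound for $N_Q$ on $Q$ then yields the good-$\lambda$ inequality
\begin{equation}
w\left(\left\{\mathcal S_{1+\eps,1}f>2\lambda,\ \mathcal S_{1+\eps,1+\eps}f\le\gamma\lambda\right\}\right)\le C\exp\left(-\delta\,\frac{\gamma^{-\frac{1+\eps}{\eps}}}{[w]_{A_\infty}}\right)w\left(\left\{\mathcal S_{1+\eps,1}f>\lambda\right\}\right),
\end{equation}
which is exactly how the paper proceeds (quoting a minor variation of \cite{DPF+RDF}*{Theorem E}). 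Both cases $s=p$ and $s=\infty$ then follow from the standard good-$\lambda$ argument upon choosing $\gamma\sim_p[w]_{A_\infty}^{-\eps/(1+\eps)}$, which is where the factor $[w]_{A_\infty}^{\eps/(1+\eps)}$ comes from, uniformly in $\eps\in(0,1]$.
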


\begin{proof}{The norm control follows}  easily from the good-$\lambda$ inequality
\begin{equation}  \label{e:goodlambdahere}
w\left( \left\{  \mathcal{S}_{1+\eps,1} f> 2\lambda, \mathcal{S}_{1+\eps,1+\eps} f\leq \gamma \lambda \right\} \right) \leq C \exp\left( -\delta\frac{\gamma^{{-\frac{1+\eps}{\eps}}}}{[w]_{A_\infty}} \right)w\left( \left\{  \mathcal{S}_{1+\eps,1} f> \lambda\right\} \right) 
\end{equation}
with absolute constants $C,\delta>0$, whose proof is a minor variation of \cite{DPF+RDF}*{Theorem E}, to which we send for details.
\end{proof}

Combining \eqref{e:constha} with Lemma \ref{l:912} entails the control
\begin{equation} \label{e:H}
\|\mathrm{H}_{\Xi}\|_{L^p(w) \to L^{p,s}(w)} \lesssim {K^\star(\tau,\Xi) \eps(w,p)^{-\frac{\tau}{2}}} [w]_{A_\infty}^{\frac{\eps(w,p)}{1+\eps(w,p)}}
\|\mathcal{S}_{1+\eps(w,p),1+\eps(w,p)}\|_{L^p(w) \to L^{p,s}(w)}  .
\end{equation}
However 
\[
[w]_{A_\infty}^{\frac{\eps(w,p)}{1+\eps(w,p)}} \leq [\sigma]_{A_{p'}}^{(p-1)\eps(w,p)}=\exp\left(\frac{\log [\sigma]_{A_{p'}}}{2^8p [\sigma]_{A_{p'}}}\right) \lesssim_p 1
\]
and Corollary \ref{cor:rsf1} follows from \eqref{e:H} together with Lemma \ref{l:911}.

\begin{proof}[Proof of \eqref{e:rsf1imp}] An application of \cite{LKC}*{Theorem 1.2} together with the sharp reverse H\"older inequality yield
\begin{equation} \label{e:thateps}
\|\mathcal{S}_{1+\widetilde{\eps}(w,p),1+\widetilde{\eps}(w,p)}\|_{L^{p}(w) } \lesssim_p [w]_{A_p}^{\frac1p} \left( [\sigma]_{A_\infty}^{\frac{1}{p}}  +[w]_{A_\infty}^{\frac{1}{p'}}  \right), \qquad  \widetilde{\eps}(w,p) 
\coloneqq \frac{1}{ 2^8 p  [\sigma]_{A_{\infty}} },
\end{equation}
and \eqref{e:rsf1imp} is then a direct consequence of \eqref{e:H}.
\end{proof}

\subsection{Proof of Corollary \ref{cor:ssfw}} The relevant consequence of Theorem \ref{thm:smoothsf} is 
\begin{equation} \label{e:constG}
\|\mathrm{G}_\Xi\|_{L^p(w)  } \lesssim K^\star(\tau,\Xi)\sup_{\mathcal S \;\mathrm{sparse}}\inf_{0<\eps\leq 1}  \frac{\|\mathcal{S}_{1+\eps,2}\|_{L^p(w)}}{\eps^{\frac\tau 2}}, \qquad 1<p<\infty,
\end{equation}
{with reference to the   notation \eqref{e:thesparsedefs}}. Fix $p\geq  3$, let $\sigma$ stand for the $p$-th dual of $w$ and $\eps=\widetilde{\eps}(w,p)$ {as defined} in \eqref{e:thateps}. Then, uniformly over sparse collections $\mathcal S$, 
\begin{equation} \label{e:pfcorssfw1}
\begin{split}
\left\|\mathcal{S}_{1+\eps,2}\right\|_{L^p(w)} &=\left\|\mathcal{S}_{1,\frac{2}{1+\eps}}\right\|_{L^{\frac{p}{1+\eps}}(w)}^{\frac{1}{1+\eps}}  \lesssim_p [w]_{ {A_{\frac{p}{1+\eps}}} }^{\frac1p} \left( [w]_{A_\infty}^{\frac12-\frac1p} + 	\left[\sigma^{1+\frac{
{\eps p}	}{p-(1+\eps)}}\right]_{A_\infty}^{\frac1p}\right)
\\ 
& \lesssim_p  [w]_{A_p}^{\frac1p} \left( [w]_{A_\infty}^{\frac12-\frac1p} + \left[\sigma \right]_{A_\infty}^{\frac1p}\right)
\end{split}
\end{equation}
where we have used \cite{LaLi}*{Theorem 2.3} for the {approximate inequality in} the first line, and the sharp reverse H\"older {property of $\sigma$} to pass to the second line. Note that \eqref{e:pfcorssfw1} yields the case $p\geq 3$ of the corollary in combination with \eqref{e:constG}.
Specializing to $p=3$ {we get}
\[ 
\|\mathrm{G}_\Xi\|_{L^p(w)  } \lesssim {K^\star(\tau,\Xi)} [w]_{A_p}^{\frac{1+\frac\tau 2}{{2}}}
\]
and the case $1<p<3$ is then obtained by sharp extrapolation theorems; {see e.g.\ \cite{Duo}.}

\subsection{Proof of Corollary \ref{cor:Mw}} \label{ss:corw3}
This time the starting point is the inequality
\begin{equation}
	\label{e:consthc}
	\|\mathrm{T}_m\|_{L^p(w)} \lesssim \left[K^{\star \star} (\tau,\Xi)  \right]^{2} \sup_{\mathcal S \;\mathrm{sparse}}\inf_{0<\eps_1,\eps_2\leq 1}  \frac{\left\|\Lambda^{\mathcal{S}}_{1+\eps_1,1+\eps_2}\right\|_{L^p(w) \times L^{p'}(\sigma)   }}{(\eps_1\eps_2)^{\frac\tau 2}}
\end{equation}
 for $ 1<p<\infty$, a consequence of Theorem \ref{thm:Marc}, {with reference to the   notation \eqref{e:thesparsedefs_2}}.  Fix $1<p<\infty$ and apply again \cite{LKC}*{Theorem 1.2}together with two instances of the sharp reverse H\"older {properties of $\sigma$ and $w$}, yielding
\begin{equation} 
{\left\|\Lambda^{\mathcal{S}}_{1+\widetilde{\eps_1} ,1+\widetilde{\eps_2}}\right\|_{L^{p}(w) \times L^{p'}(\sigma) }} \lesssim_p [w]_{A_p}^{\frac1p} \left( [\sigma]_{A_\infty}^{\frac{1}{p}}  +[w]_{A_\infty}^{\frac{1}{p'}}  \right), \qquad  \widetilde{\eps_1} \coloneqq \textstyle  \frac{1}{2^8 p  [\sigma]_{A_{\infty}} },\;  \widetilde{\eps_2} \coloneqq \textstyle  \frac{1}{2^8 p'  [w]_{A_{\infty}} }. 
\end{equation}
With these choices, {estimate}  \eqref{e:mweigh} follows by combining the estimate in the previous display with  \eqref{e:consthc}.

\begin{proof}[Proof of \eqref{e:mweighless}] First, let us argue for the case $1<p\leq 2$. Starting from  \eqref{e:mweigh} in this particular range, estimate
\begin{equation}
	\label{e:tobedualz}
	\frac{\left\| \mathrm{T}_m \right\|_{L^p(w)}}{\left[K^{\star \star} (\tau,\Xi)  \right]^{2}}\lesssim [\sigma]_{A_\infty} ^{\frac {\tau}{2}} [w]_{A_\infty} ^{\frac{\tau}{2}}[w]_{A_p}^{\frac1p} \left( [\sigma]_{A_\infty}^{\frac{1}{p}}  +[w]_{A_\infty}^{\frac{1}{p'}}  \right) \lesssim [w]_{A_p}^{\frac{\frac\tau 2 + 1}{p-1}+\frac\tau 2 }    .
\end{equation}
Now if $2<p<\infty$, use duality to obtain
\begin{equation}
	\label{e:tobedualz2}
	\left\| \mathrm{T}_m \right\|_{L^p(w)}= 	\left\| \mathrm{T}_m \right\|_{L^{p'}(\sigma)}   \lesssim \left[K^{\star \star} (\tau,\Xi)  \right]^{2} [\sigma]_{A_{p'}}^{\frac{\frac\tau 2 + 1}{p'-1}+\frac\tau 2 } = \left[K^{\star \star} (\tau,\Xi)  \right]^{2} [w]_{A_{p}}^{ \frac\tau 2 + 1+\frac{\tau}{ 2(p-1)} }   .
\end{equation}
Collecting \eqref{e:tobedualz} and \eqref{e:tobedualz2} with some algebra results exactly into \eqref{e:mweighless}.
\end{proof}

\subsection{Proof of Corollary \ref{cor:mult}} Let $m\in R_{\mu,1}^\Xi$, $1< \mu < 2$ of unit norm. It is best to argue separately for $1<p<\mu'$ and $p>\mu$. In this proof, we apply the conventions that $\eta=\eta(\eps)$ is a generic sublinear function of $\eps\in [0,1]$ and $\beta$ is  positive real exponent which are allowed to vary between occurrences. The exact forms of $\eta$ and $\beta$ are allowed to depend on $p,\mu$ and may be explicitly computed.

We begin with the first range. Applying Theorem \ref{thm:F} with $X=L^{1+\eps}(0,1)$ and computing the interpolation space 
$X_\mu {\coloneqq} [X,L^2(0,1)]_\theta$ in \eqref{e:Xintp} for the value $\theta= \frac{2(\mu-1)}{\mu}$   leads to the inequality
\begin{equation}\label{e:consthf} 	
\frac{\|\mathrm{T}_m\|_{L^p(w)} }{\left[K^{\star\star}(\tau,\Xi)\right]^{\frac2\mu}}\lesssim \sup_{\mathcal S \;\mathrm{sparse}}\inf_{0<\eps \leq 1}  \frac{ {\left\|\Lambda^{\mathcal{S}}_{1+\eps,\mu+\eps \delta(\eps,\mu)}\right\|_{L^p(w) \times L^{p'}(\sigma)  }}
	}{\eps^{\frac\tau \mu}}, \textstyle \qquad \delta(\eps,\mu)=
{\frac{\mu (2-\mu)}{1+\eps( \mu  - 1)}},
\end{equation}
{with reference to the   notation \eqref{e:thesparsedefs_2}}. Applying \cite{LKC}*{Theorem 1.2} leads to  
\[
\left\|\Lambda^{\mathcal{S}}_{1+\eps,\mu+\eps \delta(\eps,\mu)}\right\|_{L^p(w) \times L^{p'}(\sigma)}  \lesssim \left[ w^{   {\frac{\mu}{\mu-p(\mu-1)}+\eta(\eps)}} \right]_{A_{ {\frac{\mu'}{\mu'-p}(p-1) + 1 -\eta(\eps) }}}^\beta,   \qquad 0<\eps \leq 1.
\]
Choosing now $\eps=c[w^{ \frac{\mu}{\mu-p(\mu-1)}}]_{A_\infty}^{-1}$ and using \eqref{e:consthf} leads via the reverse H\"older inequality to 

\[
\frac{\|\mathrm{T}_m\|_{L^p(w)}}{\left[K^{\star\star}(\tau,\Xi)\right]^{\frac2\mu}}
\lesssim  \left[w^{  \upsilon}\right]_{A_\infty}^{\frac{\tau}{\mu}}\left[ w^{\upsilon}\right]_{A_{ \nu}}^\beta \lesssim \left[ w^{\upsilon}\right]_{A_{ \nu}}^\beta \lesssim \left( [w]_{A_p}[w]_{\mathrm{RH}_{{\upsilon}}} \right)^\beta
\]
where
\[
{\upsilon(\mu,p) = \frac{\mu}{\mu-p(\mu-1)},} \qquad {\nu(\mu,p) = \frac{\mu'}{\mu'-p}(p-1) + 1,}
\]
and this completes the proof of the case $1<p<\mu'$. 

We turn to the range $p>\mu$. This time  Theorem \ref{thm:F} is applied in adjoint form,  leading to the inequality
\begin{equation}\label{e:consthf2} 
\|\mathrm{T}_m\|_{L^p(w)} \lesssim \left[K^{\star \star} (\tau,\Xi)  \right]^{\frac 2\mu} \sup_{\mathcal S \;\mathrm{sparse}}\inf_{0<\eps \leq 1}  \frac{{\left\|\Lambda^{\mathcal{S}}_{\mu+\eps \delta(\eps,\mu),1+\eps}\right\|_{L^p(w) \times L^{p'}(\sigma)  }}}{\eps^{\frac\tau \mu}}, \qquad \delta (\eps,\mu)=\textstyle  {\frac{\mu (2-\mu)}{1+\eps( \mu  - 1)}}.
\end{equation}
The claimed estimate is again a simple consequence of \cite{LKC}*{Theorem 1.2} and of the reverse H\"older inequality, and we omit the details.

\section{Upper bounds for $\ZZ^\star(X,\Xi)$}\label{proof_equiv_Zyg_prop}  This section contains the proofs of the leftmost upper bounds in Theorem \ref{t:mainchar},  Corollary \ref{c:sample} eq.\ \eqref{e:maincharsf}, and Corollary \ref{c:sample2} eq.\ \eqref{e:maincharssf}. These share the same broad goal, which, up to easy algebra, corresponds to showing
\begin{equation}\label{e:reusal}
	\ZZ^\star(X,\Xi) \leq C \frac{1}{\mathrm{Y}_X^{-1}\left(\frac{1}{CK}\right)}, 
\end{equation}
where $K$ is the corresponding maximal modular estimate in the assumption and $C\geq 1$ is an absolute constant which will be explicitly computed in each case.
All arguments rely on the characterization
\begin{equation}
\ZZ^\star(X,\Xi) = \sup\left\{ \sqrt{\sum_{k\in\lfloor \mu \Xi \rfloor}  |\widehat{f} (k)|^2 } :\, \mu \in 2^{\mathbb Z}, \,f=\sum_{|k|\leq N}\widehat {f}(k)\exp(2\pi i k \cdot):\, \|f\|_X=1,  \, N\geq 1 \right\},
\end{equation}
which a simple consequence of duality, as well as   upon the upcoming construction~\eqref{e:thecount}. Pick a nonnegative even Schwartz function $\phi$ with $\mathrm{supp} \,\phi \subset [-1,1]$, $\widehat{\phi} (0)= \int_\mathbb{R} \phi (x) \, \d x= 1$. Let $\eps >0$ be a fixed parameter to be chosen during each  argument, and for each trigonometric polynomial $f$  define 
\begin{equation} \label{e:thecount}
Pf (x) \coloneq \sum_{|k| \leq N} \widehat{f} (k) \int_{\mathbb{R}} \phi \left( \frac {\xi-k}{\eps } \right) \e^{ 2\pi i  \xi x}  \, \frac{\d \xi}{\eps}, \qquad Qf(x) \coloneq \widehat{\phi} (\eps x) P{f} (x), \qquad x\in \mathbb R.
\end{equation}
Notice that $Pf,Qf$ are Schwartz functions. For further use, we claim the existence of a constant $C$ depending on $\eps$ only, such that 
\begin{equation}\label{e:thecure}
 	\int_{\R}\mathrm{Y}_X \left( \frac{|Qf|}{\lambda}  \right) \leq C \int_{[0,1]}\mathrm{Y}_X \left( \frac{|f|}{\lambda}  \right) 
\end{equation}
uniformly over $\lambda>0$. To verify this, notice the equality
\[
P{f} (x) =  \int_{\mathbb R} f(x) \e^{ 2\pi i t x}  \phi \left(  \frac{t}{ \eps }  \right)\, \frac{\d t} {\eps} , \qquad x\in \mathbb R 
\]
which in particular entails $|Pf|\leq |f|$, and exploit the rapid decay of $ \widehat{\phi} (\eps \cdot)$ at scale $\eps^{-1}$.
Furthermore, the equality
\begin{equation}
	\label{e:Qf}
	\widehat{Q f}  = \sum_{|k|\leq N} \widehat f(k) \widetilde{\phi}_{k,\eps}, \qquad \widetilde{\phi}_{k,\eps} (\xi)\coloneqq \int_{\R}  \phi\left(\frac{\xi-\zeta}{\eps} \right)  \phi\left(\frac{\zeta-k}{\eps} \right)  \,\frac{\d\zeta}{\eps^2}
\end{equation}
holds, and we highlight the properties that $\widetilde{\phi}_{k,\eps}$ is {nonnegative, supported in $(k-4\eps,k+4\eps)$, and} has integral 1. This has in particular the consequence that $\widehat{\widetilde{\phi}_{k,\eps}}(x)\geq \frac12$ for $\eps|x| \leq c$ for some absolute constant $c$. 

\subsection{Proof of the leftmost bounds in \eqref{e:maincharsf}, \eqref{e:maincharssf}} By virtue of the inequality
\begin{equation}
	\label{e:4graph}
 \sup_{\Xi'\subset \Xi} \sup_{\|m\|_{\mathrm{HM}(\Xi')} \leq 1}\left[\mathrm{H}_{\Xi',m}\right]_{X} \gtrsim
  \sup_{\Xi'\subset \Xi}  \left[\mathrm{G}_{\Xi'}\right]_{X},
\end{equation}
 it suffices to present the argument for  \eqref{e:maincharssf}. For convenience, let $K$ stand for the right hand side of \eqref{e:4graph}.  Fix a trigonometric polynomial $f$ with $\|f\|_{X}=1$ and $\widehat{f}(k)=0$ for $|k|>N$. Fix also $\mu\in 2^{\mathbb Z}$ and define
  \[
  A_j\coloneqq \left\{k \in  \lfloor \mu \Xi \rfloor:\,  k\equiv {j  \Mod 4},\,  |k|\leq N\right\}, \qquad j=0,\ldots, 3.
  \]
 Fixing $j\in \{0,\ldots,3\}$ for the moment, let $a_1<\ldots<a_M$ be an ordering of $A_j$. The set $\mu \Xi \cap [a_m,a_m+1)$ is clearly nonempty for each $m=1,\ldots, M$, therefore  $\xi_m\coloneqq \max  \left[\mu \Xi \cap [a_m,a_m+1)\right]$ is well defined. Setting also $\zeta_m\coloneqq \xi_m  +\frac12$, $\zeta_0\coloneqq -\infty$  it holds that
 \[
 a_m\in (\zeta_{m-1},\zeta_{m}), \quad \textstyle \frac12 \dist\left(a_m, \zeta_{m}\right) <\frac32, \quad  \dist\left(\zeta_{m-1}, \zeta_{m}\right) \geq 3 \qquad  m=1,\ldots, M. 
 \]
 If $Z=\{\zeta_1,\ldots, \zeta_m\}$, the above situation entails for each $m=1,\ldots, M$ the existence of an interval $\omega_m\in \mathbf{w}_{\mathcal D}(O_Z)$ with the property that the interval $(a_{m}-2^{-9}, a_{m}+2^{-9}) \subset \mathcal \omega_m$, so that we may pick $\varphi_m\in c\Phi_{\omega_m}$ with the property that $\widehat{\varphi_m} =1$ on $(a_{m}-2^{-10}, a_{m}+2^{-10}) $.  
 The latter choice, together with \eqref{e:Qf}, entails the chain 
 \[
 \left|
 \mathrm{T}_{\widehat {\varphi_m}} [Qf](x)\right| = |\widehat f(a_m)|\left| \int  {\widetilde\phi}_{a_m,\eps}(\xi) \e^{2\pi i x \xi} \, \d \xi \right|
  = |\widehat f(a_m)|\left| \widehat{   {\widetilde\phi}_{a_m,\eps}}( {-} x)  \right| \geq \frac12 |\widehat f(a_m)| , \quad x\in [0,1]
 \] 
 provided that $\eps$ is chosen small enough. We have achieved the key inequality
 \begin{equation}\label{e:keyeq} 
 \mathrm{G}_{Z} [Qf](x) \geq c \left(\sum_{m=1}^M  \left| \mathrm{T}_{\widehat {\varphi_m}} [Qf](x)\right| ^2\right)^{\frac12} \geq c \left(\sum_{k\in A_j}  |\widehat f(k)|^2\right)^{\frac12}, \qquad   x\in [0,1]
 \end{equation}
while on the other hand by modulation invariance
\begin{equation}\label{e:keyineq}     
\left[\mathrm{G}_{Z}\right]_{X} \leq    \sup_{\Xi'\subset \Xi}  \left[\mathrm{G}_{\Xi'}\right]_{X}= K.
\end{equation}
Call $\lambda$ the right hand side of \eqref{e:keyeq}. By virtue of \eqref{e:keyeq} itself,
\[
\begin{split}
1 &\leq   \left|\{ \mathrm{G}_{Z} [Qf]>\lambda \}\right| \leq   K  \int_{\R}\mathrm{Y}_X \left( \frac{|Qf|}{\lambda}  \right) \leq  C  K	\int_{[0,1]}\mathrm{Y}_X \left( \frac{|f|}{\lambda}  \right)
\\  
& \leq  C  K	\mathrm{Y}_X \left( \frac{1}{\lambda}  \right) \int_{[0,1]}\mathrm{Y}_X \left( |f| \right) \leq  C   K	\mathrm{Y}_X \left( \frac{1}{\lambda}  \right) 
\end{split}
\]
where the passage to the second line uses submultiplicativity and the last inequality exploits $\|f\|_X\leq 1$. Rearranging,
\[
\lambda \leq \mathcal Q(K), \qquad \mathcal Q(t) \coloneqq \frac{1}{\mathrm{Y}_X^{-1}\left(\frac{1}{C t}\right)}
\]
which, up to repeating the proof for all values of $j=0,\ldots, 3$, yields, with reference to \eqref{e:4graph},  
\begin{equation}
\ZZ^\star(X,\Xi) \leq 4 \mathcal Q\left(K\right)
\end{equation}
which is a form of \eqref{e:reusal}, and thus completes the proof of the leftmost estimate in \eqref{e:maincharssf}.
\subsection{Proof of the leftmost estimate of Theorem 
\ref{t:mainchar}} The proof uses the same tools and notation seen in the previous argument. In this proof,
\[
K\coloneqq \sup_{\|m\|_{\mathrm{HM}(\Xi)} \leq 1} [\mathrm{T}_m ]_{X}.
\]
 Fix again  a trigonometric polynomial $f$ with $\|f\|_{X}=1$, $\widehat{f}(k)=0$ for $|k|>N$, and  $\mu\in 2^{\mathbb Z}$. 
For our purposes, it suffices to deal with those $f$ with $\widehat f(k)\neq 0$ for at least one $k\in  \lfloor \mu \Xi \rfloor  $.
Let  $\eps>0$ to be chosen momentarily and introduce the corresponding  multiplier  
\begin{equation}
\label{e:multip4}	
m (\xi) \coloneq \sum_{k\in\lfloor \mu \Xi \rfloor  } \; \eta_k  \psi \left( \frac{ \xi -k} {\eps}\right)  
,\qquad  \eta_k \coloneq   \frac{ \overline{\widehat{f} (k)}}{\left( \sum_{j\in\lfloor \mu \Xi \rfloor } |\widehat{f}(j)|^2 \right)^{\frac 12}} , \quad  k\in\lfloor \mu \Xi \rfloor, \end{equation}
and
  $\psi \in {\mathcal S} (\mathbb{R})$ with the properties $\mathbf{1}_{[-4,4]} \leq \psi \leq \mathbf{1}_{[-6,6]}$. 
The condition on the support of $\psi$ ensures that $\| m  \|_{{\rm HM}(\Xi)} 
{\leq M}$ {for a fixed $M < \infty$} and
$$ \mathrm{T}_m [Qf](x) = \sum_{k \in \lfloor \mu \Xi \rfloor} \eta_k \widehat{f} (k) \int_{\mathbb{R}} \widetilde{\phi}_{\epsilon, k} (\xi) e^{ 2  \pi i \xi x} \, \d \xi .$$
An easy computation shows that for $\eps |x|\leq c$ we have
$$ | T_m [Qf](x)| \geq   \left( \sum_{k \in\lfloor \mu \Xi \rfloor } |\widehat{f}(k)|^2 \right)^{\frac 12} { \inf_{k \in\lfloor \mu \Xi \rfloor }} \left|\widehat{ \widetilde{\phi}_{\epsilon, k}} (x) \right|\geq \frac 12  \left( \sum_{k \in\lfloor \mu \Xi \rfloor } |\widehat{f}(k)|^2 \right)^{\frac 12}. $$
Therefore, choosing $\eps\leq c$ and  setting $\lambda $ equal to the right hand side of the last display, we may argue in the previous section and achieve the inequality
$$ 1 \leq   \left|\{ \left| \mathrm{T}_m  [Qf] \right| >\lambda \}\right| \leq C K {M} {\mathrm{Y}_X} \left(  \frac{1}{\lambda} \right). $$
The leftmost estimate in Theorem~\ref{t:mainchar} then follows along the same exact lines.

\section{ A characterization of the Littlewood-Paley property} \label{s:LPp} For the proof of the characterization of the $\mathrm{LP}(p)$-property in terms of the maximal multiscale Zygmund property, {as stated in Theorem~\ref{thm:LP}}, it will be useful to work with perturbations of a given sets of frequencies. Such definitions are standard in the literature, see \cite{HaKl}*{Definition 2.6}. We give below the appropriate version for the real line.

\begin{definition}[Perturbations of $\mathrm{LP}$-sets] Given a closed null set $\Xi\subset \R$ as above we will say that $\Xi'$ is a \emph{perturbation of $\Xi$} if for every $\omega\in\Omega_{\Xi'}$ there exist at most two intervals $\omega_-,\omega_+\in\Omega_\Xi$ such that $\omega\subset \omega_-\cup\omega_+$ up to a set of measure zero.
\end{definition}

\begin{remark} \label{rmrk:pert} It is easy to check that a perturbation of an $\mathrm{LP}(p)$-set is also a $\mathrm{LP}(p)$-set. Indeed, for each $\omega\in \Omega_{\Xi'}$ there holds
\[
\mathrm{H}_{\omega}=\mathrm{H}_{\omega\cap \omega_-}+\mathrm{H}_{\omega\cap \omega_+}=\mathrm{H}_{\omega}\circ \mathrm{H}_{\omega_-}+\mathrm{H}_{\omega}\circ {\mathrm{H}}_{\omega_+}
\]
and our claim follows by $L^p(\ell^2)$ bound for the Hilbert transform.
\end{remark}

\begin{proof}[Proof of Theorem~\ref{thm:LP}] Clearly $\mathrm{1.}\Rightarrow \mathrm{2.}$ as $\ZZ^{\star \star}$ is a stronger version of $\ZZ^\star$. The fact that $\mathrm{2.}\Rightarrow \mathrm{3.}$ is a consequence of Theorem~\ref{thm:roughsf} applied with $X=L^p$ for any $p>q$. The sparse control of our theorem implies in particular that ${\mathrm H}_\Xi$ is bounded on $L^p$ for any $p>q$. It remains to show that $\mathrm{3.}\Rightarrow\mathrm{1.}$ For this note that any set $\{p_\omega:\, \omega\in\Omega\}$ with $p_\omega\in\overline{\omega}$ with one point per interval is a perturbation of $\Xi$. Hence, Remark~\ref{rmrk:pert} shows that $\widetilde \Omega\coloneqq\{p_\omega:\, \omega\in\Omega\}$ has the $\mathrm{LP}(p)$ property for any $p>q$, and by scale invariance of the square function estimate \eqref{eq:LP} so does any rescaling of the form $\lambda\Xi$ with $\lambda>0$. Applying Corollary~\ref{c:sample} with $X=L^p$ for any $p>q$ shows that $\ZZ^{\star}(L^p,\left\{p_\omega:\, \omega\in\Omega\right\})<+\infty$, uniformly over choices of points $p_\omega\in\overline{\omega}$ and this completes the proof of the theorem.
\end{proof}

\appendix\setcounter{section}{25} 

\section{Sparse domination implies modular inequalities} \label{s:app} This appendix is devoted to the statement and proof of Proposition \ref{p:weighmod} below. We have appealed to this proposition to deduce Corollaries \ref{c:sample} and \ref{cor:sample3} from Theorems \ref{thm:roughsf} and \ref{thm:HMsparse} respectively. Throughout the appendix, $X$ is a local Orlicz space with Young function $\mathrm{Y}_X$ enjoying the $B_p$ property as spelled out in Definition~\ref{d:locorl}. {We refer to \eqref{e:char_weight} for the definitions of the characteristics of weights.} 

\begin{proposition} \label{p:weighmod} Let  $\textstyle 1\leq  q<\infty,  1<p<q'$ and let $X$ be a local Orlicz space with $B_p(X)\lesssim 1$. Referring to Definition \ref{d:locspa}, assume $\|T\|_{X,L^q}\lesssim 1$. Then, there exists a positive increasing function $\mathcal Q$ such that
\[
w\left(\left\{x\in \mathbb R: |Tf(x)|>\lambda\right\} \right) \leq \mathcal Q\left([w]_{A_1}, [w]_{\mathrm{RH}_{\frac{q}{q-p(q-1)}}}\right) \int_{\mathbb R} \mathrm{Y}_{X} \left(\frac{|f(x)|}{\lambda}\right)  w(x) \, \d x
\]
uniformly over all weights $w$.
\end{proposition}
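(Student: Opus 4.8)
The goal is to convert the bilinear sparse bound $\|T\|_{X,L^q} \lesssim 1$ into a weighted weak-type modular estimate. The plan is to first reduce matters, by standard homogeneity, to $\lambda = 1$ and to a single sparse form: by definition of $\|T\|_{X,L^q}$, for any $f_1, f_2 \in L^\infty_0(\R)$ there is a sparse collection $\mathcal S$ with
\[
\left|\langle Tf_1, f_2\rangle\right| \lesssim \sum_{Q\in\mathcal S} |Q| \langle f_1\rangle_{X,Q}\langle f_2\rangle_{L^q,Q}.
\]
Fix a weight $w$ and a level $\lambda > 0$; rescaling $f_1$, we may take $\lambda = 1$. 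Set $E_\lambda = \{x : |Tf_1(x)| > 1\}$. To estimate $w(E_\lambda)$ we dualize against $g = \mathbf 1_{E_\lambda} w / w(E_\lambda)^{1/q'} \cdot c$ — more precisely, test $\langle Tf_1, f_2\rangle$ with $f_2 = \operatorname{sign}(\overline{Tf_1}) h$ for a suitable $h$ supported on $E_\lambda$. The cleanest route is: $w(E_\lambda) = \int_{E_\lambda} w \le \int_{E_\lambda} |Tf_1| w = \langle Tf_1, (\operatorname{sign}\overline{Tf_1}) \mathbf 1_{E_\lambda} w\rangle$, so with $f_2 = (\operatorname{sign}\overline{Tf_1})\mathbf 1_{E_\lambda} w$ we obtain
\[
w(E_\lambda) \lesssim \sum_{Q\in\mathcal S} |Q| \langle f_1\rangle_{X,Q} \langle \mathbf 1_{E_\lambda} w\rangle_{L^q,Q}.
\]

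The next step is to absorb the weight. Write $\langle \mathbf 1_{E_\lambda} w\rangle_{L^q,Q} = \big(\frac1{|Q|}\int_{Q\cap E_\lambda} w^q\big)^{1/q}$. Using the reverse Hölder inequality for $w \in A_1$ (which holds with $[w]_{\mathrm{RH}_s} \lesssim_{[w]_{A_1}} 1$ for $s$ slightly above $1$, but here we keep the explicit exponent $s = \frac{q}{q-p(q-1)}$, whose Hölder conjugate arrangement with $p$ is exactly what makes the argument close), together with Hölder's inequality to split $\int_{Q\cap E_\lambda} w^q \le \big(\int_Q w^{qs}\big)^{1/s}|Q\cap E_\lambda|^{1/s'}$, one converts the $L^q(w)$-average into a product of $\langle w\rangle_{1,Q}$ (up to the $A_\infty$/$\mathrm{RH}$ characteristic) and a power of $\frac{|Q\cap E_\lambda|}{|Q|}$. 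Pairing this with the $A_1$ bound $\langle w\rangle_{1,Q} \le [w]_{A_1}\operatorname{ess\,inf}_Q w$, and choosing the sparse subsets $E_Q \subset Q$ so that $w(E_Q) \gtrsim w(Q)$ (sparse families are $A_\infty$-sparse with respect to $w$ when $w\in A_1$, with constant controlled by $[w]_{A_\infty} \le [w]_{A_1}$), we arrive at a bound of the form
\[
w(E_\lambda) \lesssim \mathcal Q([w]_{A_1},[w]_{\mathrm{RH}_s}) \sum_{Q\in\mathcal S} \langle f_1\rangle_{X,Q} \, w(E_Q)^{\theta} \, \Big(\tfrac{w(Q\cap E_\lambda)}{w(Q)}\Big)^{1-\theta}
\]
for a suitable $\theta\in(0,1)$ determined by $p,q$; an application of Hölder's inequality in the sum, using the packing $\sum w(E_Q) \le w(E_\lambda)$ and $w(Q\cap E_\lambda)\le w(E_\lambda)$, lets one absorb a factor $w(E_\lambda)^{1-\theta}$ from the left.

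The remaining step is to bound $\sum_{Q\in\mathcal S}\langle f_1\rangle_{X,Q}^{p} w(E_Q)$ — or the appropriate power — by $\int \mathrm{Y}_X(|f_1|)\, w$. This is precisely where the $B_p$ property enters: the sparse operator $\mathcal S_{X,1}$ is pointwise controlled by the Orlicz maximal operator $\mathrm M_X$ on the supports $E_Q$, and P\'erez's theorem gives $\|\mathrm M_X\|_{L^p\to L^p} \lesssim B_p(X) \lesssim 1$; weighting by $w\in A_1$ costs only a power of $[w]_{A_1}$ via the Fefferman–Stein inequality $\int (\mathrm M_X f)^p w \lesssim [w]_{A_1}\int |f|^p \mathrm M w \lesssim [w]_{A_1}^2 \int |f|^p w$ (using $\mathrm M w \lesssim [w]_{A_1} w$). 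Finally one upgrades the resulting $L^p(w)$-type bound on $\mathcal S_{X,1}f_1$ to the modular statement by the now-standard linearization/convexity trick: $\langle f_1\rangle_{X,Q} \le 1 + \langle \mathrm Y_X(|f_1|)\rangle_{1,Q}$ when $\langle f_1\rangle_{X,Q}\le 1$ is not assumed one splits $f_1$ at its Orlicz level, and on the high part submultiplicativity of $\mathrm Y_X$ trades the power $p$ for a single copy of $\mathrm Y_X$; this is exactly the reduction recorded in \cite{BCPV}. The main obstacle is bookkeeping: tracking the precise exponents $\theta$, the interplay between $p$, $q$, $s = \frac{q}{q-p(q-1)}$ in the Hölder/reverse-Hölder steps, and making sure the constraint $p < q'$ is exactly what guarantees $s \in (1,\infty)$ so that $\mathrm{RH}_s$ is a nontrivial but finite condition — everything else is a routine assembly of P\'erez's maximal bound, the Fefferman–Stein inequality, and sparse packing.
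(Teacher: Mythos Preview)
Your proposal has the right opening move --- dualize against $(\operatorname{sign}\overline{Tf})\mathbf 1_{E_\lambda} w$ and invoke the sparse bound --- but the subsequent steps contain a genuine gap that the paper's argument repairs with one extra idea you are missing.

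The fatal issue is your final step: you need (with $\lambda=1$)
\[
\sum_{Q\in\mathcal S}\langle f\rangle_{X,Q}^{p}\, w(E_Q) \;\lesssim\; \int_{\R} \mathrm{Y}_X(|f|)\,w.
\]
But the left side is dominated by $\int (\mathrm{M}_X f)^p\, w$, and this is \emph{not} controlled by the modular in general. Take the admissible choice $X=L^1$, $\mathrm{Y}_X(t)=t$ (which does satisfy $B_p$ for $p>1$): you would be asking for $\|\mathrm{M}f\|_{L^p(w)}^p\lesssim \|f\|_{L^1(w)}$, which fails already by homogeneity. Your ``linearization trick'' $\langle f\rangle_{X,Q}\le 1+\langle \mathrm{Y}_X(|f|)\rangle_{1,Q}$ does not save this: the constant term produces $\sum_Q w(E_Q)$, and your claim ``$\sum w(E_Q)\le w(E_\lambda)$'' is simply wrong --- the $E_Q$ are disjoint subsets of the sparse cubes $Q$, not of $E_\lambda$.

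The paper's fix is a Calder\'on--Zygmund--type excision. Set $H_\lambda=\{\mathrm{M}_X f>\lambda\}$ and remove the enlargement $\widetilde{H_\lambda}=\{\mathrm{M}\mathbf 1_{H_\lambda}>c\eta\}$ from the level set; its $w$-measure is already controlled by the modular via a Fefferman--Stein argument (Lemma~\ref{l:weighmod1}). On the remaining piece $F_\lambda\subset\widetilde{H_\lambda}^{\,c}$, every sparse cube $Q$ touching $F_\lambda$ has $|Q\cap H_\lambda|<c\eta|Q|$, so the major sets $E_Q$ may be taken inside $G_\lambda=\{\mathrm{M}_X f\le\lambda\}$. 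One then bounds the sparse sum pointwise by $\int_{G_\lambda}\mathrm{M}_X f\cdot \mathrm{M}_q(w\mathbf 1_{F_\lambda})$ and applies H\"older with exponents $p,p'$. The decisive gain is that $\|\mathbf 1_{G_\lambda}\mathrm{M}_X f\|_{L^p(w)}^p$ is now a layer cake \emph{truncated at $\lambda$}: submultiplicativity of $\mathrm{Y}_X$ and the substitution $s=t/\lambda$ convert it exactly into $\lambda^p\,p\,B_p(X)^p\int \mathrm{Y}_X(|f|/\lambda)\,w$ (Lemma~\ref{l:weighmod2}); this is precisely where $B_p$ enters, and why the restriction to $G_\lambda$ is not cosmetic. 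The second H\"older factor $\|\mathrm{M}_q(w\mathbf 1_{F_\lambda})\|_{L^{p'}(w^{1-p'})}$ is handled by the weighted $L^{p'}$ bound for $\mathrm{M}_q$, yielding $w(F_\lambda)^{1/p'}$ together with the $\mathrm{RH}_{q/(q-p(q-1))}$ characteristic --- so the reverse H\"older exponent appears here, through $[w^{1-p'}]_{A_{p'/q}}$, rather than through a direct manipulation of $\langle \mathbf 1_{E_\lambda}w\rangle_{q,Q}$ as you suggested.
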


\begin{remark} If $\|T\|_{X,L^q}\lesssim_q 1$  for all $1<q<\infty$, and $B_p(X)\lesssim_p 1$ for some $1<p<\infty$, the reverse H\"older property of $A_1$ weights{, see \cite{SteinBook}*{Chpt. 5, Prop. 3}}, may be also used to prove that 
  \[
w\left(\left\{x\in \mathbb R: |Tf(x)|>\lambda\right\} \right) \leq  \widetilde{\mathcal Q} \left([w]_{A_1} \right) \int_{\mathbb R} \mathrm{Y}_X \left(\frac{|f(x)|}{\lambda}\right)  w(x) \, \d x ,
\]
{where $\widetilde{\mathcal Q}(x) \coloneq \mathcal Q (x,c(x))$ for a specific increasing function $c$, hence $\widetilde{\mathcal Q}$ is {positive} increasing as well.}
\end{remark}

\subsection{Proof of Proposition \ref{p:weighmod}} We will need two lemmas. The first one is proved exactly in the same way as the Fefferman-Stein theorem. The second one follows from the first via a simple layer cake argument which we omit. In both, $f, \lambda$ are  fixed and $\eta>0$ is the sparsity constant.
\begin{equation}\label{e:weighmod1} H_\lambda \coloneqq \left\{x\in \mathbb R: \, \mathrm{M}_X f(x) >\lambda \right\}, \qquad \widetilde{H_\lambda} \coloneqq\left\{x\in \mathbb R: \, \mathrm{M} \ind_{H_\lambda} (x)>2^{-4}\eta \right\}, \qquad G_\lambda \coloneqq \R \setminus H_\lambda{.} 
\end{equation}

\begin{lemma}
\label{l:weighmod1}{$\displaystyle w(\widetilde {H_\lambda}) \lesssim [w]_{A_1} w(H_\lambda)\lesssim [w]_{A_1}^2\int_{\mathbb R} \mathrm{Y}_X \left(\frac{|f(x)|}{\lambda}\right)   w(x) \, \d x. $}
\end{lemma}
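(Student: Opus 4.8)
\textbf{Proof proposal for Lemma~\ref{l:weighmod1}.} The plan is to mirror the classical Fefferman--Stein argument for the weighted weak-type bound for the maximal function, adapting it to the Orlicz maximal operator $\mathrm{M}_X$ and to the level set $H_\lambda$ defined in \eqref{e:weighmod1}. First I would note that the second inequality of the lemma is exactly the weighted weak-type $(X,L^1)$ estimate for $\mathrm{M}_X$ with respect to an $A_1$ weight, with the stated quadratic $[w]_{A_1}$ dependence; this is well known and follows from the unweighted modular weak-type inequality for $\mathrm{M}_X$ recalled in \eqref{e:wot1} (due to P\'erez \cite{Perez95}, see also \cite{CMP}*{Theorem 5.5}) together with a Calder\'on--Zygmund-type covering of $H_\lambda$ and the $A_1$ condition. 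Concretely, decompose $H_\lambda$ into a maximal (hence pairwise disjoint) family $\{Q_j\}$ of dyadic intervals on which the Orlicz average of $f$ exceeds $\lambda$ (up to the fixed dimensional constant from the three-lattice theorem), so that $\langle f\rangle_{X,Q_j}>\lambda$ and $H_\lambda\subseteq\bigcup_j 3Q_j$ up to adjusting constants; then
\[
w(H_\lambda)\le\sum_j w(3Q_j)\lesssim [w]_{A_1}\sum_j |Q_j|\,\langle w\rangle_{1,Q_j},
\]
and the $A_1$ property gives $\langle w\rangle_{1,Q_j}\le [w]_{A_1}\inf_{Q_j}w$, which combined with $\langle f\rangle_{X,Q_j}>\lambda$ and the convexity/Jensen bound $1<\int_{Q_j}\mathrm{Y}_X(|f|/\langle f\rangle_{X,Q_j})$ (rescaled) yields $|Q_j|\lesssim\int_{Q_j}\mathrm{Y}_X(|f|/\lambda)$ pointwise against $w$, producing the claimed $[w]_{A_1}^2$ factor after summing over the disjoint $Q_j$.

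For the first inequality, I would invoke the standard fact that $\widetilde{H_\lambda}$, the set where $\mathrm{M}\mathbf 1_{H_\lambda}>2^{-4}\eta$, satisfies $w(\widetilde{H_\lambda})\lesssim_{\eta}[w]_{A_1}w(H_\lambda)$ for any $A_1$ weight. This is the weighted weak-type $(1,1)$ bound for the Hardy--Littlewood maximal operator $\mathrm{M}$ applied to the indicator $\mathbf 1_{H_\lambda}$, using that $\|\mathrm{M}\|_{L^1(w)\to L^{1,\infty}(w)}\lesssim[w]_{A_1}$; evaluating the weak-type inequality at the fixed threshold $2^{-4}\eta$ gives $w(\widetilde{H_\lambda})\lesssim_{\eta}[w]_{A_1}\|\mathbf 1_{H_\lambda}\|_{L^1(w)}=[w]_{A_1}w(H_\lambda)$. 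Since $\eta$ is the fixed sparsity constant of the paper, the dependence on $\eta$ is harmless and may be absorbed into the implicit constant. Chaining the two displayed estimates gives the full chain of inequalities asserted in the lemma.

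The main obstacle, if any, is purely bookkeeping: one must be careful that the Orlicz maximal function $\mathrm{M}_X$ in \eqref{e:weighmod1} is the non-dyadic one, so the passage from the dyadic modular bound \eqref{e:wot1} to a genuine Calder\'on--Zygmund decomposition of $H_\lambda$ requires the three-lattice theorem (as already used in the proof of \eqref{e:wot}), and that the submultiplicativity of $\mathrm{Y}_X$ is what allows the rescaling $\mathrm{Y}_X(|f|/\langle f\rangle_{X,Q})\le C\,\mathrm{Y}_X(|f|/\lambda)\mathrm{Y}_X(\lambda/\langle f\rangle_{X,Q})$ with the last factor $\le 1$ when $\langle f\rangle_{X,Q}>\lambda$. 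None of this is deep; it is the same manipulation performed in the proof of \eqref{e:wot} above, now carried out against the measure $w\,\d x$ rather than Lebesgue measure, with the two powers of $[w]_{A_1}$ entering respectively from the enlargement $Q_j\to 3Q_j$ and from $\langle w\rangle_{1,Q_j}\le[w]_{A_1}\inf_{Q_j}w$.
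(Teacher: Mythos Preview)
Your proposal is correct and matches the paper's own approach: the paper does not spell out a proof but simply says the lemma ``is proved exactly in the same way as the Fefferman--Stein theorem,'' which is precisely the covering-plus-$A_1$ argument you outline for the second inequality and the weighted weak $(1,1)$ bound for $\mathrm{M}$ applied to $\cic{1}_{H_\lambda}$ for the first.

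One small bookkeeping point: you attribute \emph{both} powers of $[w]_{A_1}$ to the step $w(H_\lambda)\lesssim\int \mathrm{Y}_X(|f|/\lambda)\,w$, but the chained inequality in the lemma is to be read as one power from each step, i.e.\ $w(H_\lambda)\lesssim [w]_{A_1}\int \mathrm{Y}_X(|f|/\lambda)\,w$. Your argument with the enlargement $Q_j\to 3Q_j$ introduces an unnecessary extra power; if you run the Calder\'on--Zygmund decomposition directly on the three shifted dyadic grids (as in the proof of \eqref{e:wot1}) the dyadic stopping intervals already cover $H_{\lambda/3}$ without enlargement, and only the single use of $\langle w\rangle_{1,Q_j}\le [w]_{A_1}\inf_{Q_j} w$ remains. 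This is inconsequential for the application in Proposition~\ref{p:weighmod}, where the $[w]_{A_1}$-dependence is absorbed into an unspecified increasing function, but it is worth tightening.
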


\begin{lemma}\label{l:weighmod2} $\displaystyle \left\|\cic{1}_{G_\lambda}\mathrm{M}_X f \right\|_{L^p(w)} \lesssim [w]_{A_1}^{{\frac{1}{p}}} \lambda \left[p  B_p(X)^{{p}} \right]^{\frac1p} \left( \int_{\mathbb R} \mathrm{Y}_X \left(\frac{|f(x)|}{\lambda}\right)  w(x) \, \d x\right)^{\frac1p} $.
\end{lemma}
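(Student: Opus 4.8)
The statement is the weighted quantitative bound for the truncated Orlicz maximal function, and I would obtain it from Lemma~\ref{l:weighmod1} by the layer-cake (distribution function) argument announced in the text. Set $g:=\cic{1}_{G_\lambda}\mathrm{M}_X f\ge 0$ and use the representation
\[
\|g\|_{L^p(w)}^p=p\int_0^\infty t^{p-1}\,w\big(\{x:\,g(x)>t\}\big)\,\mathrm d t,
\]
valid for the measure $w\,\mathrm d x$. The crucial structural remark is that $\{g>t\}=G_\lambda\cap H_t$ (with $H_t=\{\mathrm M_X f>t\}$ as in \eqref{e:weighmod1}), and that by the very definition $G_\lambda=\R\setminus H_\lambda$ one has $\mathrm M_X f\le\lambda$ on $G_\lambda$, so $G_\lambda\cap H_t=\varnothing$ whenever $t\ge\lambda$. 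Hence the $t$-integral runs only over $(0,\lambda)$, which is exactly what produces the single power of $\lambda$ in the claimed inequality. The borderline case $t=\lambda$ is harmless since $\{\mathrm M_X f=\lambda\}$ contributes nothing to the integral.

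Next I would bound $w(G_\lambda\cap H_t)\le w(H_t)$ and invoke Lemma~\ref{l:weighmod1} with $t$ in place of $\lambda$ (its proof being level-agnostic, as it runs along the Fefferman--Stein argument): its middle inequality gives $w(H_t)\lesssim [w]_{A_1}\int_{\R}\mathrm Y_X(|f(x)|/t)\,w(x)\,\mathrm d x$ for every $t>0$. Inserting this, and applying Tonelli to exchange the $t$- and $x$-integrations, reduces everything to estimating, for fixed $x$, the one-dimensional integral $\int_0^\lambda t^{p-1}\mathrm Y_X(|f(x)|/t)\,\mathrm d t$. The substitution $t=\lambda s$ turns it into $\lambda^p\int_0^1 s^{p-1}\mathrm Y_X\!\big(|f(x)|/(\lambda s)\big)\,\mathrm d s$, and at this point the submultiplicativity of $\mathrm Y_X$ (part i of the $B_p$ property) in the form $\mathrm Y_X(|f(x)|/(\lambda s))\lesssim \mathrm Y_X(|f(x)|/\lambda)\,\mathrm Y_X(1/s)$ pulls out the $x$-dependence, leaving precisely $\int_0^1 s^{p-1}\mathrm Y_X(s^{-1})\,\mathrm d s=B_p(X)^p$ from part ii of Definition~\ref{d:locorl}.

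Collecting the pieces yields
\[
\|g\|_{L^p(w)}^p\lesssim [w]_{A_1}\,p\,B_p(X)^p\,\lambda^p\int_{\R}\mathrm Y_X\!\Big(\frac{|f(x)|}{\lambda}\Big)\,w(x)\,\mathrm d x,
\]
and taking $p$-th roots is exactly the assertion of Lemma~\ref{l:weighmod2}. I do not anticipate a genuine obstacle: the only points requiring a line of care are the identification $\{g>t\}=G_\lambda\cap H_t$ together with the vanishing of the integrand for $t\ge\lambda$, and the fact that the submultiplicativity constant of $\mathrm Y_X$ is absorbed into the implicit constant --- consistent with the standing assumption $B_p(X)\lesssim 1$ in this appendix, so no extra weight- or $\mathrm Y_X$-dependence is introduced. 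If one wished to avoid submultiplicativity entirely one could replace that step by a Jensen/convexity estimate for the inner integral, but the submultiplicative route is the shortest given the hypotheses already in force.
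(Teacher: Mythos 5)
Your argument is correct and is precisely the "simple layer cake argument" the paper indicates (and omits): truncate the level-set integral at $t=\lambda$ using $\mathrm{M}_X f\le\lambda$ on $G_\lambda$, apply the middle (level-agnostic, Fefferman--Stein type) bound of Lemma~\ref{l:weighmod1} at each level $t$, and after Tonelli plus submultiplicativity the rescaled $s$-integral produces exactly $B_p(X)^p$, giving the stated constant $[w]_{A_1}^{1/p}\lambda\,[p\,B_p(X)^p]^{1/p}$. No gaps; the bookkeeping of the single $[w]_{A_1}$ factor and the power of $\lambda$ is exactly as required.
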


We now come to the actual proof of the proposition. By virtue of Lemma \ref{l:weighmod1}, it suffices to prove the estimate
\begin{equation} \label{e:weighmod3}
w( {F_\lambda}) \lesssim\mathcal Q\left([w]_{A_1}, [w]_{\mathrm{RH}_{\frac{q}{q-p(q-1)}}}\right)\int_{\mathbb R} \mathrm{Y}_X \left(\frac{|f|}{\lambda}\right) w , \quad F_\lambda\coloneqq \left\{ x\in   \widetilde {H_\lambda}^c: \, |Tf(x)|>\lambda\right \}.
\end{equation} 
The observation we will use in the sequel is the following. If $I$ is any interval with $I \cap {F_\lambda}\neq \varnothing $, then $|I \cap H_\lambda|< 2^{-4}\eta|I|$. Therefore, if $E_I$ is an $\eta$-major subset of such $I$, the set $  E_I \cap G_\lambda$ is a $\frac{\eta}{2}$-major subset of $I$ as well. At this point, invoking the sparse domination with collection $\mathcal S$ for the pair $(f,\theta w  \ind_{F_\lambda})$ for a suitably chosen unimodular function $\theta$ yields

\begin{equation}
\begin{split}
\lambda w(F_\lambda) & \leq \langle Tf,\theta w\ind_{F_\lambda}  \rangle  \lesssim  \sum_{I\in \mathcal S} |I| \langle  f \rangle_{X,I}\langle w \ind_{F_\lambda} \rangle_{q,I} 
\\ 
& = \sum_{\substack{I\in \mathcal S\\ I\cap F_\lambda \neq \varnothing}} |I| \langle  f \rangle_{X,I}\langle w \ind_{F_\lambda} \rangle_{q,I} \lesssim 
\sum_{I\in \mathcal S} |E_I \cap G_\lambda| \langle  f \rangle_{X,I}\langle w \ind_{F_\lambda} \rangle_{q,I}
\\ 
&\leq \int_{G_\lambda} \mathrm{M}_X f \mathrm{M}_{q}(w \ind_{F_\lambda}) = \int_{G_\lambda} \left[w^{\frac1p}\mathrm{M}_X f \right]\left[\mathrm{M}_{q}(w \ind_{F_\lambda}) w^{-\frac1p} \right]
\\ 
& \leq \left\|\cic{1}_{G_\lambda}\mathrm{M}_X f \right\|_{L^p(w)} \left\|\mathrm{M}_{q}(w \ind_{F_\lambda}) \right\|_{L^{p'}(w^{1-p'})} 
\\
& \lesssim \mathcal Q\left([w]_{A_1}, [w]_{\mathrm{RH}_{\frac{q}{q-p(q-1)}}}\right) \lambda    \left( \int_{\mathbb R} \mathrm{Y}_X \left(\frac{|f(x)|}{\lambda}\right)   w(x) \, \d x\right)^{\frac1p}  \left( w({F_\lambda})\right)^{\frac{1}{p'}}
\end{split}
\end{equation}
and this completes the proof of \eqref{e:weighmod3}. In the passage to the last line we have used Lemma \ref{l:weighmod2} and the well known weighted estimate 
\[
\left\|\mathrm{M}_{q} \right\|_{L^{p'}(w^{1-p'})} \lesssim {\widetilde{\mathcal Q}} \left([w^{1-p'}]_{A_{  \frac{p'}{q}} } \right) \sim \mathcal Q\left([w]_{A_p}, [w]_{\mathrm{RH}_{\frac{q}{q-p(q-1)}}}\right),	
\]
where $\widetilde{\mathcal Q}$ is a positive increasing function}; see \cite{IMS} for the last equivalence.

\bibliography{SparseRDF}{}
\bibliographystyle{amsplain}


\end{document}